\documentclass[11pt]{amsart}
\usepackage{amsmath, amssymb, float, graphicx, microtype, amsthm, caption, subcaption,booktabs,enumerate,wrapfig,mathrsfs,bbm, fancyhdr,tikz-cd,mathtools}
\usepackage[top=1in,bottom=1in,left=1in,right=1in]{geometry}

\usepackage{hyperref} %%%% Adds index to generated pdf %%%%

\newtheorem{theorem}{Theorem}[section]

\newtheorem{claim}[theorem]{Claim}
\newtheorem{prop}[theorem]{Proposition}
\newtheorem{lemma}[theorem]{Lemma}
\newtheorem{cor}[theorem]{Corollary}
\theoremstyle{definition}

\newtheorem{definition}[theorem]{Definition}
\newtheorem{question}[theorem]{Question}

\theoremstyle{remark}
\newtheorem{remark}[theorem]{Remark}

\numberwithin{equation}{section} %%% Number equations as Section.Equation  e.g. first equation in section 1 is labeled as (1.1) %%%

\newcommand{\R}{{\mathbb R}}
\newcommand{\N}{{\mathbb N}}
\newcommand{\Z}{{\mathbb Z}}

\newcommand{\1}{{\mathbbm 1}}

\newcommand{\e}{{\varepsilon}}

\renewcommand{\sl}{\mathfrak{sl}}

\DeclareMathOperator{\im}{Im}
\DeclareMathOperator{\tr}{Trace}
\newcommand{\id}{{\text{id}}}

\DeclareMathOperator{\supp}{supp}

\DeclareMathOperator{\vol}{vol}

\DeclareMathOperator{\Aut}{Aut}

\DeclareMathOperator{\graph}{graph}

\DeclareMathOperator{\dist}{dist}

\DeclareMathOperator{\Ad}{Ad}

\DeclareMathOperator{\vecspan}{span}

\DeclareMathOperator{\LyapReg}{LyapReg}
\DeclareMathOperator{\Lip}{Lip}

\DeclareMathOperator{\diag}{diag}
\DeclareMathOperator{\card}{card}
\DeclareMathOperator{\diam}{diam}

\title{The Bernoulli property for $PSL_2\R$ Skew products over exponentially mixing base}
\author{Meg Doucette}

\date{}

\begin{document}
\maketitle

\begin{abstract}
	We study the Bernoulli property for skew products of the form $f(x,y)=(g(x),A(x)y)$ on $M\times PSL_2(\mathbb{R})/\Gamma$, where $g$ is a smooth, measure-preserving, exponentially mixing diffeomorphism, $\Gamma<PSL_2(\mathbb{R})$ is a cocompact lattice, and $A\colon M\to \operatorname{PSL}_2(\mathbb{R})$ is a $C^{1+\alpha}$ cocycle. We prove that if $A$ has a nonzero Lyapunov exponent, then $f$ is Bernoulli. The proof does not require that $f$ be exponentially mixing. This provides a new method for establishing the Bernoulli property, which applies to a large class of skew products. 
\end{abstract}

%\tableofcontents

\section{Introduction}

	Chaotic properties of dynamical systems have been the subject of extensive study. The strongest qualitative chaotic property a dynamical system can exhibit is the \emph{Bernoulli Property}. A dynamical system $f:(M,\mu)\to (M,\mu)$ is \emph{Bernoulli} if it is isomorphic (as a measure preserving system) to a \textit{Bernoulli Shift}. A Bernoulli Shift is the shift $\sigma:\left( \{1,...,n\}^{\Z}, \mathbf{p}^\Z \right) \to \left( \{1,...,n\}^{\Z}, \mathbf{p}^\Z \right)$ where $\mathbf{p}=(p_1,...,p_n)$ is a probability vector. 
	
	In this paper we will focus on the skew-products $f:(M\times N, \mu\times \nu) \to (M\times N, \mu\times \nu)$ of the form
		\begin{equation}\label{eq:skew-product-f_intro}
			f(x,y)=(g(x),A(x)y)
		\end{equation}
	where $g:(M,\mu)\to (M,\mu)$ is a $C^{1+\alpha_g}$ diffeomorphism of a closed manifold $M$ that preserves a smooth measure $\mu$, $N=PSL_2\R/\Gamma$, where $\Gamma<PSL_2\R$ is a cocompact lattice, $\nu$ is normalized Haar measure on $N$, and $A:M\to PSL_2\R$ is a $C^{1+\alpha}$ map with $\log\|A\|, \log\|A^{-1}\| \in L^1(\mu)$. 
	
	We say that $g$ is \emph{exponentially mixing} if there exists $r>0, \ C>0,$ and $\eta>0$ such that for any $\phi,\psi\in C^r(M)$, 
		\begin{equation*}
			\left| 
				\int_M \phi(x)\psi(g^nx)d\mu - \int_M \phi d\mu \int_M\psi d\mu
			 \right|
			 \leq 
			 Ce^{-\eta n} \|\phi\|_{C^r}\|\psi\|_{C^r}
		\end{equation*}
	
	Our main result is the following:
	\begin{theorem}\label{thm:Main_f_bernoulli}
		If $g$ is exponentially mixing and $A$ has a nonzero Lyapunov exponent, then the skew-product $f$ given in \eqref{eq:skew-product-f_intro} is Bernoulli. 
	\end{theorem}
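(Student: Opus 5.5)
The plan is to verify Ornstein's \emph{very weak Bernoulli} (VWB) criterion for $f$ with respect to a suitable finite partition. We follow the strategy of Kalikow and Rudolph, and of Dolgopyat--Kanigowski--Rodriguez Hertz, adapted to a skew product whose fiber dynamics is not uniformly hyperbolic. Because $A$ has a nonzero Lyapunov exponent, Oseledets' theorem applied to the projective action of $A$ produces, for $\mu$-a.e.\ $x$, measurable stable and unstable directions $E^s(x),E^u(x)\subset \sl_2\R$. Right-multiplying by the corresponding one-parameter unipotent subgroups gives fiber stable and unstable foliations $W^{s/u}_{\mathrm{fib}}$ on $N$, along which $A^n(x)$ contracts or expands at rate $e^{\mp\lambda n}$ up to tempered errors. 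The first step is to combine these with the base. If $g$ has its own Pesin structure, we use its stable and unstable manifolds; otherwise we work with "$x$ fixed, fiber unstable" leaves. The result is a family of local unstable leaves for $f$ that cover a set of measure close to one and carry conditional measures equivalent to Haar measure on unipotent orbits. The $C^{1+\alpha}$ regularity of $A$ is used here, through a Lusin-set (Pesin block) argument, to get uniform control of the local leaves and of the holonomies on sets of large measure.

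The second step is the standard reduction. For a partition $\mathcal P$ with piecewise smooth boundaries, VWB follows once we show the following. For $\e>0$ and most atoms $P$ of $\bigvee_{i=0}^{n}f^{-i}\mathcal P$, the conditional measure on $P$ and $\mu\times\nu$ can be matched by an $\e$-measure-preserving map $\theta$ with $f^i\theta(z)$ and $f^iz$ in the same atom of $\mathcal P$ for all but $\e m$ of $0\le i<m$. We construct $\theta$ along \emph{stable} directions: points are pushed along fiber-stable leaves (unipotent orbits in $E^s$) and base-stable directions, where forward orbits stay close. This is the Ornstein--Weiss geodesic-flow type argument. To make it work we need two facts. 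First, atoms of the past partition are, up to small error, unions of local unstable pieces. Second, pushing forward an unstable piece by $f^m$ equidistributes to $\mu\times\nu$, uniformly over most pieces. Together these let us couple, via stable holonomy, a typical unstable piece in $P$ with a typical one in the whole space.

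The main obstacle is this equidistribution of $f^m$-images of unstable pieces, since we are not assuming $f$ is exponentially mixing. Our first attempt will split the image into the two components. A fiber unstable piece $\{x\}\times u_{[0,r]}y$ is sent by $f^m$ to $\{g^mx\}\times A^m(x)u_{[0,r]}y$, which is a long unipotent orbit of length about $re^{\lambda m}$ in $N$. Ratner/Dani--Margulis-type quantitative equidistribution of long horocycle segments in compact $PSL_2\R/\Gamma$, which holds with a polynomial rate via the spectral gap of $\Gamma$, shows this orbit is $\e$-equidistributed in $N$. This equidistribution is uniform in the starting point, which is why no mixing of $f$ is needed. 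For the base, we thicken the piece by a small ball $B$ in $M$ and apply exponential mixing of $g$ to $g^m(B)$. Tempered Oseledets estimates on a Pesin block let us choose $B$ of radius $e^{-\delta m}$ on which the fiber expansions at different base points are comparable. Exponential mixing beats the polynomial loss from smoothing the indicator of $B$ in $C^r$, provided $\delta$ is small relative to $\eta$.

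Combining the two estimates gives equidistribution of $f^m$-images of such product-shaped sets in $M\times N$. Feeding this into the coupling argument, with the usual choice of $m$, $n$, and block sizes, verifies VWB and hence the Bernoulli property by Ornstein theory. Since $\mathcal P$ can be chosen generating, with small boundary neighborhoods (finite entropy is automatic here), this completes the proof of Theorem~\ref{thm:Main_f_bernoulli}.
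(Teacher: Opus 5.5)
\textbf{The gap is in your coupling step.} You propose to match a typical unstable piece of an atom with a typical unstable piece of the whole space via \emph{stable} holonomy, so that forward orbits converge. But $f$ always has a zero Lyapunov exponent in the fiber: the exponents of $\Ad(A)$ are $2\lambda_2,0,-2\lambda_2$, and the zero exponent lives along $E^c_3$, the conjugated geodesic direction. The exponentially mixing base $g$ may also have zero exponents of its own, e.g.\ the partially hyperbolic translations of Corollary \ref{cor:new_examples_bernoulli_skew}.

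So stable and unstable leaves do not span $M\times N$. Two unstable pieces in the same small box differ by a center--stable displacement, and along center--stable leaves forward orbits need not converge; they can separate at subexponential rates. Matching at a fixed scale $\delta$, which is all that ``$\e$-equidistributed in $N$'' gives you, controls the orbits only for a number of steps depending on $\delta$. VWB, however, needs closeness for most $0\le i<S$ with $S$ arbitrary.

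The missing idea, which the paper takes from \cite{ExpMixingBernoulli}, is to show that unstable plaques equidistribute on \emph{exponentially small} cubes, of radius $e^{-\eta n}$ at time $n$. The matching is then exponentially fine and beats the subexponential center growth over times proportional to $n$. The coupling itself is carried out along the fake center--stable foliation of \cite{ExpMixingBernoulli}.

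In the fiber, this is exactly Proposition \ref{prop:exp_equidistribution_fiber}. The map $A_n(x')P(x')$ sends the fiber slice $L(x')$ of a plaque to $P(g^nx')$ times a horocycle segment of length $T=2\xi e^{\tau_\infty(x',x_0)+2\tau_n(x')}\gtrsim e^{\lambda_2 n/2}$. Hence the polynomial Flaminio--Forni error $T^{-\alpha_1}$ is \emph{exponentially} small in $n$. It therefore beats the $C^4$-cost $\ell^4e^{(8\eta+4\e)n}$ of a smoothed indicator of an $e^{-\eta n}$-cube pulled back by $P(g^nx')$, as long as $\eta<\eta_0$. You had the right tool, but by stopping at fixed-scale equidistribution you threw away exactly this gain.

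\textbf{A related problem concerns what has to equidistribute.} It is the leafwise conditional measures $m^u_{(x,y)}$ on $f$-unstable plaques, since atoms of the past partition are unions of these plaques and $\mu|_A$ disintegrates along them. These plaques are not products of a base unstable piece with a fixed horocycle. By Proposition \ref{prop:unstables_f}, the fiber part over $x'$ is $P(x')r_s(x,x')P(x)^{-1}y$, which involves the merely measurable conjugacy $P$ and the correction $\tau_\infty(x',x)$. This is why the paper first performs the Oseledets--Pesin reduction and proves H\"older bounds for $\tau$ and, along unstables, for $P$.

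Your thickened sets $B\times u_{[0,r]}y$, with $B$ a base ball, are not unions of $f$-unstable plaques. Exponential mixing of $g$ applied to them therefore does not yield the plaque-wise statement. The paper instead combines the fiber result with the plaque-wise base equidistribution of Theorem \ref{thm:exp_equidistribution_g_expmixing} to get Theorem \ref{thm:exp_equidistribution_unstables_f}. It then upgrades from product cubes to $f$-adapted parallelograms (Proposition \ref{prop:equidistribution_f_adapted}) so that the rest of \cite{ExpMixingBernoulli} applies.

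Two smaller slips:
\begin{enumerate}
\item The atoms in the VWB criterion come from $\bigvee_{i=N}^{N'}f^{i}\mathcal P$, the remote past with a gap $N$. They do not come from $\bigvee_{i=0}^{n}f^{-i}\mathcal P$, whose atoms are unions of \emph{stable} pieces.
\item Fiber (un)stable leaves are orbits of \emph{left} multiplication by $\exp(sX)$ with $X\in E^{u/s}_3(x)$. Right multiplication by $PSL_2\R$ is not well defined on $PSL_2\R/\Gamma$.
\end{enumerate}
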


\subsection{Historical Background}

	The main strategy for proving that a smooth system is Bernoulli originates with Ornstein and Weiss \cite{OrnsteinWeiss}. 	Their method revolves around constructing a coupling between nearby pieces of unstable manifolds that will mostly keep the pieces close together for a sufficiently long time. Using this method, several classical dynamical systems have been proved to be Bernoulli. This includes Anosov diffeomorphisms \cite{BowenEquilibrium, BowenBernoulli} and flows \cite{RatnerBernoulli}, isometric extensions of these systems \cite{DolgopyatCptGrpExtensions,RudolphIsometric}, and partially hyperbolic homogeneous systems \cite{Dani, KanigowskiBernoulliHomogeneous}. 
	
	An important extension of the Anosov property is \textit{nonuniform hyperbolicity}. If $f$ is a $C^{1+\alpha}$ diffeomorphism that preserves an absolutely continuous measure, then $f$ is measure theoretically isomorphic to the product of a Bernoulli shift and a finite rotation (\cite[Theorem 9.5.11]{BarreiraPesinNonUniform}, \cite{PesinLyapunovExponents}). Thus in this case, if we additionally assume that $f$ is weak-mixing, then $f$ is Bernoulli. 

	Another natural extension of the Anosov property is \textit{partial hyperbolicity}. Statistical properties of partially hyperbolic systems have been the subject of extensive studies. The strongest result in that direction is due to \cite{BW_KProp} which shows that volume-preserving partially hyperbolic diffeomorphisms which are center bunched and essentially accessible have the K-property. The {\em K-property} means that every measure preserving factor has positive entropy. It is a strong stochastic property, one step below Bernoullicity.

	The mechanism behind the K-property in these systems is called \textit{accessibility}. The \textit{accessibility class} of a point $x$ is the set of points which can be joined to $x$ by a piecewise smooth curve so that each piece is tangent to either $E^s$ or $E^u$. A system is \textit{accessible} if there is only one accessibility class. On the opposite end of the spectrum are direct products, in which the accessibility classes are compact submanifolds that form a horizontal foliation.  Thus accessibility provides a way for chaotic behavior coming from hyperbolicity to proliferate to the central direction. A system is {\em essentially accessible} if every set which  is a union of accessibility classes has either zero or full volume. According to a conjecture of Pugh and Shub (essential) accessibility holds for an open and dense set of partially hyperbolic systems. Thus we expect most volume preserving partially hyperbolic systems to be K.

	On the other hand, much less is known about circumstances under which diffeomorphisms with non-isometric center direction(s), are Bernoulli.\footnote{If the center direction(s) are isometric, then Ornstein and Weiss's method usually applies.} 
	There are a number of examples of partially hyperbolic diffeomorphisms that have the K-property, but are not Bernoulli \cite{FlexiblityCLT,Dim4KBernoulli,Katok80,RudolphBrownian}. There are also limited examples of partially hyperbolic diffeomorphisms that are Bernoulli \cite{ExpMixingBernoulli,SkewProductsBernoulli,RudolphIsometric}.  However, the features of partially hyperbolic diffeomorphisms responsible for the Bernoulli property are still poorly understood, even in relatively simple cases. 
	
	Consider the skew product $f:X\times Y\to X\times Y$ of the form
		\begin{equation}\label{eq:general_skew_product}
			f(x,y)=(g(x),\tau(x)y)
		\end{equation}
	where $g:X\to X$ is a diffeomorphism preserving a measure $\mu_X$ on $X$, and $\tau:X\to G$ is a smooth function, where $G$ is a Lie group that acts smoothly on the left on $Y$ and preserves a smooth measure $\mu_Y$ on $Y$. The examples of partially hyperbolic diffeomorphisms that are $K$ but not Bernoulli are mentioned in the previous paragraph are all of the form \eqref{eq:general_skew_product}. Previous studies of skew-products focused on the case where $G$ is either abelian (see \cite{LP12, KanICM}) or compact (see \cite{Rob88} and references therein). 
		
	The question of when the skew product given by \eqref{eq:general_skew_product} is Bernoulli is of great interest. For $f$ to be Bernoulli, $g$ must be Bernoulli \cite{OrnsteinFactorsBernoulli}. This leads us the following question:
	
	\begin{question}\label{question:skew_prod_over_bernoulli}
		When is a skew product over a map with the Bernoulli property Bernoulli?
	\end{question}
	
	By the earlier discussion, Question \ref{question:skew_prod_over_bernoulli} is primarily of interest in the case where the skew product has at least one zero Lyapunov exponent. To illustrate the subtlety of the Bernoulli property in this case, consider a skew product	
		$f: (X \times SL_d(\R)/\Gamma,\mu) \to (X \times SL_d(\R)/\Gamma,\mu)$ 
	of the form 
		\begin{equation*}
			f(x,y)= (g(x), A(x)y),
		\end{equation*}
 	where $\Gamma$ is a cocompact lattice in $SL_d(\R)$, $g:X\to X$ is a volume preserving Anosov diffeomorphism, and $A:X\to SL_d(\R)$ is a smooth map. Note that $f$ preserves the measure $\mu=\text{vol}_X\times \text{Haar}$. Also note that $f$ has at least one zero Lyapunov exponent. We now summarize some existing results.

	\begin{theorem}\label{thm:BackgroundSkew}
	\begin{enumerate}[(a)]
		\item (\cite{ExpMixingBernoulli})
			If $A(x)=\diag\left( e^{\tau_1(x)}, e^{\tau_2(x)}, \dots, e^{\tau_d(x)} \right)$ and  $\exists i: \int \tau_id\mu\neq 0$ then $f$ is Bernoulli;
		
		\item (\cite{FlexiblityCLT}, \cite{RudolphBrownian})
			If $A(x)=\diag\left( e^{\tau_1(x)}, e^{\tau_2(x)}, \dots, e^{\tau_d(x)} \right)$ and  $\forall i: \int \tau_id\mu=0$ then $f$ is not Bernoulli;
		
		\item (\cite{ExpMixingBernoulli}) 
			If $A$ is close to the identity and pinching and twisting in the sense of \cite{AviliaVianaSimplicity}, then $f$ is Bernoulli.
%			If $d=2$ and $A(x)$ is close to the identity then $f$ is Bernoulli unless either $A$ preserves a Riemannian metric on $SL_2(\R)$ or it is smoothly conjugated to
%			\begin{equation*}
%				\begin{pmatrix} e^{\alpha(x)} & \beta(x)\\ 0 & e^{-\alpha(x)}\end{pmatrix} \quad \text{with} \quad \int \alpha d\mu=0.
%			\end{equation*}
	\end{enumerate}
	\end{theorem}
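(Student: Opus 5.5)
Theorem \ref{thm:BackgroundSkew} collects results from the literature, so I would not reprove it from scratch. The plan is to reduce each item to the cited work and to check that the hypotheses there match our setting: $g$ a volume preserving Anosov diffeomorphism, $\Gamma$ cocompact, $A$ smooth. The common framework is the Ornstein--Weiss criterion. $f$ is Bernoulli if and only if it is very weak Bernoulli. For a smooth partially hyperbolic system this amounts to showing the following. For most pairs of nearby local unstable leaves $W_1,W_2$ (of $f$) and for large $n$, the conditional measures on $W_1$ and $W_2$ can be coupled by a measurable map $\theta$ such that $f^k x$ and $f^k\theta(x)$ stay close for most $0\le k\le n$, and $\theta$ nearly preserves measure.

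For (a), with $A$ diagonal and some $\int\tau_i\,d\mu\neq0$, I would follow \cite{ExpMixingBernoulli}. First, I would use the nonzero drift $\int\tau_i\,d\mu\neq0$ together with Anosov mixing of $g$ to show that $f$ is exponentially mixing on $X\times SL_d(\R)/\Gamma$; this is a spectral-gap statement for the diagonal flow twisted by the cocycle. Second, I would use that $f$ has nontrivial stable and unstable foliations in the fiber, coming from the diagonal directions with nonzero exponent, so that the unstable leaves of $f$ contain pieces of horospherical orbits in the fiber. Third, I would build the very weak Bernoulli coupling. I would push the two unstable pieces forward for a time $\sim\log n$ so that they equidistribute, which is where exponential mixing is used. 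Then I would match them along the center-stable holonomy, which is isometric in the zero-exponent (torus) directions up to a bounded drift. The quantitative rate of mixing is what beats the polynomial losses coming from the center direction.

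For (b), when every $\int\tau_i\,d\mu=0$, I would invoke \cite{FlexiblityCLT,RudolphBrownian}. The idea is that the fiber displacement $\sum_{k<n}\tau_i(g^kx)$ satisfies a central limit theorem with nondegenerate variance, unless it is a coboundary, in which case $f$ is conjugate to a product of $g$ with an isometric-type map on the zero-drift part and the conclusion is handled separately. In the nondegenerate case the system behaves like a $T,T^{-1}$ (Kalikow-type) transformation driven by a Brownian motion in the Cartan subgroup. Two typical unstable pieces then drift apart in the fiber by $\sim\sqrt n$, so no coupling can keep them close for a positive fraction of times. This contradicts very weak Bernoulli, and Rudolph's argument shows that it in fact contradicts Bernoullicity.

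For (c), I would combine \cite{AviliaVianaSimplicity} with \cite{ExpMixingBernoulli}. Pinching and twisting give a simple Lyapunov spectrum for the cocycle, hence a nonzero fiber exponent. Closeness to the identity keeps $f$ partially hyperbolic with $g$ dominating, and it lets one prove exponential mixing of $f$ by perturbing from the product. The argument for (a) then applies, with the Oseledets splitting replacing the diagonal coordinates.

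The main obstacle is the exponential mixing step in (a) and (c), together with controlling the non-isometric center holonomy during the coupling. The present paper is designed precisely to avoid needing mixing of $f$.
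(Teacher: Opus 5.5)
Your reduction has the same shape as the paper's. For (a) and (c) the paper only observes that $f$ is exponentially mixing (citing \cite{mixing_generalized_TTinverse} and \cite{gouezel_quantitative_2019}) and then applies Theorem \ref{thm:expmixing}; (b) is quoted from \cite{FlexiblityCLT,RudolphBrownian}. The step that fails is your source of exponential mixing in (c): closeness to the identity cannot give exponential mixing ``by perturbing from the product.''
\begin{itemize}
\item $g\times\id$ is not even ergodic, so there is no mixing to perturb from.
\item Any argument that uses only proximity to $g\times\id$ would apply to every cocycle in a neighborhood of the identity. But part (b) supplies cocycles in every such neighborhood, e.g.\ $\diag(e^{s\tau_0},e^{-s\tau_0})$ with $\int\tau_0\,d\mu=0$ and $s\to 0$, for which $f$ is not Bernoulli. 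By Theorem \ref{thm:expmixing}, these are not exponentially mixing.
\item The nonzero exponent you extract from pinching and twisting is not enough either. A nonzero exponent is exactly the hypothesis of the present paper, and there exponential mixing of $f$ is not known.
\end{itemize}
What is needed is non-perturbative, quantitative (large-deviation type) control of the cocycle coming from pinching and twisting. Closeness to the identity only serves to make $A$ fiber bunched, so that holonomies exist and the framework of \cite{AviliaVianaSimplicity} applies. This is the input behind the exponential mixing the paper cites from \cite{gouezel_quantitative_2019}, and in your write-up it should simply be cited.

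Two lesser points. First, Theorem \ref{thm:expmixing} applies to any exponentially mixing smooth volume-preserving system, so the fiber-specific coupling you sketch in (a) is unnecessary. It also misdescribes \cite{ExpMixingBernoulli}. There, unstable pieces are pushed forward for time of order $\e n$, not $\log n$, so that they equidistribute on cubes of size $e^{-\eta\e n}$. This exponential closeness is what beats center growth, which is only known to be \emph{subexponential}. Polynomially small cubes would not suffice: for $d\ge 3$, root directions with $\int(\tau_i-\tau_j)\,d\mu=0$ typically grow like $e^{c\sqrt n}$, so the center is neither isometric nor polynomially growing. Second, in (b), the fact that two unstable pieces drift apart by about $\sqrt n$ does not by itself contradict very weak Bernoulli. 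VWB allows arbitrary couplings of names, not just holonomy-type ones. The non-Bernoulli proofs in \cite{FlexiblityCLT,RudolphBrownian} are Kalikow-type scenery arguments, and all of the content of (b) lies there.
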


	The systems in (a) and (c) are exponentially mixing (\cite{mixing_generalized_TTinverse}, \cite{gouezel_quantitative_2019}), and so parts (a) and (c) of Theorem \ref{thm:BackgroundSkew} are a consequence of the following more general result:
	\begin{theorem}[\cite{ExpMixingBernoulli}]\label{thm:expmixing}
		Exponentially mixing volume-preserving systems are Bernoulli.
	\end{theorem}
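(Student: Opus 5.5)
We would prove Theorem~\ref{thm:expmixing} through Ornstein's very weak Bernoulli (VWB) criterion, following the Ornstein--Weiss coupling strategy. Let $f$ preserve a smooth measure $\mu$ and be exponentially mixing for $C^r$ observables. It suffices to show that for a generating family of finite partitions $\xi$ with piecewise smooth boundaries, the process $(f^{-i}\xi)$ is VWB. VWB means: for every $\e>0$, all large $N$, and most atoms $P$ of the past $\bigvee_{i\geq 0} f^{i}\xi$ (truncated), the conditional distribution on $P$ of the future $\xi$-names of length $N$ is $\bar d$-close to the unconditional one.

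\emph{Step 1: hyperbolicity from exponential mixing.} We first show that $h_\mu(f)>0$, hence by Ruelle's inequality and Pesin theory there is a positive Lyapunov exponent a.e. The idea is that exponential mixing, applied to smooth bump functions at scale $e^{-\delta n}$, shows that no $n$-Bowen ball carries measure larger than $e^{-cn}$. By Brin--Katok this gives positive entropy, and in fact positive exponents almost everywhere. Pesin theory then provides local unstable manifolds $W^u_{loc}(x)$ on Pesin sets of large measure. It also provides the measurable partition subordinate to $W^u$ along which $\mu$ disintegrates into absolutely continuous conditionals. Atoms of the infinite past of $\xi$ are unions of such unstable pieces, up to a small error.

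\emph{Step 2: equidistribution of unstable pieces.} This is the key step and the one we expect to be the main obstacle. We need that for a typical unstable piece $W$ with its conditional density $\rho$ (which is Hölder along the leaf), the push-forward $f^n_*(\rho\,dm_W)$ is close to $\mu$ when tested against the future cylinder sets. Since $W$ is not a smooth function on $M$, we would thicken it to a tube of width $e^{-Kn}$ using a smooth foliation by nearby pieces. We then apply exponential mixing to the resulting $C^r$ density, whose norm is $e^{rKn}$, against a smoothed indicator of a cylinder. This requires choosing the time split as $n=n_1+n_2$: first expand for time $n_1$ so the image of $W$ is large, then mix for time $n_2$ so that $e^{-\eta n_2}$ beats the polynomial-in-$e^{Kn}$ smoothing costs. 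The transverse thickening error is controlled by absolute continuity of the holonomies and by the zero-measure neighborhoods of $\partial\xi$ on which trajectories enter $e^{-Kn}$-neighborhoods of the partition boundary. The smoothing must be balanced against the Hölder (not smooth) regularity of Pesin objects, and this has to hold uniformly on sets of large measure.

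\emph{Step 3: coupling and conclusion.} Given two typical past atoms $P,P'$, Step 2 shows their images at time $n_1$ are both equidistributed. We would therefore build a measure-preserving matching between pieces of $f^{n_1}P$ and $f^{n_1}P'$ that pairs points lying on a common local stable manifold. This works because, at scale $\e$, after equidistribution both measures are close to $\mu$ on a box foliated by stable leaves, and absolute continuity of the stable holonomy gives an almost measure-preserving map. Paired points then have future trajectories converging exponentially. Outside an $\e$-proportion of time, which comes from points hitting a neighborhood of $\partial\xi$ (controlled by the Borel--Cantelli argument using smoothness of $\mu$), paired points have equal $\xi$-names. This gives $\bar d$-closeness of the future name distributions, hence VWB. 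Ornstein's theorem then yields that $f$ is Bernoulli.
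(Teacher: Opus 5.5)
Your argument breaks in Step 3, at exactly the point that makes this theorem nontrivial.

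\textbf{The coupling along stable manifolds.} Step 1 gives you only \emph{some} positive exponent. An exponentially mixing system can have zero Lyapunov exponents; the exponentially mixing skew products of Theorem~\ref{thm:BackgroundSkew}(a),(c) always have one. Pairing pieces of $f^{n_1}P$ and $f^{n_1}P'$ along local \emph{stable} manifolds needs $W^u$ and $W^s$ to have local product structure, i.e.\ no zero exponents. When $E^u\oplus E^s$ misses the center directions, the stable leaves through one unstable piece do not fill a box. Two unstable pieces in the same $\varepsilon$-box are then in general not joined by stable leaves, so your matching is not defined on a set of positive measure. If all exponents were nonzero, you would be in the nonuniformly hyperbolic case, where Bernoullicity already follows from Pesin theory plus weak mixing; exponential mixing would then only have been used in Step~1.

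\textbf{The center drift.} The only transversal complementary to $W^u$ is center-stable, for instance a fake center-stable foliation. Along it, paired points need not converge: their center components can separate at a subexponential rate $e^{\delta k}$. You couple at a fixed scale, since you only use equidistribution at scale $\varepsilon$. Then paired orbits can leave the $\varepsilon$-neighborhood after a number of iterates that does not grow with $S$. So the requirement in Lemma~\ref{lem:VWB_coupling_close}, that $d(f^ix,f^i\theta x)<\varepsilon$ for a $(1-\varepsilon)$-fraction of $i\in[0,S)$, fails as $S\to\infty$.

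\textbf{The missing idea.} You need a second, essential use of exponential mixing, which is the key idea of \cite{ExpMixingBernoulli}: force unstable pieces to be \emph{exponentially} close before coupling them.
\begin{enumerate}
\item Show that at time $\lfloor\varepsilon n\rfloor$ typical unstable plaques equidistribute on $f$-adapted parallelograms of center-stable radius $e^{-\eta\varepsilon n}$. These are thickenings of unstable plaques in Lyapunov charts.
\item Restrict this to a set $\mathcal{R}_n$ of large measure on which a fake center-stable foliation has local product structure with $W^u$.
\item Couple along fake center-stable leaves points that start only $e^{-\eta\varepsilon n}$ apart.
\end{enumerate}
Choose the Pesin parameter $\delta\ll\eta\varepsilon$ and require orbits to visit Pesin sets at the relevant times. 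Then the subexponential center drift $e^{O(\delta)n}$ cannot undo the initial closeness, and paired orbits stay close on a window of length of order $n$. The first $\varepsilon n$ iterates are an $\varepsilon$-fraction of the names and can be discarded.

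In your plan the exponentially small scale appears only as a smoothing device in Step~2, and its precision is lost when you couple at scale $\varepsilon$. Step~2's stated target is also off. Future cylinders of length $S\gg n$ are thinner than $e^{-Kn}$ in the unstable direction, so their indicators cannot be smoothed at that scale with small relative error. Closeness of their probabilities is in any case far more than VWB requires. What the coupling needs is equidistribution on exponentially small boxes adapted to a center-stable transversal.
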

	
	The proof of Theorem \ref{thm:expmixing} uses the strategy developed by Ornstein and Weiss \cite{OrnsteinWeiss} of constructing a coupling of nearby unstable manifolds that stay close for a sufficiently long time. Constructing this coupling is problematic for systems (like exponentially mixing systems) that may have center directions with expansion because those directions pull nearby unstables apart. The key idea in \cite{ExpMixingBernoulli} is that we can use exponential mixing to force unstable pieces exponentially close, which counters the sub-exponential expansion in the center direction. 
	
	This illustrates a general theme we see in examples of Bernoulli systems with zero Lyapunov exponents: the speed of mixing needs to be sufficient to counter growth in the directions with zero Lyapunov exponents. In \cite{FlexiblityCLT}, Dolgopyat, Dong, Kanigowski, and Nandori give examples that are not Bernoulli despite mixing at an arbitrarily fast polynomial rate. On the other hand, if the center is isometric, weak mixing skew products are always Bernoulli \cite{RudolphIsometric}, even though their mixing rate may be arbitrarily slow \cite{DolgopyatCptGrpExtensions}. Faster mixing is unnecessary in this case because there is no growth in the center direction. 	

	Theorem \ref{thm:Main_f_bernoulli} extends the results of Theorem \ref{thm:BackgroundSkew}. Unlike in Theorem \ref{thm:BackgroundSkew}(a),(c), we allow the base map $g$ to be exponentially mixing, rather than using the stronger requirement that $g$ is Anosov. If $A$ is neither diagonal nor close to the identity and pinching and twisting in the sense of \cite{AviliaVianaSimplicity}, it is unknown whether the skew product $f$ that we consider in Theorem \ref{thm:Main_f_bernoulli} is exponentially mixing. This allows us to construct several new examples of Bernoulli skew products:
	
	\begin{cor}\label{cor:new_examples_bernoulli_skew}
		Suppose that $g$ is either an Anosov diffeomorphism or a partially hyperbolic translation on $G/\Delta$, where $G$ is a simple Lie group and $\Delta<G$ is a uniform lattice. Then the skew product \eqref{eq:general_skew_product} is Bernoulli except for a meagre codimension infinity subset of cocycles. 
	\end{cor}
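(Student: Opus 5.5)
The plan is to deduce the corollary from Theorem \ref{thm:Main_f_bernoulli}. This requires two inputs: that the base maps in question are exponentially mixing, and that for all cocycles outside a meagre, codimension-infinity set, $A$ has a nonzero Lyapunov exponent. (Here the fiber is $PSL_2\R/\Gamma$ and $\tau=A$ takes values in $PSL_2\R$, as in the main theorem.)

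\textbf{Step 1: exponential mixing of the base.} For a volume preserving $C^2$ Anosov diffeomorphism $g$ that is mixing, exponential mixing for H\"older (hence $C^r$) observables is classical: one uses a Markov partition, symbolic coding and the spectral gap of the transfer operator. If $g$ is not mixing, one passes to a mixing power on a cyclic component. I would either add mixing as a standing assumption or handle this reduction explicitly.

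For a partially hyperbolic translation $x\Delta\mapsto hx\Delta$ on $G/\Delta$ with $G$ simple and $\Delta$ uniform, exponential mixing for smooth observables follows from the spectral gap of the regular representation on $L^2_0(G/\Delta)$. Concretely, one uses effective decay of matrix coefficients for $K$-finite or Sobolev vectors (Howe--Moore with rates, Kleinbock--Margulis). Partial hyperbolicity guarantees that $h^n\to\infty$ in $G$ at an exponential rate in the Cartan decomposition. Hence the correlations decay like $e^{-\eta n}$ with $C^r$ norms, which is exactly the hypothesis of Theorem \ref{thm:Main_f_bernoulli}.

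\textbf{Step 2: genericity of a nonzero exponent.} Here I would invoke results of Avila--Viana and Avila--Santamaria--Viana type on the zero exponent locus for cocycles over hyperbolic or partially hyperbolic bases.

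For fiber-bunched (for example H\"older, close to conformal) $SL_2$ cocycles over an Anosov base, the Lyapunov exponent vanishes only if the cocycle admits a continuous invariant holonomy-compatible structure: either an invariant conformal structure or an invariant line field or pair of line fields. These conditions are given by infinitely many independent equations, via periodic point obstructions at different periodic orbits. So the exceptional set is a closed set of infinite codimension, and in particular meagre.

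For partially hyperbolic translations, I would use the invariance principle of Avila--Santamaria--Viana, whose accessibility hypothesis holds because $G$ is simple. It shows that zero exponent forces $su$-invariant fiber measures on $\mathbb{P}^1$, and this again imposes infinitely many conditions.

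\textbf{The main obstacle} is Step 2 for cocycles that are not fiber-bunched. There the invariance principle does not apply directly, and the $C^{1+\alpha}$ topology (rather than $C^0$, where Bochi--Ma\~n\'e shows zero exponents are generic) matters. I would first restrict to the fiber-bunched open set, where the codimension-infinity statement is known, and then extend using the approach of Viana's ``almost all cocycles have nonvanishing exponents,'' which gives nonzero exponents off an infinite-codimension set in the H\"older and $C^r$ topologies over hyperbolic bases. Combining Steps 1 and 2 with Theorem \ref{thm:Main_f_bernoulli} then yields the corollary.
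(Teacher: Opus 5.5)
Your route is the paper's. You deduce the corollary from Theorem \ref{thm:Main_f_bernoulli}, taking generic nonvanishing exponents from \cite[Theorem A]{viana_almost_2008} over an Anosov base, and over a translation from center bunching plus accessibility \cite{PS-JEMS} together with \cite[Theorem A]{ASMV}. Your Step 1 also spells out the exponential mixing of the base, which the paper leaves implicit.

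Two corrections to how you organize Step 2.

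First, over an Anosov base there is no fiber-bunching obstacle. Viana's Theorem A is stated for arbitrary $C^{r,\nu}$ cocycles over (even non-uniformly) hyperbolic bases. The reason is that vanishing exponents themselves produce stable and unstable holonomies along Pesin manifolds. So you can cite it directly instead of first restricting to fiber-bunched cocycles and then extending.

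Second, that extension is unavailable for a partially hyperbolic translation. This base has zero center exponents, so it is not hyperbolic in Viana's sense. The Avila--Santamaria--Viana invariance principle relies on fiber bunching to obtain holonomies. It also needs center bunching of the base, not just accessibility; you should check this, and it holds because $\Ad(h)$ grows at most polynomially on the center. So in the translation case your argument, like the paper's, gives the conclusion off a meagre, infinite-codimension set of fiber-bunched cocycles, not of all $C^{1+\alpha}$ cocycles.

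A minor point: no passage to a mixing power is needed. By the spectral decomposition, a volume-preserving Anosov diffeomorphism of a connected manifold is topologically mixing. Hence it is exponentially mixing for H\"older observables.
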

	
	For an Anosov base, the Lyapunov exponents of a generic cocycle are nonzero by \cite[Theorem A]{viana_almost_2008}. For a partially hyperbolic translation on $G/\Delta$, the base is center bunched and accessible by \cite[Theorem B, Corollary b]{PS-JEMS}, and thus the Lyapunov exponents of a generic fiber bunched cocycle are nonzero \cite[Theorem A]{ASMV}. In both cases the exceptional set has infinite codimension. See also \cite{AviliaVianaSimplicity} for earlier results about nonzero Lyapunov exponents.

\subsection{Novelties in this work and future outlook}

	Regardless of the dynamics of the base map $g$, the skew-product $f$ we consider in \eqref{eq:skew-product-f_intro} will have a nontrivial center direction. Thus, Theorem \ref{thm:Main_f_bernoulli} provides new examples of smooth Bernoulli systems with nontrivial and non-isometric center directions. 
	
	The proof uses the framework of \cite{ExpMixingBernoulli}, i.e. by forcing unstable pieces exponentially close, we can counter sub-exponential expansion in the center direction to construct a coupling of nearby unstables that stays close for sufficiently long time. The key point of departure between the proof of Theorem \ref{thm:Main_f_bernoulli} and Theorem \ref{thm:expmixing} is that because $f$ is not assumed to be exponentially mixing, we need another mechanism to force unstable pieces exponentially close. Once we do that, constructing the coupling between unstable pieces is analogous to that given in \cite{ExpMixingBernoulli}. 
	
	To force unstable pieces exponentially close, we show that we can analyze the dynamics of $f$ on the base and in the fibers separately. This allows us to see that the rate of equidistribution of the parts of our unstables in the base and in the fiber are ``independent'' of each other. We show that the fiber part of our unstables equidistribute on exponentially small cubes (Proposition \ref{prop:exp_equidistribution_fiber}), regardless of the rate of equidistribution of unstables in the base. 
	
	There is still a large gap between the mixing rate which is sufficient to guarantee the Bernoulli property and the complexity of the dynamics in the center direction(s) which is sufficient to conclude that a system is not Bernoulli. This necessitates developing new methods for both proving and disproving the Bernoulli property. In the present work, we show that for skew products with a nontrivial, non-isometric center direction, exponential mixing of the full system is not necessary for Bernoullicity: exponential mixing of the base along with suitable fiberwise hyperbolicity suffices. This condition can be viewed as a ``non-uniform'' version of exponential mixing. In this paper, we analyzed $PSL_2\R$ fibers, but we expect many more applications of our method. We also note that the fiberwise equidistribution established in this paper does not rely on the base map being exponentially mixing, and thus has application to skew-products of the form \eqref{eq:skew-product-f_intro} with base dynamics that may not be exponentially mixing. These extensions will be addressed in future work. 
	
	There are many other interesting questions about ergodic properties of skew products with semisimple fibers, and our result is just one step in this direction. 
	
\subsection{Outline of Paper}

	In Section \ref{sec:basedynamics}, we give the definitions and results about the base dynamics that will be used throughout the paper. Section \ref{sec:geometry_fiber} contains preliminaries on the geometry of the fiber $N=PSL_2\R/\Gamma$, including local product coordinates and the definitions of the fiber cubes that will be used in the equidistribution argument in Section \ref{sec:equidistribution_f}. 
	
	We state and prove the results we will need about the fiber dynamics in Section \ref{sec:fiberdynamics}. A key result in this section is the Osceledets-Pesin reduction of $A$ (Theorem \ref{thm:Osceledets_Pesin_Reduction}), which allows us to conjugate $A$ to a diagonal cocycle $B(x)=P(gx)^{-1}A(x)P(x)$. We also obtain quantitative regularity estimates for both $B$ and for the conjugating map $P$. In Section \ref{sec:ergodicity_f}, we use this reduction to prove that the skew product $f$ is ergodic. 
	
	Sections \ref{sec:diagonal_skew_product} and \ref{sec:unstable_f} describe the unstable geometry of the skew product $f$. In Section \ref{sec:diagonal_skew_product}, we introduce the diagonal skew product 
		$
			f_{diag}(x,y)=(gx,B(x)y),
		$
	for which the unstable manifolds and their conditional measures can be described explicitly. In Section \ref{sec:unstable_f}, we transfer this description through the conjugacy by $P$ to obtain the unstable manifolds and conditional measures for the original skew product $f$. 
	
	In Section \ref{sec:equidistribution_f}, we state and prove the main ingredient in the proof of Theorem \ref{thm:Main_f_bernoulli}, that unstable manifolds for $f$ equidistribute on exponentially small product cubes. We first prove that the fiber components of unstables equidistribute on exponentially small cubes in $N$. Combining this with exponential equidistribution of unstable manifolds in the base yields equidistribution for $f$. 
	
	Finally, we prove Theorem \ref{thm:Main_f_bernoulli} in Section \ref{sec:proof_main_thm}. Following the coupling approach of \cite{OrnsteinWeiss} and \cite{ExpMixingBernoulli}, we use the equidistribution result from Section \ref{sec:equidistribution_f} to couple typical unstable pieces so that corresponding orbits remain close. This verifies the very weak Bernoulli property, and thus proves that $f$ is Bernoulli. 
			
\subsection*{Acknowledgments}
	The author would like to thank Adam Kanigowski and Dmitry Dolgopyat for many helpful discussions and comments on the draft of this paper. The author would also like to thank Jon Dewitt for a number of helpful discussions. The author also thanks Amie Wilkinson for her helpful comments on earlier drafts of this paper.

\section{The base dynamics}\label{sec:basedynamics}

	This section summarizes the facts about $g$ that we will need later. Since $g$ is exponentially mixing, it is ergodic and has a positive Lyapunov exponent \cite[Prop. 1.4]{ExpMixingBernoulli}. Let $\lambda_g>0$ be the smallest positive Lyapunov exponent of $g$. 
	
	This and Theorem \ref{thm:exp_equidistribution_g_expmixing} are the only places we use the fact that $g$ is exponentially mixing in this section. All the other results we state in this section hold if $g$ is a $C^{1+\alpha_g}$ diffeomorphism that preserves a smooth measure, $g$ is ergodic with respect to $\mu$, and $g$ has at least one nonzero Lyapunov exponent. Fix $\delta_g>0$ sufficiently small, and let $\LyapReg_g=\LyapReg_g(\delta_g)$ denote the corresponding full-measure set of Lyapunov regular points for $g$ \cite[Definition 2.1]{ExpMixingBernoulli}.
	
	For $x\in \LyapReg_g$, let $h_x$ be the Lyapunov chart from \cite[Lemma 2.4]{ExpMixingBernoulli}, and let $\mathfrak{r}_{\delta_g}(x)$ denote its size. For $\tau_g>0$, set 
		$$
			\hat{\mathcal{P}}_{\tau_g}=\hat{\mathcal{P}}_{\tau_g}^{\delta_g}:= \left\{ x\in M \colon \mathfrak{r}_{\delta_g}(x)\geq \tau_g \right\}.
		$$
	For $x\in \hat{\mathcal{P}}_{\tau_g}$, 
	
		\begin{equation}\label{eq:lipschitz_constant_lyapunov_charts_g1}
			\Lip(h_x), \Lip(h_x^{-1}) \leq \mathfrak{r}_{\delta_g}(x)^{-1}
		\end{equation}
		
	We always take $\delta_g,\tau_g\leq 1$. 
	
\subsection{Unstable manifolds and conditional measures}
	For $x\in \LyapReg_g$, let $W^u_g(x)$ be its local unstable manifold, and for $0<\xi\leq \mathfrak{r}_{\delta_g(x)}$, we let $W^u_{g,\xi}(x)$ denote the unstable plaque of size $\xi$ in Lyapunov coordinates:
	
	\begin{lemma}[{\cite[Lemma 2.8]{ExpMixingBernoulli}}]\label{lem:unstables_g}
		There are constants $C_{0,g}>0$ and $\alpha_{g,4}\in(0,1)$ such that, writing $\R^{\dim(M)}=\R^u\oplus \R^{cs}$ for every $x\in \LyapReg_g$, there is a $C^{1+\alpha_{g,4}}$ map $\eta_x^u:\R^u\to \R^{cs}$ with 
			\begin{equation*}
				\|\eta\|_{C^{1+\alpha_{g,4}}}\leq C_{0,g}, \ \eta^u_x(0)=0, \ D_0\eta^u_x=0
			\end{equation*}
		such that for $0<\xi\leq r_{\delta_g}(x)$
			\begin{equation*}
				h_x^{-1}(W^u_{g,\xi}(x))=\graph(\eta^u_x,B^u(\xi))
			\end{equation*}
	\end{lemma}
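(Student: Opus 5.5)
This is the unstable manifold theorem for nonuniformly hyperbolic $C^{1+\alpha_g}$ diffeomorphisms, read in the Lyapunov charts of \cite[Lemma 2.4]{ExpMixingBernoulli}. I would follow the Hadamard--Perron graph transform, working entirely in the charts $h_x$. Write the local maps in charts as
$$\tilde g_x := h_{gx}^{-1}\circ g\circ h_x,$$
defined on the ball of radius $\mathfrak{r}_{\delta_g}(x)$. By the construction of Lyapunov charts, $\tilde g_x(0)=0$, and $D_0\tilde g_x$ is block diagonal with respect to $\R^u\oplus\R^{cs}$. On $\R^u$ it expands by at least $e^{\lambda_g-\delta_g}$, and on $\R^{cs}$ it expands by at most $e^{\delta_g}$. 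Since $\Lip(D\tilde g_x-D_0\tilde g_x)$ is controlled uniformly on the chart, $\tilde g_x$ is a $C^1$-small perturbation of a uniformly hyperbolic linear map, with Hölder constants bounded independently of $x$. Shrinking $\delta_g$ so that $e^{\delta_g}<e^{\lambda_g-\delta_g}$ leaves a uniform spectral gap.

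\textbf{Step 1 (invariant cone and graph transform).} Consider the space $\mathcal{G}$ of maps $\eta\colon \R^u\to\R^{cs}$ with $\eta(0)=0$ and $\Lip(\eta)\le 1$, extended to all of $\R^u$ by a cutoff so that the maps are globally defined. The domination gives two things:
\begin{itemize}
\item the image $\tilde g_x(\graph\eta)$ is again the graph of some $\Gamma_x\eta\in\mathcal{G}$ over $\R^u$;
\item $\Gamma_x$ contracts the sup-type metric $\sup_{v\ne0}|\eta(v)-\eta'(v)|/|v|$ by a factor $e^{-(\lambda_g-2\delta_g)}$.
\end{itemize}
I would then set $\eta^u_x=\lim_n \Gamma_{g^{-1}x}\circ\cdots\circ\Gamma_{g^{-n}x}(0)$. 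This limit exists uniformly, with constants independent of $x$, because all the charts have the same uniform hyperbolicity. Its graph is characterized as the set of points whose backward chart-orbits stay in the charts and converge to $0$ exponentially. That characterization identifies $h_x(\graph(\eta^u_x,B^u(\xi)))$ with the size-$\xi$ plaque $W^u_{g,\xi}(x)$ for every $0<\xi\le\mathfrak{r}_{\delta_g}(x)$.

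\textbf{Step 2 (regularity).} I would run the same contraction argument on the $1$-jets, that is, on the graph transform acting on pairs (point, tangent plane). This shows that $D\eta^u_x$ exists and is a fixed point of a fiber contraction; in particular $D_0\eta^u_x=0$, because $0$ is fixed and the tangent plane there is $\R^u$. For the Hölder estimate I would use the standard $C^{1+\alpha}$ graph transform estimate: the Hölder seminorm of $D\eta$ is mapped to itself with factor $e^{\delta_g}e^{-(1+\alpha)(\lambda_g-\delta_g)}$ plus a constant coming from $\|D\tilde g_x\|_{C^{\alpha_g}}$. Choosing $\alpha_{g,4}\le\alpha_g$ small enough makes this factor less than $1$, which yields an a priori bound $C_{0,g}$ independent of $x$.

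\textbf{Main obstacle.} The key point is uniformity in $x$: the constants must not depend on how irregular $x$ is. This is exactly what working in the tempered Lyapunov charts buys. All dependence on $x$ is absorbed into the chart size $\mathfrak{r}_{\delta_g}(x)$ and into $h_x$ (see \eqref{eq:lipschitz_constant_lyapunov_charts_g1}), so the estimates in the charts are uniform. The hardest step is checking that the Hölder constant of $D\tilde g_x$ in the chart is uniformly bounded. I would take this from the chart construction in \cite{ExpMixingBernoulli}, rescaling the charts if necessary.
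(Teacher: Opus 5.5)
The paper gives no proof of this lemma; it imports it from \cite[Lemma 2.8]{ExpMixingBernoulli}, which rests on the Hadamard--Perron graph-transform argument in Lyapunov charts. Your outline is essentially that standard proof: a contracting graph transform, a fiber contraction on $1$-jets giving $D_0\eta^u_x=0$ and the $C^{1+\alpha_{g,4}}$ bound, and uniformity in $x$ coming from the (rescaled) chart estimates. One point to tighten: when you identify the graph with $W^u_{g,\xi}(x)$ through backward orbits, you must require backward decay faster than the $e^{\delta_g}$ per-step growth allowed on $\R^{cs}$. That space may contain zero exponents, and the one-step chart bounds alone do not rule out $cs$-displaced points whose chart orbits decay only slowly.
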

	
	By \cite[(2.13)]{ExpMixingBernoulli}, there exists a measurable function $r_{u,g}:\LyapReg_g\to (0,\infty)$  such that $r_{u,g}(x)\leq \mathfrak{r}_{\delta_g}(x)$ and for $x'\in W^u_{g}(x)$, $n\geq 1$
		\begin{equation}\label{eq:g_unstable_exp_contracting1}
			d(g^{-n}(x),g^{-n}x')
			\leq \frac{1}{r_{u,g}(x)}e^{(\sqrt{\delta_g}-\lambda_g)n}d(x,x')
		\end{equation}
	Fix $\delta_g>0$ sufficiently small for the earlier Pesin theory results, and assume in addition that 
		$$
			\delta_g<\frac{\lambda_g^2}{4}. 
		$$
	For $\tau_g>0$, we choose compact nested sets 
		$$
			\mathcal{P}_{\tau_g}=\mathcal{P}_{\tau_g}^{\delta_g}\subset \left\{ x\in \hat{\mathcal{P}}_{\tau_g} \colon r_{u,g}\geq \tau_g  \right\},
			\qquad \tau_g>0
		$$
	whose union has full measure. Then, for $x\in \mathcal{P}_{\tau_g}$, $x'\in W^u_{g}(x)$, and $n\geq 1$, \eqref{eq:g_unstable_exp_contracting1} gives that 
		\begin{equation}\label{eq:g_unstable_exp_contracting2}
			d(g^{-n}(x),g^{-n}x')
			\leq \tau_g^{-1} e^{ -\frac{\lambda_g}{2}n}d(x,x'),
		\end{equation}
	Given $\e>0$, we can choose $\tau_g>0$ sufficiently small that $\mu(\mathcal{P}_{\tau_g})\geq 1-\e^{10^{10}}$.
	
	For each $x\in \LyapReg_g$ and $x'\in W^u_g(x)$, we define
		$$
			\rho(x,x') := \prod_{j\geq 1}\frac{\det\left( D_{g^{-j}(x)}g \vert E_g^u\left(g^{-j}(x) \right)\right)}{\det\left( D_{g^{-j}(x')}g \vert E_g^u\left(g^{-j}(x') \right)\right)}, 
			\qquad 
			d\mu_x^u(x'):= \rho(x,x')d\bar{\mu}^u_x(x'),
		$$
	where $\bar{\mu}^u_x$ is Riemannian volume on $W^u_g(x)$. By \cite[Lemma 2.12]{ExpMixingBernoulli}, $\rho$ is H\"older in $x\in \mathcal{P}_{\tau_g}$ and in $x'\in W^u_g(x)$, $\rho(x,x)=1$,  and 
		\begin{equation}\label{eq:base_rho_switch_basepoint}
			\rho(x,x')\rho(x',x'')=\rho(x,x'').
		\end{equation}
	This means that we can make $\rho(x,x')$ arbitrarily close to 1 by taking $x'$ sufficiently close to $x$. Additionally, if $x_1,x_2$ lie on the same unstable leaf, then  $\mu^u_{x_1}(\mathcal{W})=\rho(x_1,x_2)\mu^u_{x_2}(\mathcal{W})$: in other words, changing the base point of an unstable leaf only changes $\mu^u_x$ by a constant factor. 
	
	We now give the local disintegration along $W^u_g$. For $x\in \hat{\mathcal{P}}_{\tau_g}$, take a transversal $T_x$ and let 
		\begin{equation*}
			Q_x:=h_x(B_\infty(\tau_g)), \qquad 
			\tilde T^{\tau_g}_x := \bigcup_{y\in \hat{\mathcal{P}}_{\tau_g}\cap Q_x} T_x\cap W^u_{g,\tau_g}(y), \qquad
			\tilde Q^{\tau_g}_x := \bigcup_{y\in \hat{\mathcal{P}}_{\tau_g}\cap Q_x}  W^u_{g,\tau_g}(y)
		\end{equation*}
	as in \cite[Section 2.4]{ExpMixingBernoulli}. For $z\in \tilde T^{\tau_g}_x$, let $W^u_{Q_x}(z)$ be the connected unstable plaque through $z$ that is obtained by chaining together connected components of $W^u_{g,\tau_g}(y)\cap Q_x$ for $y\in \hat{\mathcal{P}}_{\tau_g}\cap Q_x$. 
		\begin{lemma}[{\cite[Lemma 2.13]{ExpMixingBernoulli}}]\label{lem:disintegration_mu}
			For $x\in \hat{\mathcal{P}}_{\tau_g}$ there exists an absolutely continuous (with respect to Lebesgue) measure $\mu^{cs}_x$ such that if $h\in L^1(\tilde Q^{\tau_g}_x)$, 
				$$
					\int_{Q_x\cap \tilde Q^{\tau_g}_x} h(y)d\mu(y) =\int_{\tilde T^{\tau_g}_x} \int_{W^u_{Q_x}(z)} h(y)d\mu^u_z(y)d\mu^{cs}_x(z)
				$$
		\end{lemma}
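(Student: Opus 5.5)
The argument is a standard local product disintegration in Lyapunov coordinates, combined with absolute continuity of the unstable foliation. It is cited from \cite[Lemma 2.13]{ExpMixingBernoulli}.

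\textbf{Step 1: restrict and push to the chart.} Work inside the chart $Q_x=h_x(B_\infty(\tau_g))$. For $x\in\hat{\mathcal P}_{\tau_g}$ the map $h_x$ is bi-Lipschitz with constants bounded by $\tau_g^{-1}$, by \eqref{eq:lipschitz_constant_lyapunov_charts_g1}. So $\mu$ restricted to $Q_x\cap\tilde Q^{\tau_g}_x$ pulls back to a measure with a bounded, positive (indeed H\"older) density with respect to Lebesgue on $\R^u\oplus\R^{cs}$, since $\mu$ is smooth.

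By Lemma \ref{lem:unstables_g}, each plaque $W^u_{g,\tau_g}(y)$ with $y\in\hat{\mathcal P}_{\tau_g}\cap Q_x$ is, in these coordinates, a graph over $\R^u$ with uniformly bounded $C^{1+\alpha_{g,4}}$ norm. Chaining these plaques gives the sets $W^u_{Q_x}(z)$. Two such plaques either coincide on overlaps or are disjoint, by uniqueness of unstable manifolds. Hence $\{W^u_{Q_x}(z)\}_{z\in\tilde T^{\tau_g}_x}$ is a measurable partition of $Q_x\cap\tilde Q^{\tau_g}_x$, and each element meets the transversal $T_x$ in exactly one point $z$.

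\textbf{Step 2: absolute continuity of the lamination.} Define the holonomy along this lamination from $T_x$ to the parallel transversals $\{v\}\times\R^{cs}$. Pesin's absolute continuity theorem, in the form of \cite[Section 2]{ExpMixingBernoulli}, makes this holonomy absolutely continuous with H\"older Jacobian on the compact set of plaques through $\hat{\mathcal P}_{\tau_g}$. Change variables $(v,z)\mapsto \eta$-graph point, and apply Fubini in the chart. This writes $\int h\,d\mu$ as $\int_{\tilde T}\int_{W^u_{Q_x}(z)} h\,J(z,y)\,d\bar\mu^u_z(y)\,d\mathrm{Leb}_T(z)$ for a positive measurable Jacobian $J$.

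\textbf{Step 3: identify the conditional density with $\rho$.} This is the main obstacle. We must show that, on each plaque, $J(z,\cdot)$ is proportional to $\rho(z,\cdot)$. Two routes are available.
\begin{enumerate}[(i)]
\item Use that conditional measures of the invariant smooth measure $\mu$ on unstable leaves are determined by invariance. Push forward by $g^{n}$ from the preimage plaques $g^{-n}W^u$, which contract exponentially by \eqref{eq:g_unstable_exp_contracting2}. There the conditional density is nearly constant. The change-of-variables Jacobians then telescope to the product defining $\rho$.
\item Equivalently, invoke the uniqueness of the disintegration (Rokhlin) on the measurable partition. Then verify that $\rho(z,y)\,d\bar\mu^u_z$ satisfies the same invariance relation $g_*\mu^u_{z}\propto\mu^u_{gz}$.
\end{enumerate}
In either route the infinite product converges because $g$ is $C^{1+\alpha_g}$ and distances contract exponentially backward.

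Once proportionality holds, write $J(z,y)=c(z)\rho(z,y)$, and set $d\mu^{cs}_x(z)=c(z)\,d\mathrm{Leb}_T(z)$. This measure is absolutely continuous because $c$ is integrable. Relation \eqref{eq:base_rho_switch_basepoint} shows the choice of base point $z$ on each plaque affects only the constant. This yields the stated formula.
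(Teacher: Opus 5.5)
\textbf{Verdict: there is a genuine gap in Step 3.} Steps 1 and 2 are fine, but neither of your two routes in Step 3 actually identifies the conditional density with $\rho$.

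\textbf{What the paper does, and Steps 1--2.} The paper gives no argument for this lemma. It quotes \cite[Lemma 2.13]{ExpMixingBernoulli}, which I take to rest on two standard ingredients: Pesin's absolute continuity of the unstable lamination, and the standard fact that the conditional densities of a smooth invariant measure on unstable plaques are given by $\rho$ (cf.\ \cite{BarreiraPesinNonUniform}). Your Steps 1--2 reproduce the first ingredient, modulo small points. The plaques $W^u_{g,\tau_g}(y)$ are graphs in the chart $h_y$, not $h_x$, so seeing them as graphs in $h_x$-coordinates needs a comparison of nearby Lyapunov charts. Also, $\hat{\mathcal P}_{\tau_g}$ need not be compact.

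\textbf{Why route (ii) fails.} Rokhlin uniqueness says that two families which \emph{both} disintegrate $\mu$ along the same partition agree a.e. It cannot certify that $\rho(z,\cdot)\,d\bar\mu^u_z$ disintegrates $\mu$. Equivariance under $g$ is only a necessary condition. What invariance actually gives is the following. Let $J$ be the Step 2 density and set
\[
\Delta(y,y')=\frac{J(z,y)\,\rho(z,y')}{J(z,y')\,\rho(z,y)},\qquad y,y'\in W^u_{Q_x}(z).
\]
Then $\Delta(y,y')$ equals the analogous ratio at $g^{-n}y,g^{-n}y'$, computed from the conditional density on a plaque through $g^{-n}z$. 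This holds whenever $g^{-n}W^u_{Q_x}(z)$ lies in a single plaque of some foliation box, because $\rho$ transforms under $g^{-n}$ by exactly the unstable Jacobian. You still have to show $\Delta\equiv 1$.

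\textbf{Why route (i) is incomplete.} The same objection applies to the phrase ``conditional measures \dots\ are determined by invariance.'' The sentence ``there the conditional density is nearly constant'' is precisely the unproved claim. The points $g^{-n}z$ leave $\hat{\mathcal P}_{\tau_g}$, and at a typical time there is no foliation box around $g^{-n}z$ with any control on conditional densities.

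\textbf{What is missing.} Two ingredients close the argument.
\begin{enumerate}
\item[(a)] You need uniform continuity along plaques, and positivity, of the Step 2 density on the finitely many boxes centred at points of a compact Pesin set $\mathcal P_{\tau_g}$ (Lemma \ref{lem:foliation_boxes_covering_P_tau}). This requires continuity of Pesin's product formula for the holonomy Jacobian in both the point and the target transversal, not merely absolute continuity.
\item[(b)] You need recurrence. For $\mu$-a.e.\ $w$, and hence by Step 2 for a.e.\ plaque and a.e.\ $w$ on it, there are $n_k\to\infty$ with $g^{-n_k}w\in\mathcal P_{\tau_g}$. Recurrence must be applied to such $w$, not to $z$, since the transversal is $\mu$-null.
\end{enumerate}
Since $g^{-n_k}W^u_{Q_x}(z)$ shrinks exponentially (cf.\ \eqref{eq:g_unstable_exp_contracting1}), it eventually lies inside one plaque of such a box. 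Then $\mu$-invariance, together with the fact that conditionals on a common refinement are normalised restrictions, shows that $\Delta(y,y')$ equals a ratio evaluated at two exponentially close points of that plaque. That ratio tends to $1$ by (a) and the continuity of $\rho$ at the diagonal.

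Once $J(z,\cdot)=c(z)\rho(z,\cdot)$ is established this way, or imported from the standard theorem as \cite{ExpMixingBernoulli} does, your final paragraph completes the proof.
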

		
	The following lemma shows that we can cover our Pesin sets by finitely many of these foliation boxes $Q_x\cap \tilde Q^{\tau_g}_x$:

	\begin{lemma}\label{lem:foliation_boxes_covering_P_tau}
		Fix $\tau_g>0$ and $r_0\in (0,\tau_g^2)$. There exists a finite set $\{x_j\}_{j=1}^m\subset \mathcal{P}_{\tau_g}$ such that 
			$$
				\mathcal{P}_{\tau_g}\subset \bigcup_{j=1}^m B(x_j,r_0)
				\qquad \text{ and } \qquad
				\mathcal{P}_{\tau_g}\cap B(x_j,r_0)\subset \tilde{Q}_{x_j}^{\tau_g}\cap Q_{x_j}
			$$
	\end{lemma}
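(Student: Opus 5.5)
The plan is a compactness argument. The one substantive ingredient is showing that every point of $\mathcal{P}_{\tau_g}$ near $x\in\mathcal{P}_{\tau_g}$ lies in the foliation box $\tilde Q^{\tau_g}_x\cap Q_x$; finiteness then comes from compactness of $\mathcal{P}_{\tau_g}$.

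\textbf{Step 1: membership in $Q_x$.} Fix $x\in\mathcal{P}_{\tau_g}\subset\hat{\mathcal{P}}_{\tau_g}$. Since $\mathfrak{r}_{\delta_g}(x)\ge\tau_g$, the chart $h_x$ is defined on $B_\infty(\tau_g)$. By \eqref{eq:lipschitz_constant_lyapunov_charts_g1}, $\Lip(h_x^{-1})\le\tau_g^{-1}$. Hence any $y$ with $d(x,y)<r_0<\tau_g^2$ satisfies
\begin{equation*}
\|h_x^{-1}(y)\|\le \tau_g^{-1}d(x,y)<\tau_g .
\end{equation*}
So $y\in h_x(B_\infty(\tau_g))=Q_x$. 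Here I take the Euclidean norm, which dominates $\|\cdot\|_\infty$; the ball is small enough that $h_x^{-1}$ is defined on it.

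\textbf{Step 2: membership in $\tilde Q^{\tau_g}_x$.} Now suppose moreover that $y\in\mathcal{P}_{\tau_g}$. Then $y\in\hat{\mathcal{P}}_{\tau_g}\cap Q_x$. The plaque $W^u_{g,\tau_g}(y)$ is defined because $\mathfrak{r}_{\delta_g}(y)\ge\tau_g$, and it contains $y$. By the definition of $\tilde Q^{\tau_g}_x$ as a union over $y\in\hat{\mathcal{P}}_{\tau_g}\cap Q_x$, we get $y\in\tilde Q^{\tau_g}_x$.

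One subtlety needs checking: whether membership in the box also requires the plaque to cross the transversal $T_x$, as in the definition of $\tilde T^{\tau_g}_x$. If it does, I would use Lemma \ref{lem:unstables_g}. In $h_y$-coordinates the plaque is the graph of a $C^{1+\alpha}$ map with bounded norm and zero derivative at $0$, so it is a nearly flat disc of size $\tau_g$ centered at $y$. Its distance from $x$ is only about $\tau_g^{-1}r_0<\tau_g$, so it crosses a transversal $T_x$ through $x$ chosen roughly in the $cs$-direction. I expect this step to be the main, mild, obstacle. It may force shrinking $r_0$ by a constant depending on $C_{0,g}$, or choosing $T_x$ appropriately. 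The hypothesis $r_0<\tau_g^2$ appears designed exactly so that $\tau_g^{-1}r_0<\tau_g$.

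\textbf{Step 3: finite subcover.} Consider the open cover $\{B(x,r_0)\}_{x\in\mathcal{P}_{\tau_g}}$ of the set $\mathcal{P}_{\tau_g}$. This set is compact by construction, so the cover has a finite subcover $\{B(x_j,r_0)\}_{j=1}^m$. This gives the first inclusion. Steps 1 and 2, applied at each $x_j$, give
\begin{equation*}
\mathcal{P}_{\tau_g}\cap B(x_j,r_0)\subset\tilde Q^{\tau_g}_{x_j}\cap Q_{x_j}.
\end{equation*}
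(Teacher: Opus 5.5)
Your proof is correct and is essentially the paper's argument: compactness of $\mathcal{P}_{\tau_g}$ gives the finite cover, the bound $\Lip(h_{x_j}^{-1})\le\tau_g^{-1}$ together with $r_0<\tau_g^2$ puts $\mathcal{P}_{\tau_g}\cap B(x_j,r_0)$ inside $Q_{x_j}$, and $y\in W^u_{g,\tau_g}(y)$ gives membership in $\tilde Q^{\tau_g}_{x_j}$. The transversal subtlety you raise does not arise. This is because $\tilde Q^{\tau_g}_x$ is defined simply as the union of the plaques $W^u_{g,\tau_g}(y)$ over $y\in\hat{\mathcal{P}}_{\tau_g}\cap Q_x$, with no condition that the plaque cross $T_x$.
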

	\begin{proof}
		Compactness of $\mathcal{P}_{\tau_g}$ gives the finite cover. If $y\in B(x_j,r_0)$, then by \eqref{eq:lipschitz_constant_lyapunov_charts_g1},
			$$
				\|h_{x_j}^{-1}(y)\|
				\leq \Lip(h_{x_j}^{-1})d(y,x_j)
				<\tau_g
			$$
		So, $y\in Q_{x_j}$. If in addition, $y\in \mathcal{P}_{\tau_g}\subset \hat{\mathcal{P}}_{\tau_g}$, then $y\in W^u_{g,\tau_g}(y)\subset \tilde{Q}_{x_j}^{\tau_g}$. 
	\end{proof}
	
\subsection{Exponential equidistribution of unstables for $g$}

	\begin{theorem}[{\cite[Proposition 7.3]{ExpMixingBernoulli}}]\label{thm:exp_equidistribution_g_expmixing}
		Assume that $g$ is exponentially mixing. There exists $\eta_g>0$ such that, for all $\e>0$ there exists $\xi_{0,g}>0$ such that for all $\xi\in (0,\xi_{0,g})$, there exists $\bar{n}=n(\e,\xi)\in \N$ such that for all $n\geq \bar{n}$, there is a set $\mathcal{R}_n\subset M$ with $\mu(\mathcal{R}_n)\geq 1-300\e^{10^{10}/16}$ and a set $K_n\subset M$ with $\mu(K_n)\geq 1-\e^{10}$ satisfying the following:
		
		Let $\{B_i\}_{i\in I_1}$ be a family of disjoint sets in $M$ such that either
			\begin{itemize}
			\item cubes of unstable radius $e^{\e^2n-\eta_g\e n}$ and center stable radius $e^{-\eta_g \e n}$ and $\mu\left( \bigcup_{i\in I_1} B_i \right)\geq 1-100\e^{10^{10}/16}$, or 
			\item Balls of radius between $\left[ e^{-\eta_g \e n}, 1\right)$ and $\mu\left( \bigcup_{i\in I_1} B_i \right)>1-\e^{10^5}$
			\end{itemize}
		Then, for all $x\in K_n$ and for every unstable box $\mathcal{W}$ of radius $\xi$ containing $x$, there exists a subset $J_g(x)\subset I_1$ such that $\mu\left( \bigcup_{i\in J_g(x)} B_i \right)>1-\e^{10}$ and for every $i\in J_g(x)$, 
			\begin{equation*}
				\mu^u_{x}\left( \mathcal{W} \cap g^{-\e n}(B_i)\cap \mathcal{R}_n \right)\in (1-\e^{10},1+\e^{10})\mu^u_x\left( \mathcal{W} \right) \mu(B_i)
			\end{equation*}
	\end{theorem}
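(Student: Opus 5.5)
This statement is quoted from \cite[Proposition 7.3]{ExpMixingBernoulli}, so I would not prove it from scratch. Instead I would check that its hypotheses are exactly the ones available for $g$: $g$ is a $C^{1+\alpha_g}$ volume-preserving diffeomorphism, it is exponentially mixing, and the Pesin objects $\mathcal{P}_{\tau_g}$, $\mu^u_x$ and $\rho$ used above are the same ones set up there. I would then recall the structure of the argument, so that it is clear which constants ($\eta_g$, $\xi_{0,g}$, $\bar n$) depend on what.

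The plan has four steps.

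\textbf{Step 1: reduce to an integral over a thickened box.} Fix a Pesin set $\mathcal{P}_{\tau_g}$ with $\mu(\mathcal{P}_{\tau_g})\ge 1-\e^{10^{10}}$. Cover it by finitely many foliation boxes using Lemma \ref{lem:foliation_boxes_covering_P_tau}. For an unstable box $\mathcal{W}$ of radius $\xi$ through a point of $\mathcal{P}_{\tau_g}$, thicken $\mathcal{W}$ in the center-stable direction by $e^{-\eta\e n}$ for a small $\eta$. By Lemma \ref{lem:disintegration_mu} and H\"older continuity of $\rho$ and of the unstable plaques (Lemma \ref{lem:unstables_g}), the quantity $\mu^u_x(\mathcal{W}\cap g^{-\e n}B_i)$ is then comparable to $\mu(\text{thickened box}\cap g^{-\e n}B_i)$ divided by the transverse measure. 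The error comes from those points of the thickening whose images under $g^{\e n}$ land near $\partial B_i$.

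\textbf{Step 2: apply exponential mixing to smoothed indicators.} Replace the indicators of the thickened box and of $B_i$ by $C^r$ mollifications at scale $e^{-\eta'\e n}$. Their $C^r$ norms are $e^{O(\eta'\e n)}$. Exponential mixing at time $\e n$ then gives correlation error $Ce^{-\eta\e n}e^{O(\eta'\e n)}$. This is much smaller than $\mu(\text{box})\mu(B_i)$ provided $\eta'$, and hence $\eta_g$, is a small multiple of the mixing rate. The cube sizes in the statement are calibrated exactly for this: in the first case the unstable side length $e^{\e^2 n-\eta_g\e n}$ gets stretched by $g^{-\e n}$ of order $e^{-\lambda\e n}$ in a compatible way.

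\textbf{Step 3: control the smoothing error.} The error from mollifying is the measure of $e^{-\eta'\e n}$-neighbourhoods of the boundaries. For cubes and balls this is a proportion $O(e^{-c\e n})$ of their measure. One subtlety: for balls, radii near $e^{-\eta_g\e n}$ are only comparable to the smoothing scale, so the constants must be tuned so that the boundary layer is relatively small.

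\textbf{Step 4 (the main obstacle): pass from averages to individual points.} This is the step that needs the sets $K_n$, $J_g(x)$ and $\mathcal{R}_n$. The mixing estimate controls box-averages, but the statement is pointwise in $x$ and allows an exceptional family of $i$. I would handle this with a Chebyshev/Fubini argument. Define $\mathcal{R}_n$ as the set of points whose iterates up to time $\e n$ stay in a good Pesin set sufficiently often, so that the thickening in Step 1 is legitimate; a maximal-inequality-type estimate gives $\mu(\mathcal{R}_n)\ge 1-300\e^{10^{10}/16}$. Then sum the error over all $i$ and use disjointness of the $B_i$. The bad pairs $(x,i)$ then have small total $\mu\times\mu$ mass, and Markov's inequality produces $K_n$ and $J_g(x)$ with the stated bounds.

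All of this is carried out in \cite{ExpMixingBernoulli}, so in the paper I would simply cite it.
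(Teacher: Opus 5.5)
Your proposal agrees with the paper. The paper gives no proof of its own and simply quotes \cite[Proposition 7.3]{ExpMixingBernoulli}, after noting at the start of Section \ref{sec:basedynamics} that exponential mixing makes $g$ ergodic with a positive Lyapunov exponent, so the Pesin setup of \cite{ExpMixingBernoulli} applies to $g$. One correction to your recalled sketch: there $\mathcal{R}_n$ is not a Pesin-recurrence set but the domain of the fake center-stable foliation, saturated by its leaves and with local product structure against unstable plaques; the coupling argument of \cite{ExpMixingBernoulli} depends on exactly this structure, which is why Section \ref{sec:proof_main_thm} takes care to transfer the equidistribution onto $\mathcal{R}_n^f$.
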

	
	In Theorem \ref{thm:exp_equidistribution_g_expmixing}, when we write $g^{-\e n}$, we mean $g^{-\lfloor\e n\rfloor}$. For the sake of brevity, we will use this convention throughout the paper.

\section{Geometry of the Fiber N}\label{sec:geometry_fiber}

	Throughout, $N=PSL_2\R/\Gamma$, where $\Gamma<PSL_2\R$ is a cocompact lattice. This section gives the notation and preliminaries about $N$ that we will use later. It also defines cubes in the fiber. 
	
\subsection{$SL_2\R$ and its Lie algebra $\sl_2\R$}\label{sec:lie_theory_sl_2R}

	Recall that $SL_2(\R)=\{B\in M_{2\times 2}(\R): \det(B)=1\}$ has Lie algebra $T_eSL_2(\R)=\sl_2\R=\{ b\in M_{2\times 2}(\R): \tr(b)=0 \}$. We use the basis 
		$$
			e_1=\begin{pmatrix} 0&1\\0&0\end{pmatrix}, 
			\quad 
			e_2=\begin{pmatrix} 1&0\\0&-1\end{pmatrix}, 
			\quad \text{ and } \quad 
			e_3=\begin{pmatrix} 0&0\\1&0\end{pmatrix}
		$$
	for $\sl_2\R$. 
	
	We will consider two different norms on $\sl_2\R$. For the construction of cubes and the right-invariant metric on $SL_2\R$, we will use the inner product $\left\langle \cdot, \cdot \right\rangle_{coord}$ for which $e_1,e_2,e_3$ are orthonormal. When studying the adjoint cocycle in Section \ref{sec:fiberdynamics}, we will use the Frobenius inner product $\langle X, Y\rangle_F=\tr(X^TY)$.
	Since 
		$
			\|X\|_{coord}\leq \|X\|_F\leq \sqrt{2}\|X\|_{coord},
		$
	all qualitative statements involving H\"older regularity, Lyapunov Exponents, and perturbation estimates remain true under either norm, although constants can change. 
	
	In Section \ref{sec:fiberdynamics}, we will use the adjoint representation
		$$
			\Ad \colon SL_2\R \to \Aut(\sl_2\R), \qquad \Ad(g)X=gXg^{-1}
		$$
		
\subsection{Quotients, Haar measure, and distance} 
	We take all quotients on the right. We define the quotient maps,
		$$
			\pi_{PSL_2\R}:SL_2\R \to SL_2\R /\{\pm \id\}=PSL_2\R,
			\qquad 
			\pi_N:PSL_2\R\to PSL_2\R /\Gamma = N,
		$$
	and we write $\pi=\pi_N\circ \pi_{PSL_2\R}: SL_2\R \to N$. All of these maps are covering maps, so locally the spaces $SL_2\R, \ PSL_2\R$, and $N$ all look the same.  
	
	Haar measure on $SL_2\R$ descends to Haar measure on $PSL_2\R$ and to a finite measure on $N$. We use $\nu$ to denote Haar measure on $SL_2\R$ and the measures it descends to on $PSL_2\R$ and $N$, normalized so that that $\nu(N)=1$. Since $SL_2\R$ is unimodular, Haar measure on $SL_2\R$ and $PSL_2\R$ is bi-invariant, and the probability measure induced on $N$ is invariant under left multiplication by $PSL_2\R$. 
	
	Let $d_{SL_2\R}$ be right-invariant Riemannian distance on $SL_2\R$ induced by $\langle \cdot, \cdot \rangle_{coord}$. It descends to a right-invariant metric $d_{PSL_2\R}$ on $PSL_2\R$ and induces the metric 
		$$
			d_N(x,y)=\inf_{\gamma\in \Gamma} d_{PSL_2\R}(x,y\gamma)
		$$
	on $N$. We will simply write $d$ when the ambient space is clear. 
	
	For $B\in SL_2\R$, the operator norm of $B$ is 
		$$
			\|B\|_{op}:= \sup_{\|v\|_{\R^2}=1} \|Bv\|_{\R^2}.
		$$
	Unless otherwise specified, the norm of an element of $SL_2\R$ or $PSL_2\R$ will mean this operator norm. Since $\|B\|_{op}=\|-B\|_{op}$, this norm is well-defined on $PSL_2\R$. 
	
	The operator norm gives us another way to measure the distance between two matrices in $SL_2\R$ (or $PSL_2\R$). This distance function is locally (but not globally) equivalent to the right-invariant Riemannian distance function. 
	
	\begin{lemma}\label{lem:SL2R_difference_equivalent_to_Riem_dist}
		There exists a neighborhood $U\subset SL_2\R$ of the identity and a constant $K>0$ such that if $M\in U$, then 
			$$
				\frac{1}{K}\|M-e\|\leq d_{SL_2\R}(M,e) \leq K\|M-e\|
			$$
	\end{lemma}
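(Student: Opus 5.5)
The plan is to compare both quantities with the Lie algebra norm through the matrix logarithm. Near the identity, $\log$ is a diffeomorphism from a neighborhood $U_0$ of $e$ onto a neighborhood of $0$ in $\sl_2\R$. On $\sl_2\R$ the norm $\|\cdot\|_{coord}$ and the operator norm on matrices are equivalent, since both are norms on a finite-dimensional space. I will prove two local comparisons and then combine them:
\begin{equation*}
	\|M-e\| \asymp \|\log M\| \asymp d_{SL_2\R}(M,e).
\end{equation*}

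\textbf{First comparison.} Write $M=\exp(X)$ with $X=\log M$. The power series gives
\begin{equation*}
	M-e=X+\sum_{k\geq 2}\frac{X^k}{k!},
\end{equation*}
so for $\|X\|\leq 1/2$,
\begin{equation*}
	\tfrac12\|X\|\leq \|M-e\|\leq 2\|X\|.
\end{equation*}
Equivalently, $D_e\exp=\id$, so $\exp$ and $\log$ are bi-Lipschitz near $0$ and $e$ respectively, with constants close to $1$.

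\textbf{Second comparison.} Here I work with the right-invariant Riemannian metric.

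\emph{Upper bound.} The curve $t\mapsto \exp(tX)$, $t\in[0,1]$, joins $e$ to $M$. Its velocity is $X\exp(tX)$, which is the right translate of $X\in\sl_2\R$. Since the metric is right-invariant, this velocity has length $\|X\|_{coord}$, so
\begin{equation*}
	d(M,e)\leq \|X\|_{coord}.
\end{equation*}

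\emph{Lower bound.} I use that the Riemannian distance and the Euclidean distance in a chart are locally comparable. Concretely, fix a small ball $B$ around $e$ in the ambient matrix space $M_{2\times 2}(\R)$, restricted to $SL_2\R$. On the compact closure of $B$, the Riemannian norm $|v|_g$ is comparable to the Euclidean norm of $v\in T_pSL_2\R\subset M_{2\times2}$, with constants $c,C$, by continuity and positivity of the metric. Now take $M$ close enough to $e$ that $d(M,e)$ is smaller than the distance from $e$ to $\partial B$. Then any nearly minimizing curve from $e$ to $M$ stays inside $B$, and its Euclidean length is at least $\|M-e\|$. This gives $d(M,e)\geq c\|M-e\|$.

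In the same way, the Euclidean length of the straight-line-type path bounds $d$ from above. So the first comparison is not even strictly needed.

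\textbf{Conclusion.} Take $U$ to be a small enough neighborhood of $e$ and $K$ to be the maximum of the resulting constants. I expect no real obstacle. The only care needed is the localization in the lower bound: I must ensure that short curves cannot leave the region where the metrics are comparable. This is handled by shrinking $U$ so that $d(M,e)$ is less than $c$ times the radius of $B$.
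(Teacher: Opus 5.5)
Your proof is correct and is essentially the paper's argument, which simply invokes the local equivalence of the right-invariant Riemannian distance and the Euclidean distance near $e$; you have spelled out that standard fact (the lower bound by confining near-minimizing curves to a compact neighborhood where the norms are comparable, the upper bound via the path $\exp(tX)$, right-invariance, and the series comparison $\|M-e\|\asymp\|\log M\|$). One small caveat: your aside that a ``straight-line-type path'' makes the first comparison unnecessary needs a chart, because the literal segment from $e$ to $M$ leaves $SL_2\R$; your main argument does not depend on that aside.
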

	\begin{proof}
		Immediate from the fact that in a sufficiently small neighborhood of $e$, the Riemannian distance function is equivalent to the Euclidean metric. 
	\end{proof}

\subsection{Cubes in $SL_2\R$ and $N$} \label{sec:FiberCubes}
	Let
		\begin{equation*}
			U^-:= \left\{ h_s:=\begin{pmatrix} 1 & s \\ 0 & 1\end{pmatrix} \in SL_2\R \colon s\in \R \right\}\subset SL_2\R.
		\end{equation*}
	We define the unstable cube of radius $\delta>0$ to be 
		$
			C^-(\delta):= \{h_s:s\in (-\delta,\delta)\}.
		$
	Let $m_{U^-}$ be the pushforward of Lebesgue measure under $s\in \R \mapsto h_s\in U^-$, so that $m_{U^-}(C^-(\delta))=2\delta$. Note that $m_{U^-}$ is Haar measure on $U^-$. 
	
	Let 
		\begin{equation*}
			U^{cs}=\left\{ \begin{pmatrix} a & 0 \\ b & a^{-1} \end{pmatrix}\in SL_2\R \colon a,b\in \R, \ a\neq 0 \right\}
		\end{equation*}
	On $U^{cs}$, we use right Haar measure given by $dm^R_{U^{cs}}=a^{-2}dadb$. We define $\theta_{cs}(te_2+se_3)=\exp(te_2)\exp(se_3)$, where $\exp:\sl_2\R\to SL_2\R$ is the Lie exponential. We then define the center-stable cube of radius $\delta>0$ by 
		$$
			C^{cs}(\delta)
			:= \theta_{cs}\left( \left\{ te_2+se_3 \colon |t|, |s|<\delta\right\} \right)
			=\left\{ \begin{pmatrix} e^t & 0 \\ se^{-t} & e^{-t} \end{pmatrix} \colon |t|,|s|<\delta \right\}
		$$
	Then, $m^R_{U^{cs}}(C^{cs}(\delta))=\delta e^{2\delta}-\delta e^{-2\delta}$. 
	
	We now define a map
		\begin{equation*}
			\theta: \R e_1 \oplus \left( \R e_2 \oplus \R e_3 \right) \to SL_2\R, 
			\qquad \theta(v^u,v^{cs})=\exp(v^u)\theta_{cs}(v^{cs})
		\end{equation*}
	and the cube of radius $\delta>0$ by
		$$
			C(\delta)=\theta\left( C_{\sl_2\R}(\delta) \right) = C^-(\delta) C^{cs}(\delta)
		$$
	where $C_{\sl_2\R}(\delta)=\left\{ te_1+se_2+re_3 \colon |t|, |s|,|r| <\delta\right\}$. Since $D_0\theta=\id$, for $\delta_0$ sufficiently small, $\theta$ is a diffeomorphism on a neighborhood of $\overline{C_{\sl_2\R}(\delta_0)}$. From now on, all cubes in $SL_2\R$ will have radius $\delta\in \left(0,\delta_0\right]$. A cube centered at $y\in SL_2\R$ is the right translation $C(\delta)y$. 
	
	Now, we compute the measure of $C(\delta)$. Let $\phi:U^-\times U^{cs}\to SL_2\R$ be $\phi(u^-,u^{cs})\mapsto u^-u^{cs}$. Then by  \cite[Lemma 11.31]{EinsiedlerWard}, there is a constant $C_1>0$ such that 
		$$
			\nu|_{U^-U^{cs}}=C_1\phi_*\left( m_{U^-}\times m_{U^{cs}}^R \right)
		$$
	Since $C(\delta)=C^{-}(\delta)C^{cs}(\delta)$, we get that 
		\begin{equation}\label{eq:measure_delta_cube}
			\nu(C(\delta))=2C_1\delta^2\left( e^{2\delta}-e^{-2\delta} \right).
		\end{equation}
	In particular, since $e^{2\delta}-e^{-2\delta}\geq 4\delta$, we have the lower bound
		\begin{equation}\label{eq:lower_bound_measure_cubes_SL2R}
			\nu(C(\delta))\geq 8C_1\delta^3
		\end{equation}
		
	We now give an upper bound:
	
	\begin{prop}\label{prop:upper_bound_difference_measure_cubes_SL2R}
		There exists a constant $K_4>0$ such that for $r_1,r_2<\delta_0$, 
		\begin{equation*}
			\left| \nu(C(r_2))-\nu(C(r_1)) \right| \leq K_4 |r_2^3-r_1^3|.
		\end{equation*}
	\end{prop}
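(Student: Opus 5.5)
We start from the explicit formula \eqref{eq:measure_delta_cube}, which says $\nu(C(\delta)) = F(\delta)$ with $F(\delta) := 2C_1\delta^2\left(e^{2\delta}-e^{-2\delta}\right) = 4C_1\delta^2\sinh(2\delta)$. The proposition then reduces to a one-variable estimate: show that $F(r)=G(r^3)$ for a function $G$ that is Lipschitz on $[0,\delta_0^3]$.

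The natural choice is to write $\sinh(2\delta) = 2\delta\, S(\delta)$, where
$$
S(\delta)=\sum_{k\ge 0}\frac{(2\delta)^{2k}}{(2k+1)!}.
$$
This gives $F(\delta)=8C_1\delta^3 S(\delta)$. The function $S$ is smooth and even, with $S(0)=1$, and on $[0,\delta_0]$ it satisfies $1\le S\le S(\delta_0)$ and $0\le S'(\delta)\le S'(\delta_0)$.

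Assume without loss of generality that $r_1\le r_2$. Split the difference as
$$
F(r_2)-F(r_1)=8C_1\left[(r_2^3-r_1^3)S(r_2) + r_1^3\bigl(S(r_2)-S(r_1)\bigr)\right].
$$
The first term is bounded by $8C_1 S(\delta_0)\,|r_2^3-r_1^3|$. For the second term, the mean value theorem gives $|S(r_2)-S(r_1)|\le S'(\delta_0)(r_2-r_1)$.

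The one slightly delicate point is converting $r_1^3(r_2-r_1)$ into a multiple of $r_2^3-r_1^3$. This follows from the factorization
$$
r_2^3-r_1^3=(r_2-r_1)(r_2^2+r_2r_1+r_1^2)\ge (r_2-r_1)r_1^2,
$$
which yields
$$
r_1^3(r_2-r_1)\le r_1\,(r_2^3-r_1^3)\le \delta_0\,(r_2^3-r_1^3).
$$
Combining the two bounds, we may take $K_4 = 8C_1\bigl(S(\delta_0)+\delta_0 S'(\delta_0)\bigr)$.

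Equivalently, one can observe that $F'(\delta)=8C_1\bigl(3\delta^2 S+\delta^3 S'\bigr)$, so $\frac{F'(\delta)}{3\delta^2}$ is bounded on $(0,\delta_0]$. Integrating in the variable $u=\delta^3$ then gives the Lipschitz bound directly.

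I expect no real obstacle. The only points requiring care are that the constant is uniform because $r_1,r_2<\delta_0$ keeps us in a compact range, and that the formula \eqref{eq:measure_delta_cube} is valid there because $\theta$ is a diffeomorphism on $C_{\sl_2\R}(\delta_0)$.
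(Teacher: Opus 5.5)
Your proof is correct, and your closing remark (bound $F'(\delta)/(3\delta^2)$ on $(0,\delta_0]$, then integrate in $u=\delta^3$) is exactly the paper's argument, which shows $|\psi'(r)|\le Cr^2$ for $\psi(r)=\nu(C(r))$ and integrates from $r_1$ to $r_2$. Your main argument is a discrete version of the same estimate: it uses the factorization $F(\delta)=8C_1\delta^3S(\delta)$, the mean value theorem for $S$, and $r_1^3(r_2-r_1)\le \delta_0(r_2^3-r_1^3)$, and it gives an explicit constant $K_4$.
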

	\begin{proof}
			Let $\psi(r)=\nu(C(r))=2C_1r^2(e^{2r}-e^{-2r})$. Since $\psi'(r)=4C_1r(e^{2r}-e^{-2r})+4C_1r^2(e^{2r}+e^{-2r})$ and $e^{2r}-e^{-2r}=O(r)$ and $e^{2r}+e^{-2r}=O(1)$, we can find a constant $C>0$ such that for $r\in(0,\delta_0)$, $|\psi'(r)|\leq Cr^2$. Integrating between $r_1$ and $r_2$, then proves the claim. 
	\end{proof}

	We define cubes in $N$ by projecting down cubes in $SL_2\R$. Because $N$ is compact and $\pi:SL_2\R\to N$ is a covering map, there exists $r_0>0$ such that $\pi$ is injective on every ball $B_{SL_2\R}(y,r_0)$, and $\pi(B_{SL_2\R}(y,r_0))=B_N(\pi(y),r_0)$. 
	
	Choose $\delta_{inj}\in \left(0,\delta_0\right]$ so that $C(\delta_{inj})\subset B_{SL_2\R}(e,r_0)$. Then for $\delta<\delta_{inj}$ and $y\in SL_2\R$, we can define the cube of radius $\delta$ centered at $\pi(y)\in N$ by 
		$$
			C(\delta)\pi(y)=\pi(C(\delta)y).
		$$
	Whenever we speak of a cube in $N$ of radius $\delta$, we implicitly assume that $\delta\in (0,\delta_{inj})$.
	
	Finally, we observe that we can cover all but an arbitrarily small piece of $N$ by sufficiently small disjoint cubes:
	
	\begin{lemma}\label{lem:covering_N_cubes}
		For all $\e>0$, $\exists \delta_\e>0$ such that if $\delta\in (0,\delta_\e)$, then there exist $y_1,...,y_m\in N$ for which $C(\delta)y_1,...,C(\delta)y_m$ are pairwise disjoint and 
			$\displaystyle
				\nu\left(\bigcup_{j=1}^m C(\delta)y_j\right)\geq 1-\e
			$
	\end{lemma}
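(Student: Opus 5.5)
The plan is to reduce the lemma to a standard Vitali/Lebesgue-type covering statement. To keep the cubes disjoint, I will use a lattice of centers in local coordinates rather than a greedy selection. First, cover $N$ by finitely many coordinate charts. Because $N$ is compact, I choose finitely many points $z_1,\dots,z_k\in N$ together with lifts $\tilde z_i\in SL_2\R$. The sets
$$
V_i=\pi\bigl(\theta(C_{\sl_2\R}(\rho))\tilde z_i\bigr)
$$
with $\rho<\delta_{inj}$ small then cover $N$. From them I form the disjoint measurable pieces $W_i=V_i\setminus\bigcup_{j<i}V_j$. Each $W_i$ is identified, through the diffeomorphism $v\mapsto \pi(\theta(v)\tilde z_i)$, with a bounded subset $\tilde W_i\subset C_{\sl_2\R}(\rho)\subset\R^3$. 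The boundaries of these sets are contained in finitely many smooth surfaces, so they have measure zero. In these coordinates $\nu$ has a smooth, positive density bounded above and below.

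Next, I fix $\e$ and work in a single chart. Because $\partial\tilde W_i$ is null, for small $\delta$ the set of points of $\tilde W_i$ at distance less than $10\delta$ from $\partial \tilde W_i$ has measure at most $\e/(3k)$. The cube $C(\delta)y$, with $y=\pi(\theta(v_0)\tilde z_i)$, corresponds in coordinates to
$$
\{\theta^{-1}(\theta(w)\theta(v_0)) : w\in C_{\sl_2\R}(\delta)\}.
$$
Because $\theta$ is a diffeomorphism with $D_0\theta=\id$ and the group law is smooth, this set lies within $O(\delta^2+\delta|v_0|)$ of the affine cube $v_0+L_{v_0}(C_{\sl_2\R}(\delta))$. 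Here $L_{v_0}$ is the derivative of right translation by $\theta(v_0)$ in coordinates, and $L_{v_0}=\id+O(\rho)$. Taking $\rho$ small, each coordinate cube is a $C^1$-small distortion of a Euclidean cube of side $2\delta$. I will therefore place centers on the grid $v_0\in (2+c\rho)\delta\,\Z^3$, keeping those whose cubes lie in $\tilde W_i$ at distance at least $10\delta$ from the boundary. The small gap factor $c\rho$ guarantees that the cubes are pairwise disjoint. Cubes coming from different $i$ are disjoint because they lie in the disjoint sets $W_i$.

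Finally, the measure estimate. The chosen cubes fill $\tilde W_i$, minus the $10\delta$-boundary layer, up to a fraction $1-O(\rho)$. Since the density is continuous, this yields a total measure of at least $(1-O(\rho))(1-\e/3)$. Fixing $\rho$ small enough depending on $\e$ first, and then $\delta_\e$ small, gives $\nu(\bigcup C(\delta)y_j)\ge 1-\e$.

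The main obstacle is the distortion step: showing that right-translated cubes $C(\delta)\theta(v_0)$ look like nearly Euclidean cubes in a fixed chart, uniformly in $v_0$, so that grid centers give disjoint cubes with almost full packing. This should follow from smoothness of the map $(w,v_0)\mapsto\theta^{-1}(\theta(w)\theta(v_0))$ on a compact neighborhood. The fallback, if one prefers to avoid tracking constants, is a different argument. Pick a maximal disjoint family of cubes $C(\delta)y$ and iterate a Vitali-type exhaustion: each pass covers a fixed proportion of the remaining measure because cubes are comparable to balls of radius $\asymp\delta$. Each later pass, however, requires smaller cubes, so this version must be combined with a rescaling argument to keep a single radius $\delta$.
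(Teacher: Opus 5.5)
Your chart-and-grid argument is correct and amounts to the standard near-tiling proof that the paper cites (Lemma 3.9 of \cite{KanigowskiBernoulliHomogeneous}). In a fixed chart, right-translated cubes $C(\delta)\theta(v_0)$ are uniformly $O(\rho)$-distorted Euclidean cubes, and the map $(w,v_0)\mapsto\theta^{-1}(\theta(w)\theta(v_0))$ does not even depend on the chart center, so a grid with spacing $(2+c\rho)\delta$ packs them disjointly at the cost of an $O(\rho)$ fraction plus a boundary layer; take that layer of width about $20\delta$ rather than $10\delta$, so that the grid cells of the kept centers cover everything else. Drop the fallback: after one maximal disjoint family, maximality means no further cube of the same radius can be added, so a single-radius Vitali exhaustion only gives a fixed fraction, and repairing it amounts to the grid argument you already have.
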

	\begin{proof}
		The proof is analogous to the proof of \cite[Lemma 3.9]{KanigowskiBernoulliHomogeneous}.
	\end{proof}

\section{The cocycle $A$} \label{sec:fiberdynamics}

	The purpose of this section is to reduce the cocycle $A$ to a diagonal cocycle and to establish the regularity of the resulting diagonal cocycle and of the conjugacy. Throughout this section, we only require that $g\colon M\to M$ is a $C^{1+\alpha_g}$ diffeomorphism with a nonzero Lyapunov exponent. We assume that $A\colon M\to PSL_2\R$ is $C^\alpha$, that $\log \|A\|, \log\|A^{-1}\|\in L^1(\mu)$, and that $A$ has a nonzero Lyapunov exponent. In particular, the exponentially mixing assumption on $g$ is not used in this section, and this section does not require $A$ to be $C^{1+\alpha}$. 
	
	For the convenience of the reader, we summarize the main results from this section that we will need later. Before doing this we recall a definition: A measurable function $P:M\to PSL_2\R$ is \textit{tempered with respect to} $g:M\to M$ if for almost every $x\in M$.
		$$
			\lim_{n\to \pm\infty} \frac{1}{n} \log\|P(g^nx)\| = \lim_{n\to \pm \infty} \frac{1}{n}\log\|P(g^nx)^{-1}\|=0.
		$$
	We now summarize the main results of the section:
	
	\begin{theorem}\label{thm:results_about_A}
		There exist measurable maps $P \colon M\to PSL_2\R$ and $\tau:M\to R$ defined on a full-measure set such that 
			\begin{equation*}
				A(x)=P(gx)B(x)P(x)^{-1}, \qquad 
				B(x):= \begin{pmatrix} e^{\tau(x)} & 0 \\ 0 & e^{-\tau(x)} \end{pmatrix}.
			\end{equation*}
		The diagonal cocycle $B(x)$ has the same Lyapunov exponents as $A$, and $P$ is tempered. Moreover, there exist $\alpha_b, \beta\in (0,1)$ such that on Pesin sets, 
			\begin{enumerate}
			\item 
				$\tau$ is $\alpha_b$-H\"older with a H\"older constant that grows subexponentially, more precisely,
					$$
						\left| \tau(g^kx)-\tau(g^ky)\right| \leq C_\tau e^{7\e|k|}d(g^kx,g^ky)^{\alpha_b}.
					$$
			\item 
				If $x,y\in M$ lie sufficiently close on the same local unstable manifold, then 
					$$
						d_{PSL_2\R}\left( P(g^{-k}x),P(g^{-k}y) \right)\leq C_{P}e^{\left(9\e-\frac{\lambda_g \beta}{2}\right)k}d(x,y)^\beta,
						\qquad k\geq 0.
					$$
				In particular, for $\e$ sufficiently small, $P(g^{-k}x)$ and $P(g^{-k}y)$ converge exponentially fast to one another. 
			\end{enumerate}
	\end{theorem}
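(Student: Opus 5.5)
The construction of $P$ and $B$ is the Oseledets–Pesin reduction for the $2\times 2$ cocycle $A$ over $g$. I will build $P$ from the Oseledets splitting and then read off $\tau$.

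\textbf{Step 1: measurable reduction.} Since $A$ takes values in $PSL_2\R$ and has a nonzero exponent, its exponents are $\pm\lambda$ with $\lambda>0$. Oseledets' theorem gives measurable lines $E^+(x),E^-(x)\subset\R^2$ on a full-measure set, with $A(x)E^\pm(x)=E^\pm(gx)$. I choose unit vectors $v^\pm(x)\in E^\pm(x)$ and use a Lyapunov norm $\|\cdot\|'_x$ (Pesin's $\e$-reduction). In this norm $v^+$ and $v^-$ are orthogonal, and $A$ acts on $E^\pm$ by factors $e^{\pm\tau(x)}$; the determinant-one condition forces the two factors to be reciprocal, with the determinant normalized by rescaling $v^\pm$. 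Let $P(x)$ be the matrix with columns $v^+(x),v^-(x)$, scaled to determinant one. Then $P(gx)^{-1}A(x)P(x)$ is diagonal, equal to $\diag(e^{\tau},e^{-\tau})$ up to sign in $PSL_2\R$. The scaling ambiguity of $v^\pm$ is fixed measurably by the Lyapunov-norm normalization.

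\textbf{Step 2: exponents and temperedness.} $\|P(x)\|$ and $\|P(x)^{-1}\|$ are controlled by the angle between $E^+(x)$ and $E^-(x)$ and by the ratio of the Lyapunov norm to the standard norm. Both are tempered functions by standard Pesin theory: the angle decays subexponentially along orbits, and $K_\e(g^{\pm1}x)\le e^\e K_\e(x)$. Hence $P$ is tempered. Conjugation by a tempered cocycle preserves Lyapunov exponents, so $B$ has exponents $\pm\lambda$.

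\textbf{Step 3: regularity on Pesin sets.} For item (1), $\tau(x)=\log\|A(x)v^+(x)\|'_{gx}$. I will prove Hölder continuity of $x\mapsto E^\pm(x)$ on Pesin sets: this is the standard fact that Oseledets subspaces are Hölder on Pesin sets (Brin's argument), with constants degrading by $e^{\e|k|}$ along the orbit via the tempering kernel. Combined with $A\in C^\alpha$, this gives the bound with $C_\tau e^{7\e|k|}$. The constant $7$ simply collects a fixed number of $e^{\e|k|}$ losses from the Lyapunov charts and the norm comparison.

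For item (2), which I expect to be the main obstacle, I use the unstable-manifold characterization of $E^-$. For $x,y$ on the same local unstable manifold, $d(g^{-k}x,g^{-k}y)\le \tau_g^{-1}e^{-\lambda_g k/2}d(x,y)$ by \eqref{eq:g_unstable_exp_contracting2}. The backward-defined direction $E^+$ can be written as the limit of $A^{(-n)}$-pushforwards, and $E^-$ as a forward graph-transform limit. Along unstable manifolds, the relevant direction is one whose difference is controlled by the Hölder estimate at the points $g^{-k}x$ and $g^{-k}y$. So I apply the Hölder continuity from Step 3 at these points, with Hölder exponent $\beta$ and constant growing like $e^{9\e k}$ (the extra $\e$ losses come from the Lyapunov norms at both points). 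Inserting the contraction estimate then gives
$$
d\big(P(g^{-k}x),P(g^{-k}y)\big)\le C_P e^{9\e k}\big(e^{-\lambda_g k/2}d(x,y)\big)^\beta .
$$
The delicate points are two: ensuring that the normalizations of $v^\pm$ (the scaling by the Lyapunov norm) are themselves Hölder along these orbits, and handling the fact that $g^{-k}x$ need not lie in a fixed Pesin set. I handle the latter by temperedness: $g^{-k}x$ lies in the Pesin set of level $\tau e^{-\e k}$, which produces the $e^{\e k}$-type losses.
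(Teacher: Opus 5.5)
There is a genuine gap: the Lyapunov-norm normalization of $v^\pm$, which you flag as delicate but never handle, and which is not routine. The paper takes the same overall route (Oseledets frame, temperedness from the angle, orbitwise H\"older estimates on Pesin sets, backward contraction along $W^u_g$), but normalizes with Euclidean unit vectors, and that choice is what makes the regularity go through.

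In the Lyapunov norm the factors $\|A(x)v^\pm(x)\|'_{gx}$ are not reciprocal; their product is $D(x)/D(gx)$ with $D=\det(v^+,v^-)$. So after your rescaling to determinant one, your conjugacy is
$$P(x)=(\sin\theta_x)^{-1/2}\bigl(u^+(x)\ u^-(x)\bigr)\diag(r(x)^{1/2},r(x)^{-1/2}),$$
with $u^\pm$ Euclidean unit vectors and $r=\|u^-\|'_x/\|u^+\|'_x$. Likewise your $\tau$ is not $\log\|A(x)v^+(x)\|'_{gx}$; it is the Euclidean one plus the coboundary $\frac12(\log r(x)-\log r(gx))$.

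Both items therefore need H\"older control of $r$ on Pesin sets and along orbits, with an exponent independent of $\e$. The Lyapunov norm is an infinite weighted sum over iterates whose terms decay only like $e^{-c\e n}$. For the stable vector it uses forward iterates, which separate exponentially even when $x,y$ lie on the same unstable leaf. So the standard truncation at $n\sim\log(1/d)$ gives an exponent of order $\e$ only.

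With $\beta=O(\e)$ the rate $9\e-\lambda_g\beta/2$ need not be negative. You would then lose the exponential convergence in item (2), and also the later choices $\e<\alpha_b\lambda_g/14$ and $\e<\lambda_g\beta/18$, which need $\alpha_b,\beta$ fixed before $\e$.

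The fix is to drop the Lyapunov norm entirely. Take Euclidean unit vectors with $\det(v^u,v^s)=\sin\theta_x$ and set $\tilde P(x)=(\sin\theta_x)^{-1/2}\bigl(v^u(x)\ v^s(x)\bigr)$.
\begin{itemize}
\item $\|\tilde P(x)^{\pm1}\|=(\tan(\theta_x/2))^{-1/2}$ gives temperedness, and $\|\tilde P(g^kx)^{-1}\|\le 2\ell^{1/2}e^{\e|k|/2}$ on $\Lambda^\ell_\e$.
\item $|b(x)|=(\sin\theta_{gx}/\sin\theta_x)^{1/2}\|\tilde A(x)v^u(x)\|$.
\end{itemize}
Everything then reduces to the $\e$-independent exponent $\beta$ of $E^u_2$, $E^s_2$ and $\theta$.

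Two further steps are missing.
\begin{itemize}
\item \textbf{The lift.} Brin's argument needs a H\"older \emph{linear} cocycle. $A$ is only H\"older into $PSL_2\R$, and a global $SL_2\R$ lift may be discontinuous. The paper runs the argument for $\Ad(A)$ and transfers back, using that $E^u_3$ and $E^s_3$ are spanned by nilpotents whose kernel and image are $E^u_2$ and $E^s_2$ respectively. Local lifts would also work, but you have to address it.
\item \textbf{The metric in item (2).} A H\"older bound on $\min_\pm\|\tilde P(g^{-k}x)\mp\tilde P(g^{-k}y)\|$ does not directly bound $d_{PSL_2\R}$, because $d_{SL_2\R}(P_1,P_2)$ is not comparable to $\|P_1-P_2\|$ when $\|P_1\|$ is large. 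You need to write $d(P_1,P_2)=d(e,P_2P_1^{-1})$ and use $\|P_2P_1^{-1}-e\|\le\|P_1^{-1}\|\,\|P_2-P_1\|$ with $\|P_1^{-1}\|\le 2\ell^{1/2}e^{\e k}$. Then compare with the Riemannian distance near $e$. This is where the ninth $\e$ and the restriction $d(x,y)<r_p$ come from, not from ``Lyapunov norms at both points.''
\end{itemize}

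Finally, the graph transform and the unstable-manifold characterization of $E^-$ play no role. The orbitwise H\"older estimate on $\Lambda^\ell_\e$, combined with $d(g^{-k}x,g^{-k}y)\le\tau_g^{-1}e^{-\lambda_g k/2}d(x,y)$, is all that is needed.
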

	
	Throughout this section, we only require that $g\colon M\to M$ is a $C^{1+\alpha_g}$ diffeomorphism with a nonzero Lyapunov exponent. We assume that $A\colon M\to PSL_2\R$ is $C^\alpha$, that $\log \|A\|, \log\|A^{-1}\|\in L^1(\mu)$, and that $A$ has a nonzero Lyapunov exponent. In particular, the exponentially mixing assumption on $g$ is not used in this section, and this section does not require $A$ to be $C^{1+\alpha}$. 
	
	For $n\geq 1$, set 
		$$
			A_n(x)= A(g^{n-1}(x))\cdots A(x), 
			\qquad 
			A_{-n}(x)=(A_n(g^{-n}x))^{-1}
		$$
	Many of the arguments we will use in this section only make sense for cocycles over $g$ that can be represented by matrices in $GL_n(R)$. As a result, we introduce two cocycles that are extremely closely related to $A$. Choose a measurable lift $\tilde{A} \colon M\to SL_2\R$ of $A$. Although the cocycle $\tilde{A}$ need not be continuous, the adjoint cocycle 
		$$
			\mathcal{A}(x):=\Ad(A(x))=\Ad(\tilde A(x))
		$$
	is $\alpha$-H\"older. Moreover, $\mathcal{A}_n=\Ad(A_n)$ and $\|\mathcal{A}_n\|_{op, F}=\|\tilde A_n\|_{op}^2=\|A_n\|_{op}^2$, where $\|\cdot \|_{op,F}$ is the operator norm on $\Aut(\sl_2\R)$ induced by the Frobenius norm. The integrability assumptions on $A^{\pm 1}$ therefore give analogous integrability assumptions for $\tilde A^{\pm 1}$ and $\mathcal{A}^{\pm 1}$. 
 	
 \subsection{Lyapunov Exponents and Osceledets Splitting}\label{sec:Lyap_exps_osceledets_A}
 	
 \subsubsection{Lyapunov Exponents}\label{sec:Lyap_exps_A}
 	By the Furstenburg-Kesten Theorem \cite[Theorem 3.12]{VianaLyapExps}, the Lyapunov exponents for $\tilde A(x)$ are $\pm \lambda_2$ for some constant $\lambda_2\geq 0$ (ergodicity of $g$ and the integrability assumptions on $\tilde{A}^{\pm1}$ are used here). Since $\|\mathcal{A}_n\|=\|\tilde{A}_n\|^2$ and we can view $\mathcal{A}(x)\in SL_3\R$, the Lyapunov exponents for $\mathcal{A}$ are $2\lambda_2, 0, -2\lambda_2$. Thus, the hypothesis in Theorem \ref{thm:Main_f_bernoulli} that $A$ has a nonzero Lyapunov exponent is equivalent to $\lambda_2>0$, which we assume from now on. 

\subsubsection{Osceledets Splittings}\label{sec:osceledets_A}

	By Osceledets Theorem \cite[Theorem 4.2]{VianaLyapExps}, for $\mu$-a.e. $x\in M$, there is a $\tilde{A}(x)$-invariant splitting 
		$$
			\R^2=E^u_2(x)\oplus E^s_2(x)
		$$ 
	such that the subspaces $E^u_2(x)$ and $E^s_2(x)$ correspond to the Lyapunov exponents $\lambda_2$ and $-\lambda_2$ respectively. Moreover, the angles between $E^u_2(g^nx)$ and $E^s_2(g^nx)$ decay subexponentially in $n$, i.e.
		$$
			\lim_{n\to \pm\infty} \frac{1}{n}\log \sin \theta_{g^nx} =0, 
			\qquad \theta_{g^nx}:=\angle\left( E^u_2(g^nx),E^s_2(g^nx)\right)\in \left(0,\frac{\pi}{2}\right]
		$$
	We call the points $x\in M$ for which the conclusion of Osceledets Theorem holds \textit{regular}. Note that since multiplication by $-\id$ changes orientations, but does not affect expansion or contraction along orbits, the splitting $\R^2=E^u_2(x)\oplus E^s_2(x)$ is independent of the choice of lift $\tilde{A}$.
	
	Osceledets Theorem also gives, for $\mu$-a.e. $x\in M$, an $\mathcal{A}$-invariant splitting, 
		$$
			\sl_2\R =E^u_3(x)\oplus E^c_3(x) \oplus E^s_3(x)
		$$
	with the subspaces $E^u_3(x)$, $E^c_3(x)$, and $E^s_3(x)$ corresponding to the Lyapunov exponents $2\lambda_2, 0, -2\lambda_2$ respectively. We can express the Osceledets splitting for $\mathcal{A}$ explicitly in terms of the Osceledets splitting for $\tilde{A}$. Fix unit vectors $v_u(x)\in E^u_2(x)$ and $v_s(x)\in E^s_2(x)$, and let $u_u(x)^T$ and $u_s(x)^T$ be the corresponding dual basis. 
 
  	\begin{prop}\label{prop:osceledets_subspaces_Ad}
 	 	The Osceledets subspaces for $\mathcal{A}$ are 
 	 		\begin{equation*}
 	 			E^u_3(x) = \vecspan \{ v_u(x)u_s(x)^T \}, \quad
 	 			E^c_3(x) = \vecspan \{  v_u(x)u_u(x)^T-v_s(x)u_s(x)^T \}, \quad
 	 			E^s_3(x) = \vecspan \{ v_s(x)u_u(x)^T \}.
 	 		\end{equation*}
 	 	Thus, the set of regular points for $\tilde{A}$ is the same as the set of regular points for $\mathcal{A}$.
  	\end{prop}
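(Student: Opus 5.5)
The plan is to compute directly how $\mathcal{A}_n(x)=\Ad(\tilde A_n(x))$ acts on the rank-one matrices $v u^T$ built from the Oseledets vectors of $\tilde A$. I will then read off the growth rates and use the uniqueness of Oseledets splittings.

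First, the basic identity. For any $B\in SL_2\R$ and vectors $v,w\in\R^2$,
$$\Ad(B)(vw^T)=Bvw^TB^{-1}=(Bv)\big((B^{-1})^Tw\big)^T .$$
Fix a regular point $x$. By invariance of the splitting we have $\tilde A_n(x)v_u(x)=a_n v_u(g^nx)$ and $\tilde A_n(x)v_s(x)=b_n v_s(g^nx)$, where $\frac1n\log|a_n|\to\lambda_2$ and $\frac1n\log|b_n|\to-\lambda_2$. The dual basis transforms contragrediently: the rows of $\tilde A_n(x)^{-1}$ in the basis $\{v_u(g^nx),v_s(g^nx)\}$ give $u_u(x)^T\tilde A_n(x)^{-1}=a_n^{-1}u_u(g^nx)^T$ and $u_s(x)^T\tilde A_n(x)^{-1}=b_n^{-1}u_s(g^nx)^T$. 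Combining these,
$$\mathcal{A}_n(x)\big(v_u u_s^T\big)=\tfrac{a_n}{b_n}\,v_u(g^nx)u_s(g^nx)^T,\qquad \mathcal{A}_n(x)\big(v_s u_u^T\big)=\tfrac{b_n}{a_n}\,v_s(g^nx)u_u(g^nx)^T,$$
and $v_uu_u^T$ and $v_su_s^T$ are each mapped to the corresponding matrices at $g^nx$ with factor $1$. Therefore $v_uu_u^T-v_su_s^T$ is sent to the same expression at $g^nx$.

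Each of the three matrices has trace zero:
$$\tr(v_uu_s^T)=u_s^Tv_u=0,\qquad \tr(v_su_u^T)=u_u^Tv_s=0,\qquad \tr(v_uu_u^T-v_su_s^T)=1-1=0.$$
They are linearly independent, because $\{v_iu_j^T\}$ is a basis of $M_{2\times2}(\R)$. Hence the three lines give an $\mathcal{A}$-invariant splitting of $\sl_2\R$.

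Next, the growth rates. We have $\frac1n\log|a_n/b_n|\to2\lambda_2$. To convert scalar factors into norm growth, I also need $\frac1n\log\|v_u(g^nx)u_s(g^nx)^T\|\to0$ (and similarly for the other two lines). The norm $\|u_s(g^nx)\|$ equals $1/\sin\theta_{g^nx}$, because $u_s$ is the dual covector. By the subexponential angle decay this gives zero exponential rate, for $n\to\pm\infty$. So the three lines carry exponents $2\lambda_2$, $0$, $-2\lambda_2$. Since $\lambda_2>0$ these are distinct, and the uniqueness of the Oseledets splitting (the subspaces are determined by forward and backward growth rates) identifies them with $E^u_3,E^c_3,E^s_3$.

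For the regular-point statement, the forward direction is the argument above: regularity for $\tilde A$ produces a splitting with the required exponents and subexponential angles. The angle condition for the three lines follows because the basis $\{v_iu_j^T\}$ has conditioning controlled by $1/\sin\theta$ at polynomial order. For the converse, given $\mathcal{A}$-regularity, I would recover $E^u_2$ as the image of the rank-one element spanning $E^u_3$, and $E^s_2$ as its kernel. Invariance and the growth rates of $\tilde A_n$ then follow from $\|\mathcal A_n\|=\|\tilde A_n\|^2$ restricted to these directions.

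The main obstacle is this converse step. It requires checking that a generator of $E^u_3(x)$ is nilpotent of rank one, so that it determines a line, and that the angle control transfers back. I would handle it by noting that the $\mathcal{A}$-invariant lines with exponent $\pm2\lambda_2$ must consist of nilpotents: by invariance of the Killing form $\tr(X^2)$, a vector whose orbit grows or decays exponentially must satisfy $\tr(X^2)=0$.
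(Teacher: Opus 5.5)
Your proposal is correct and takes the same route as the paper: compute $\mathcal{A}_n(x)\,v_i(x)u_j(x)^T=\frac{a^i_n(x)}{a^j_n(x)}\,v_i(g^nx)u_j(g^nx)^T$, read off the exponents $\lambda_i-\lambda_j$, and identify the three invariant lines with the one-dimensional Oseledets subspaces of $\mathcal{A}$. Your additional checks supply details the paper leaves implicit: that $\|v_i(g^nx)u_j(g^nx)^T\|$ grows subexponentially because $\|u_\sigma(g^nx)\|=1/\sin\theta_{g^nx}$, and the converse regularity direction via invariance of $\tr(X^2)$ under $\Ad$.
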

  	\begin{proof}
		For $\sigma\in \{u,s\}$, write 
			$
				\tilde{A}_n(x)v_{\sigma}(x)=a_n^\sigma(x)v_\sigma(g^nx).
			$
		Then, 
			$$
				\mathcal{A}_n(x) v_i(x)u_j(x)^T = \frac{a_n^i(x)}{a_n^j(x)}  v_i(g^nx) u_j(g^nx)^T
			$$
		Thus, 
	 		$$
	 			\lim_{n\to \pm\infty} \frac{1}{n}\log \left\| \mathcal{A}_n(x) v_i(x)u_j(x)^T\right\|
	 			= \lambda_i-\lambda_j 
	 		$$
	 	Since $v_u(x)u_s(x)^T, v_s(x)u_u(x)^T, v_u(x)u_u(x)^T-v_s(x)u_s(x)^T$ are all nonzero elements of $\sl_2\R$ and each $E^\sigma_3(x)$ is one dimensional, we're done. 
  	\end{proof}
 	
 	Proposition \ref{prop:osceledets_subspaces_Ad} implies that the set of regular points for $\mathcal{A}$ is the same as the set of regular points for $\tilde A$. It also gives the following characterization of $E^u_3$ and $E^s_3$ in terms of $E^u_2$ and $E^s_2$:
 	
 	\begin{cor}\label{cor:stable_unstable_osceledets_subspaces_Ad}
		\begin{align*}
			E^u_3(x)&=\left\{ X\in \sl_2\R \colon X(\R^2)\subset E^u_2(x), \ X|_{E^u_2(x)}=0 \right\}, \quad
			E^s_3(x) = \left\{ X\in \sl_2\R \colon X(\R^2)\subset E^s_2(x), \ X|_{E^s_2(x)}=0 \right\} 
		\end{align*}
 	\end{cor}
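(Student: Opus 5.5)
The plan is to read the characterization directly off Proposition \ref{prop:osceledets_subspaces_Ad}, which says that $E^u_3(x)$ is spanned by the rank-one matrix $v_u(x)u_s(x)^T$ and $E^s_3(x)$ by $v_s(x)u_u(x)^T$. I will prove the statement for $E^u_3(x)$ and obtain the one for $E^s_3(x)$ by exchanging the roles of $u$ and $s$. In each case I show that the right-hand set is exactly the line spanned by that generator.

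First I check that $X_0:=v_u(x)u_s(x)^T$ lies in the right-hand set. Its image is $\vecspan\{v_u(x)\}=E^u_2(x)$. By duality, $u_s(x)^Tv_u(x)=0$, so $X_0 v_u(x)=0$, i.e.\ $X_0|_{E^u_2(x)}=0$. It is traceless, since $\tr(v_uu_s^T)=u_s^Tv_u=0$, so $X_0\in\sl_2\R$. Because the conditions defining the right-hand set are linear in $X$, the whole span of $X_0$ is contained in the set.

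For the reverse inclusion, let $X\in\sl_2\R$ satisfy $X(\R^2)\subset E^u_2(x)$ and $X|_{E^u_2(x)}=0$. Write $X$ in the basis $\{v_u(x),v_s(x)\}$ of $\R^2$, which is a basis by the Osceledets splitting. Then $Xv_u=0$, and $Xv_s=c\,v_u$ for some scalar $c$ because the image lies in $E^u_2(x)$. The matrix $cX_0$ satisfies $cX_0v_u=0$ and $cX_0v_s=c\,v_u\,(u_s^Tv_s)=c\,v_u$. So $X$ and $cX_0$ agree on a basis, hence $X=cX_0\in E^u_3(x)$. This gives equality for $E^u_3$. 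The same argument with $v_s,u_u$ in place of $v_u,u_s$ gives the equality for $E^s_3$.

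There is no real obstacle here. The only point needing care is that the dual basis $u_u^T,u_s^T$ satisfies $u_\sigma^Tv_{\sigma'}=\delta_{\sigma\sigma'}$. This is exactly what makes the generator kill its own direction and be traceless, so that it lies in $\sl_2\R$. Note that the tracelessness condition is automatic for any $X$ in the right-hand set: the image lies in the kernel, so $X$ is nilpotent.
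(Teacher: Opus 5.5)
Your proof is correct and follows the paper's route: the paper states this corollary as an immediate consequence of Proposition~\ref{prop:osceledets_subspaces_Ad} without further argument. You have filled in the linear algebra, namely that $v_u(x)u_s(x)^T$ lies in the right-hand set, and that any $X$ in that set agrees with a multiple of it on the basis $\{v_u(x),v_s(x)\}$, together with the symmetric argument for $E^s_3(x)$.
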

 	
 \subsection{Pesin Sets}\label{sec:pesin_sets_A}
 	In this section, we will define the Pesin set $\Lambda_\e^\ell$ and give the properties of it that we will use later. We begin by defining Pesin sets for the cocycle $\tilde{A}$. 
	
 	\begin{definition}[{\cite[Definition 2.2.6]{BarreiraPesinNonUniform}}]\label{def:PesinSets}
 		For $\e>0$ and $\ell\geq 1$, the \emph{Pesin set} $\Lambda_{\e,\tilde A}^\ell$ is the set of regular points in $M$ such that $\forall k\in \Z$ and $m\geq 0$ the following hold:
 		\begin{enumerate}
 		\item 
 			If $v\in E^s_2(g^kx)$, then 
 				\begin{align*}
 					\left\| \tilde A_{m}(g^kx)v\right\| \leq \ell e^{-\lambda_2 m+\e m+\e |k|} \|v\|
 					\quad \text{and} \quad 
 					\left\| \tilde A_{-m}(g^kx)v\right\| \geq \ell^{-1} e^{\lambda_2 m-\e m-\e |k-m|} \|v\|
 				\end{align*}
 		\item 
 			If $v\in E^u_2(g^kx)$, then 
 				\begin{align*}
 					\left\| \tilde A_{-m}(g^kx)v\right\| \leq \ell e^{-\lambda_2 m+\e m+\e |k|} \|v\|
 					\quad \text{and} \quad 
 					\left\| \tilde A_{m}(g^kx)v\right\| \geq \ell^{-1} e^{\lambda_2 m-\e m-\e |k+m|} \|v\|
 				\end{align*}
 		\item 
 			$\displaystyle
 				\angle \left( E^u_2(g^kx),E^s_2(g^kx) \right)\geq \ell^{-1}e^{-\e |k|}
 			$
 		\end{enumerate}
 	\end{definition}
 	
 	We also need Pesin sets for the adjoint cocycle $\mathcal{A}$. For $\e>0$ and $\ell\geq 1$, let $\Lambda_{\e,\mathcal{A}}^\ell$ be the Pesin set for $\mathcal{A}$ corresponding to the Osceledets splitting $\sl_2\R =E^u_3(x)\oplus E^c_3(x) \oplus E^s_3(x)$. The precise definition of $\Lambda_{\e,\mathcal{A}}^\ell$ is only used in the proof of Proposition \ref{prop:E^sE^uHolderPesin} and is included in Appendix \ref{appendix:pesin_theory}. 
 	
 	We now define 
 		$$
 			\Lambda_\e^\ell:= \Lambda_{\e,\tilde{A}}^\ell \cap \Lambda_{\e, \mathcal{A}}^\ell
 		$$
 
 	We will use the following properties of $\Lambda_\e^\ell$ throughout this paper:
 
  	\begin{prop}\label{prop:PesinSetsFacts}
  		Fix $\e>0$, 
  		\begin{enumerate}
  		\item For all $\ell\geq 1$, $\Lambda_{\e}^\ell\subset \Lambda_{\e}^{\ell+1}$.
  		\item For all $\ell\geq 1$ and $k\in \Z$, $g^k\left( \Lambda_{\e}^\ell \right)\subset \Lambda_{\e}^{\ell'}$ where $\ell'=\ell e^{|k|\e}$.
  		\item $\displaystyle \mu\left( \bigcup_{\ell\geq 1} \Lambda_{\e}^\ell \right)=1$. In fact, $\bigcup_{\ell\geq 1} \Lambda_{\e }^\ell$ is the set of regular points for $A$.
  		\end{enumerate}
  	\end{prop}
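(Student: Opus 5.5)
Each of the three items should follow from the corresponding standard fact about Pesin sets for a single cocycle, applied to $\tilde A$ and to $\mathcal{A}$ separately and then intersected. Write $\Lambda_\e^\ell=\Lambda_{\e,\tilde A}^\ell\cap\Lambda_{\e,\mathcal{A}}^\ell$. It then suffices to prove the three statements for each factor $\Lambda_{\e,\tilde A}^\ell$ and $\Lambda_{\e,\mathcal{A}}^\ell$.

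Item (1) is direct from Definition \ref{def:PesinSets} and its analogue in Appendix \ref{appendix:pesin_theory}. Every condition has one of two forms: an upper bound with a factor $\ell$, or a lower bound with a factor $\ell^{-1}$ (including the angle condition). Replacing $\ell$ by $\ell+1$ only weakens each inequality, so the sets are nested.

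For item (2), I would take $x\in\Lambda_{\e,\tilde A}^\ell$ and $k_0\in\Z$, set $y=g^{k_0}x$, and check the conditions at $g^k y=g^{k+k_0}x$. By invariance of the splitting, $E^\sigma_2(g^ky)=E^\sigma_2(g^{k+k_0}x)$, so the hypothesis gives bounds with the term $e^{\e|k+k_0|}$. We need bounds with $\ell'=\ell e^{\e|k_0|}$ in place of $\ell$ and $e^{\e|k|}$ in place of $e^{\e|k+k_0|}$. Since $|k+k_0|\le|k|+|k_0|$, the extra factor $e^{\e|k_0|}$ is absorbed into $\ell'$. The lower bounds work the same way: $|k+k_0\pm m|\le|k\pm m|+|k_0|$ gives $\ell^{-1}e^{-\e|k+k_0\pm m|}\ge\ell'^{-1}e^{-\e|k\pm m|}$, and the angle condition is handled identically. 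The same bookkeeping applies verbatim to the three-subspace definition for $\mathcal{A}$. Here the only subtlety is checking that the center-direction bounds in the Appendix definition also use $|k|$-type tempering, so the triangle inequality still applies. Finally, I would intersect the two resulting inclusions.

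Item (3) is the only point with content. By the standard result \cite[Theorem 2.2.7 / Section 2.2]{BarreiraPesinNonUniform}, for each regular point $x$ of a cocycle there is a finite $\ell(x)$ with $x\in\Lambda^{\ell(x)}$. This comes from tempered estimates (Oseledets plus the tempering kernel lemma), which yield constants of the form $\ell(x)e^{\e|k|}$. Conversely, every point of a Pesin set is regular by definition. So $\bigcup_\ell\Lambda^\ell_{\e,\tilde A}$ is the set of $\tilde A$-regular points, and $\bigcup_\ell\Lambda^\ell_{\e,\mathcal{A}}$ is the set of $\mathcal{A}$-regular points. By Proposition \ref{prop:osceledets_subspaces_Ad}, these two regular sets coincide and both equal the regular set for $A$. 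Since the families are nested by (1), a point $x$ in both unions lies in $\Lambda_{\e,\tilde A}^{\ell_1}\cap\Lambda_{\e,\mathcal{A}}^{\ell_2}\subset\Lambda_\e^{\max(\ell_1,\ell_2)}$. Hence $\bigcup_\ell\Lambda_\e^\ell$ is exactly the regular set, which has full measure by Oseledets' theorem.

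The main obstacle I expect is the first half of (3): that every regular point satisfies the uniform-in-$k$ tempered bounds for some finite $\ell$. In particular the lower bounds with $e^{-\e|k\pm m|}$ and the angle bound need $\e$-tempering of the Lyapunov norm. I would cite \cite{BarreiraPesinNonUniform} for this rather than reprove it, possibly restricting "regular" to the full-measure set where the tempering kernel is finite.
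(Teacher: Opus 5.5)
Your proposal is correct and follows the paper's route: the paper likewise reduces each item to the corresponding property of $\Lambda_{\e,\tilde A}^\ell$ and $\Lambda_{\e,\mathcal{A}}^\ell$ and appeals to the argument of \cite[Proposition 2.2.7]{BarreiraPesinNonUniform}. You additionally spell out the triangle-inequality bookkeeping for (2) and the nestedness step $\Lambda_{\e,\tilde A}^{\ell_1}\cap\Lambda_{\e,\mathcal{A}}^{\ell_2}\subset\Lambda_\e^{\max(\ell_1,\ell_2)}$ that the paper leaves implicit in (3); your hedge about restricting to a tempering-kernel set is unnecessary, since every Oseledets subspace here is one-dimensional and the bounds hold at every regular point directly from $\log\|\tilde A_n(x)w\|=\pm\lambda_2 n+o(|n|)$ for unit $w\in E^u_2(x)$, resp.\ $E^s_2(x)$, together with the subexponential angle decay.
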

  	\begin{proof}
  		Each property for $\Lambda_\e^\ell$ follows from the analogous properties holding for both $\Lambda_{\e,\tilde{A}}^\ell$ and $\Lambda_{\e,\mathcal{A}}^\ell$. The properties for $\Lambda_{\e,\tilde{A}}^\ell$ and $\Lambda_{\e,\mathcal{A}}^\ell$ follow by the same arguments given in \cite[Proposition 2.2.7]{BarreiraPesinNonUniform}.
  	\end{proof}

\subsection{Regularity of $E^s_2$ and $E^u_2$}\label{sec:regularityE^s_E^u}
	
	For subspaces $E,F\subset \R^n$ let $\dist(E,F)$ be the gap metric \cite[(1.3)]{AraujoBufetovFilip}. If $E$ and $F$ are one-dimensional, then
		\begin{equation}\label{eq:angle_distance_subspaces}
			\dist(E,F)=\sin \left(\angle (E,F)\right)
		\end{equation}
	
	The stable and unstable distributions $E^u_2$ and $E^s_2$ for $\tilde A$ are H\"older on Pesin sets with a H\"older constant that grows subexponentially along $g$-orbits. More precisely, 
	
	\begin{prop}\label{prop:E^sE^uHolderPesin}
		There exists $\beta\in (0,1)$ such that for all $\e\in (0,\frac{\lambda_2}{4})$, $\ell\geq 1$, there exists a constant $C_{\e,\ell}>0$ such that $\forall k\in \Z$ and for $\sigma\in \{s,u\}$, if $x,y\in \Lambda_\e^\ell$ with $d(g^kx,g^ky)\leq 1$, then 
			\begin{equation*}
				\dist\left( E^\sigma_2(g^kx), E^\sigma_2(g^ky) \right) \leq C_{\e,\ell}e^{6\e|k|} d(g^kx,g^ky)^\beta
			\end{equation*}
	\end{prop}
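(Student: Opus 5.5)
We follow the standard Brin-type argument for H\"older continuity of Oseledets subspaces on Pesin sets (as in \cite[Theorem 5.3.1]{BarreiraPesinNonUniform}), adapted so that the H\"older constant is tracked along the orbit. By Proposition \ref{prop:PesinSetsFacts}(2), $g^kx,g^ky\in\Lambda_\e^{\ell e^{\e|k|}}$. It therefore suffices to prove a $k=0$ estimate of the form
$$
\dist(E^\sigma_2(x),E^\sigma_2(y))\leq C\ell^{p}d(x,y)^\beta
$$
for $x,y\in\Lambda_\e^{\ell}$ with $d(x,y)\le1$, with $C,\beta$ independent of $\ell$ and some fixed power $p$. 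Substituting $\ell'=\ell e^{\e|k|}$ then gives the factor $e^{p\e|k|}$. We aim for $p\le 6$; this requires bookkeeping of how many factors of $\ell$ enter. We treat $\sigma=s$; the case $\sigma=u$ follows by applying the argument to $g^{-1}$ and the inverse cocycle.

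\textbf{Step 1 (perturbation of finite products).} Because $A$ is $C^\alpha$ and $g$ is $C^{1}$, there is a constant $L$ with $\|\tilde A(gz)\|,\|\tilde A(z)^{-1}\|\le e^{L}$ on the relevant set. To avoid lift ambiguity we work with $\mathcal{A}=\Ad(A)$, which is $\alpha$-H\"older, or with local continuous lifts. A telescoping argument then gives
$$
\|\tilde A_m(x)-\tilde A_m(y)\|\le C e^{Lm}d(x,y)^{\alpha}
$$
for $0\le m$ with $e^{Lm}d(x,y)\le 1$. If $\log\|A\|$ is only integrable rather than bounded, we first replace $L$ by a bound valid on the compact set on which we are working, using that $A$ is continuous on compact $M$.

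\textbf{Step 2 (characterize $E^s_2$ by growth).} For $z\in\Lambda_\e^\ell$, decompose a unit vector as $v=v_s+v_u$. The angle bound (3) gives $\|v_u\|\ge \ell^{-1}\dist(v,E^s_2(z))$ up to constants. Items (1) and (2) then give
$$
\|\tilde A_m(z)v\|\ge \ell^{-2}e^{(\lambda_2-\e)m}\dist(\mathbb R v,E^s_2(z))-\ell e^{(-\lambda_2+\e)m}.
$$
Take $v$ a unit vector in $E^s_2(y)$. Then $\|\tilde A_m(y)v\|\le \ell e^{(-\lambda_2+\e)m}$, so by Step 1
$$
\|\tilde A_m(x)v\|\le \ell e^{(-\lambda_2+\e)m}+Ce^{Lm}d^\alpha .
$$
Combining the two bounds yields
$$
\dist(E^s_2(y),E^s_2(x))\le \ell^{2}\big(\ell e^{-2(\lambda_2-\e)m}+Ce^{(L-\lambda_2+\e)m}d^\alpha\big).
$$

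\textbf{Step 3 (optimize $m$).} Choose $m$ so that $e^{Lm}d^{\alpha/2}\approx1$, i.e. $m\approx\frac{\alpha}{2L}\log(1/d)$. Both terms are then bounded by $\ell^{3}d^{\beta}$ with
$$
\beta=\min\Big(\frac{\alpha\lambda_2}{2L},\frac{\alpha}{2}\Big)\Big/2,
$$
using $\e<\lambda_2/4$ to keep the exponents negative. The constant does not depend on $\ell$ except through $\ell^3$. If $m$ would be less than $1$, we use the trivial bound $\dist\le1\le C d^\beta$.

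\textbf{Main obstacle.} The delicate point is the exponent bookkeeping in Step 2, so that the total power of $\ell$ is at most $6$. This yields $C_{\e,\ell}e^{6\e|k|}$ after the substitution $\ell\mapsto \ell e^{\e|k|}$. It may be necessary to use the tempered versions of items (1)--(3) directly at $g^kx$, whose factors $e^{\e|k|}$ appear explicitly, rather than re-basing the Pesin constant. A secondary concern is the lift ambiguity of $\tilde A$, which we handle through the adjoint cocycle $\mathcal{A}$. Corollary \ref{cor:stable_unstable_osceledets_subspaces_Ad} identifies $E^s_3$ with $E^s_2$ H\"older-bi-Lipschitzly, so the same argument can be run for $\mathcal{A}$ on $\Lambda^\ell_{\e,\mathcal{A}}$ and transferred back to $E^s_2$.
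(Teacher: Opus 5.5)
Your proposal is correct. Its core is the same Brin-type mechanism the paper uses: perturb the finite-time cocycle, use the Pesin growth and angle bounds at $k=0$, choose $m\approx c\log(1/d)$, and then re-base $\ell\mapsto \ell e^{\e|k|}$ via Proposition \ref{prop:PesinSetsFacts}(2).

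The difference is which cocycle you run it on. The paper explicitly declines to work with $\tilde A$, on the grounds that it need not be continuous. It instead applies the Araujo--Bufetov--Filip argument to the globally $\alpha$-H\"older adjoint cocycle $\mathcal{A}$, using the $\Lambda^\ell_{\e,\mathcal{A}}$ half of $\Lambda^\ell_\e$. This gives $64\ell^6(\cdots)d^\beta$, which is where the $e^{6\e|k|}$ comes from. It then transfers to $E^\sigma_2$ via Corollary \ref{cor:stable_unstable_osceledets_subspaces_Ad}: for unit $X_z\in E^\sigma_3$, the matrix $\id-X_z^TX_z$ is the orthogonal projection onto $E^\sigma_2$, so $\dist(E^\sigma_2)\le 2\sqrt2\,\dist(E^\sigma_3)$.

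Your primary route uses orbitwise sign-consistent local lifts instead. For each $j<m$ you pick $\sigma_j=\pm1$ with $\|\tilde A(g^jx)-\sigma_j\tilde A(g^jy)\|\lesssim d(g^jx,g^jy)^\alpha$, then telescope. This works because $\|\tilde A_m(\cdot)v\|$ and the Oseledets subspaces do not depend on the choice of lift. It needs only the $\Lambda^\ell_{\e,\tilde A}$ conditions, skips the transfer step, and gives a smaller power of $\ell$. The paper's route buys the ability to quote a published theorem for an honestly H\"older cocycle, at the cost of the extra $\mathcal{A}$-Pesin conditions and the transfer lemma.

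A few bookkeeping points:
\begin{itemize}
\item At $k=0$, item (2) of Definition \ref{def:PesinSets} gives the rate $\lambda_2-2\e$, not $\lambda_2-\e$, because of the factor $e^{-\e|k+m|}$.
\item In $v=v_s+v_u$ you need $\|v_s\|\le 1/\sin\theta_x\le 2\ell$.
\item Conversely, $\|v_u\|\ge \dist(\R v,E^s_2(x))$ holds with no $\ell^{-1}$ loss.
\item Your $L$ must also absorb $\alpha\log\Lip(g)$ from $d(g^jx,g^jy)\le \Lip(g)^jd$.
\end{itemize}
With these corrections you still get $C\ell^3d^\beta$ with $\beta=\min\{\alpha\lambda_2/(2L),\alpha/2\}$, independent of $\e$ and $\ell$. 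Re-basing then gives $e^{3\e|k|}\le e^{6\e|k|}$, so the ``main obstacle'' you flag is not actually an obstacle.
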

	\begin{proof}
		See Appendix \ref{appendix:pesin_regularity_osceledets}
	\end{proof}

	\begin{cor}\label{cor:theta_x_holder}
		Under the same hypotheses as Proposition \ref{prop:E^sE^uHolderPesin}, 
			\begin{equation*}
				|\theta_{g^kx}-\theta_{g^ky}|\leq 4C_{\e,\ell}e^{6\e|k|}d(g^kx,g^ky)^{\beta}
			\end{equation*}
	\end{cor}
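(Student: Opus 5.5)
The plan is to reduce the claim to Proposition \ref{prop:E^sE^uHolderPesin} via the triangle inequality for the angle metric on lines in $\R^2$, and then convert angles to the gap distance using \eqref{eq:angle_distance_subspaces}.

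First, I would note that for one-dimensional subspaces $L_1,L_2\subset\R^2$ the unoriented angle $\angle(L_1,L_2)\in[0,\frac{\pi}{2}]$ is a metric on $\mathbb{RP}^1$. Indeed, it is the quotient of the arc-length metric on the unit circle $S^1$ by the antipodal map:
$$\angle(L_1,L_2)=\min\{\angle(v_1,v_2)\colon v_i\in L_i,\ \|v_i\|=1\}.$$
The triangle inequality is inherited from $S^1$, since minimizing over representatives preserves it.

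Second, set $a=g^kx$ and $b=g^ky$. Applying the triangle inequality twice gives
$$\theta_a=\angle(E^u_2(a),E^s_2(a))\le \angle(E^u_2(a),E^u_2(b))+\theta_b+\angle(E^s_2(b),E^s_2(a)).$$
Together with the symmetric inequality obtained by swapping $a$ and $b$, this yields
$$|\theta_a-\theta_b|\le \angle(E^u_2(a),E^u_2(b))+\angle(E^s_2(a),E^s_2(b)).$$

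Third, I would pass from angles to the gap distance. For $\varphi\in[0,\frac{\pi}{2}]$, concavity of $\sin$ gives $\sin\varphi\ge\frac{2}{\pi}\varphi$, so $\varphi\le\frac{\pi}{2}\sin\varphi$. By \eqref{eq:angle_distance_subspaces}, each angle above is therefore at most $\frac{\pi}{2}\dist(E^\sigma_2(a),E^\sigma_2(b))$. Since $x,y\in\Lambda^\ell_\e$ and $d(a,b)\le1$, Proposition \ref{prop:E^sE^uHolderPesin} bounds each of these distances by $C_{\e,\ell}e^{6\e|k|}d(a,b)^\beta$. Summing the two terms gives
$$|\theta_a-\theta_b|\le \pi C_{\e,\ell}e^{6\e|k|}d(g^kx,g^ky)^\beta\le 4C_{\e,\ell}e^{6\e|k|}d(g^kx,g^ky)^\beta.$$

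There is no real obstacle here. The only point needing care is that the angles are unoriented, i.e.\ they live in $[0,\frac{\pi}{2}]$. This is exactly why I would use the $\mathbb{RP}^1$ quotient metric rather than angles between chosen unit vectors, since sign flips in $v_u$ or $v_s$ could otherwise break the triangle inequality.
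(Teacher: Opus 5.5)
Your argument is correct and is essentially the paper's proof, which is just the remark that the bound follows from the triangle inequality and Proposition~\ref{prop:E^sE^uHolderPesin}; you have filled in the details, namely the quotient angle metric on lines and the conversion from $\sin$ back to the angle. The paper's constant $4$ presumably comes from applying $\sin t\ge t/2$ on $[0,\frac{\pi}{2}]$ to each of the two terms, while your sharper Jordan inequality gives $\pi C_{\e,\ell}$, which is also at most $4C_{\e,\ell}$.
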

	\begin{proof}
		Immediate from the triangle inequality and Proposition \ref{prop:E^sE^uHolderPesin}.
	\end{proof}

\subsection{Osceledets-Pesin Reduction} \label{subsub:Osceledets_Pesin_Reduction}
	The following result, known as the Osceldets-Pesin Reduction, allows us to ``diagonalize" $\tilde A(x)$. The result in its standard form can be found in \cite[Theorem 3.5.5]{BarreiraPesinNonUniform}. We will use a slightly modified form: 
	
	\begin{theorem}\label{thm:Osceledets_Pesin_Reduction}
		Let $x\in M$ be a regular point. Let $\theta_x\in \left( 0,\frac{\pi}{2}\right]$ be the angle between $E^u_2(x)$ and $E^s_2(x)$. There exists a matrix $\tilde P(x)\in SL_2\R$ such that 
		\begin{enumerate}[(1)]
		\item 
			$\tilde P(x)$ sends the orthogonal decomposition of $\R^2$ to the decomposition $\R^2=E^u_2(x)\oplus E^s_2(x)$. 
		\item 
			The cocycle $\tilde B(x):=\tilde P(gx)^{-1}\tilde A(x)\tilde P(x)\in SL_2\R$ has diagonal form $\tilde B(x)=\begin{pmatrix} b(x) & 0 \\ 0 & b(x)^{-1}\end{pmatrix}$, where $b(x)\neq 0$. 
		\item 
			$\log|b(x)|\in L^1(\mu)$ and $\log^+\|\tilde B\|_{2,op}\in L^1(\mu)$.
		\item 
			The Lyapunov exponents for $\tilde B(x)$ are $\pm \lambda_2$.
		\item 
			$\tilde P$ is tempered $\mu$-a.e. i.e. for $\mu$-a.e. $x\in M$, 
				\begin{equation*}
					\lim_{n\to \pm\infty} \frac{1}{n}\log\|\tilde P(g^nx)\|_{2,op}=\lim_{n\to \pm\infty} \frac{1}{n}\log\|\tilde P(g^nx)^{-1}\|_{2,op}=0
				\end{equation*} \label{thm:Osceledets_Pesin_Reduction_tempering_part}
		\end{enumerate}
	\end{theorem}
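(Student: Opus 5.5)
The plan is to construct $\tilde P(x)$ explicitly from the unit vectors $v_u(x)\in E^u_2(x)$ and $v_s(x)\in E^s_2(x)$ fixed in Section \ref{sec:osceledets_A}. Let $Q(x)$ be the matrix with columns $v_u(x)$ and $v_s(x)$; choosing the sign of $v_s(x)$ measurably, we may assume $\det Q(x)=\sin\theta_x>0$. Set
\begin{equation*}
\tilde P(x)=(\sin\theta_x)^{-1/2}Q(x)\in SL_2\R .
\end{equation*}
Then $\tilde P(x)e_1\in E^u_2(x)$ and $\tilde P(x)e_2\in E^s_2(x)$, which gives (1). For (2), invariance of the Oseledets splitting gives $\tilde A(x)v_\sigma(x)=a^\sigma_1(x)v_\sigma(gx)$ for $\sigma\in\{u,s\}$. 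Hence $\tilde B(x)=\tilde P(gx)^{-1}\tilde A(x)\tilde P(x)$ maps $e_1$ and $e_2$ to multiples of themselves, so it is diagonal. Since $\tilde B(x)\in SL_2\R$, it has the form $\diag(b,b^{-1})$ with $b(x)=a^u_1(x)\,(\sin\theta_{gx}/\sin\theta_x)^{1/2}\neq 0$. Measurability of $\tilde P$ follows from measurability of the Oseledets subspaces.

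Temperedness (5) comes next. We have $\|\tilde P(x)\|\le \sqrt2\,(\sin\theta_x)^{-1/2}$, and since $\det\tilde P=1$ we also get $\|\tilde P(x)^{-1}\|=\|\tilde P(x)\|$. In addition, $\|\tilde P(x)\|\ge1$ holds automatically. By the subexponential decay of angles from the Oseledets theorem, $\frac1n\log\sin\theta_{g^nx}\to0$, which forces $\frac1n\log\|\tilde P(g^nx)^{\pm1}\|\to0$ as $n\to\pm\infty$.

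Item (4) follows from (5). Write $\tilde B_n(x)=\tilde P(g^nx)^{-1}\tilde A_n(x)\tilde P(x)$. Then $\tilde B_n(x)e_1$ is a multiple of $e_1$, and its norm differs from $\|\tilde A_n(x)\tilde P(x)e_1\|$ by at most the factor $\|\tilde P(g^nx)^{\pm1}\|$, which is subexponential. Because $\tilde P(x)e_1$ is a nonzero vector in $E^u_2(x)$, the growth rate is $\lambda_2$. The same argument along $e_2$ gives $-\lambda_2$.

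The main obstacle is item (3), the integrability. We have $\log|b(x)|=\log|a^u_1(x)|+\tfrac12\log\sin\theta_{gx}-\tfrac12\log\sin\theta_x$. The first term is bounded in absolute value by $\log\|\tilde A(x)\|+\log\|\tilde A(x)^{-1}\|$, which lies in $L^1$. However, $\log\sin\theta_x$ need not be integrable, so the coboundary term is the difficulty. I would handle it the way the standard Oseledets–Pesin reduction does, as in \cite[Theorem 3.5.5]{BarreiraPesinNonUniform}. If $\log \sin\theta$ is not integrable, replace the normalization factor by a tempered function $K(x)$ built from the Lyapunov norm, $\|v\|'_x=\sum_{n\in\Z}e^{-\lambda_2 n-\e|n|}\|\tilde A_n(x)v\|$ on $E^u_2$, and analogously on $E^s_2$. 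This choice makes $|b(x)|$ comparable to $\|\tilde A(x)|_{E^u_2}\|$ up to $e^{\pm\e}$. In particular $\lambda_2-\e\le\log|b(x)|\le\lambda_2+\e$ on the full-measure set where these norms are finite, so $\log|b|\in L^\infty\subset L^1$. One then rescales $v_u,v_s$ by the Lyapunov norms, keeping $\det=1$ by putting the correction into the $E^s_2$ vector. Temperedness of the Lyapunov norm relative to the Euclidean one is the tempering kernel lemma, and (5) still holds. Finally, $\log^+\|\tilde B\|=|\log|b||$ is then integrable, which completes (3).
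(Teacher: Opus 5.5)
Your proposal is correct. For items (1), (2), (4), (5) it is essentially the paper's argument: the paper computes $\|\tilde P(x)\|=\|\tilde P(x)^{-1}\|=(\tan(\theta_x/2))^{-1/2}$ exactly, where you use $1\le\|\tilde P(x)\|\le\sqrt2(\sin\theta_x)^{-1/2}$, and either gives temperedness.

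The divergence is item (3), and the ``main obstacle'' you identify there is not one. Although $\log\sin\theta_x$ need not be integrable, the coboundary term is. Since $\tilde A(x)\in SL_2\R$ preserves area and maps $v^u(x),v^s(x)$ into $E^u_2(gx),E^s_2(gx)$,
\[
\sin\theta_{gx}\,\|\tilde A(x)v^u(x)\|\,\|\tilde A(x)v^s(x)\|=\bigl|\det\bigl(\tilde A(x)v^u(x),\tilde A(x)v^s(x)\bigr)\bigr|=\sin\theta_x ,
\]
which yields the bound \eqref{eq:osceledets_bound_ratio_angles_norm} that the paper uses. Plugged into your formula for $b$, this gives $|b(x)|=\bigl(\|\tilde A(x)v^u(x)\|/\|\tilde A(x)v^s(x)\|\bigr)^{1/2}$. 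Hence $\bigl|\log|b(x)|\bigr|\le\log\|\tilde A(x)\|\in L^1$ directly, with no case split.

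Your Lyapunov-norm fallback is also valid. Normalizing $\tilde P(x)e_1$ to Lyapunov norm one and fixing the $E^s_2$ column by $\det=1$ gives $|\log|b|-\lambda_2|\le\e$. Note that $|b(x)|$ matches the Lyapunov operator norm of $\tilde A(x)|_{E^u_2}$, not the Euclidean one, which is what you wrote. This route buys a stronger, bounded $\log|b|$, i.e.\ the classical $\e$-reduction.

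It costs something this paper needs, though. $\tilde P$ would then depend on a merely measurable Lyapunov norm. The paper keeps the explicit $\tilde P(x)=(\sin\theta_x)^{-1/2}(v^u\ v^s)$ precisely because the decomposition $|b|=b_1b_2$ with $b_1=(\sin\theta_{gx}/\sin\theta_x)^{1/2}$, and the norm formula for $\tilde P$, are reused for the H\"older estimates for $\tau$ and $P$ on Pesin sets (Propositions \ref{prop:tau_holder_on_pesin}--\ref{prop:P_holder_on_pesin_unstables}). Those estimates would not follow directly from your normalization.
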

	
	\begin{proof}
		The proof is standard, but we include an outline of it in Appendix \ref{appendix:proof_osceledets_pesin_reduction} because we will use several of the intermediate steps and bounds later. 
	\end{proof}
	
	Let $P(x)\in PSL_2\R$ be the projection of $\tilde{P}(x)$. Projecting $\tilde{B}(x)$ to $PSL_2\R$ and choosing the representative with positive diagonal entries gives
		\begin{equation}\label{eq:osceledets_pesin_diagonalize_A}
			B(x)=P(gx)^{-1}A(x)P(x)
			=\begin{pmatrix}
				e^{\tau(x)} & 0 \\ 0 & e^{-\tau(x)}
			\end{pmatrix}, 
			\qquad \tau(x):=\log|b(x)|
		\end{equation}
		
\subsection{Regularity of the Osceledets-Pesin Reduction}
	In this section we give regularity estimates for the diagonal cocycle $B$ and for the conjugacy $P$ that will be used in sections \ref{sec:diagonal_skew_product} and \ref{sec:unstable_f}. 
	
	For $\e>0$ and $\ell\geq 1$, we define the Pesin set $\bar{\Lambda}_\e^\ell$ analogously to how we defined the Pesin set $\Lambda_{\e,\tilde{A}}^\ell$ for $\tilde A$ in Definition \ref{def:PesinSets}.
	
	We first establish the H\"older regularity of the diagonal cocycle $B$.
	
	\begin{prop}\label{prop:tau_holder_on_pesin} %in D14 was {cor:tau_holder_on_pesin}
		There exists $\alpha_b\in (0,1)$ such that for all $\e\in (0,\frac{\lambda_2}{4})$ and $\ell\geq 1$, there exists a constant $C_\tau=C_\tau(\e,\ell)>0$ such that for all $k\in \Z$, if $x,y\in \Lambda_\e^\ell \cap \bar\Lambda_\e^\ell$ with $d(g^kx,g^ky)\leq 1$, then 
			$$
				\left| \tau(g^kx)-\tau(g^ky)\right| \leq C_\tau e^{7\e|k|}d(g^kx,g^ky)^{\alpha_b}
			$$
	\end{prop}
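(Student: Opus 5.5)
We write $\tau(z)$ as an explicit expression in $A(z)$ and the Oseledets data, and then combine the Hölder regularity of $A$ with Proposition \ref{prop:E^sE^uHolderPesin} and Corollary \ref{cor:theta_x_holder}.

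\textbf{Step 1: a formula for $\tau$.} By Theorem \ref{thm:Osceledets_Pesin_Reduction}(1),(2), $\tilde P(z)$ maps $e_1$ to a vector in $E^u_2(z)$. From the outline in Appendix \ref{appendix:proof_osceledets_pesin_reduction}, the standard choice is $\tilde P(z)e_1=c(z)v_u(z)$, with normalization $c(z)=(\sin\theta_z)^{-1/2}$ (or a comparable explicit function of $\theta_z$) forcing $\det \tilde P(z)=1$. Then $\tilde A(z)\tilde P(z)e_1=b(z)\tilde P(gz)e_1$ gives
$$
\tau(z)=\log|b(z)|=\log\|\tilde A(z)v_u(z)\|+\log c(z)-\log c(gz).
$$
Equivalently, $\tau(z)=\log\|A(z)v_u(z)\|+\tfrac12\log\sin\theta_{gz}-\tfrac12\log\sin\theta_z$. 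Each term is well defined on $PSL_2\R$, since the sign ambiguity disappears under the norm.

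\textbf{Step 2: bound the first term.} Set $z=g^kx$ and $w=g^ky$. We have
$$
\bigl|\log\|A(z)v_u(z)\|-\log\|A(w)v_u(w)\|\bigr|\le \|A(z)^{-1}\|\,\|A(w)^{-1}\|\cdot\bigl(\|A(z)-A(w)\|+\|A(w)\|\,|v_u(z)-v_u(w)|\bigr),
$$
after choosing signs of $v_u$ consistently, so that $|v_u(z)-v_u(w)|\lesssim\dist(E^u_2(z),E^u_2(w))$. Since $M$ is compact and $A$ is $C^\alpha$, the norms $\|A\|$, $\|A^{-1}\|$ are bounded and $\|A(z)-A(w)\|\le C d(z,w)^\alpha$. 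Proposition \ref{prop:E^sE^uHolderPesin} bounds the distance term by $C_{\e,\ell}e^{6\e|k|}d^\beta$.

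\textbf{Step 3: bound the angle terms.} On the Pesin set, $\sin\theta_z\ge \ell^{-1}e^{-\e|k|}$ (Definition \ref{def:PesinSets}(3)). Because $\log$ is Lipschitz with constant $\ell e^{\e|k|}$ on $[\ell^{-1}e^{-\e|k|},1]$, Corollary \ref{cor:theta_x_holder} gives
$$
|\log\sin\theta_z-\log\sin\theta_w|\le 4\ell C_{\e,\ell}e^{7\e|k|}d(z,w)^\beta.
$$

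\textbf{Step 4: the $gz$ term, which is the main obstacle.} The term $\log\sin\theta_{gz}$ is more delicate. We need $d(gz,gw)\le \Lip(g)\,d(z,w)$, but $d(gz,gw)\le 1$ may fail; this is handled by shrinking the hypothesis, using the bound $|\tau|\le C$ when the distance is large, or absorbing into constants. We also need the Pesin estimates at the index $k+1$, which cost only a factor $e^{\e}$, absorbed into $C_\tau$.

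Here the hypothesis $x,y\in\bar\Lambda^\ell_\e$ may be needed if the actual normalization of $\tilde P$ in the appendix involves quantities controlled by the Pesin set of $\tilde B$, for instance if $c$ is chosen so that $\|\tilde P\|$ is tempered. In that case we use the $\bar\Lambda$ bounds to keep $\log c$ Lipschitz with subexponential constant. Taking $\alpha_b=\min(\alpha,\beta)$, using $d\le1$ so that $d^\alpha\le d^{\alpha_b}$, and collecting constants gives the bound with $e^{7\e|k|}$.
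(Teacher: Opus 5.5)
Your argument is correct, and it uses the same decomposition and inputs as the paper's. The paper writes $|b(z)|=b_1(z)b_2(z)$ with $b_1=(\sin\theta_{gz}/\sin\theta_z)^{1/2}$ and $b_2=\|\tilde A(z)v^u(z)\|$, which is your formula before taking logs. Your guessed normalization $\tilde P(z)=(\sin\theta_z)^{-1/2}\begin{pmatrix}v^u(z)&v^s(z)\end{pmatrix}$ is exactly the one in Appendix~\ref{appendix:proof_osceledets_pesin_reduction}. The paper controls the pieces with H\"older continuity of $A$, Proposition~\ref{prop:E^sE^uHolderPesin}, Corollary~\ref{cor:theta_x_holder} at times $k$ and $k+1$, and Definition~\ref{def:PesinSets}(3), as you do.

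The difference is the order of operations. The paper first shows that $|b|$ is H\"older. It handles $b_1$ via $|\sqrt{t_1}-\sqrt{t_2}|\le|t_1-t_2|^{1/2}$, which halves the exponent and gives $\alpha_b=\min\{\alpha,\beta/2\}$. Only then does it pass to $\tau=\log|b|$ by the mean value theorem, using a lower bound on $|b(g^kz)|$ taken from $z\in\bar\Lambda^\ell_\e$. That is where $\bar\Lambda^\ell_\e$ is used and where $e^{6\e|k|}$ becomes $e^{7\e|k|}$.

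You take logarithms first, so the lower bounds you need are the uniform one $\|\tilde A(z)v^u(z)\|\ge\|A(z)^{-1}\|^{-1}$ and the angle bound on $\Lambda^\ell_\e$. The latter gives $\sin\theta_{g^kz}\ge\frac{2}{\pi}\ell^{-1}e^{-\e|k|}$, which differs from what you wrote only by a harmless constant. Your route buys the better exponent $\alpha_b=\min\{\alpha,\beta\}$ and never uses $x,y\in\bar\Lambda^\ell_\e$, so you can drop your closing hedge about $\bar\Lambda^\ell_\e$. In fact \eqref{eq:osceledets_bound_ratio_angles_norm} gives $|b|\ge(\sup_M\|A\|)^{-2}$ everywhere, so $\bar\Lambda^\ell_\e$ is dispensable in the paper's route as well.

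On Step 4, you are right that Corollary~\ref{cor:theta_x_holder} at time $k+1$ needs $d(g^{k+1}x,g^{k+1}y)\le 1$, a point the paper passes over. Shrinking the hypothesis is not an option, but the bounded-$\tau$ fix is easy to make precise:
\begin{itemize}
\item By \eqref{eq:osceledets_bound_ratio_angles_norm}, $\tfrac12|\log\sin\theta_{gz}-\log\sin\theta_z|\le\log\|\tilde A(z)\|$.
\item Also $|\log\|\tilde A(z)v^u(z)\||\le\log\|\tilde A(z)\|$.
\item Hence $|\tau|\le 2\log\sup_M\|A\|$.
\end{itemize}
If $L_g^{-1}\le d(g^kx,g^ky)\le 1$, the estimate then holds trivially with constant $4L_g^{\alpha_b}\log\sup_M\|A\|$. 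Otherwise $d(g^{k+1}x,g^{k+1}y)<1$ and the corollary applies. Alternatively, since $|\theta_{g^kx}-\theta_{g^ky}|\le\pi/2$, Corollary~\ref{cor:theta_x_holder} holds at all distances once its constant is enlarged.
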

	\begin{proof}
		See Appendix \ref{appendix:proof_tau_holder}. 
	\end{proof}
	
	Now, we turn to the regularity of the conjugacy $P$. The following estimate follows from the Holder regularity of the stable and unstable distributions $E^s_2$ and $E^u_2$ on Pesin sets:

	\begin{prop}\label{prop:P_holder_on_pesin-norm}
		For all $\e\in (0,\frac{\lambda_2}{4})$ and $\ell\geq 1$, there exists a constant $C_{P,1}=C_{P,1}(\e,\ell)>0$ such that $\forall k\in \Z$, if $x,y\in \Lambda_\e^\ell$ with $d(g^kx,g^ky)\leq 1$ then, 
			$$
				\min\left\{ \left\|\tilde P(g^kx) - \tilde P(g^ky) \right\|, \left\|\tilde P(g^kx) + \tilde P(g^ky) \right\| \right\} \leq C_{P,1} e^{8\e|k|}d(g^kx,g^ky)^\beta
			$$
	\end{prop}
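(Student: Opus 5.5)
The plan is to write $\tilde P(x)$ explicitly in terms of the Oseledets directions and then control each ingredient using Proposition \ref{prop:E^sE^uHolderPesin} and Corollary \ref{cor:theta_x_holder}. Following the construction in Appendix \ref{appendix:proof_osceledets_pesin_reduction}, $\tilde P(x)$ sends the standard basis to multiples of the unit vectors $v_u(x)\in E^u_2(x)$ and $v_s(x)\in E^s_2(x)$. Concretely,
$$\tilde P(x)=\frac{1}{\sqrt{\sin\theta_x}}\bigl(v_u(x)\ \ v_s(x)\bigr),$$
where the normalization by $(\sin\theta_x)^{-1/2}$ ensures $\det=1$ once the orientation of $v_s$ is chosen so that the determinant $\sin\theta_x$ is positive. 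The choice of unit vectors $v_\sigma(x)$ is determined only up to sign. This sign ambiguity, together with the orientation constraint, is exactly why the statement involves $\min\{\|\tilde P(g^kx)-\tilde P(g^ky)\|,\|\tilde P(g^kx)+\tilde P(g^ky)\|\}$.

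\textbf{Step 1: aligning the unit vectors.} Write $x_k=g^kx$ and $y_k=g^ky$. By Proposition \ref{prop:E^sE^uHolderPesin} and \eqref{eq:angle_distance_subspaces}, for $\sigma\in\{u,s\}$,
$$\sin\angle\bigl(E^\sigma_2(x_k),E^\sigma_2(y_k)\bigr)\le C_{\e,\ell}e^{6\e|k|}d(x_k,y_k)^\beta.$$
Replacing $v_\sigma(y_k)$ by $-v_\sigma(y_k)$ if necessary, we may assume $\|v_\sigma(x_k)-v_\sigma(y_k)\|\le 2C_{\e,\ell}e^{6\e|k|}d(x_k,y_k)^\beta$. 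Each sign flip changes $\tilde P(y_k)$ by a column sign flip, and we must check that this does not break the determinant condition. If the Hölder bound is small, the aligned pairs have $\det(v_u,v_s)$ of the same sign at $x_k$ and $y_k$, because the determinant is close to $\pm\sin\theta$ and $\sin\theta_{x_k}\ge\ell^{-1}e^{-\e|k|}$ on the Pesin set. So either no flip or both columns are flipped, which amounts to replacing $\tilde P(y_k)$ by $-\tilde P(y_k)$; this is the case accounted for by the $\min$. If the Hölder bound is not small, we instead bound both sides crudely: $\|\tilde P\|\le(\sin\theta)^{-1/2}\cdot\sqrt2\le\sqrt{2\ell}\,e^{\e|k|/2}$. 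The inequality then holds after enlarging $C_{P,1}$, since $d^\beta\gtrsim$ const times $e^{-6\e|k|}$ in that regime.

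\textbf{Step 2: the normalization factor.} By Corollary \ref{cor:theta_x_holder} and the mean value theorem applied to $t\mapsto(\sin t)^{-1/2}$ on $[\min\theta,\pi/2]$, whose derivative is bounded by $C(\sin\theta)^{-3/2}\le C\ell^{3/2}e^{3\e|k|/2}$,
$$\bigl|(\sin\theta_{x_k})^{-1/2}-(\sin\theta_{y_k})^{-1/2}\bigr|\le C\ell^{3/2}e^{3\e|k|/2}\cdot 4C_{\e,\ell}e^{6\e|k|}d^\beta.$$

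\textbf{Step 3: combining.} Use the product decomposition
$$ab-a'b'=(a-a')b+a'(b-b'),$$
with $\|v\|=1$ and $(\sin\theta)^{-1/2}\le\sqrt\ell\,e^{\e|k|/2}$. The exponent obtained is $6\e+\tfrac32\e$ from the first term and $\tfrac12\e+6\e$ from the second, both at most $8\e$.

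The main obstacle is the bookkeeping in Step 1. One has to make the sign choice consistent with $\det\tilde P=1$, and also confirm that the exponent growth stays within $e^{8\e|k|}$. To handle this, I would first compute the exponents carefully as above; if they turn out to exceed $8\e$, I would absorb the excess by slightly adjusting how $\tilde P$ is normalized in the appendix.
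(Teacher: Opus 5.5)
Your proposal is correct and follows essentially the same route as the paper. Both write $\tilde P=(\sin\theta)^{-1/2}(v^u\ v^s)$, align the unit vectors at $g^ky$ with those at $g^kx$, and combine Proposition \ref{prop:E^sE^uHolderPesin}, Corollary \ref{cor:theta_x_holder} and the mean value theorem for $(\sin t)^{-1/2}$, which gives the exponent $\tfrac{15}{2}\e\le 8\e$. Your explicit small/large dichotomy for sign consistency is more careful than the paper, which fixes $\tilde v^s$ by the determinant and asserts its alignment without comment. One correction: smallness is measured against $\sin\theta_{g^kx}\gtrsim \ell^{-1}e^{-\e|k|}$, so the large regime is $d^\beta\gtrsim e^{-7\e|k|}$, not $e^{-6\e|k|}$; this still suffices, since $e^{8\e|k|}e^{-7\e|k|}\ge e^{\e|k|/2}$.
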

	\begin{proof}
		See Appendix \ref{appendix:proofs_holder_P(x)}.
	\end{proof}

	Proposition \ref{prop:P_holder_on_pesin-norm} doesn't quite show that $P$ is H\"older on Pesin sets, or at least not in a form that is independent of $x$ and $k$. This is because for arbitrary matrices $M_1,M_2\in SL_2\R$, our right-invariant Riemannian distance function $d_{SL_2\R}(M_1,M_2)$ is not equivalent to $\|M_1-M_2\|$. However, we can use that these distances are equivalent in a neighborhood of the identity (Lemma \ref{lem:SL2R_difference_equivalent_to_Riem_dist}) to show that Proposition \ref{prop:P_holder_on_pesin-norm} implies that $P$ is H\"older when restricted to local unstables and Pesin sets. 
	
	\begin{prop}\label{prop:P_holder_on_pesin_unstables}
		Take $\e\in (0,\min\{\frac{\lambda_2}{4},\frac{\lambda_g\beta}{18}\})$ and $\ell\geq 1$. Take $\tau_g\in (0,1)$. There exists a constant $C_{P,2}=C_{P,2}(\e,\ell,\tau_g)>0$ and $r_p=r_p(\e,\ell,\tau_g)\in (0,1)$ such that if $x,y\in \Lambda_\e^\ell \cap \mathcal{P}_{\tau_g}^{\delta_g}$ with $y\in W^u_g(x)\cap B(x,r_p)$, then $\forall k\geq 0$, 
			$$
				d_{PSL_2\R}\left( P(g^{-k}x),P(g^{-k}y) \right)\leq C_{P,2}e^{\left(9\e-\frac{\lambda_g \beta}{2}\right)k}d(x,y)^\beta
			$$
	\end{prop}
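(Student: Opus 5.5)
Fix $x,y$ and $k\geq 0$ as in the statement. We will combine Proposition \ref{prop:P_holder_on_pesin-norm} with the backward contraction along $W^u_g$ from \eqref{eq:g_unstable_exp_contracting2}, and then convert the resulting matrix-norm estimate into a Riemannian distance estimate using Lemma \ref{lem:SL2R_difference_equivalent_to_Riem_dist}. Since $y\in W^u_g(x)$ and $x\in \mathcal{P}_{\tau_g}$, \eqref{eq:g_unstable_exp_contracting2} gives
$$
d(g^{-k}x,g^{-k}y)\leq \tau_g^{-1}e^{-\frac{\lambda_g}{2}k}d(x,y).
$$
If $r_p\leq \tau_g$, this is at most $1$ for every $k$, so Proposition \ref{prop:P_holder_on_pesin-norm} applies with $-k$ in place of $k$. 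After choosing the sign of the lift $\tilde P(g^{-k}y)$, which does not change $P(g^{-k}y)\in PSL_2\R$, we get
$$
\|\tilde P(g^{-k}x)-\tilde P(g^{-k}y)\|\leq C_{P,1}\tau_g^{-\beta}e^{(8\e-\frac{\lambda_g\beta}{2})k}d(x,y)^\beta .
$$

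The next step converts this to $d_{PSL_2\R}$. By right-invariance,
$$
d_{SL_2\R}\big(\tilde P(g^{-k}x),\tilde P(g^{-k}y)\big)=d_{SL_2\R}(M,e),\qquad M:=\tilde P(g^{-k}x)\tilde P(g^{-k}y)^{-1}.
$$
We also have
$$
\|M-e\|\leq \|\tilde P(g^{-k}x)-\tilde P(g^{-k}y)\|\,\|\tilde P(g^{-k}y)^{-1}\|.
$$
So we need a bound on $\|\tilde P(g^{-k}y)^{-1}\|$ along the orbit. We will take it from the explicit construction of $\tilde P$ in Appendix \ref{appendix:proof_osceledets_pesin_reduction}. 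There $\tilde P$ sends the standard basis to unit vectors in $E^u_2,E^s_2$, rescaled to determinant one, so $\|\tilde P(z)^{\pm1}\|\lesssim (\sin\theta_z)^{-1/2}$ or so. On $\Lambda^\ell_\e$, item (3) of Definition \ref{def:PesinSets} gives $\sin\theta_{g^{-k}y}\geq c\,\ell^{-1}e^{-\e k}$, hence $\|\tilde P(g^{-k}y)^{-1}\|\leq C\ell e^{\e k}$. This is where the exponent rises from $8\e$ to $9\e$:
$$
\|M-e\|\leq C' e^{(9\e-\frac{\lambda_g\beta}{2})k}d(x,y)^\beta .
$$

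To use Lemma \ref{lem:SL2R_difference_equivalent_to_Riem_dist} we need $M\in U$. Since $\e<\lambda_g\beta/18$, the exponent $9\e-\lambda_g\beta/2$ is negative, so the right-hand side is at most $C'r_p^\beta$ uniformly in $k$. Choosing $r_p$ small in terms of $\e,\ell,\tau_g$ puts $M$ in $U$. Then
$$
d_{PSL_2\R}\big(P(g^{-k}x),P(g^{-k}y)\big)\leq d_{SL_2\R}\big(\tilde P(g^{-k}x),\tilde P(g^{-k}y)\big)\leq K\|M-e\|,
$$
which gives the claim with $C_{P,2}=KC'$.

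The main obstacle is the uniform-in-$k$ control of $\|\tilde P^{-1}\|$ along backward orbits, together with making sure the $\e$-losses add up to exactly $9\e$. If the appendix bound on $\|\tilde P\|$ grows like $e^{\e k}$ rather than $e^{\e k/2}$, the bookkeeping has to absorb it, for example by taking the factor of $\|\tilde P^{-1}\|$ at the point where it is cheaper. I would first check this against the tempering estimates recorded in the appendix.
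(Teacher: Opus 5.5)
Your proof is correct and is essentially the paper's argument. You combine Proposition~\ref{prop:P_holder_on_pesin-norm} with \eqref{eq:g_unstable_exp_contracting2}, multiply by the norm of $\tilde P^{-1}$ at one of the two points, choose $r_p$ so the product lies in the neighborhood $U$ of Lemma~\ref{lem:SL2R_difference_equivalent_to_Riem_dist}, and conclude by right-invariance. The paper uses $\|\tilde P(g^{-k}x)^{-1}\|\le 2\ell^{1/2}e^{\e k/2}$, which follows from Claim~\ref{claim:osceledets-pesin_norm_P(x)}, and you use the same bound at $g^{-k}y$.

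The worry in your last paragraph is moot, because that claim gives exactly $e^{\e k/2}$ growth. Your explicit check that $r_p\le\tau_g$ forces $d(g^{-k}x,g^{-k}y)\le 1$ supplies a hypothesis the paper uses without stating it.
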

	\begin{proof}
		See Appendix \ref{appendix:proofs_holder_P(x)}.
	\end{proof}

\section{Ergodicity of $f$}\label{sec:ergodicity_f}

	\begin{prop}\label{prop:f_ergodic}
		Assume $g:(M,\mu)\to (M,\mu)$ is invertible and ergodic and $A:M\to PSL_2\R$ is measurable with a nonzero Lyapunov exponent and $\log^+\|A^{\pm 1}\|\in L^1(\mu)$. Then, the skew product $f:M\times N\to M\times N$,  $f(x,y)=(g(x),A(x)y)$ is ergodic.
	\end{prop}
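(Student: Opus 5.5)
We plan to prove ergodicity of $f$ by reducing, via Theorem \ref{thm:Osceledets_Pesin_Reduction}, to the diagonal cocycle $B$, and then exploiting hyperbolicity of $B$ in the fiber. Let $\Phi(x,y)=(x,P(x)y)$. This is a measurable bijection of $M\times N$, defined a.e., and it preserves $\mu\times\nu$ because $\nu$ is left-invariant on $N$ and the map acts fiberwise by the single group element $P(x)$. By \eqref{eq:osceledets_pesin_diagonalize_A} we have $\Phi^{-1}\circ f\circ \Phi = f_{diag}$, where $f_{diag}(x,y)=(gx,B(x)y)$. It therefore suffices to show that $f_{diag}$ is ergodic. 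Note that only measurability of $A$ and the integrability hypothesis enter here, through the Oseledets–Pesin reduction.

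Let $\phi\in L^2(M\times N)$ be $f_{diag}$-invariant. The plan is to show that for a.e.\ $x$ the function $\phi(x,\cdot)$ is invariant under $U^-$ and under the opposite horocycle group $U^+$ acting on the left of $N$.

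\emph{Invariance along $U^-$.} Observe that $B(x)h_sB(x)^{-1}=h_{e^{2\tau(x)}s}$, so that $B_{-n}(x)h_s B_{-n}(x)^{-1}=h_{s e^{-2\sum_{j=1}^n\tau(g^{-j}x)}}$. The Birkhoff sum satisfies $\sum_{j=1}^n \tau(g^{-j}x)\sim n\lambda_2\to\infty$, by Theorem \ref{thm:Osceledets_Pesin_Reduction}(4) together with the ergodic theorem. Approximate $\phi$ in $L^2$ by a continuous function $\psi$. Then
\[
\|\phi(x,h_s\cdot)-\phi(x,\cdot)\|_{L^2(\mu\times\nu)}\le 2\|\phi-\psi\|+\|\psi\circ f_{diag}^{-n}\circ(\id,h_s)-\psi\circ f_{diag}^{-n}\|.
\]
The last term is bounded using invariance of $\phi$. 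After pulling back by $f_{diag}^{-n}$, the perturbation $h_s$ becomes $h_{s'}$ with $s'\to0$ for a.e.\ $x$, so it tends to $0$ by uniform continuity of $\psi$ and dominated convergence. Hence $\phi(x,h_sy)=\phi(x,y)$ for a.e.\ $(x,y)$ and each $s$; by Fubini and a countable dense set of $s$ this holds for all $s$.

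\emph{Invariance along $U^+$.} The same argument run forward in time, with $f_{diag}^n$, gives invariance along $U^+=\{\begin{pmatrix}1&0\\ s&1\end{pmatrix}\}$, which is contracted by forward iterates.

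Since $U^-$ and $U^+$ generate $PSL_2\R$, for a.e.\ $x$ the function $\phi(x,\cdot)$ is invariant under the full left $PSL_2\R$-action on $N$. This action is transitive, so Haar measure is ergodic, and $\phi(x,y)=\bar\phi(x)$ for a.e.\ $(x,y)$. (Alternatively, one could invoke Moore's ergodicity theorem for $U^-$ alone.) Invariance of $\phi$ then forces $\bar\phi\circ g=\bar\phi$, and ergodicity of $g$ makes $\bar\phi$ constant.

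The main obstacle is the measurability bookkeeping in the $U^-$ step. The group element $h_s$ is fixed, while the conjugated perturbation $h_{s'(x,n)}$ depends on $x$ and $n$. The convergence $s'(x,n)\to0$ holds only pointwise a.e.\ (not uniformly), and since $\tau$ is not bounded we must use the a.e.\ growth of the Birkhoff sums. We handle this by computing with the identity $\phi=\phi\circ f_{diag}^{n}$ so that
\[
\phi(x,h_sy)-\phi(x,y)=\phi\bigl(f_{diag}^{-n}(x,h_sy)\bigr)-\phi\bigl(f_{diag}^{-n}(x,y)\bigr),
\]
noting that $f_{diag}^{-n}(x,h_sy)=(g^{-n}x,\,h_{s'}B_{-n}(x)y)$. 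The approximation by $\psi$ is then applied after this change of variables, using that $f_{diag}$ preserves $\mu\times\nu$. Dominated convergence applies because $\psi$ is bounded.
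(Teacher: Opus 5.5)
Your proof is correct, but after the common first step it takes a different route from the paper. That first step, the reduction to $f_{diag}$ via $\Phi(x,y)=(x,P(x)y)$, is also how the paper begins.

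\emph{The paper's route.} It never examines invariant functions directly. For product functions $F=\phi\otimes u$, $G=\psi\otimes v$ it writes $\langle F,G\circ f_{diag}^k\rangle=\int_M\phi(x)\psi(g^kx)c_k(x)\,d\mu$, where $c_k(x)=\int_N u\cdot(v\circ g_{\tau_k(x)})\,d\nu$. It then uses the mixing of the geodesic flow on $N$, together with $\tau_k(x)\to\infty$, to get $c_k(x)\to\int u\int v$ pointwise. Dominated convergence and the mean ergodic theorem for $g$ then give Ces\`aro convergence of the correlations, and density extends this to all of $L^2$.

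\emph{Your route.} You give a Hopf/Mautner-type argument. The identity $B_{-n}(x)h_sB_{-n}(x)^{-1}=h_{se^{2\tau_{-n}(x)}}$, and its forward-time analogue for the opposite horocycle group, shows that any invariant $L^2$ function is fiberwise invariant under both horocycle subgroups. Since these subgroups generate $PSL_2\R$ and the action on $N$ is transitive, the function is constant on fibers, and ergodicity of $g$ finishes the proof.

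\emph{What yours buys.} It needs no mixing theorem for the geodesic flow. It uses only uniform continuity of a continuous approximant, the uniform bound $d_N(h_{s'}z,z)\le d(h_{s'},e)$ from right-invariance of the metric, and the elementary fact that the two horocyclic subgroups generate the group. It also makes visible that the fiberwise strong stable and unstable horocycles are the mechanism, in the spirit of the accessibility discussion in the introduction.

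\emph{What the paper's buys.} It is shorter once mixing of the geodesic flow is taken as a black box. It also yields more at no extra cost: if $g$ is mixing, the same computation without the Ces\`aro average shows $f_{diag}$, and hence $f$, is mixing. Your invariant-function argument does not give that.
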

	\begin{proof}
		By Theorem \ref{thm:Osceledets_Pesin_Reduction}, we have that 
			\begin{equation*}
				A(x)=P(gx)B(x)P(x)^{-1}, 
				\qquad B(x)=\diag(e^{\tau(x)},e^{-\tau(x)}), 
				\qquad \int_M \tau d\mu=\lambda_2>0
			\end{equation*}
		Note that the map $\Phi:M\times N\to M\times N$, $\Phi(x,y)=(x,P(x)y)$ is a measure-theoretic isomorphism and that $f\circ \Phi=\Phi\circ f_{diag}$, where $f_{diag}(x,y)=(gx,B(x)y)$. It therefore suffices to prove that $f_{diag}$ is ergodic. 
		
		Let $m=\mu\times \nu$, and let $F(x,y)=\phi(x)u(y)$ and $G(x,y)=\psi(x)v(y)$, where $\phi,\psi\in C^0(M)$ and $u,v\in C^0(N)$.  We use $g_t$ to denote the geodesic flow. Since for $\mu$-a.e. $x\in M$, $\tau_n(x)\to \infty$ and the geodesic flow $g_t$ is mixing on $N$ by \cite[Theorem 1.1]{eskin_mixing_1993}, we get that as $n\to \infty$, 
			\begin{equation}\label{eq:f_ergodic_pf1}
				c_k(x):=\int_N u(y)v(B_k(x)y)d\nu(y) \to \left( \int_N u d\nu \right)\left( \int_N v d\nu \right).
			\end{equation}
		Observe that 
			$
				\langle F, G\circ f^k_{diag}\rangle 
				= \int_M \phi(x)\psi(g^kx)c_k(x)d\mu(x).
			$
		Since $|c_k(x)|\leq \|u\|_{L^2}\|v\|_{L^2}$, \eqref{eq:f_ergodic_pf1}, dominated convergence, and the Mean Ergodic Theorem for $g$ give that 
			\begin{equation}\label{eq:f_ergodic_pf2}
				\frac{1}{n}\sum_{k=0}^{n-1} \langle F, G\circ f^k_{diag}\rangle  \to \left(\int F dm\right)\left( \int G dm \right).
			\end{equation}
		Since linear combinations of product functions $\phi(x)u(y)$ are dense in $L^2(M\times N)$, and the Koopman operator for $f_{diag}$ is an $L^2(M\times N)$ isometry, we get that \eqref{eq:f_ergodic_pf2} holds for all $F,G\in L^2(M\times N)$.
	\end{proof}

\section{The ``Diagonal'' Skew Product $f_{diag}$}\label{sec:diagonal_skew_product}
	
	Recall from Theorem \ref{thm:Osceledets_Pesin_Reduction} that for $\mu$-a.e. $x\in M$, 
		\begin{equation*}
			B(x):=P(gx)^{-1}A(x)P(x)=\begin{pmatrix} e^{\tau(x)} & 0 \\ 0 & e^{-\tau(x)} \end{pmatrix}.
		\end{equation*}
	After defining $B$ arbitrarily on the null set where the reduction is not defined, we let 
		\begin{equation*}
			f_{diag}\colon M\times N\to M\times N, \qquad f_{diag}(x,y)=(gx,B(x)y)
		\end{equation*}
	The advantage of $f_{diag}$ over $f$ is that its fiber dynamics can be computed explicitly. In this section, we explicitly describe the unstable manifolds for $f_{diag}$ and then identify the corresponding conditional measures. Before doing this, we introduce some notation.
	
	For $n\in \Z$, we can write 
		\begin{equation*}
			f^n_{diag}(x,y)=(g^nx,B_n(x)y), \qquad
			B_n(x)= \begin{pmatrix} e^{\tau_n(x)} & 0 \\ 0 & e^{-\tau_n(x)} \end{pmatrix}.
		\end{equation*}
	where, for $n>0$, 
		\begin{equation*}
			\tau_n(x)= \sum_{j=0}^{n-1}\tau(g^jx), \qquad 
			\tau_0(x)=0, \qquad
			\tau_{-n}(x)=-\tau_{n}(g^{-n}x)
		\end{equation*}
	
	By Theorem \ref{thm:Osceledets_Pesin_Reduction}, the Lyapunov exponents for $B(x)$ acting on $\R^2$ are $\pm \lambda_2$ and $\lambda_2=\int \tau d\mu$. By an analogous argument to that given in Section \ref{sec:Lyap_exps_A}, we see that the Lyapunov exponents for $\Ad(B(x))$ acting on $\sl_2\R$ are $2\lambda_2, 0, -2\lambda_2$. By the Birkhoff Ergodic Theorem, 
		\begin{equation*}
			\frac{1}{n}\tau_n\to \int_{M}\tau d\mu=\lambda_2
		\end{equation*}
	for $\mu$-a.e. $x\in M$ as $n\to \pm \infty$. We also recall that $\tau$ is H\"older on $\Lambda_\e^\ell\cap  \bar{\Lambda}_\e^\ell$ by Proposition \ref{prop:tau_holder_on_pesin}.
	
	\begin{remark}
		The arguments in this section only require that $g:(M,\mu)\to (M,\mu)$ is a $C^{1+\alpha_g}$ ergodic diffeomorphism that preserves the smooth measure $\mu$ and that has at least one nonzero Lyapunov exponent and that $A:M\to PSL_2\R$ is $C^{\alpha}$ and has a nonzero Lyapunov exponent. 
	\end{remark}
	
\subsection{Unstable manifolds for $f_{diag}$} \label{sec:unstable_manifolds_f_diag}
	Fix 
		$$
			0<\e <\min\left\{ \frac{\lambda_2}{4}, \frac{\alpha_b\lambda_g}{14} \right\}.
		$$
	Then
		$$
			\kappa:= \frac{\alpha_b\lambda_g}{2}-7\e>0
		$$
	We will work on the full measure set
		$$
			\mathcal{R}_{\delta_g,\e}:= \bigcup_{\ell\geq 1} \bigcup_{\tau_g\in(0,1)} G_{\ell,\tau_g}, 
			\qquad \text{where} \qquad
			G_{\ell,\tau_g}:=\Lambda_\e^\ell \cap \bar\Lambda_\e^\ell \cap \mathcal{P}_{\tau_g}^{\delta_g}.
		$$

	Before we can explicitly characterize the unstable manifolds for $f_{diag}$, we define the correction term that accounts for the difference in the fiber dynamics along two points on the same base unstable leaf. 
	
	\begin{lemma}\label{lem:tau_infty_convergence}
		Take $x,x'\in \mathcal{R}_{\delta_g,\e}$ with $x'\in W^u_{g}(x)$. Then the series 
			\begin{equation}\label{eq:tau_infty_convergence}
				\sum_{j=1}^\infty \left( \tau\left(g^{-j}x'\right)-\tau\left(g^{-j}x\right) \right)
			\end{equation}
		converges absolutely.
	\end{lemma}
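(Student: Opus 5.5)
The approach is to bound each term of \eqref{eq:tau_infty_convergence} by a geometrically decaying quantity, combining the exponential backward contraction along base unstable manifolds with the H\"older estimate for $\tau$ from Proposition \ref{prop:tau_holder_on_pesin}, whose H\"older constant grows only subexponentially.

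First, fix the relevant Pesin sets. Since $x,x'\in\mathcal{R}_{\delta_g,\e}$ and the sets $G_{\ell,\tau_g}$ are nested (by Proposition \ref{prop:PesinSetsFacts}(1), the analogous property for $\bar\Lambda^\ell_\e$, and nestedness of $\mathcal{P}_{\tau_g}$), we can choose a single $\ell\geq 1$ and $\tau_g\in(0,1)$ with $x,x'\in G_{\ell,\tau_g}=\Lambda_\e^\ell\cap\bar\Lambda_\e^\ell\cap\mathcal{P}^{\delta_g}_{\tau_g}$. Since $x\in\mathcal{P}_{\tau_g}$ and $x'\in W^u_g(x)$, estimate \eqref{eq:g_unstable_exp_contracting2} gives, for all $j\geq 1$,
$$
d(g^{-j}x,g^{-j}x')\leq \tau_g^{-1}e^{-\frac{\lambda_g}{2}j}d(x,x').
$$
Here $d(x,x')$ is bounded because $x'$ lies on the local unstable manifold. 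Hence there is $j_0$ such that $d(g^{-j}x,g^{-j}x')\leq 1$ for all $j\geq j_0$.

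Next, for $j\geq j_0$, apply Proposition \ref{prop:tau_holder_on_pesin} with $k=-j$ (the pair $x,x'$ lies in $\Lambda_\e^\ell\cap\bar\Lambda_\e^\ell$ and the distance condition holds). This yields
$$
|\tau(g^{-j}x')-\tau(g^{-j}x)|\leq C_\tau e^{7\e j}\tau_g^{-\alpha_b}e^{-\frac{\alpha_b\lambda_g}{2}j}d(x,x')^{\alpha_b}=C_\tau\tau_g^{-\alpha_b}d(x,x')^{\alpha_b}e^{-\kappa j},
$$
where $\kappa=\frac{\alpha_b\lambda_g}{2}-7\e>0$ by the choice of $\e$. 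The tail $\sum_{j\geq j_0}$ is therefore dominated by a convergent geometric series.

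The remaining finitely many terms $1\leq j<j_0$ are finite, since $\tau=\log|b|$ is finite at every point where the reduction is defined, and the full orbits of $x,x'$ consist of regular points. Absolute convergence follows.

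The only delicate point is checking that the hypotheses of Proposition \ref{prop:tau_holder_on_pesin} apply along the backward orbit. That proposition is stated for base points $x,y$ in the Pesin set, with the subexponential factor $e^{7\e|k|}$ accounting for the iterates, so no additional Pesin-set membership of $g^{-j}x$ is needed. The distance threshold $\leq 1$ is handled by discarding finitely many terms, as above.
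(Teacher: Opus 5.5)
Your proposal is correct and follows the paper's proof essentially step for step. You pick a common $G_{\ell,\tau_g}$ containing $x,x'$, use \eqref{eq:g_unstable_exp_contracting2} to find $j_0$ beyond which $d(g^{-j}x,g^{-j}x')\leq 1$, apply Proposition \ref{prop:tau_holder_on_pesin} with $k=-j$ to bound the tail by $C_\tau\tau_g^{-\alpha_b}d(x,x')^{\alpha_b}e^{-\kappa j}$, and note that the finitely many initial terms are finite.
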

	\begin{proof}
		Since $x,x'\in \mathcal{R}_{\delta_g,\e}$, the exists $\ell\geq 1$ and $\tau_g\in (0,1)$ such that $x,x'\in G_{\ell,\tau_g}$. Since $\lambda_g>0$, we can find $j_0\in \Z_+$ such that for $j\geq j_0$, we have that $\tau_g^{-1}e^{-\frac{\lambda_g}{2}j}d(x,x')\leq 1$. Thus, \eqref{eq:g_unstable_exp_contracting2} implies that $d\left(g^{-j}x',g^{-j}x\right)\leq 1$ for all $j\geq j_0$.  We can therefore combine Proposition \ref{prop:tau_holder_on_pesin} and \eqref{eq:g_unstable_exp_contracting2} to get that for $j\geq j_0$,
			\begin{equation}\label{eq:tau_infty_terms_bound}
				\left| \tau\left(g^{-j}x'\right)-\tau\left(g^{-j}x\right) \right| 
				\leq C_\tau e^{7\e j} d\left(g^{-j}x',g^{-j}x\right)^{\alpha_b} 
				\leq C_\tau \tau_g^{-\alpha_b}e^{-\kappa j}d(x,x')^{\alpha_b}
			\end{equation}
		Since the tail of \eqref{eq:tau_infty_convergence} is bounded by a geometric series and the first $j_0-1$ terms are finite, we get that the series \eqref{eq:tau_infty_convergence} converges absolutely. 
	\end{proof}
	
	As a consequence of Lemma \ref{lem:tau_infty_convergence}, if $x,x'\in \mathcal{R}_{\delta_g,\e}$ with $x'\in W^u_{g}(x)$, we can define
		\begin{equation*}
			\tau_\infty(x',x):=\sum_{j=1}^\infty \left( \tau\left(g^{-j}x'\right)-\tau\left(g^{-j}x\right) \right)
		\end{equation*}
	Whenever $x,x',$ and $x''$ are all on the same unstable leaf, we get that 
		\begin{equation}\label{eq:tau_infty_change_basepoints}
			\tau_\infty(x'',x)=\tau_\infty(x'',x')+\tau_\infty(x',x).
		\end{equation}
	Moreover, the $n$th partial sum satisfies 
		\begin{equation}\label{eq:tau_infty_partial_sums}
			\sum_{j=1}^n \left( \tau\left(g^{-j}x'\right)-\tau\left(g^{-j}x\right) \right) 
			= \tau_{-n}(x)-\tau_{-n}(x')
		\end{equation}
		
	The estimates used to prove Lemma \ref{lem:tau_infty_convergence} also give the following leafwise H\"older control on $\tau_\infty$:
	
	\begin{lemma}\label{lem:bound_tau_infty}
		Fix $\ell\geq 1$ and $\tau_g\in (0,1)$. There is a constant $C_{\tau_\infty}=C_{\tau_\infty}(\tau_g,\delta_g,\e,\ell)>0$ such that if $x,x'\in G_{\ell,\tau_g}$ with $x'\in W^u_g(x)$ and $d(x,x')\leq \tau_g e^{\frac{\lambda_g}{2}}$, then 
			\begin{equation}\label{eq:bound_tau_infty}
				|\tau_\infty(x',x)|\leq C_{\tau_\infty} d(x',x)^{\alpha_b}
			\end{equation}	
		Further, there exists $\xi_0=\xi_0(\tau_g,\delta_g)>0$ such that $d(x,x')\leq \tau_g e^{\frac{\lambda_g}{2}}$ holds whenever $x'\in W^u_{g,\xi}(x)$ for $0<\xi\leq \xi_0$. 
	\end{lemma}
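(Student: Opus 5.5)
The plan is to sum the termwise bound from the proof of Lemma \ref{lem:tau_infty_convergence}. The condition $d(x,x')\leq \tau_g e^{\frac{\lambda_g}{2}}$ is chosen so that the threshold $j_0$ can be taken to be $1$. Indeed, by \eqref{eq:g_unstable_exp_contracting2}, for every $j\geq 1$ we have
$$
d(g^{-j}x,g^{-j}x')\leq \tau_g^{-1}e^{-\frac{\lambda_g}{2}j}d(x,x')\leq e^{-\frac{\lambda_g}{2}(j-1)}\leq 1 .
$$
Hence the hypothesis $d(g^{k}x,g^{k}x')\leq 1$ of Proposition \ref{prop:tau_holder_on_pesin} holds with $k=-j$ for all $j\geq 1$. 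Since $x,x'\in G_{\ell,\tau_g}\subset \Lambda_\e^\ell\cap\bar\Lambda_\e^\ell$, that proposition applies to every term of the series.

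Combining the two estimates exactly as in \eqref{eq:tau_infty_terms_bound} gives, for every $j\geq 1$,
$$
|\tau(g^{-j}x')-\tau(g^{-j}x)|\leq C_\tau \tau_g^{-\alpha_b}e^{-\kappa j}d(x,x')^{\alpha_b},
$$
with $\kappa=\frac{\alpha_b\lambda_g}{2}-7\e>0$ by the choice of $\e$. Summing this geometric series yields \eqref{eq:bound_tau_infty} with
$$
C_{\tau_\infty}=C_\tau(\e,\ell)\,\tau_g^{-\alpha_b}\,\frac{e^{-\kappa}}{1-e^{-\kappa}}.
$$
This constant depends only on $\tau_g,\e,\ell$, and on $\delta_g$ through $\lambda_g$ and the Pesin sets $\mathcal{P}^{\delta_g}_{\tau_g}$.

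For the second assertion, I would use Lemma \ref{lem:unstables_g} together with \eqref{eq:lipschitz_constant_lyapunov_charts_g1}. A point $x'\in W^u_{g,\xi}(x)$ with $\xi\leq \mathfrak{r}_{\delta_g}(x)$ is of the form $h_x(v,\eta^u_x(v))$ with $\|v\|<\xi$. Since $\|\eta^u_x\|_{C^1}\leq C_{0,g}$ and $\eta^u_x(0)=0$, we get $\|(v,\eta^u_x(v))\|\leq (1+C_{0,g})\xi$. Because $x\in\mathcal{P}_{\tau_g}\subset\hat{\mathcal{P}}_{\tau_g}$, we have $\Lip(h_x)\leq \tau_g^{-1}$, and therefore $d(x,x')\leq \tau_g^{-1}(1+C_{0,g})\xi$. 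Taking
$$
\xi_0=\min\left\{\tau_g,\ \frac{\tau_g^2 e^{\lambda_g/2}}{1+C_{0,g}}\right\}
$$
guarantees $\xi_0\leq\mathfrak{r}_{\delta_g}(x)$ and $d(x,x')\leq \tau_g e^{\frac{\lambda_g}{2}}$.

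The main point needing care is the first step: \eqref{eq:g_unstable_exp_contracting2} must hold for $x'$ on the local unstable leaf $W^u_g(x)$ itself, and not only for points on the global leaf, so that every term $j\geq 1$ is controlled. Otherwise the estimate is a direct repetition of the proof of Lemma \ref{lem:tau_infty_convergence}.
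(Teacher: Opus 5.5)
Your proof is correct and is the paper's argument. You take $j_0=1$ in the proof of Lemma \ref{lem:tau_infty_convergence}, sum the geometric series to get the same constant $C_{\tau_\infty}=C_\tau\tau_g^{-\alpha_b}\frac{e^{-\kappa}}{1-e^{-\kappa}}$, and obtain the second assertion from Lemma \ref{lem:unstables_g}; your explicit choice of $\xi_0$ via the chart bound \eqref{eq:lipschitz_constant_lyapunov_charts_g1} just fills in a detail the paper leaves implicit.
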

	\begin{proof}
		Since $d(x,x')\leq \tau_g e^{\frac{\lambda_g}{2}}$, we can take $j_0=1$ in the proof of Lemma \ref{lem:tau_infty_convergence}. Thus, \eqref{eq:tau_infty_terms_bound} holds for all $j\geq 1$, which gives that 
			\begin{equation*}
				\left| \tau_\infty(x',x) \right|
				\leq C_\tau \tau_g^{-\alpha_b}d(x,x')^{\alpha_b} \sum_{j=1}^{\infty} e^{-\kappa j}.
			\end{equation*}
		So, \eqref{eq:bound_tau_infty} holds with 
			$
				C_{\tau_\infty}= C_\tau \tau_g^{-\alpha_b} \frac{e^{-\kappa}}{1-e^{-\kappa}}.
			$
		The final assertion follows from the uniform parameterization of unstables for $g$ in Lemma \ref{lem:unstables_g}.
	\end{proof}
	
	The following consequence of Lemma \ref{lem:bound_tau_infty} will also be useful. 
	
	\begin{lemma}\label{lem:bound_e^tau_infty}
		For every $\delta>0$, there exists $\xi_3\in \left(0,\xi_0\right]$ (where $\xi_0$ is as in Lemma \ref{lem:bound_tau_infty}) such that if $x_0,x,x'\in \Lambda_\e^\ell \cap \bar\Lambda_\e^\ell \cap \mathcal{P}_{\tau_g}$ and $x,x'\in W^u_{g,\xi_3}(x_0)$, then $e^{\pm\tau_\infty(x',x)}\in (1-\delta,1+\delta)$.
	\end{lemma}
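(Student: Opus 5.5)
The plan is to reduce the statement to Lemma \ref{lem:bound_tau_infty}, using the cocycle identity \eqref{eq:tau_infty_change_basepoints}. Fix $\delta>0$, and let $C_{\tau_\infty}$ and $\xi_0$ be the constants of Lemma \ref{lem:bound_tau_infty} for the given $\ell$, $\tau_g$, $\delta_g$, $\e$.

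Since $x,x'\in W^u_{g,\xi_3}(x_0)$, all three points lie on the same unstable leaf, and they all belong to $G_{\ell,\tau_g}$. By \eqref{eq:tau_infty_change_basepoints}, and since antisymmetry follows from the same identity with $x''=x$,
	$$
		\tau_\infty(x',x)=\tau_\infty(x',x_0)-\tau_\infty(x,x_0).
	$$
Both terms on the right are controlled by Lemma \ref{lem:bound_tau_infty}, applied with base point $x_0$. Because $\xi_3\le\xi_0$, the last assertion of that lemma gives $d(x_0,x),d(x_0,x')\le\tau_g e^{\lambda_g/2}$. Hence
	$$
		|\tau_\infty(x',x)|\le C_{\tau_\infty}\bigl(d(x',x_0)^{\alpha_b}+d(x,x_0)^{\alpha_b}\bigr).
	$$

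Next we make the distances uniformly small. By Lemma \ref{lem:unstables_g}, the plaque $W^u_{g,\xi}(x_0)$ is the image under $h_{x_0}$ of the graph of $\eta^u_{x_0}$ over $B^u(\xi)$, and $\eta^u_{x_0}$ has $C^{1}$ norm bounded by $C_{0,g}$. Since $x_0\in\mathcal{P}_{\tau_g}\subset\hat{\mathcal{P}}_{\tau_g}$, \eqref{eq:lipschitz_constant_lyapunov_charts_g1} gives $\Lip(h_{x_0})\le\tau_g^{-1}$. Therefore every point of $W^u_{g,\xi}(x_0)$ lies within distance $\tau_g^{-1}(1+C_{0,g})\xi$ of $x_0$, a bound that depends only on $\tau_g$ and not on $x_0$. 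Combining this with the previous display gives
	$$
		|\tau_\infty(x',x)|\le 2C_{\tau_\infty}\bigl(\tau_g^{-1}(1+C_{0,g})\xi_3\bigr)^{\alpha_b}.
	$$

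Finally, we choose $\xi_3\in(0,\xi_0]$ so small that this right-hand side is less than $\log(1+\delta)$ and also less than $-\log(1-\delta)$ (if $\delta\ge1$, only the first condition is needed). Then $e^{\pm\tau_\infty(x',x)}\in(1-\delta,1+\delta)$. The choice of $\xi_3$ depends on $\ell$ and $\tau_g$, which are implicit in the statement.

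The only delicate point is that the distance bound must be uniform in $x_0$. This is exactly what the uniform chart estimates on $\hat{\mathcal{P}}_{\tau_g}$ provide.
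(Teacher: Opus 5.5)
Your proof is correct and follows the paper's argument: the cocycle identity \eqref{eq:tau_infty_change_basepoints}, two applications of Lemma \ref{lem:bound_tau_infty} based at $x_0$, then a choice of small $\xi_3$ using the uniform chart bounds of Lemma \ref{lem:unstables_g}, which you spell out where the paper only cites them. Your second smallness condition is redundant, since $|\tau_\infty(x',x)|<\log(1+\delta)$ already gives $e^{\pm\tau_\infty(x',x)}>(1+\delta)^{-1}>1-\delta$.
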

	\begin{proof}
		Combining \eqref{eq:tau_infty_change_basepoints} with Lemma \ref{lem:bound_tau_infty}, we get that 
			\begin{equation*}
				|\tau_\infty(x',x)|\leq C_{\tau_\infty}\left( d(x',x_0)^{\alpha_b}+d(x,x_0)^{\alpha_b} \right)
			\end{equation*}
		Choose $\xi_3\leq \xi_0$ so that the right hand side is less than $\log(1+\delta)$. (We can do by Lemma \ref{lem:unstables_g}.) Then $e^{|\tau_\infty(x',x)|}<1+\delta$ and $e^{-|\tau_\infty(x',x)}>(1+\delta)^{-1}\geq 1-\delta$.
	\end{proof}
	
	We combine the diagonal correction $\tau_\infty(x',x)$ with the unstable horocycle flow to parameterize the fiber component of the unstable manifolds for $f_{diag}$. For $s\in \R$, we define 
		\begin{equation}\label{eq:def_r_s}
			r_s=r_s(x,x')
			:= \begin{pmatrix}
					e^{\tau_\infty(x',x)} & s \\ 0 & e^{-\tau_\infty(x',x)}
				\end{pmatrix}.			
		\end{equation} 
	
	We can write $r_s$ as a product of a horocycle flow and a geodesic flow. To be precise, for $t\in \R$, we let $g_t=\diag(e^t,e^{-t})$ be the geodesic flow and let $h_t=\begin{pmatrix} 1 & t \\ 0 & 1\end{pmatrix}$ be the horocycle flow. For any $s,t\in \R$, 
		\begin{equation}\label{eq:r_s_horocycle_geodesic}
			r_s(x,x')
			=h_{se^{\tau_\infty(x',x)}}g_{\tau_\infty(x',x)}
			=g_{\tau_\infty(x',x)}h_{se^{-\tau_\infty(x',x)}}
		\end{equation}
		
	For $(x,y)\in \mathcal{R}_{\delta_g,\e}\times N$, the local unstable manifold for $f_{diag}$ centered at $(x,y)$ can be characterized as follows:
		\begin{equation}\label{eq:unstable_fdiag_characterization}
			W^u_{diag}(x,y)=\left\{ (x',y')\in M\times N \colon \limsup_{n\to \infty} \frac{1}{n}\log d_{M\times N}\left( f_{diag}^{-n}(x,y), f_{diag}^{-n}(x',y')\right) <0 \right\}
		\end{equation}
	(See \cite[p.233]{BarreiraPesinNonUniform} or \cite[Lemma 2.8]{ExpMixingBernoulli}.) We can explicitly describe the local unstable manifolds for $W^u_{diag}(x,y)$ using the function $r_s$:
		
	\begin{prop}\label{prop:unstables_f_diag}
		For $(x,y)\in \mathcal{R}_{\delta_g,\e}\times N$,
			$$
				W^u_{diag}(x,y)\cap \left( \mathcal{R}_{\delta_g,\e}\times N \right)=\left\{
					(x',y')\in \mathcal{R}_{\delta_g,\e}\times N \colon x'\in W^u_g(x), \ y'= r_s(x,x')y, \ s\in \R
				\right\}
			$$
	\end{prop}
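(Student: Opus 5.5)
We prove both inclusions by computing the backward iterates of $f_{diag}$ explicitly. The fiber dynamics is just left multiplication by diagonal matrices. For $(x',y')$ with $x'\in W^u_g(x)$, write $f_{diag}^{-n}(x',y')=(g^{-n}x',B_{-n}(x')y')$. The base coordinates $g^{-n}x$ and $g^{-n}x'$ converge exponentially by \eqref{eq:g_unstable_exp_contracting2}. So everything reduces to the fiber distance $d_N(B_{-n}(x)y,B_{-n}(x')y')$.

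For the inclusion ``$\supseteq$'', take $y'=r_s(x,x')y$. Using $B_{-n}(x)=g_{\tau_{-n}(x)}$ and \eqref{eq:r_s_horocycle_geodesic}, we write
$$B_{-n}(x')r_s=g_{\tau_{-n}(x')+\tau_\infty(x',x)}h_{se^{-\tau_\infty(x',x)}}=g_{\tau_{-n}(x')-\tau_{-n}(x)+\tau_\infty(x',x)}\,h_{se^{-\tau_\infty(x',x)}e^{2\tau_{-n}(x)}}\,B_{-n}(x).$$
Here we used $g_t h_u g_{-t}=h_{ue^{2t}}$.

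By \eqref{eq:tau_infty_partial_sums}, $\tau_{-n}(x')-\tau_{-n}(x)+\tau_\infty(x',x)$ equals the tail $\sum_{j>n}(\tau(g^{-j}x')-\tau(g^{-j}x))$. By \eqref{eq:tau_infty_terms_bound} this tail is $O(e^{-\kappa n})$. Since $\tau_{-n}(x)/n\to-\lambda_2$, we have $e^{2\tau_{-n}(x)}\le e^{(-2\lambda_2+\e)n}$ for large $n$. Hence the left factor $g_{O(e^{-\kappa n})}h_{O(e^{-(2\lambda_2-\e)n})}$ is exponentially close to $e$. Right-invariance of $d_{PSL_2\R}$ (which descends to $d_N$) then gives $d_N(B_{-n}(x')y',B_{-n}(x)y)\le C e^{-\min(\kappa,2\lambda_2-\e)n}$. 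So the $\limsup$ in \eqref{eq:unstable_fdiag_characterization} is negative.

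For ``$\subseteq$'', take $(x',y')\in W^u_{diag}(x,y)$. First, the base projection of the distance is dominated by the product distance, so $g^{-n}x'$ approaches $g^{-n}x$ exponentially. This puts $x'\in W^u_g(x)$ by the base characterization of unstable manifolds.

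Next, set $z:=r_0(x,x')y$, which lies in $W^u_{diag}(x,y)$ by the first part. By the triangle inequality, $d_N(B_{-n}(x')y',B_{-n}(x')z)$ decays exponentially. For large $n$ this distance is below the injectivity radius $r_0$ of $\pi$. So $y'=wz$ with $w\in B_{SL_2\R}(e,r_0)$ fixed, and the lift of $B_{-n}(x')wz$ near $B_{-n}(x')z$ is given by the conjugate $B_{-n}(x')wB_{-n}(x')^{-1}$. We need the lift to be consistent, i.e.\ the same $w$ for all large $n$. This follows from continuity in $n$ of the small conjugates together with uniqueness of the lift within $r_0$. One has to be slightly careful here since the steps are discrete, but consecutive conjugates differ by conjugation by $B(g^{-n}x')$, which has bounded norm on Pesin-type returns. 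It is cleaner to argue along the subsequence where $g^{-n}x'$ is in a fixed Pesin set, where $\tau$ is bounded.

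Now write $w=h_a\,g_b\,\bar h_c$ in the product coordinates $\theta$ of Section~\ref{sec:FiberCubes}. Conjugation by $g_t$ with $t=\tau_{-n}(x')\to-\infty$ multiplies the lower-unipotent coordinate by $e^{-2t}\to\infty$ exponentially and leaves $b$ fixed. So the conjugate tends to $e$ only if $c=0$ and $b=0$, i.e.\ $w=h_a$. Then $y'=h_a r_0 y=r_s y$ with $s=ae^{\tau_\infty(x',x)}$ by \eqref{eq:r_s_horocycle_geodesic}.

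I expect the lifting/consistency step in the reverse inclusion to be the main obstacle; the forward computation is routine. The reverse inclusion is also local in nature, since $W^u_{diag}$ is a local unstable manifold, so I would state it for $y'$ in a small neighborhood and use the same local-product coordinates throughout.
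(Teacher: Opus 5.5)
Your proof of $\supseteq$ is the paper's computation. Your factor $g_{q_n}h_{u_n}$ is exactly the upper-triangular matrix $B_{-n}(x')y'y^{-1}B_{-n}(x)^{-1}$ that the paper writes down, with diagonal $e^{\pm q_n}$ and corner $se^{\tau_{-n}(x')+\tau_{-n}(x)}$.

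For $\subseteq$ you take a different route. The paper never lifts in discrete time. It writes $B_{-n}(x')=g_{-s_n}$ and interpolates to continuous time via $d(g_{-t}y_1,g_{-t}y_0)\le \|\Ad(g_{s_n-t})\|\,d_N(g_{-s_n}y_1,g_{-s_n}y_0)\le e^{2|s_n-t|-cn}$ with $n=\lfloor t/\lambda_2\rfloor$. It concludes that $y_0,y_1$ lie on one strong unstable leaf of the geodesic flow, and quotes that these leaves are horocycle orbits. Continuity in $t$ is what absorbs the lift-consistency problem you flagged. Your direct lifting argument is more elementary and self-contained, and it can be made to work, but two points need repair.

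\emph{First point: $w$ need not be small.} The claim ``$y'=wz$ with $w\in B_{SL_2\R}(e,r_0)$'' does not follow from smallness at time $-n$. You only get $B_{-n}(x')y'=w_nB_{-n}(x')z$ with $w_n$ small. That means $y'=v_nz$ with $v_n=B_{-n}(x')^{-1}w_nB_{-n}(x')$, and $v_n$ is large whenever $y'=h_az$ with $|a|$ large. Restricting to $y'$ near $z$, as you suggest at the end, would prove only a local version. The proposition allows every $s\in\R$, and \eqref{eq:unstable_fdiag_characterization} imposes no smallness in the fiber.

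Smallness of $w$ is not needed. Once $v_n\equiv w$ for large $n$, the entries of $g_twg_{-t}$ are $w_{11}$, $e^{2t}w_{12}$, $e^{-2t}w_{21}$, $w_{22}$. Since $\tau_{-n}(x')\to-\infty$, the convergence $w_n=g_{\tau_{-n}(x')}\,w\,g_{-\tau_{-n}(x')}\to e$ forces $w_{21}=0$ and $w_{11}=w_{22}=\pm1$. So $w=h_a$, and no product chart is needed. Then $y'=h_a r_0(x,x')y=r_s(x,x')y$ with $s=ae^{-\tau_\infty(x',x)}$, not $ae^{+\tau_\infty(x',x)}$.

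\emph{Second point: consistency of the lift.} Passing to a subsequence of Pesin returns does not work, because consecutive terms of a subsequence differ by conjugation by products of many $B$'s, whose norms are not controlled. Use every $n$ instead. The element $v_n^{-1}v_{n+1}$ fixes $z$. Hence $B(g^{-n-1}x')^{-1}w_n^{-1}B(g^{-n-1}x')\,w_{n+1}$ lies in the stabilizer of $B_{-(n+1)}(x')z$. Every nontrivial element of that stabilizer is at distance $\ge r_0$ from $e$, by right-invariance and injectivity of $\pi$ on $r_0$-balls.

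Since $A$ is continuous on compact $M$, $|\tau|\le 2\log\sup_M\|A\|$. This follows from \eqref{eq:b(x)_formula}, \eqref{eq:osceledets_bound_ratio_angles_norm} and $|a(x)|\in[\|A(x)\|^{-1},\|A(x)\|]$. So the conjugating matrices $B(g^{-n-1}x')$ are uniformly bounded. The element above therefore tends to $e$ and is trivial for large $n$, giving $v_{n+1}=v_n$.
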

		
	\begin{proof}
		Fix $(x,y)\in \mathcal{R}_{\delta_g,\e}\times N$. Since the $G_{\ell,\tau_g}$ are nested, it suffices to prove that if $\tau_g\in (0,1)$ and $\ell\geq 1$ are such that $x\in G_{\ell,\tau_g}$, then 
			$$
				W^u_{diag}(x,y)\cap \left( G_{\ell,\tau_g}\times N \right)=\left\{
					(x',y')\in G_{\ell,\tau_g}\times N \colon x'\in W^u_g(x), \ y'= r_s(x,x')y, \ s\in \R
				\right\}.
			$$
		Fix $\tau_g\in (0,1)$ and $\ell\geq 1$ such that $x\in G_{\ell,\tau_g}$. We first prove the reverse inclusion. Suppose $x'\in W^u_g(x)\cap G_{\ell,\tau_g}$ and that $y'=r_s(x,x')y$. To prove that $(x',y')\in W^u_{diag}(x,y)$, by \eqref{eq:unstable_fdiag_characterization} it suffices to prove that $d_M(g^{-n}x,g^{-n}x')$ and $d_N(B_{-n}(x)y,B_{-n}(x')y')$ go to zero exponentially fast. This is automatic for the base coordinates since $x'\in W^u_g(x)$. To see it for the fiber coordinates, observe that by \eqref{eq:tau_infty_partial_sums}, 
			\begin{equation*}
				q_n:= \tau_\infty(x',x)+\tau_{-n}(x')-\tau_{-n}(x)=\sum_{j=n+1}^\infty \left( \tau(g^{-j}x')-\tau(g^{-j}x) \right)
			\end{equation*}
		After replacing $y'$ and $y$ with appropriate lifts, we can observe that
			\begin{equation}\label{eq:unstables_fdiag_proof1}
				B_{-n}(x')y'y^{-1}B_{-n}(x)^{-1}
				= \begin{pmatrix} 	
					e^{q_n} & se^{\tau_{-n}(x')+\tau_{-n}(x)} \\ 0 & e^{-q_n}
					\end{pmatrix}
			\end{equation}
		The proof of Lemma \eqref{lem:tau_infty_convergence} gives that for sufficiently large $n$, $|q_n|\leq Ce^{-\kappa n}$. Since $x,x'\in \bar{\Lambda}_\e^\ell$, we get that $e^{\tau_{-n}(x)}, e^{\tau_{-n}(x')}\leq \ell e^{-(\lambda_2-\e)n}$. We've shown that the entries of the matrix in \eqref{eq:unstables_fdiag_proof1} go to the corresponding entries of the identity matrix exponentially fast. Applying Lemma \ref{lem:SL2R_difference_equivalent_to_Riem_dist} along with the quotient map $\pi_n:PSL_2\R\to N$ then gives that that $d_N(B_{-n}(x)y,B_{-n}(x')y')$ goes to zero exponentially fast. 
		
		We now prove the forward inclusion. Suppose $(x',y_1)\in W^u_{diag}(x,y)\cap (G_{\ell,\tau_g}\times N)$. Projecting down to $M$, we get that $x'\in W^u_g(x)$. So, now we just need to show that there exists $s\in \R$ such that $y_1=r_s(x,x')y$. Let $y_0:=r_0(x,x')y$. By what we just proved, $(x',y_0)\in W^u_{diag}(x,y)$. Combining \eqref{eq:unstable_fdiag_characterization} with the triangle inequality gives that 
			$$
				\limsup_{n\to \infty} \frac{1}{n} \log d_N(B_{-n}(x')y_1,B_{-n}(x')y_0)<0. 
			$$
		Thus, there exists $c>0$ such that for all $n$ sufficiently large, 
			\begin{equation*}
				d_N(B_{-n}(x')y_1,B_{-n}(x')y_0)\leq e^{-cn}.
			\end{equation*}
		We can write $B_{-n}(x')=g_{\tau_{-n}(x')}=g_{-s_n}$, where $s_n=-\tau_{-n}(x')$. If $t>0$, we get that for sufficiently large $n$, 
			\begin{equation}\label{eq:unstables_fdiag_proof2}
				d(g_{-t}y_1,g_{-t}y_0) 
				\leq \left\|\Ad(g_{s_n-t})\right\| d_N(g_{-s_n}y_1,g_{-s_n}y_0)
				\leq e^{2|s_n-t|-cn}
				\leq e^{(2\left|\frac{s_n}{t}-1\right|-c\frac{n}{t})t}
			\end{equation}
		For $t>0$, let $n_t=\lfloor \frac{t}{\lambda_2}\rfloor$. Note that $\frac{t}{n_t}\to \lambda_2$ as $t\to \infty$. Also note that since $x'\in \bar{\Lambda}_\e^\ell$ is a Lyapunov regular point for $B$, we have that 
			$$
				\frac{\tau_{-n}(x')}{-n}=\frac{s_n}{n}\to \lambda_2
				\quad \Longrightarrow \quad 
				\frac{s_{n_t}}{n_t} \cdot \frac{n_t}{t} \to 1
			$$
		Thus \eqref{eq:unstables_fdiag_proof2} implies that $d(g_{-t}y_1,g_{-t}y_0) \to 0$ as $t\to \infty$. We have shown that $y_0$ and $y_1$ are on the same strong unstable leaf of the geodesic flow on $N$. The strong unstable leaves of the geodesic flow on $N$ are of the form $\{h_by_2 \colon b\in \R\}$ for $y_2\in N$. Thus, $y_1=h_by_0$ for some $b\in \R$. Combining this with the fact that $y_0:=r_0(x,x')y$ and \eqref{eq:r_s_horocycle_geodesic}, we get
			$$
				y_1=h_br_0(x,x')y=r_{be^{-\tau_\infty(x',x)}}(x,x')y.
			$$ 
	\end{proof}

	\begin{definition}\label{def:unstable_segments_f_diag}
		For $(x,y)\in \mathcal{R}_{\delta_g,\e}\times N$, the local unstable manifold for $f_{diag}$ centered at $(x,y)$ of radius $\xi>0$ is
		\begin{equation*}
			W_{diag,\xi}^u(x,y)=\left\{(x',y')\in \mathcal{R}_{\delta_g,\e}\times N \colon x'\in W^u_{g,\xi}(x), \ y'=r_s(x,x')y, \ s\in (-\xi,\xi)\right\}
		\end{equation*}
	\end{definition}
	
\subsection{Conditional Measures on Unstables for $f_{diag}$}\label{sec:unstable_conditionals_f_diag}

	Fix $\e, \ell, \tau_g$ as in Section \ref{sec:unstable_manifolds_f_diag}. In this section, we disintegrate $\mu\times \nu$ in local coordinates adapted to the unstable plaques for $f_{diag}$. 
	
	We will use the following reformulation of Lemma \ref{lem:disintegration_mu}:
	
	\begin{lemma}\label{lem:disintegration_mu_reformulation}
		Fix $x_0\in \hat{\mathcal{P}}_{\tau_g}$ and $r>0$. There exists a standard Borel space $\mathcal{T}$, a measure $\mu^{\mathcal{T}}$, a family $\{\mathcal{W}_t\}_{t\in \mathcal{T}}$ of nonempty, pairwise disjoint Borel sets, and a measurable function $t\mapsto x_t\in \mathcal{W}_t$ such that 
			\begin{enumerate}
			\item 
				for each $t\in \mathcal{T}$, $\mathcal{W}_t\subset W^u_{Q_{x_0}}(z)\cap G_{\ell,\tau_g}$ for some $z\in \tilde{T}_{x_0}^{\tau_g}$,
			\item 
				for each $t\in \mathcal{T}$, $\diam(\mathcal{W}_t)<r$, and 
			\item 
				for each $h\in L^1(M)$, 
				\begin{equation}\label{eq:disintegration_mu_reformulation}
					\int_{Q_{x_0}\cap \tilde{Q}_{x_0}^{\tau_g} \cap G_{\ell,\tau_g}} h(x')d\mu(x')
					= \int_{\mathcal{T}} \int_{\mathcal{W}_t} h(x')d\mu^u_{x_t}(x')d\mu^{\mathcal{T}}(t)
				\end{equation}				
			\end{enumerate}
	\end{lemma}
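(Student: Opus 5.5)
The plan is to start from Lemma \ref{lem:disintegration_mu} applied at $x_0$ and refine the transversal parametrization. Set $\mathcal{T}_0:=\tilde T^{\tau_g}_{x_0}$ with the measure $\mu^{cs}_{x_0}$. Apply Lemma \ref{lem:disintegration_mu} to $h\1_{G_{\ell,\tau_g}}$ in place of $h$; this function is in $L^1$, and $G_{\ell,\tau_g}$ is Borel because it is an intersection of Pesin sets. This gives
$$
\int_{Q_{x_0}\cap \tilde Q^{\tau_g}_{x_0}\cap G_{\ell,\tau_g}} h\,d\mu=\int_{\mathcal{T}_0}\int_{W^u_{Q_{x_0}}(z)\cap G_{\ell,\tau_g}} h(x')\,d\mu^u_z(x')\,d\mu^{cs}_{x_0}(z).
$$
The only problems are that the plaques $W^u_{Q_{x_0}}(z)\cap G_{\ell,\tau_g}$ may have diameter $\geq r$, and that the basepoint $z$ need not lie in the set being integrated over.

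To handle the diameter, I would fix a countable Borel partition of $M$ into sets $\{D_i\}_{i\in\N}$ of diameter $<r$, for instance the cells of a fine grid in finitely many charts. Then define
$$
\mathcal{T}:=\{(z,i)\in \mathcal{T}_0\times \N : W^u_{Q_{x_0}}(z)\cap G_{\ell,\tau_g}\cap D_i\neq\emptyset\},\qquad \mathcal{W}_{(z,i)}:=W^u_{Q_{x_0}}(z)\cap G_{\ell,\tau_g}\cap D_i .
$$
These sets are nonempty by construction and have diameter $<r$. They are pairwise disjoint: distinct $z$ give disjoint plaques, since $\tilde T^{\tau_g}_{x_0}$ is a transversal and each plaque meets it once, and distinct $i$ give disjoint $D_i$. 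They lie in $W^u_{Q_{x_0}}(z)\cap G_{\ell,\tau_g}$, so property (1) holds. Splitting the inner integral over $i$ by countable additivity and Fubini–Tonelli gives a disintegration over $\mathcal{T}$ with measure $\mu^{cs}_{x_0}\times$ counting, restricted to $\mathcal{T}$, and with conditional measures $\mu^u_z$.

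Next, the basepoint has to be moved into $\mathcal{W}_t$. Choose a measurable section $t\mapsto x_t\in\mathcal{W}_t$. By \eqref{eq:base_rho_switch_basepoint}, $\mu^u_z=\rho(z,x_t)\,\mu^u_{x_t}$ on the leaf, because $d\mu^u_z=\rho(z,\cdot)d\bar\mu^u$ and $\rho(z,x')=\rho(z,x_t)\rho(x_t,x')$. So I would set
$$
d\mu^{\mathcal{T}}(z,i):=\rho(z,x_{(z,i)})\,d(\mu^{cs}_{x_0}\times\#)(z,i),
$$
which yields \eqref{eq:disintegration_mu_reformulation}. The density $\rho$ is positive and finite, since it is Hölder and the plaques are bounded within the box.

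The main obstacle I expect is the measurability bookkeeping: showing that $\mathcal{T}$ is a standard Borel space and that the section $t\mapsto x_t$ can be chosen measurably. For the space itself, $\mathcal{T}_0\times\N$ is a Borel subset of a Polish space. The set $\mathcal{T}$ is the projection of the Borel set $\{(z,i,x'): x'\in\mathcal{W}_{(z,i)}\}$, and the plaque map $z\mapsto W^u_{Q_{x_0}}(z)$ is continuous on the compact Pesin set. I would first use compactness: $G_{\ell,\tau_g}$ is compact if the Pesin sets are taken closed, and $D_i$ can be replaced by compact pieces up to measure zero, so each fiber is $\sigma$-compact. The Arsenin–Kunugui uniformization theorem then gives both that $\mathcal{T}$ is Borel and that a Borel section exists. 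If the Pesin sets are not closed, I would accept that $\mathcal{T}$ is only analytic and $x_t$ only universally measurable. In that case I would discard a $\mu^{\mathcal{T}}$-null set, which is harmless for \eqref{eq:disintegration_mu_reformulation}, and obtain a standard Borel space.
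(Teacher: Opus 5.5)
Your proposal is correct and follows the paper's proof essentially step for step: apply Lemma \ref{lem:disintegration_mu} to $h\chi_{G_{\ell,\tau_g}}$, index the pieces by pairs $(z,i)$ with $\mathcal{W}_{(z,i)}=W^u_{Q_{x_0}}(z)\cap G_{\ell,\tau_g}\cap D_i$, reweight $\mu^{cs}_{x_0}\times\#$ by $\rho(z,x_t)$ using \eqref{eq:base_rho_switch_basepoint}, and choose $x_t$ by a uniformization theorem. The differences are cosmetic. The paper uses a finite partition of $\tilde Q^{\tau_g}_{x_0}\cap Q_{x_0}$ and keeps only the pairs $(z,i)$ with $\mu^u_z(\mathcal{W}_{z,i})>0$, which sidesteps your analytic-projection issue, and it cites Jankov--von Neumann rather than Arsenin--Kunugui; your measurability discussion is more careful than the paper's.
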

	\begin{proof}
		Let $U_1,...,U_m$ be a finite partition of $\tilde{Q}_{x_0}^{\tau_g}\cap Q_{x_0}$ into Borel sets of diameter less than $r$. We define 
			\begin{equation*}
				\mathcal{T}:= \bigcup_{i=1}^m \mathcal{T}_i\times \{i\}, \qquad 
				\mathcal{T}_i:= \left\{ z\in \tilde{T}_{x_0}^{\tau_g} \colon \mu^u_z(\mathcal{W}_{z,i})>0 \right\}, \qquad 
				\mathcal{W}_{z,i}:= W^u_{Q_{x_0}}(z)\cap G_{\ell,\tau_g}\cap U_i
			\end{equation*}
		For $t=(z,i)\in \mathcal{T}$, we set $z_t:=z$ and $\mathcal{W}_t:=\mathcal{W}_{z,i}$. Properties (1) and (2) are automatic. 
		
		For $E\subset \mathcal{T}$, we can write $E=\bigcup_{i=1}^m (E_i\times \{i\})$, where $E_i\subset \mathcal{T}_i$. For $E\subset \mathcal{T}$ measurable, we define  $\mu_0^{\mathcal{T}}(E)=\sum_{i=1}^m \mu^{\mathcal{T}_i}(E_i)$, where $\mu^{\mathcal{T}_i}(E_i)=\mu^{cs}_{x_0}(E_i)$. We can find a measurable function $t\mapsto x_t\in \mathcal{W_t}$ by the Jankov-von Neumann Uniformization Theorem. 
		
		Now, we define a measure $\mu^{\mathcal{T}}$ on $\mathcal{T}$ by letting $d\mu^{\mathcal{T}}(t)=\rho(z_t,x_t)d\mu_0^{\mathcal{T}}(t)$. Take $h\in L^1(M)$. Applying Lemma \ref{lem:disintegration_mu} to $h\chi_{G_{\ell,\tau_g}}$ and using \eqref{eq:base_rho_switch_basepoint}, we get that Property (3) holds. 
	\end{proof}
	
	Now, fix $x_0\in \hat{\mathcal{P}}_{\tau_g}$ and $y_0\in N$. Fix 
		\begin{equation}\label{eq:disintegration_diag_choice_r}
			0<r<\min\left\{ \tau_ge^{\frac{\lambda_g}{2}},  \left(\frac{\delta_{inj}}{2C_{\tau_\infty}}\right)^{1/\alpha_b} \right\},
		\end{equation}
	and apply Lemma \ref{lem:disintegration_mu_reformulation}. We define
		\begin{equation*}
			\Psi(t,u^{cs},x',h_s):=\left(x',r_s(x_t,x')u^{cs}y_0\right), \qquad 
			t\in \mathcal{T}, \quad
			u^{cs}\in U^{cs}, \quad 
			x'\in \mathcal{W}_t, \quad 
			h_s\in U^-.
		\end{equation*}
	By Proposition \ref{prop:unstables_f_diag}, for fixed $t\in \mathcal{T}$ and $u^{cs}\in U^{cs}$,
		$$
			\Psi(t,u^{cs},\mathcal{W}_{t},U^-)\subset \mathcal{W}^u_{diag}(x_t,u^{cs}y_0).
		$$
	By restricting $u^{cs}$ and $h_s$, we can make $\Psi$ injective:
	
	\begin{prop}\label{lem:f_diag_foliation_charts_injective}
		There exist $\delta_1,\delta_2>0$ such that $\Psi$ is injective on 
		\begin{equation*}
			\left\{ (t,u^{cs},x',h_s) \colon t\in \mathcal{T}, \ u^{cs}\in C^{cs}(\delta_1), \ x'\in \mathcal{W}_t, \ h_s\in C^-(\delta_2) \right\}
		\end{equation*}
	\end{prop}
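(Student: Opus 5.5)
Suppose $\Psi(t_1,u_1,x_1',h_{s_1})=\Psi(t_2,u_2,x_2',h_{s_2})$. The plan is to deduce equality of the four coordinates one at a time: first the base point $x'$, then the index $t$, and finally the two fiber coordinates from the $N$-component, using the product structure of cubes in $SL_2\R$.

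\textbf{Base coordinates.} Comparing first coordinates gives $x_1'=x_2'=:x'$. Since the sets $\{\mathcal{W}_t\}_{t\in\mathcal{T}}$ are pairwise disjoint (Lemma \ref{lem:disintegration_mu_reformulation}) and $x'\in\mathcal{W}_{t_1}\cap\mathcal{W}_{t_2}$, we get $t_1=t_2=:t$. Hence $r_{s_i}(x_t,x')$ share the same diagonal part $g_{\tau}$, where $\tau:=\tau_\infty(x',x_t)$. By \eqref{eq:r_s_horocycle_geodesic},
\begin{equation*}
r_{s_i}(x_t,x')=h_{s_ie^{\tau}}\,g_{\tau},\qquad i=1,2.
\end{equation*}

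\textbf{Size of $\tau$.} Both $x_t$ and $x'$ lie in $\mathcal{W}_t\subset G_{\ell,\tau_g}$, which sits on a single base unstable plaque and has diameter $<r$. By the choice \eqref{eq:disintegration_diag_choice_r} of $r$, Lemma \ref{lem:bound_tau_infty} applies (this uses $r<\tau_g e^{\lambda_g/2}$) and gives
\begin{equation*}
|\tau|\leq C_{\tau_\infty}r^{\alpha_b}<\tfrac{\delta_{inj}}{2}.
\end{equation*}

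\textbf{Fiber coordinates.} Lift $y_0$ to $\tilde y_0\in SL_2\R$. The equality of second coordinates says
\begin{equation*}
\pi\bigl(h_{s_1e^\tau}g_\tau u_1\tilde y_0\bigr)=\pi\bigl(h_{s_2e^\tau}g_\tau u_2\tilde y_0\bigr).
\end{equation*}
Since $g_\tau\in U^{cs}$, the element $w_i:=g_\tau u_i$ lies in $U^{cs}$. Writing $g_\tau=\theta_{cs}(\tau e_2)$ and $u_i=\theta_{cs}(t_ie_2+\sigma_ie_3)$ with $|t_i|,|\sigma_i|<\delta_1$, a direct computation gives $w_i=\theta_{cs}\bigl((\tau+t_i)e_2+e^{-\tau}\sigma_ie_3\bigr)$ (using $g_\tau\exp(\sigma e_3)g_{-\tau}=\exp(e^{-2\tau}\sigma e_3)$ together with the normalization in $\theta_{cs}$). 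This places $w_i$ in $C^{cs}(\delta_{inj}/2+2\delta_1)$. Likewise $h_{s_ie^\tau}\in C^-(2\delta_2)$ for $\delta_2$ small, since $|\tau|$ is bounded.

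Choose $\delta_1,\delta_2$ so that $\delta_{inj}/2+2\delta_1<\delta_{inj}$ and $2\delta_2<\delta_{inj}$. Then both products $h_{s_ie^\tau}w_i$ lie in the cube $C(\delta_{inj})$.

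\textbf{Conclusion.} Since $C(\delta_{inj})\subset B_{SL_2\R}(e,r_0)$ and $\pi$ is injective on $B(e,r_0)\tilde y_0$, the two lifts coincide:
\begin{equation*}
h_{s_1e^\tau}w_1=h_{s_2e^\tau}w_2\quad\text{in } SL_2\R.
\end{equation*}
The map $\theta$ is a diffeomorphism on $C_{\sl_2\R}(\delta_0)$, and uniqueness of the decomposition $U^-U^{cs}$ holds anyway since $U^-\cap U^{cs}=\{e\}$. Hence $h_{s_1e^\tau}=h_{s_2e^\tau}$ and $w_1=w_2$. The first gives $s_1=s_2$; the second gives $u_1=g_{-\tau}w_1=u_2$.

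The main obstacle is the middle step: showing that the lifted elements stay inside a single injectivity cube. This is exactly why $r$ was chosen with the $(\delta_{inj}/2C_{\tau_\infty})^{1/\alpha_b}$ bound, so that the geodesic correction $g_\tau$ contributes at most $\delta_{inj}/2$ and leaves room for the $u^{cs}$ and $h_s$ perturbations.
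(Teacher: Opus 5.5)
Your argument is correct and is essentially the paper's proof. You match base points, use disjointness of the $\mathcal{W}_t$ to get $t_1=t_2$, bound $|\tau_\infty(x',x_t)|<\delta_{inj}/2$ via Lemma \ref{lem:bound_tau_infty} and the choice of $r$, and conclude by injectivity of the product chart $(u^-,u^{cs})\mapsto u^-u^{cs}y_0$ on $C^-(\delta_{inj})\times C^{cs}(\delta_{inj})$.

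Two harmless slips. First, since $g_\tau$ and $\exp(t_ie_2)$ commute, $g_\tau u_i=\theta_{cs}\bigl((\tau+t_i)e_2+\sigma_ie_3\bigr)$ with no $e^{-\tau}$ factor, so the paper's choice $\delta_1=\delta_{inj}/2$ already suffices. Second, for $h_{s_ie^{\tau}}$ what you need is $\delta_2e^{|\tau|}<\delta_{inj}$ (e.g.\ $\delta_2=\delta_{inj}e^{-C_{\tau_\infty}r^{\alpha_b}}$, as in the paper); taking ``$\delta_2$ small'' does not give $e^{|\tau|}\le 2$.
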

	\begin{proof}
		Take 
			$$
				\delta_1=\frac{\delta_{inj}}{2} 
				\qquad \text{and} \qquad 
				\delta_2=\delta_{inj} e^{-C_{\tau_\infty}r^{\alpha_b}}.
			$$
		Suppose $\Psi(t_1, u^{cs}_1,x'_1,h_{s_1})=\Psi(t_2, u^{cs}_2,x'_2,h_{s_2})$. Equality of the base coordinates and the fact that the $\mathcal{W}_t$ are disjoint gives that $x_1'=x_2'$ and $t_1=t_2$. From now on, we let $x':=x_1'=x_2'$ and $t:=t_1=t_2$. Setting the fiber coordinates equal and applying \eqref{eq:r_s_horocycle_geodesic} gives that 
			\begin{equation*}
				h_{s_1e^{\tau_\infty(x',x_t)}}g_{\tau_\infty(x',x_t)}u^{cs}_1 y_0 
				=r_{s_1}(x_t,x')u^{cs}_1 y_0
				=r_{s_2}(x_t,x')u^{cs}_2 y_0
				=h_{s_2e^{\tau_\infty(x',x_t)}}g_{\tau_\infty(x',x_t)}u^{cs}_2 y_0 
			\end{equation*}
		If we can show that $h_{s_ie^{\tau_\infty(x',x_t)}}\in C^-(\delta_{inj})$ and $g_{\tau_\infty(x',x_t)}u^{cs}_i\in C^{cs}(\delta_{inj})$, then we'll be done by injectivity of the local product chart $(u^-,u^{cs})\mapsto u^-u^{cs}y_0$ on $C^-(\delta_{inj})\times C^{cs}(\delta_{inj})$ from Section \ref{sec:FiberCubes}. Combining Lemma \ref{lem:bound_tau_infty} with \eqref{eq:disintegration_diag_choice_r}, we get that 
			\begin{equation*}
				|\tau_\infty(x',x_t)|\leq C_{\tau_\infty}r^{\alpha_b} <\frac{\delta_{inj}}{2}.
			\end{equation*}
		Thus, if $h_{s_i}\in C^-(\delta_2)$, then $h_{s_ie^{\tau_\infty(x',x_t)}}\in C^-(\delta_{inj})$, and if $u^{cs}_i\in C^{cs}(\delta_1)$, then $g_{\tau_\infty(x',x_t)}u^{cs}_i\in C^{cs}(\delta_{inj})$.
	\end{proof}
	
	Now, let $\delta_1,\delta_2$ be as in Lemma \ref{lem:f_diag_foliation_charts_injective}. Let 
		$$
			\mathcal{S}=\mathcal{S}_{(x_0,y_0)}= \bigcup_{t\in \mathcal{T}} \Psi \left( t,C^{cs}(\delta_1), \mathcal{W}_t, C^-(\delta_2) \right)
				\subset \left( \tilde{Q}^{\tau_g}_{x_0} \cap Q_{x_0}\cap G_{\ell,\tau_g}\right)\times \left( C^-(\delta_{inj})C^{cs}(\delta_{inj})y_0 \right)
		$$
	We now disintegrate $\mu\times \nu$ on $\mathcal{S}$ 
	
	\begin{lemma}\label{lem:disintegrate_f_diag_unstables}
		If $h\in L^1(M\times N)$, then 
			\begin{equation*}
				\int_{\mathcal{S}} h(x',y)d(\mu\times \nu)(x',y)
				= C_1 \int_{\mathcal{T}} \int_{C^{cs}(\delta_1)} \int_{\mathcal{W}_t} \int_{-\delta_2}^{\delta_2} h(x',r_s(x_t,x')u^{cs}y_0)e^{-\tau_\infty(x',x_t)}dsd\mu^u_{x_t}(x')dm^R_{U^{cs}}(u^{cs})d\mu^{\mathcal{T}}(t)
			\end{equation*}
	\end{lemma}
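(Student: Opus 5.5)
The approach is to disintegrate in two stages: first $\mu$ along base unstables, then $\nu$ along horocycles in local product coordinates, with a change of variables absorbing $\tau_\infty$.

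\textbf{Base disintegration.} By Fubini, write
\[
\int_{\mathcal S} h\,d(\mu\times\nu)=\int_M\int_N \chi_{\mathcal S}(x',y)h(x',y)\,d\nu(y)\,d\mu(x').
\]
For fixed $x'$, the slice $\mathcal S_{x'}=\{y:(x',y)\in\mathcal S\}$ is nonempty only when $x'\in\mathcal W_t$ for some (unique, by disjointness) $t$. In that case
\[
\mathcal S_{x'}=\{r_s(x_t,x')u^{cs}y_0: |s|<\delta_2,\ u^{cs}\in C^{cs}(\delta_1)\}.
\]
The function $x'\mapsto\int_N\chi_{\mathcal S}h\,d\nu$ is supported in $Q_{x_0}\cap\tilde Q^{\tau_g}_{x_0}\cap G_{\ell,\tau_g}$. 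So \eqref{eq:disintegration_mu_reformulation} of Lemma \ref{lem:disintegration_mu_reformulation} turns the $\mu$-integral into $\int_{\mathcal T}\int_{\mathcal W_t}\cdots\,d\mu^u_{x_t}(x')\,d\mu^{\mathcal T}(t)$. It then remains to compute, for fixed $t$ and $x'\in\mathcal W_t$ with $a:=\tau_\infty(x',x_t)$, the fiber integral $\int_{\mathcal S_{x'}}h(x',y)\,d\nu(y)$.

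\textbf{Fiber computation.} Using \eqref{eq:r_s_horocycle_geodesic}, write
\[
r_s(x_t,x')u^{cs}y_0=h_{se^{a}}\,(g_a u^{cs})\,y_0 .
\]
By Proposition \ref{lem:f_diag_foliation_charts_injective} and $|a|<\delta_{inj}/2$ (Lemma \ref{lem:bound_tau_infty} together with \eqref{eq:disintegration_diag_choice_r}), the map $(s,u^{cs})\mapsto h_{se^a}g_au^{cs}y_0$ is injective into the chart $C^-(\delta_{inj})C^{cs}(\delta_{inj})y_0$. On that chart $\pi$ is injective, and there $\nu=C_1\phi_*(m_{U^-}\times m^R_{U^{cs}})$ (right translation by $y_0$ preserves $\nu$). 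Hence
\[
\int_{\mathcal S_{x'}} h\,d\nu=C_1\int\!\!\int \chi\, h(x',u^-v\,y_0)\,dm_{U^-}(u^-)\,dm^R_{U^{cs}}(v),
\]
where $\chi$ is the indicator of $u^-=h_{se^a}$ with $|s|<\delta_2$ and $v\in g_aC^{cs}(\delta_1)$.

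Now make two changes of variables. First, substitute $v=g_au^{cs}$. Right Haar measure on $U^{cs}$ is invariant under right translation but not left, so the left multiplication by $g_a$ must be checked. In coordinates $\begin{pmatrix}a'&0\\b&a'^{-1}\end{pmatrix}$, left multiplication by $g_a$ sends $(a',b)\mapsto(e^aa',e^{-a}b)$. The Jacobian is $1$, and $a'^{-2}$ becomes $e^{-2a}a'^{-2}$, so $m^R_{U^{cs}}$ is scaled by $e^{-2a}$. Second, substitute $u^-=h_{se^a}$, which gives $dm_{U^-}=e^{a}ds$. The total factor is therefore $e^{a}e^{-2a}=e^{-a}$. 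This matches the stated $e^{-\tau_\infty(x',x_t)}$, although I would double-check the Haar convention $a^{-2}dadb$ here.

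\textbf{Conclusion and main obstacle.} Reassembling the iterated integrals and swapping the order of the $u^{cs}$ and $x'$ integrals by Fubini gives the stated formula. The swap is legitimate because $h\in L^1$ and all factors are measurable in $(t,x',s,u^{cs})$, using measurability of $t\mapsto x_t$ and of $\tau_\infty$ as a limit of partial sums. The main obstacle is this Jacobian bookkeeping for the modular factor: confirming that conjugating the geodesic element through $U^{cs}$ contributes exactly $e^{-2a}$ while the horocycle rescaling contributes $e^{a}$, and that the product decomposition identity from \cite[Lemma 11.31]{EinsiedlerWard} applies with the factors in the order $U^-U^{cs}$.
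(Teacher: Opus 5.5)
Your proposal is correct and is essentially the paper's proof. It disintegrates $\mu$ via Lemma \ref{lem:disintegration_mu_reformulation}(3), then computes $\nu$ in the $U^-U^{cs}$ product chart, where rescaling the horocycle parameter gives $e^{\tau_\infty(x',x_t)}$ and left multiplication by $g_{\tau_\infty(x',x_t)}$ scales the right Haar measure $a^{-2}\,da\,db$ by $e^{-2\tau_\infty(x',x_t)}$, and finishes with Fubini. The conventions you wanted to double-check are fine: $a^{-2}\,da\,db$ is right-invariant on $U^{cs}$ (right translation by $\mathrm{diag}$-part $c$ has Jacobian $c^2$ against density $(ac)^{-2}$), and for unimodular $SL_2\mathbb{R}$ the product $U^-U^{cs}$ with left Haar on the first factor and right Haar on the second is exactly the decomposition the paper uses.
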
 
	\begin{proof}
		We want to change variables using $(x',y)=(x',r_s(x_t,x')u^{cs}y_0)=\Psi(t,u^{cs},x',h_s)$. By Lemma \ref{lem:disintegration_mu_reformulation}(3), we get that $d\mu(x')=d\mu^u_{x_t}(x')d\mu^{\mathcal{T}}(t)$. Now, we compute $d\nu(y)$ in these coordinates. Recall from Section \ref{sec:FiberCubes} that $d\nu(h_su^{cs})=C_1dm_{U^-}(h_s) dm^R_{U^{cs}}(u^{cs})$. So, 
			\begin{align*}
				d\nu(r_s(x_t,x')u^{cs}y_0)
				&=d\nu(r_s(x_t,x')u^{cs})\\
				&=d\nu\left( h_{se^{\tau_\infty(x',x_t)}}g_{\tau_\infty(x',x_t)}u^{cs} \right)\\
				&=C_1 dm_{U^-}(h_{se^{\tau_\infty(x',x_t)}}) dm^R_{U^{cs}}(  g_{\tau_\infty(x',x_t)}u^{cs})
			\end{align*}
		So, using the definitions of $m_{U^-}$ and $m^{R}_{U^{cs}}$ from Section \ref{sec:FiberCubes}, we get that 
			$
				dm_{U^-}(h_{se^{\tau_\infty(x',x_t)}})  = e^{\tau_{\infty}(x',x_t)}ds
			$
		and 
			$
				dm^R_{U^{cs}}(  g_{\tau_\infty(x',x_t)}u^{cs}) = e^{-2\tau_{\infty}(x',x_t)} dm^R_{U^{cs}}(u^{cs}).
			$
		So, 
			$$
				d\nu(r_s(x_t,x')u^{cs}y_0) 
				= C_1 e^{-\tau_\infty(x',x_t)} ds dm^R_{U^{cs}}(u^{cs}).
			$$
		We now change variables using $\Psi$ and apply Fubini's theorem to get the conclusion of the Lemma. 
	\end{proof}	
	
	\begin{definition}\label{def:f_diag_unstable_conditional_measure}
		Take $\xi_1=\min\{ \xi_0, \delta_2 \}$ where $\xi_0$ is from Lemma \ref{lem:bound_tau_infty}. Take $\xi\in(0,\xi_1)$, for all $(x,y)\in G_{\ell,\tau_g}\times N$, we can define a measure $m^{u,diag}_{(x,y)}$ on $W^u_{diag,\xi}\cap G_{\ell,\tau_g}$ by for $E\subset W^u_{diag,\xi}\cap G_{\ell,\tau_g}$,
			$$
				m^{u,diag}_{(x,y)}(E)=\int_{W^u_{g,\xi}(x)\cap G_{\ell,\tau_g}} \int_{-\xi}^\xi \chi_{E}\left( x',r_s(x,x')y \right)e^{-\tau_\infty(x',x)} ds d\mu^u_x(x')
			$$
	\end{definition}
	
	The following is immediate from Lemma \ref{lem:disintegrate_f_diag_unstables} and Definition \ref{def:f_diag_unstable_conditional_measure}
	
	\begin{prop}\label{prop:decompose_unstable_diagonal_conditionals}
		Fix $\xi\in(0,\xi_1)$ where $\xi_1$ is from Definition \ref{def:f_diag_unstable_conditional_measure}. For $(x,y)\in G_{\ell,\tau_g}\times N$ and $x'\in W^u_{g,\xi}(x)$, we define a measure $\nu^u_{x,x',y}$ on $\{r_s(x,x')y \colon s\in (-\xi,\xi)\}$ as follows. Take $E\subset \{r_s(x,x')y \colon s\in (-\xi,\xi)\}$, and let $I_E=\{s\in (-\xi,\xi)\colon r_s(x,x')y\in E\}$. We define 
			$$
				\nu^u_{x,x',y}(E)=\int_{I_E} e^{-\tau_{\infty}(x',x)}ds.
			$$
		Then, 
			$$dm^{u,\diag}_{(x,y)}(x',y')=d\nu^u_{x,x',y}(y')d\mu^u_x(x').$$
	\end{prop}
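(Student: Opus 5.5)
The identity is a statement about measures on $W^u_{diag,\xi}(x,y)\cap (G_{\ell,\tau_g}\times N)$. I would prove it by checking that both sides assign the same mass to every Borel set $E$, and then invoke uniqueness of measures determined on Borel sets. Throughout, $E$ denotes a Borel subset of $W^u_{diag,\xi}(x,y)\cap (G_{\ell,\tau_g}\times N)$.

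For such $E$ and each $x'\in W^u_{g,\xi}(x)\cap G_{\ell,\tau_g}$, let $E_{x'}=\{y'\colon (x',y')\in E\}$ be the fiber slice. By Definition \ref{def:unstable_segments_f_diag}, $E_{x'}\subset \{r_s(x,x')y\colon s\in(-\xi,\xi)\}$.

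First, I need to know that $s\mapsto r_s(x,x')y$ is injective on $(-\xi,\xi)$. Otherwise $\nu^u_{x,x',y}$ would not faithfully reflect the parameterization. By \eqref{eq:r_s_horocycle_geodesic}, $r_s(x,x')y=h_{se^{\tau_\infty(x',x)}}g_{\tau_\infty(x',x)}y$. Lemma \ref{lem:bound_tau_infty} applies since $\xi<\xi_0$, and gives $|\tau_\infty|$ small. Because $\xi<\delta_2$, the argument of Proposition \ref{lem:f_diag_foliation_charts_injective} shows that $se^{\tau_\infty}$ stays in $(-\delta_{inj},\delta_{inj})$. There the horocycle orbit map is injective, since $C^-(\delta_{inj})$ embeds in $N$.

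With injectivity in hand, $I_{E_{x'}}=\{s\in(-\xi,\xi)\colon (x',r_s(x,x')y)\in E\}$. The definition of $\nu^u$ then gives
$$\nu^u_{x,x',y}(E_{x'})=\int_{-\xi}^{\xi}\chi_E\bigl(x',r_s(x,x')y\bigr)e^{-\tau_\infty(x',x)}\,ds.$$

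Next I need measurability in $x'$ so that I can integrate against $\mu^u_x$. The map $(x',s)\mapsto (x',r_s(x,x')y)$ is measurable because $\tau_\infty(\cdot,x)$ is a pointwise limit of measurable partial sums, using \eqref{eq:tau_infty_partial_sums}. Fubini–Tonelli then gives that $x'\mapsto \nu^u_{x,x',y}(E_{x'})$ is measurable, and that
$$\int \nu^u_{x,x',y}(E_{x'})\,d\mu^u_x(x') = m^{u,diag}_{(x,y)}(E)$$
by Definition \ref{def:f_diag_unstable_conditional_measure}. This is exactly the asserted disintegration $dm^{u,diag}_{(x,y)}=d\nu^u_{x,x',y}\,d\mu^u_x$.

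The only mildly delicate step is the injectivity and measurability bookkeeping just described. The rest is a direct unwinding of the definitions, consistent with the change-of-variables formula in Lemma \ref{lem:disintegrate_f_diag_unstables}.
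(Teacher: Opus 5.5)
Your proposal is correct and is essentially the paper's argument: the paper simply declares the identity immediate from Definition \ref{def:f_diag_unstable_conditional_measure} (with Lemma \ref{lem:disintegrate_f_diag_unstables} as context), and your slice-by-slice Tonelli computation is exactly that unwinding. The injectivity step is not needed, because $I_{E_{x'}}=\{s\in(-\xi,\xi)\colon (x',r_s(x,x')y)\in E\}$ follows directly from the definition of the slice $E_{x'}$. (As written, your appeal to $\delta_2$ would also require $|\tau_\infty(x',x)|\le C_{\tau_\infty}r^{\alpha_b}$, i.e.\ essentially $d(x,x')<r$, which $\xi<\xi_1$ does not obviously guarantee.)
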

	
	\begin{prop}\label{prop:radon-nikodym_unstable_conditional_wrt_riem_vol}
		Take $\delta>0$.  Take $(x,y)\in G_{\ell,\tau_g}\times N$. If $\xi\in (0,\xi_1)$ (where $\xi_1$ is from Definition \ref{def:f_diag_unstable_conditional_measure}) and $x'\in W^u_{g,\xi}(x)\cap G_{\ell,\tau_g}$, then for all $s\in (-\xi,\xi)$, 
			$$
				\frac{d\nu^u_{x,x',y}}{ds}(r_s(x,x')y)=e^{-\tau_\infty(x',x)}
			$$
		Moreover, for all $\delta>0$, there exists $\xi_2=\xi_2(\delta,\e,\ell,\delta_g,\tau_g)\in (0,\xi_1)$ independent of $x,y$ such that if $\xi\in (0,\xi_2)$,
			$$
				\frac{d\nu^u_{x,x',y}}{ds}(r_s(x,x')y)\in (1-\delta,1+\delta)
			$$			
	\end{prop}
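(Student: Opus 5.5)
The first claim is read directly off Proposition \ref{prop:decompose_unstable_diagonal_conditionals}. We parametrize the fiber segment $\{r_s(x,x')y \colon s\in(-\xi,\xi)\}$ by $s$. Injectivity of this parametrization follows from Proposition \ref{lem:f_diag_foliation_charts_injective}, since $\xi<\xi_1\le\delta_2$ and $|\tau_\infty(x',x)|$ is small by Lemma \ref{lem:bound_tau_infty}. In this parametrization, $\nu^u_{x,x',y}$ is by definition the pushforward of the measure $e^{-\tau_\infty(x',x)}\,ds$ on $(-\xi,\xi)$. The density $e^{-\tau_\infty(x',x)}$ does not depend on $s$, so the Radon--Nikodym derivative with respect to $ds$ is $e^{-\tau_\infty(x',x)}$ at every point $r_s(x,x')y$. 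One small point needs checking: $\tau_\infty(x',x)$ is defined, because $x,x'\in G_{\ell,\tau_g}\subset\mathcal{R}_{\delta_g,\e}$ and $x'\in W^u_g(x)$, so Lemma \ref{lem:tau_infty_convergence} applies.

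For the second claim we need uniform control of $\tau_\infty(x',x)$. Given $\delta>0$, we apply Lemma \ref{lem:bound_e^tau_infty} with base point $x_0=x$ and the two points $x$ and $x'$. Here $x\in W^u_{g,\xi}(x)$ trivially, and $x'\in W^u_{g,\xi}(x)$ by hypothesis. The lemma provides $\xi_3=\xi_3(\delta)\le\xi_0$ with $e^{\pm\tau_\infty(x',x)}\in(1-\delta,1+\delta)$ whenever $\xi\le\xi_3$. We then set $\xi_2:=\min\{\xi_3,\xi_1/2\}$, which lies in $(0,\xi_1)$.

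Alternatively, one can argue directly. Lemma \ref{lem:bound_tau_infty} gives $|\tau_\infty(x',x)|\le C_{\tau_\infty}d(x',x)^{\alpha_b}$. By Lemma \ref{lem:unstables_g} and \eqref{eq:lipschitz_constant_lyapunov_charts_g1}, $d(x,x')\le C\tau_g^{-1}\xi$ with $C$ depending only on $C_{0,g}$. Choosing $\xi_2$ so that $C_{\tau_\infty}(C\tau_g^{-1}\xi_2)^{\alpha_b}<\log(1+\delta)$ gives the bound, exactly as in the proof of Lemma \ref{lem:bound_e^tau_infty}.

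The only real issue is uniformity: $\xi_2$ must not depend on $x$ or $y$. This holds because the constants $C_{\tau_\infty}(\tau_g,\delta_g,\e,\ell)$, $\xi_0(\tau_g,\delta_g)$ and the chart constants from Lemma \ref{lem:unstables_g} depend only on $\delta,\e,\ell,\delta_g,\tau_g$. The dependence on $y$ is trivially absent, because the density $e^{-\tau_\infty(x',x)}$ involves only the base points.
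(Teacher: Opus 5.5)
Your proposal is correct and matches the paper's argument. The first claim is read off directly from the definition of $\nu^u_{x,x',y}$ in Proposition \ref{prop:decompose_unstable_diagonal_conditionals}, and the second follows from Lemma \ref{lem:bound_e^tau_infty}, exactly as in Remark \ref{remark:radon-nikdodym_unstable_conditional_wrt_riem_vol}, where the paper takes $\xi_2=\min(\xi_1,\xi_3)$ (your choice $\min\{\xi_3,\xi_1/2\}$ just ensures $\xi_2<\xi_1$ strictly).
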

	\begin{remark}\label{remark:radon-nikdodym_unstable_conditional_wrt_riem_vol}
		In Proposition \ref{prop:radon-nikodym_unstable_conditional_wrt_riem_vol}, we can take $\xi_2=\min(\xi_1,\xi_3)$ where $\xi_3$ is from Lemma \ref{lem:bound_e^tau_infty}.
	\end{remark}

\section{Unstable Manifolds for $f$}\label{sec:unstable_f}
	In this section we define the unstable manifolds for $f$ using the unstable manifolds for $f_{diag}$. Throughout this section fix
		\begin{equation}\label{eq:unstable_f_e_restricitons}
			0<\e<\min\left\{\frac{\lambda_2}{4}, \frac{\alpha_b\lambda_g}{14}, \frac{\lambda_g\beta}{18}\right\},
		\end{equation}
	and $\ell\geq 1$, $\delta_g$ as in Section \ref{sec:basedynamics}, and $\tau_g\in (0,1)$. The arguments in this section only require that $g$ is a $C^{1+\alpha_g}$ ergodic diffeomorphism that preserves the smooth measure $\mu$ and that has at least one nonzero Lyapunov exponent and that $A:M\to PSL_2\R$ is $C^{\alpha}$ and has a nonzero Lyapunov exponent. 

	After defining $P$ on a set of measure 0, we can define a measure theoretic isomorphism 
		$$
			\Phi:M\times N\to M\times N, \qquad \Phi(x,y)=(x,P(x)y).
		$$
	Observe that $\Phi^{-1}(x,y)=(x,P(x)^{-1}y)$, and 
		\begin{equation*}
			f\circ \Phi= \Phi\circ f_{diag}.
		\end{equation*}
		
	As in \eqref{eq:unstable_fdiag_characterization}, for $(x,y)\in \mathcal{R}_{\delta_g,\e}\times N$, the local unstable manifold for $f$ centered at $(x,y)$ can be characterized by 
		\begin{equation}\label{eq:unstable_f_characterization}
			W^u_{f}(x,y)=\left\{ (x',y')\in M\times N \colon \limsup_{n\to \infty} \frac{1}{n}\log d_{M\times N}\left( f^{-n}(x,y), f^{-n}(x',y')\right) <0 \right\}.
		\end{equation}
	
	The map $\Phi$ takes unstable manifolds $W^u_{diag}$ for $f_{diag}$ to unstable manifolds $W^u_f$ for $f$:
	
	\begin{lemma}\label{lem:Phi_unstables_f_diag_to_unstables_f}
		Let $(x,y), (x',y')\in G_{\ell,\tau_g}\times N$ with $d(x,x')<r_p$, where $r_p$ is from Proposition \ref{prop:P_holder_on_pesin_unstables}. Then, 
			\begin{equation}\label{eq:Phi_unstables_f_diag_to_unstables_f}
				(x',y')\in W^u_{diag}(x,y)
				\qquad \Longleftrightarrow \qquad 
				\Phi(x',y')\in W^u_f(\Phi(x,y)).
			\end{equation}
	\end{lemma}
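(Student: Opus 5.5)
Both conditions are exponential backward-contraction conditions: \eqref{eq:unstable_fdiag_characterization} for $f_{diag}$ and \eqref{eq:unstable_f_characterization} for $f$. So the plan is to compare backward orbits directly through the conjugacy $f\circ\Phi=\Phi\circ f_{diag}$. Iterating gives $f^{-n}\circ\Phi=\Phi\circ f_{diag}^{-n}$. Hence
$$
f^{-n}(\Phi(x,y))=\bigl(g^{-n}x,\;P(g^{-n}x)B_{-n}(x)y\bigr),\qquad f^{-n}(\Phi(x',y'))=\bigl(g^{-n}x',\;P(g^{-n}x')B_{-n}(x')y'\bigr).
$$
In both directions the base coordinates give $x'\in W^u_g(x)$, so $d(g^{-n}x,g^{-n}x')\to0$ exponentially. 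It therefore remains to compare the fiber distances
$$
a_n:=d_N\bigl(B_{-n}(x)y,B_{-n}(x')y'\bigr),\qquad b_n:=d_N\bigl(P(g^{-n}x)B_{-n}(x)y,\;P(g^{-n}x')B_{-n}(x')y'\bigr).
$$
Write $Y_n=B_{-n}(x)y$, $Y_n'=B_{-n}(x')y'$, $P_n=P(g^{-n}x)$ and $P_n'=P(g^{-n}x')$. The triangle inequality gives
$$
b_n\le d_N(P_nY_n,P_nY_n')+d_N(P_nY_n',P_n'Y_n').
$$
For the first term, left multiplication by $P_n$ distorts the right-invariant metric on $N$ by at most a factor $\|\Ad(P_n)\|\le C\|P_n\|^2$; this is the same estimate used in \eqref{eq:unstables_fdiag_proof2}. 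Hence the first term is at most $C\|P_n\|^2a_n$. For the second term, right-invariance of $d_{PSL_2\R}$ gives $d_N(P_nz,P_n'z)\le d_{PSL_2\R}(P_n,P_n')$ for every $z$. Since $x,x'\in\Lambda_\e^\ell\cap\mathcal P_{\tau_g}$, $x'\in W^u_g(x)$ and $d(x,x')<r_p$, Proposition \ref{prop:P_holder_on_pesin_unstables} bounds the second term by $C_{P,2}e^{(9\e-\lambda_g\beta/2)n}d(x,x')^\beta$. This decays exponentially by the choice \eqref{eq:unstable_f_e_restricitons} of $\e$. The reverse comparison is identical, with $P^{-1}$ in place of $P$:
$$
a_n\le C\|P_n^{-1}\|^2\,b_n+d_N(P_n^{-1}w,P_n'^{-1}w),\qquad w=P_n'Y_n'.
$$
Here one uses $d(P_n^{-1},P_n'^{-1})=d(e,P_n^{-1}P_n')$. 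One must also check that $P_n^{-1}P_n'$ stays near the identity so that Lemma \ref{lem:SL2R_difference_equivalent_to_Riem_dist} applies. This holds because $d(P_n,P_n')\to0$ and the right-invariant metric satisfies $d(P_n^{-1},P_n'^{-1})\le\|\Ad(P_n^{-1})\|\,d(P_n,P_n')$ up to constants.

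The main obstacle is the growth of the distortion factors $\|P_n\|^{\pm2}$ and $\|P_n^{-1}\|^{2}$. These must not destroy the exponential decay. I would handle them with temperedness of $P$ (Theorem \ref{thm:Osceledets_Pesin_Reduction}(\ref{thm:Osceledets_Pesin_Reduction_tempering_part})): for $x\in\Lambda_\e^\ell$, $\|P(g^{-n}x)^{\pm1}\|\le C_\ell e^{c\e n}$. Quantitatively, $\|\tilde P\|$ is controlled by $1/\sin\theta$, and Definition \ref{def:PesinSets}(3) gives $\sin\theta_{g^{-n}x}\ge\ell^{-1}e^{-\e n}$. Thus if $\limsup\frac1n\log a_n=-c<0$, then $\limsup\frac1n\log b_n\le\max(-c+2\e,\;9\e-\lambda_g\beta/2)$.

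One subtlety is that this bound need not be negative if $c$ is small relative to $\e$. To avoid this, I would not use an arbitrary exponent $c$. Instead I would exploit the explicit form from Proposition \ref{prop:unstables_f_diag}. Given $(x',y')\in W^u_{diag}(x,y)$, we have $y'=r_s(x,x')y$. The computation \eqref{eq:unstables_fdiag_proof1} then gives the concrete rate $a_n\lesssim e^{-\min(\kappa,\,2\lambda_2-2\e)n}$. Since $\e<\lambda_2/4$ and $\e<\alpha_b\lambda_g/14$, this rate dominates $2\e$, so $b_n$ decays exponentially. For the converse, the same temperedness argument gives $a_n$ decaying at least at rate $c-2\e$ when $b_n$ decays at rate $c$.

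If that rate is too weak, I would fall back on the argument in the forward inclusion of Proposition \ref{prop:unstables_f_diag}. That argument only needs $a_n\to0$ at some exponential rate along the diagonal flow in order to put $y'$ on the strong unstable horocycle of $r_0(x,x')y$. I would check that the temperedness loss still leaves subexponential-beating decay there: \eqref{eq:unstables_fdiag_proof2} shows that any $e^{-c'n}$ with $c'>0$ suffices.
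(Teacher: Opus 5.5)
Your skeleton matches the paper's proof. You conjugate by $\Phi$ and split $b_n$ by the triangle inequality. You bound the distortion of left translation by $\|\Ad(P_n)\|\le C\|P_n\|^2$ and control $d(P_n,P_n')$ by Proposition \ref{prop:P_holder_on_pesin_unstables}. For the reverse direction you use $P^{-1}$ together with $d(P_n^{-1},P_n'^{-1})\le\|\Ad(P_n^{-1})\|\,d(P_n,P_n')$. The gap is in how you treat the factors $\|P_n^{\pm1}\|^2$. You replace temperedness by the Pesin-set bound $\|P(g^{-n}x)^{\pm1}\|^2\le C_\ell e^{c\e n}$, which loses a fixed multiple of $\e$ in the exponent, and neither repair recovers that loss.

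\emph{Forward direction.} The explicit rate from \eqref{eq:unstables_fdiag_proof1} is $\min(\kappa,2\lambda_2-2\e)$. The hypothesis $\e<\alpha_b\lambda_g/14$ only gives $\kappa=\alpha_b\lambda_g/2-7\e>0$, and $\kappa$ becomes arbitrarily small relative to $\e$ as $\e\uparrow\alpha_b\lambda_g/14$. So ``this rate dominates $2\e$'' does not follow from the restrictions \eqref{eq:unstable_f_e_restricitons}.

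\emph{Reverse direction.} This is worse. If $b_n$ decays at a rate $c\le 2\e$, your bound gives no exponential decay of $a_n$ at all. The fallback to the horocycle argument in Proposition \ref{prop:unstables_f_diag} is circular. That argument starts from a point already known to lie in $W^u_{diag}(x,y)$, i.e.\ from $\limsup\frac1n\log a_n<0$, which is exactly what you are trying to prove.

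The fix is to use temperedness itself rather than the Pesin bound. Every point of $G_{\ell,\tau_g}$ is Oseledets regular. By Claim \ref{claim:osceledets-pesin_norm_P(x)},
$0\le\log\|\tilde P(g^{-n}x)^{\pm1}\|\le\tfrac12\log 2-\tfrac12\log\sin\theta_{g^{-n}x}$, and the right-hand side is $o(n)$ by Oseledets. Hence $\limsup\frac1n\log\bigl(C\|P_n\|^2a_n\bigr)=\limsup\frac1n\log a_n$, and likewise with $\|P_n^{-1}\|$ and $b_n$, so there is no loss in the exponent.

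Combine this with $\limsup\frac1n\log(u_n+v_n)=\max\{\limsup\frac1n\log u_n,\limsup\frac1n\log v_n\}$ and with $\limsup\frac1n\log d(P_n,P_n')\le 9\e-\lambda_g\beta/2<0$. Both directions then follow at once, and this is exactly the paper's argument. It needs neither the explicit form of $W^u_{diag}$ nor your check that $P_n^{-1}P_n'$ stays near the identity. For a right-invariant metric, left translation by $h$ is globally $\|\Ad(h)\|$-Lipschitz.
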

	\begin{proof}
		From \eqref{eq:unstable_fdiag_characterization} and  \eqref{eq:unstable_f_characterization}, we have that 
			\begin{equation*}
				(x',y')\in W^u_{diag}(x,y) 
				\quad \Longleftrightarrow \quad 
				x'\in W^u_g(x)  \text{ and } \limsup_{n\to \infty} \frac{1}{n} \log d_N(B_{-n}(x)y,B_{-n}(x')y')<0
			\end{equation*}
		and
			\begin{equation*}
				\Phi(x',y')\in W^u_{f}(\Phi(x,y))
				\quad \Longleftrightarrow \quad 
				x'\in W^u_g(x)  \text{ and } \limsup_{n\to \infty} \frac{1}{n} \log d_N(A_{-n}(x)P(x)y,A_{-n}(x')P(x')y')<0.
			\end{equation*}
		Let 
			$$
				P_n=P(g^{-n}x), \quad 
				P_n'=P(g^{-n}x'), \quad
				z_n=B_{-n}(x)y, \quad 
				z_n'=B_{-n}(x')y'.
			$$
		Applying \eqref{eq:osceledets_pesin_diagonalize_A} and using the triangle inequality and the right-invariance of the Riemannian metric, we get that 
			\begin{equation}\label{eq:lem:Phi_unstables_f_diag_to_unstables_f_bound1}
				d_N(P_nz_n, P_n'z_n')
				\leq \|\Ad(P_n)\|_{coord} d_N(z_n,z_n')+d_{PSL_2\R}(P_n,P_n')
				\leq C\|P_n\|^2 d_N(z_n,z_n')+d_{PSL_2\R}(P_n,P_n'),
			\end{equation}
		where the constant $C\geq 1$ is uniform. (In fact, $C$ depends only on the geometry of $PSL_2\R$.) Applying Proposition \ref{prop:P_holder_on_pesin_unstables} and using \eqref{eq:unstable_f_e_restricitons}, we get that 
			\begin{equation}\label{eq:lem:Phi_unstables_f_diag_to_unstables_f_d(P_n,P_n')}
				\lim_{n\to \infty} \frac{1}{n} \log d_{PSL_2\R}(P_n,P_n') <0.
			\end{equation}
		By Theorem \ref{thm:Osceledets_Pesin_Reduction}, $P$ is tempered, i.e. 
			\begin{equation}\label{eq:lem:Phi_unstables_f_diag_to_unstables_f_Ptempered}
				\lim_{n\to \infty} \frac{1}{n}\log \|P_n\|=\lim_{n\to \infty} \frac{1}{n}\log \|P_n^{-1}\|=0.
			\end{equation}
		Combining \eqref{eq:lem:Phi_unstables_f_diag_to_unstables_f_bound1}, \eqref{eq:lem:Phi_unstables_f_diag_to_unstables_f_d(P_n,P_n')}, and \eqref{eq:lem:Phi_unstables_f_diag_to_unstables_f_Ptempered}, we get the forward direction of \eqref{eq:Phi_unstables_f_diag_to_unstables_f}. 
		
		The same argument we gave to get \eqref{eq:lem:Phi_unstables_f_diag_to_unstables_f_bound1} gives 
			\begin{equation*}
				d_N(z_n, z_n')
				=d_N(P_n^{-1}P_nz_n, (P_n')^{-1}P_n'z_n')
				\leq C\|P_n^{-1}\|^2 d_N(P_nz_n,P_n'z_n')+d_{PSL_2\R}(P_n^{-1},(P_n')^{-1}).
			\end{equation*}			
		Since $d_{PSL_2\R}$ is right-invariant, we get that 
			\begin{equation*}
				d_{PSL_2\R}(P_n^{-1},(P_n')^{-1})
				= d(P_n^{-1}P_n',e)
				= d(P_n^{-1}P_n'P_{n}^{-1},P_n^{-1})
				\leq \|\Ad(P_n^{-1})\|_{coord} d(P_n',P_n)
				\leq C\|P_n^{-1}\|^2 d(P_n,P_n'). 
			\end{equation*}
		Thus, 
			\begin{equation}\label{eq:lem:Phi_unstables_f_diag_to_unstables_f_bound2}
				d_N(z_n, z_n') \leq C\|P_n^{-1}\|^2 \left( d_N(P_nz_n,P_n'z_n')+d_{PSL_2\R}(P_n,P_n') \right).
			\end{equation}
		Combining \eqref{eq:lem:Phi_unstables_f_diag_to_unstables_f_d(P_n,P_n')}, \eqref{eq:lem:Phi_unstables_f_diag_to_unstables_f_Ptempered}, and \eqref{eq:lem:Phi_unstables_f_diag_to_unstables_f_bound2}, we get the reverse direction of \eqref{eq:Phi_unstables_f_diag_to_unstables_f}. 
	\end{proof}

	\begin{prop}\label{prop:unstables_f}
		If $(x,y)\in G_{\ell,\tau_g}\times N$, then 
		\begin{align*}
			&W^u_f(x,y) \cap \left( (G_{\ell,\tau_g}\cap B(x,r_p))\times N \right)\\
			&\hspace{0.5in} = \left\{ (x',y')\in (G_{\ell,\tau_g}\cap B(x,r_p))\times N \colon x'\in W^u_g(x), y'=P(x')r_s(x,x')P(x)^{-1}y, s\in \R \right\}
		\end{align*}
	\end{prop}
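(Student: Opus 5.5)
The plan is to transport Proposition \ref{prop:unstables_f_diag} through the isomorphism $\Phi$, using Lemma \ref{lem:Phi_unstables_f_diag_to_unstables_f} as the bridge. Fix $(x,y)\in G_{\ell,\tau_g}\times N$ and set $\tilde y:=P(x)^{-1}y$, so that $\Phi(x,\tilde y)=(x,y)$. For $x'\in G_{\ell,\tau_g}\cap B(x,r_p)$ and any $y'\in N$, put $\tilde y':=P(x')^{-1}y'$, so that $\Phi(x',\tilde y')=(x',y')$. Since $\Phi$ is a bijection on the fibers over points where $P$ is defined, every point of the left-hand side can be written in this form.

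The proof then consists of a chain of equivalences. By Lemma \ref{lem:Phi_unstables_f_diag_to_unstables_f}, applied with the pair $(x,\tilde y),(x',\tilde y')\in G_{\ell,\tau_g}\times N$ and $d(x,x')<r_p$,
\begin{equation*}
(x',y')\in W^u_f(x,y) \iff (x',\tilde y')\in W^u_{diag}(x,\tilde y).
\end{equation*}
Since $G_{\ell,\tau_g}\subset \mathcal{R}_{\delta_g,\e}$, Proposition \ref{prop:unstables_f_diag} applies. It gives that the right-hand side holds if and only if $x'\in W^u_g(x)$ and $\tilde y'=r_s(x,x')\tilde y$ for some $s\in\R$. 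Substituting the definitions, the second condition reads $P(x')^{-1}y'=r_s(x,x')P(x)^{-1}y$, which is equivalent to $y'=P(x')r_s(x,x')P(x)^{-1}y$. Both inclusions follow at once, because each step is an equivalence.

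There are two bookkeeping points to check. First, $r_s(x,x')$ is well defined because $x,x'\in \mathcal{R}_{\delta_g,\e}$ and $x'\in W^u_g(x)$, so Lemma \ref{lem:tau_infty_convergence} applies. Second, the hypotheses of Lemma \ref{lem:Phi_unstables_f_diag_to_unstables_f} only constrain the base coordinates (namely $G_{\ell,\tau_g}$ and distance less than $r_p$), so they are satisfied for arbitrary fiber points $\tilde y,\tilde y'$.

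The only real subtlety is that $P$ is merely measurable and defined on a full-measure set. I would therefore confirm that $P$ is defined at every point of $G_{\ell,\tau_g}$. This holds because these points are regular, and $\tilde P$ is constructed at every regular point by Theorem \ref{thm:Osceledets_Pesin_Reduction}, so $\Phi$ and $\Phi^{-1}$ are genuinely defined pointwise at all the points involved. No estimate beyond those already packaged in Lemma \ref{lem:Phi_unstables_f_diag_to_unstables_f} should be needed.
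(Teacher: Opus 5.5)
Your proposal is correct and matches the paper's proof, which says only that the result is immediate from combining Proposition~\ref{prop:unstables_f_diag} with Lemma~\ref{lem:Phi_unstables_f_diag_to_unstables_f}. Your chain of equivalences through $\tilde y=P(x)^{-1}y$ and $\tilde y'=P(x')^{-1}y'$ spells out that combination, together with the check that $P$ is defined on $G_{\ell,\tau_g}$.
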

	\begin{proof}
		Immediate from combining Proposition \ref{prop:unstables_f_diag} and Lemma \ref{lem:Phi_unstables_f_diag_to_unstables_f}.
	\end{proof}
	
	By Lemma \ref{lem:unstables_g}, we can find $\xi_f>0$ depending only on $G_{\ell,\tau_g}$ such that for all $x\in G_{\ell,\tau_g}$ and $\xi\in (0,\xi_f)$, 
		$
			W^u_{g,\xi}(x)\subset B(x,r_p).
		$
	For $x\in G_{\ell,\tau_g}$ and $\xi\in (0,\xi_f)$, define 
		\begin{equation}\label{eq:definition_unstable_f_radius_xi_using_f_diag}
			W^u_{f,\xi}(x,y):= \Phi(W^u_{diag,\xi}(x,P(x)^{-1}y)\cap (G_{\ell,\tau_g}\times N)).
		\end{equation}
	Equivalently, 
		\begin{equation}\label{eq:definition_unstable_f_radius_xi}
			W^u_{f,\xi}(x,y)= \left\{ \left( x',P(x')r_s(x,x')P(x)^{-1}y \right) \colon x'\in W^u_{g,\xi}(x)\cap G_{\ell,\tau_g}, \ |s|<\xi  \right\}.
		\end{equation}
	
\subsection{Conditional measures on unstable manifolds for $f$}\label{sec:unstable_conditional_measure_f}

	Fix $0<\xi<\min\{\xi_1,\xi_f\}$, where $\xi_1$ is from Definition \ref{def:f_diag_unstable_conditional_measure}. For $(x,y)\in G_{\ell,\tau_g}\times N$, we define a measure
		\begin{equation}\label{eq:conditionals_f_define}
			m^u_{(x,y)}:=\Phi_* m^{u,diag}_{(x,P(x)^{-1}y)}.
		\end{equation}
	Then combining Lemma \ref{lem:Phi_unstables_f_diag_to_unstables_f} with the fact that $\Phi$ and $\Phi^{-1}$ preserve $m=\mu\times \nu$ and the results of Section \ref{sec:unstable_conditionals_f_diag}, we get that
	
	\begin{prop}\label{prop:conditional_unstable_f}
		After normalization on each plaque, the measures $m^u_{(x,y)}$ defined in \eqref{eq:conditionals_f_define} are the conditional probabilities along the local unstable plaques of $f$. 
	\end{prop}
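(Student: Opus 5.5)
The plan is to reduce the statement to the corresponding fact for $f_{diag}$ and then transport it through $\Phi$. The key observation is that conditional measures along a measurable partition subordinate to unstables are characterized by the disintegration identity: a family $\{m_\zeta\}$ of probability measures on the atoms $\zeta$ of a partition $\mathcal{P}$ is the system of conditional measures of $m$ exactly when $\int h\,dm=\int\int_\zeta h\,dm_\zeta\,d\hat m(\zeta)$ for all $h\in L^1$. I will first establish this for $f_{diag}$ and then push it forward.

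\textbf{Step 1: disintegration for $f_{diag}$.} On a foliation box $\mathcal{S}=\mathcal{S}_{(x_0,y_0)}$, Lemma \ref{lem:disintegrate_f_diag_unstables} writes $\int_{\mathcal{S}}h\,d(\mu\times\nu)$ as an integral over the transversal parameters $(t,u^{cs})\in\mathcal{T}\times C^{cs}(\delta_1)$. The integrand is $\int_{\mathcal{W}_t}\int_{-\delta_2}^{\delta_2}h(x',r_s(x_t,x')u^{cs}y_0)e^{-\tau_\infty(x',x_t)}\,ds\,d\mu^u_{x_t}(x')$. By Definition \ref{def:f_diag_unstable_conditional_measure} and Proposition \ref{prop:decompose_unstable_diagonal_conditionals}, this inner integral is exactly integration against $m^{u,diag}_{(x_t,u^{cs}y_0)}$ restricted to the plaque $\Psi(t,u^{cs},\mathcal{W}_t,C^-(\delta_2))$.

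Two changes of base point need checking. Changing the base point within a plaque multiplies $\mu^u$ by $\rho(x_1,x_2)$, by \eqref{eq:base_rho_switch_basepoint}. It also changes $e^{-\tau_\infty}$ and the $s$-parametrization by constant factors, via \eqref{eq:tau_infty_change_basepoints} and \eqref{eq:r_s_horocycle_geodesic}. So $m^{u,diag}_{(x,y)}$ is well defined up to a constant on each plaque, which disappears after normalization. By Proposition \ref{lem:f_diag_foliation_charts_injective}, the plaques in $\mathcal{S}$ are pairwise disjoint. Hence the normalized measures are the conditional measures of $\mu\times\nu$ on $\mathcal{S}$ for the partition into these plaques, and the plaques lie in unstable leaves by Proposition \ref{prop:unstables_f_diag}. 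Covering $G_{\ell,\tau_g}\times N$ by finitely many such boxes, using Lemma \ref{lem:foliation_boxes_covering_P_tau} and compactness of $N$, and letting $\ell\to\infty$, $\tau_g\to0$ gives the statement on a full-measure set.

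\textbf{Step 2: transport by $\Phi$.} Since $\Phi$ preserves $m=\mu\times\nu$ and is fibered over the identity on $M$, applying the disintegration of Step 1 to $h\circ\Phi$ gives $\int h\,dm=\int\int h\,d(\Phi_*m^{u,diag}_\zeta)\,d\hat m(\zeta)$. The partition into $\Phi$-images of diagonal plaques is again measurable. By Lemma \ref{lem:Phi_unstables_f_diag_to_unstables_f} and \eqref{eq:definition_unstable_f_radius_xi_using_f_diag}, its atoms are exactly the plaques $W^u_{f,\xi}$, which are subsets of local unstable manifolds of $f$. Uniqueness of disintegration then identifies the normalized $m^u_{(x,y)}$ of \eqref{eq:conditionals_f_define} with the conditional probabilities.

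\textbf{Main obstacle.} Conditional measures are defined for a measurable partition subordinate to $W^u_f$, not just for an arbitrary family of plaques. The plaque family from Step 1 is built from boxes and Pesin sets and is only defined up to null sets. I would handle this the standard way: the conditionals on any subordinate partition agree up to scaling on overlaps, so it suffices to verify the identity box by box. The remaining work is to check that the boxes $\mathcal{S}$, whose fibers are thickened by $C^{cs}(\delta_1)$, cover a set of full measure. This follows by intersecting with the full-measure set $\mathcal{R}_{\delta_g,\e}\times N$.
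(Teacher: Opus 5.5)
Your proposal is correct and takes essentially the paper's route. The paper gets the proposition in one line by combining the $f_{diag}$ disintegration of Section~\ref{sec:unstable_conditionals_f_diag} with two facts: $\Phi$ and $\Phi^{-1}$ preserve $m=\mu\times\nu$, and $\Phi$ carries $f_{diag}$-unstables to $f$-unstables (Lemma~\ref{lem:Phi_unstables_f_diag_to_unstables_f}); your extra checks (base-point changes only rescale, disjointness via Proposition~\ref{lem:f_diag_foliation_charts_injective}, measurability and finite box coverings) spell out what the paper leaves implicit, and the one detail to add is that the plaque diameter $r$ in \eqref{eq:disintegration_diag_choice_r} must also be below $r_p$ so that Lemma~\ref{lem:Phi_unstables_f_diag_to_unstables_f} applies.
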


\section{Exponential Equidistribution of Unstables for $f$} \label{sec:equidistribution_f}

	In this section, we show that typical unstable plaques for $f$ equidistribute on exponentially small product cubes. This is the main ingredient in the proof of Theorem \ref{thm:Main_f_bernoulli}.  As in previous sections, we assume that $g$ is a $C^{1+\alpha_g}$ ergodic diffeomorphism that preserves the smooth measure $\mu$ and that has at least one nonzero Lyapunov exponent and that $A:M\to PSL_2\R$ is $C^{\alpha}$ and has a nonzero Lyapunov exponent. We will be explicit when we use any additional assumptions, in particular exponential mixing of $g$. 
	
	Throughout this section, fix 
		\begin{equation}\label{eq:equidistribution_e_restricitons}
			0<\e<\min\left\{\frac{\lambda_2}{4}, \frac{\alpha_b\lambda_g}{14}, \frac{\lambda_g\beta}{18}, \frac{\alpha_1\lambda_2}{16}\right\},
		\end{equation}
	where $\alpha_1$ is the polynomial rate of the equidistribution of the horocycle flow, see \eqref{eq:thm:flaminio-forni} from Theorem \ref{thm:FlaminioForni}. Fix $\delta_g$ as in Section \ref{sec:basedynamics}.

	\begin{theorem}\label{thm:exp_equidistribution_unstables_f}
		Suppose that $g:M\to M$ is exponentially mixing. Take $\e_1,\e_4,\e_5>0$ and fix 
			\begin{equation*}
				0<\e_{12}<\min\left\{ \left( \frac{\e_1}{2} \right)^{1/10}, \left( \frac{\e_4}{4} \right)^{1/10},  \left( \frac{\e_5}{4} \right)^{1/10} \right\}.
			\end{equation*}
		Fix $\ell$ sufficiently large and $\tau_g$ sufficiently small. There exists $\eta_0>0$ such that for all $\eta\in \left(0,\eta_0\right)$, $\exists \xi_0>0$ such that $\forall \xi\in (0,\xi_0)$, there exists $N_0\in \N$ such that for all $n>N_0$, there are sets $Q,Q_n\subset M$ with 
			\begin{equation*}
				\mu(Q)>1-\e_1, \qquad 
				\mu(Q_n)>1-\frac{\e_1\e_4\e_5}{64}-300\e_{12}^{10^{10}/16}
			\end{equation*}
		such that the following holds: Let 
			$$
				m_n:=  \left\lceil \frac{n}{\e_{12}}\right\rceil,
			$$
		and let $\{B_i\}_{i\in I_1}$ be a family of pairwise disjoint sets in $M$ as in Theorem \ref{thm:exp_equidistribution_g_expmixing} (applied with $\e=\e_{12}$ and at time $m_n$). Let $D\subset N$ be a cube of radius $e^{-\eta n}$.
		
		For all $(x,y)\in Q\times N$, there is a subset $J=J(x)\subset I_1$ with $\mu\left( \cup_{i\in J} B_i \right)>1-\e_4$ such that for all $i\in J$
			\begin{equation}\label{eq:thm_exp_equidistribution_unstables_f}
				m^u_{(x,y)}\left( f^{-n}(B_i\times D) \cap W^u_{f,\xi}(x,y) \cap (Q_n\times N) \right)
				\in (1-\e_5,1+\e_5)\mu(B_i)\nu(D)m^u_{(x,y)}(W^u_{f,\xi}(x,y))
			\end{equation}
		Moreover, the subset $J=J(x)$ is independent of the choice of cube $D\subset N$ of radius $e^{-\eta n}$. 
	\end{theorem}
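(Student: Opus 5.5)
We transfer the problem to $f_{diag}$ through $\Phi$, split the time $n$ into a long fiber-equidistribution stretch and a short base stretch, and treat base and fiber separately. By \eqref{eq:conditionals_f_define} and \eqref{eq:definition_unstable_f_radius_xi}, the left side of \eqref{eq:thm_exp_equidistribution_unstables_f} equals
\[
m^{u,diag}_{(x,P(x)^{-1}y)}\bigl(f_{diag}^{-n}(\Phi^{-1}(B_i\times D))\cap W^u_{diag,\xi}\cap (Q_n\times N)\bigr).
\]
Here $\Phi^{-1}(B_i\times D)=\{(x'',P(x'')^{-1}y'')\colon x''\in B_i,\ y''\in D\}$. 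The fiber set therefore depends on the base point $g^nx'$ through $P(g^nx')^{-1}$.

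The first step is to restrict $Q_n$ to points $x'$ for which $g^nx'$ lies in a Pesin set $G_{\ell',\tau_g'}$ on which $P$ is uniformly continuous with bounded norm (Lusin). Then $P(g^nx')^{-1}D$ is, up to a bounded distortion, a translate of a cube of comparable radius $e^{-\eta n}$. This can be sandwiched between cubes of radii $e^{-\eta n}(1\pm\e)$, and Proposition~\ref{prop:upper_bound_difference_measure_cubes_SL2R} controls the resulting measure error. To make $P(g^nx')$ essentially constant on each piece, we further subdivide each $B_i$ into pieces of small diameter. I would do this by refining the family $\{B_i\}$ and applying Theorem~\ref{thm:exp_equidistribution_g_expmixing} with the ball alternative.

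Next, disintegrate $m^{u,diag}$ by Proposition~\ref{prop:decompose_unstable_diagonal_conditionals} into $d\nu^u_{x,x',y}(y')\,d\mu^u_x(x')$. For fixed $x'$, the fiber curve $\{r_s(x,x')y\}$ is a horocycle arc of length about $2\xi$, with density $e^{-\tau_\infty}\approx1$ by Proposition~\ref{prop:radon-nikodym_unstable_conditional_wrt_riem_vol}. Under $f^n_{diag}$ this arc is mapped by $g_{\tau_n(x')}$ to a horocycle arc of length $\xi e^{2\tau_n(x')}\ge \xi e^{2(\lambda_2-\e)n}$, using $x'\in\bar\Lambda^\ell_\e$. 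The effective equidistribution of long horocycle arcs (Theorem~\ref{thm:FlaminioForni}, with polynomial rate $\alpha_1$) then makes the pushed arc equidistribute on cubes of radius $e^{-\eta n}$ with error $O(e^{-c\alpha_1\lambda_2 n})$. We approximate $\1_D$ by smooth functions with Sobolev norm $e^{O(\eta n)}$, which is harmless once $\eta_0\ll \alpha_1\lambda_2$. This is the role of the condition $\e<\alpha_1\lambda_2/16$ in \eqref{eq:equidistribution_e_restricitons}. The step yields Proposition~\ref{prop:exp_equidistribution_fiber}: for each $x'$ in the good set, $\nu^u$ of the fiber preimage is within $(1\pm\e_5/4)\,\nu(D)$ times the arc mass, uniformly in $D$.

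For the base, Fubini over $x'$ reduces the estimate to $\mu^u_x(\mathcal W\cap g^{-n}(B_i)\cap Q_n)\approx\mu(B_i)\mu^u_x(\mathcal W)$. Theorem~\ref{thm:exp_equidistribution_g_expmixing} gives this at time $\e_{12}m_n\approx n$ with $\e=\e_{12}$. The resulting set $J_g(x)$ is chosen independently of $D$, and $Q$ is taken to be $K_{m_n}$ intersected with a Pesin set of measure greater than $1-\e_1$. The bound on $\mu(Q_n)$ comes from adding $\mathcal R_{m_n}$ to the bad sets of the fiber step and of Lusin's theorem, with the $\e_{12}$ exponents matching the stated constants.

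The main obstacle is the fiber step, because of the mismatch between the target cube $D$ and the twisted set $P(g^nx')^{-1}D$ along the pushed arc. Lusin's theorem and the subdivision of $B_i$ handle the twist only if $P$ is nearly constant at scale $e^{-\eta n}$ relative to the cube. I would first try uniform Hölder continuity of $P$ on a large compact set, obtained by a Lusin argument, which should suffice after shrinking the pieces of $B_i$ to diameter $e^{-C\eta n}$, still permitted by the ball alternative. One must also check that applying horocycle equidistribution to the image arc $g_{\tau_n(x')}(\text{arc})$, whose starting point depends on $x'$, remains uniform; the uniformity of Theorem~\ref{thm:FlaminioForni} over compact $N$ should give this.
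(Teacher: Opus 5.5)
Your architecture (transfer through $\Phi$, disintegrate along horocycle arcs, push each arc to length $\approx 2\xi e^{2\tau_n(x')}$ and apply Theorem~\ref{thm:FlaminioForni}, then use Theorem~\ref{thm:exp_equidistribution_g_expmixing} at time $\lfloor \e_{12}m_n\rfloor=n$) matches the paper. There is, however, a genuine gap in how you handle the twist $P(g^nx')^{-1}D$, and it comes from treating as an obstacle something that is not one.

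As written, the Lusin step does not work, for two reasons.
\begin{enumerate}
\item Conjugation by a merely bounded $P$ turns $C(\delta)$ into a box skewed by $\Ad(P^{-1})$. This box is comparable to $C(\delta)$ only up to a bounded factor, so a sandwich between cubes of radii $e^{-\eta n}(1\pm\e)$ fails; the honest sandwich costs an $O(1)$ relative error.
\item The refinement of $\{B_i\}$ via the ball alternative is unavailable. Admissible balls have radius at least $e^{-\eta_g\e_{12}m_n}$, which is already the center-stable radius of the cubes $B_i$ themselves, and the $B_i$ already have diameter $\lesssim e^{-(\eta_g-\e_{12})n}$.
\end{enumerate}

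More to the point, even if $P(g^nx')\approx P_i$ on a piece, the target would be $P_i^{-1}D$, which is still not a cube. Handling it requires exactly the observation that makes the whole Lusin layer superfluous. For each fixed $x'$, take the smoothings $\hat\phi^\pm$ of $\chi_D$ from Corollary~\ref{cor:smoothing_lemma_N} and apply Theorem~\ref{thm:FlaminioForni} to $\phi^\pm(z):=\hat\phi^\pm(P(g^nx')z)$.
\begin{itemize}
\item The Flaminio--Forni bound is uniform in the starting point and sees the test function only through $\|\phi^\pm\|_{C^4}$.
\item $\int\phi^\pm\,d\nu=\int\hat\phi^\pm\,d\nu$ by left-invariance of $\nu$, so it equals $\nu(D)$ up to $\frac{K_4}{C_1}e^{-\eta n}\nu(D)$.
\item Since $x'\in\Lambda^\ell_\e$ gives $\|\tilde P(g^nx')\|\le 2\ell^{1/2}e^{\e n/2}$ (Claim~\ref{claim:osceledets-pesin_norm_P(x)}, cf.~\eqref{eq:bound_norm_P(g^-kx)}), one gets $\|\phi^\pm\|_{C^4}\le C_7\ell^4e^{(8\eta+4\e)n}$.
\end{itemize}
That subexponential loss is what the condition $\e<\alpha_1\lambda_2/16$ in \eqref{eq:equidistribution_e_restricitons} pays for: it makes $\eta_0=\frac{\alpha_1\lambda_2}{44}-\frac{4\e}{11}$ positive. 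It is not about smoothing $\chi_D$, which only involves $\eta$. Only conditions on $x'$ itself are needed (the Pesin set, and the Egorov set $Q_1$ where $\tau_n(x')\ge\lambda_2 n/4$), none on $g^nx'$.

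Your choice of $Q$ and $J$ leaves a second gap. Theorem~\ref{thm:exp_equidistribution_g_expmixing} controls $\mu^u_x(W^u_{g,\xi}(x)\cap g^{-n}(B_i)\cap\mathcal R_{m_n})$. Replacing $\mathcal R_{m_n}$ by the smaller set $Q_n$, on which the fiber estimate holds, gives the upper bound in \eqref{eq:thm_exp_equidistribution_unstables_f} for free, but not the lower bound. With $Q=K_{m_n}\cap(\text{Pesin set})$ and $J=J_g(x)$, nothing prevents $W^u_{g,\xi}(x)\cap g^{-n}(B_i)$ from lying largely outside $\widehat Q_1=Q_1\cap G_{\ell,\tau_g}$. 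Membership of the center $x$ in a Pesin set says nothing about its plaque.

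The paper supplies two extra ingredients.
\begin{enumerate}
\item It takes $Q=Q_2\cap K_{m_n}$, where $Q_2$ is the set of centers whose $\xi$-plaque gives $\mu^u_x$-proportion at most $\delta_4=\e_1\e_4\e_5/8$ to $\widehat Q_1^c$. Then \cite[Lemma 4.4]{ExpMixingBernoulli} gives $\mu(Q_2)>1-4\delta_4$ once $\mu(\widehat Q_1^c)$ is tiny. This is why $\mu(Q_n)$ carries the term $\frac{\e_1\e_4\e_5}{64}$.
\item A Chebyshev step removes from $J_g(x)$ the ``bad'' indices $i$ with $\mu^u_x(W^u_{g,\xi}(x)\cap\widehat Q_1^c\cap g^{-n}(B_i))\ge\frac{\e_5}{4}\mu(B_i)\mu^u_x(W^u_{g,\xi}(x))$. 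Their union has measure at most $4\delta_4/\e_5\le\e_4/2$, and for the remaining $i$ the lower bound follows by subtraction.
\end{enumerate}
Because this criterion does not involve $D$, it is also what makes $J(x)$ independent of $D$.
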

	
	The proof separates the base and fiber coordinates. Proposition \ref{prop:exp_equidistribution_fiber} gives equidistribution in the fiber, while Theorem \ref{thm:exp_equidistribution_g_expmixing} gives equidistribution in the base. In Section \ref{sec:proof_equidistribution}, we combine these two results to prove Theorem \ref{thm:exp_equidistribution_unstables_f}.

\subsection{Fiberwise Equidistribution}\label{sec:exp_equidistribution_fiber}
	In this section, we prove equidistribution in the fiber. This section does not assume that $g$ is exponentially mixing. Recall from Theorem \ref{thm:Osceledets_Pesin_Reduction} that for $n>0$,
		$$
			\tau_n(x)=\sum_{j=0}^{n-1}\tau(g^jx) 
			\qquad \text{ and } \qquad 
			\int_M \tau d\mu=\lambda_2.
		$$
	By the Birkhoff Ergodic Theorem, $\frac{\tau_n(x)}{n} \to \lambda_2$ for almost every $x\in M$. We can therefore apply Egorov's Theorem to get that for all $\delta_1>0$, we can find a measurable set $Q_1=Q_1(\delta_1)\subset M$ and $N_1=N_1(\delta_1)\in \N$ such that 
		\begin{equation}\label{eq:equidistribution_Q1}
			\mu(Q_1)>1-\delta_1
			\qquad \text{ and } \qquad 
			\tau_n(x)\geq \frac{\lambda_2n}{4} 
			\quad \forall x\in Q_1, \ n\geq N_1.
		\end{equation} 
		
	Throughout this section, we will fix $\ell\geq 1$ and $\tau_g\in (0,1)$.
		
	\begin{prop}\label{prop:exp_equidistribution_fiber}
		There exists $\eta_0=\eta_0(\e)>0$ such that if $\eta\in (0,\eta_0)$, then for all $0<\e_7<1$, there exists $\xi_0>0$ such that for all $\xi \in (0,\xi_0)$, there exists $N_0\in \N$ such that $\forall n >N_0$ the following holds:
		
		Let $D$ be a cube in $N$ of radius $e^{-\eta  n}$. Take 
			$$
				(x_0,y_0)\in G_{\ell,\tau_g}\times N, 
				\qquad
				(x,y)\in W^u_{f,\xi}(x_0,y_0)\cap \left( G_{\ell,\tau_g}\times N \right),
				\qquad 
				x'\in W^u_{g,\xi}(x_0)\cap G_{\ell,\tau_g}\cap Q_1.
			$$
		Set
			$$
				L(x')=L(x_0,x',\xi,y_0):= \left\{ r_s(x_0,x')P(x_0)^{-1}y_0 \colon |s|<\xi \right\}.
			$$
		Then, 
			\begin{equation}\label{eq:exp_equidistribution_fiber}
				\mathcal{I}_3:= \int_{L(x')} \chi_{D}\left( A_n(x')P(x')y' \right) d\nu^u_{x,x',P(x)^{-1}y}(y')
					\in \left( 1-\e_7, 1+\e_7 \right) \nu_{x,x',P(x)^{-1}y}^u(L(x')) \nu(D)
			\end{equation}
	\end{prop}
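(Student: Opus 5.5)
The plan is to conjugate away the non-diagonal part of the cocycle, so that the fiber curve $A_n(x')P(x')L(x')$ becomes a long horocycle arc translated on the left by a tempered matrix. Then I apply effective equidistribution of horocycle arcs (Theorem \ref{thm:FlaminioForni}, with polynomial rate $\alpha_1$) to a smoothed version of the pulled-back cube.

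\textbf{Step 1: rewrite the integrand.} By \eqref{eq:osceledets_pesin_diagonalize_A},
\[
A_n(x')P(x')=P(g^nx')B_n(x')=P(g^nx')g_{\tau_n(x')} .
\]
Set $w:=P(x_0)^{-1}y_0$ and $z:=g_{\tau_\infty(x',x_0)}w$. Using \eqref{eq:r_s_horocycle_geodesic} and the identity $g_t h_u=h_{e^{2t}u}g_t$,
\[
A_n(x')P(x')\,r_s(x_0,x')w=P(g^nx')\,h_{se^{\tau_\infty(x',x_0)}e^{2\tau_n(x')}}\,g_{\tau_n(x')}z .
\]
Since $(x,y)\in W^u_{f,\xi}(x_0,y_0)$, \eqref{eq:definition_unstable_f_radius_xi} and the cocycle relation \eqref{eq:tau_infty_change_basepoints} show that $P(x)^{-1}y$ lies on the $f_{diag}$-plaque of $(x_0,w)$. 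Hence $L(x')$ is the same horocycle segment whether it is parametrized from $x$ or from $x_0$, up to a shift and a constant rescaling of $s$. By Proposition \ref{prop:radon-nikodym_unstable_conditional_wrt_riem_vol}, $\nu^u_{x,x',P(x)^{-1}y}$ has constant density in $s$. Therefore, after normalizing by the total mass $\nu^u_{x,x',P(x)^{-1}y}(L(x'))$, the left side $\mathcal I_3$ becomes the normalized Lebesgue average
\[
\frac1T\int_{I}\chi_{P(g^nx')^{-1}D}(h_t\bar z)\,dt ,
\]
taken over an interval $I$ of length $T\asymp \xi e^{2\tau_n(x')}$, where $\bar z$ is a fixed basepoint. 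Because $x'\in Q_1$ and $n\ge N_1$, \eqref{eq:equidistribution_Q1} gives $T\ge c\,\xi e^{\lambda_2 n/2}$, with $c$ depending only on the bound for $\tau_\infty$ from Lemma \ref{lem:bound_tau_infty}. The target set has measure $\nu(P(g^nx')^{-1}D)=\nu(D)$ by left invariance of $\nu$.

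\textbf{Step 2: control the target set.} For $x'\in G_{\ell,\tau_g}$, the Pesin bounds (the angle bound in Definition \ref{def:PesinSets} together with the construction of $\tilde P$ in Theorem \ref{thm:Osceledets_Pesin_Reduction}) give
\[
\|P(g^nx')^{\pm1}\|\le C_\ell e^{\e n}.
\]
Left multiplication by $P^{-1}$ acts on $N$ as a diffeomorphism with Lipschitz constant at most $\|\Ad(P^{-1})\|\le C\ell^2e^{2\e n}$. Consequently $D'=P(g^nx')^{-1}D$ is a set whose $\rho$-neighborhood and $\rho$-interior differ in measure by
\[
O\bigl(\rho\, e^{2\e n}e^{-2\eta n}\bigr),
\]
since the boundary of a cube of radius $e^{-\eta n}$ has area of order $e^{-2\eta n}$. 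Choose $\rho=e^{-(\eta+\theta)n}$ with a small $\theta>0$. Then mollify the indicators of the $\rho$-neighborhood and the $\rho$-interior at scale $\rho$. This produces $\phi_-\le\chi_{D'}\le\phi_+$ with
\[
\int(\phi_+-\phi_-)\,d\nu\le \tfrac{\e_7}{2}\nu(D),
\]
using $\nu(D)\gtrsim e^{-3\eta n}$ from \eqref{eq:lower_bound_measure_cubes_SL2R}, provided $\theta$ beats $2\e$ and $n$ is large. The Sobolev norms satisfy $\|\phi_\pm\|_{W^r}\le e^{C_r(\eta+\theta+\e)n}$.

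\textbf{Step 3: apply Flaminio--Forni.} Theorem \ref{thm:FlaminioForni} gives
\[
\Bigl|\tfrac1T\int_I\phi_\pm(h_t\bar z)\,dt-\int\phi_\pm\,d\nu\Bigr|\le C\|\phi_\pm\|_{W^r}T^{-\alpha_1}\le C\xi^{-\alpha_1}e^{\left(C_r(\eta+\theta+\e)-\alpha_1\lambda_2/2\right)n}.
\]
For the error to be $<\tfrac{\e_7}{2}\nu(D)$, I need
\[
C_r(\eta+\theta+\e)+3\eta<\alpha_1\lambda_2/2 .
\]
The restriction $\e<\alpha_1\lambda_2/16$ in \eqref{eq:equidistribution_e_restricitons}, together with choosing $\eta_0$ and $\theta$ small, is meant to guarantee this. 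I then pick $\xi_0$ below the thresholds $\xi_1,\xi_2,\xi_f$, and take $N_0\ge N_1$ large enough to absorb $\xi^{-\alpha_1}$, the factor $C_\ell$, and the constants. Sandwiching $\chi_{D'}$ between $\phi_-$ and $\phi_+$ then yields \eqref{eq:exp_equidistribution_fiber}.

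\textbf{Main obstacle.} I expect the difficulty to be Step 2 combined with the exponent bookkeeping in Step 3. The approximation has to keep the Sobolev norm growth of the smoothed $P(g^nx')^{-1}D$, which is distorted by the subexponentially large $P(g^nx')$, small compared with the polynomial gain $T^{-\alpha_1}\approx e^{-\alpha_1\lambda_2 n/2}$. If the Sobolev order $r$ in Flaminio--Forni makes $C_r$ too large for the stated $\e$ bound, I would first try working on the unit tangent bundle, using an $L^1$-type smoothing in the horocycle direction only and controlling the transverse directions by Lipschitz bounds. Failing that, I would shrink $\eta_0$ further, since only $\e$ is constrained a priori.
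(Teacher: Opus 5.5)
Your route works and follows the paper's overall plan. You write $A_n(x')P(x')r_s(x_0,x')P(x_0)^{-1}y_0=P(g^nx')\,h_{se^{\tau_\infty(x',x_0)+2\tau_n(x')}}\,g_{\tau_\infty(x',x_0)+\tau_n(x')}P(x_0)^{-1}y_0$. This turns $\mathcal I_3$ into a normalized horocycle average of $\chi_{P(g^nx')^{-1}D}$ over a length $T\gtrsim \xi e^{\lambda_2 n/2}$. You then use left-invariance of $\nu$, sandwich by smooth functions and apply Theorem \ref{thm:FlaminioForni}. The paper first treats $(x,y)=(x_0,y_0)$ and then checks that reparametrizing $L(x')$ from $(x,y)$ changes the density by the constant $e^{-2\tau_\infty(x,x_0)}$; your one-step reparametrization is the same argument.

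The real difference is the smoothing. The paper never looks at the distorted set $P(g^nx')^{-1}D$. It applies Corollary \ref{cor:smoothing_lemma_N} to $D$ itself with $\e=\delta=e^{-\eta n}$, giving $L^1$ error at most $\frac{K_4}{C_1}e^{-\eta n}\nu(D)$ and $C^4$ norm $\lesssim e^{8\eta n}$. It then precomposes with left multiplication by $P(g^nx')$. Left-invariance of $\nu$ keeps the $L^1$ error unchanged. In a right-invariant frame this left multiplication has constant derivative $\Ad(P(g^nx'))$, so the $C^4$ norm grows by at most $\|\Ad P(g^nx')\|^4\lesssim \ell^4e^{4\e n}$. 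The resulting condition $11\eta+4\e<\alpha_1\lambda_2/2$ is where $\eta_0=\frac{\alpha_1\lambda_2}{44}-\frac{4\e}{11}$ comes from. You instead pay for the distortion in the $L^1$ error, through the Lipschitz constant of left multiplication and the boundary area of $D$, and your mollifiers carry no distortion factor in their $C^4$ norm. This costs a boundary-neighborhood estimate and a group convolution on $N$ instead of a ready-made lemma. In exchange you get a slightly better dependence on $\eta$ ($7\eta$ versus $11\eta$).

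Your Step 3 bookkeeping does not close as written. Theorem \ref{thm:FlaminioForni} uses the $C^4$ norm, so $C_r=4$. Your condition $4(\eta+\theta+\e)+3\eta<\alpha_1\lambda_2/2$ with $\theta>2\e$ then forces $12\e<\alpha_1\lambda_2/2$, and \eqref{eq:equidistribution_e_restricitons} does not give that. Neither ``choosing $\theta$ small'' (your Step 2 needs $\theta>2\e$) nor shrinking $\eta_0$ removes a purely $\e$-dependent obstruction.

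The fix is that the extra $+\e$ is spurious. Mollifying at scale $\rho$ gives $\|\phi_\pm\|_{C^4}\lesssim\rho^{-4}=e^{4(\eta+\theta)n}$, so the actual requirement is $7\eta+4\theta<\alpha_1\lambda_2/2$. With $\theta$ just above $2\e$ and $\eta$ small, this holds exactly when $\e<\alpha_1\lambda_2/16$. You can do better: Claim \ref{claim:osceledets-pesin_norm_P(x)} with the angle bound gives $\|P(g^nx')\|\le(2\ell)^{1/2}e^{\e n/2}$, hence $\|\Ad P(g^nx')\|\lesssim \ell e^{\e n}$. Then $\theta>\e$ already suffices, with room to spare.
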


	The proof of Proposition \ref{prop:exp_equidistribution_fiber} relies on the fact that the horocycle flow equidistributes on $N=PSL_2\R/\Gamma$ at a polynomial rate:
	
	\begin{theorem}[{\cite[Theorem 1.5]{Flaminio-Forni}}]\label{thm:FlaminioForni}
		There exists a constant $\alpha_1>0$ and a constant $C_4>0$ such that for any $\phi \in C^4(N)$, $T>0$ and $y\in N$, 
			\begin{equation}\label{eq:thm:flaminio-forni}
				\left| \frac{1}{T} \int_0^T \phi(h_s y)ds - \int_N \phi d\nu \right| \leq C_4 T^{-\alpha_1} \| \phi \|_{C^4}
			\end{equation}
	\end{theorem}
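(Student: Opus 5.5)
The plan is to derive the estimate from exponential mixing of the geodesic flow on $N$ by a renormalization and thickening argument (the ``banana'' or Margulis argument). This replaces the representation-theoretic invariant-distribution analysis of \cite{Flaminio-Forni}, and it gives some $\alpha_1>0$, not the sharp one. The argument has five steps.

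\textbf{Step 1: renormalize the long segment.} Fix $\phi\in C^4(N)$, $y\in N$ and $T$ large, and set $t=\frac12\log T$. Since $g_t h_{s'}g_{-t}=h_{e^{2t}s'}$, we have
\begin{equation*}
\frac1T\int_0^T\phi(h_sy)\,ds=\int_0^1\phi(g_th_{s'}z)\,ds',\qquad z:=g_{-t}y .
\end{equation*}
This turns a long horocycle average into a pushed-forward unit segment.

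\textbf{Step 2: thicken in the center-stable direction.} Fix $\rho\in(0,\delta_{inj})$ to be chosen later. For $u\in C^{cs}(\rho)$ and $s'\in[0,1]$, a direct $2\times2$ computation gives
\begin{equation*}
h_{s'}u=u'h_{s''},\qquad u'\in C^{cs}(O(\rho)),\qquad |s''-s'|=O(\rho).
\end{equation*}
Both bounds are uniform in $s'\in[0,1]$. Conjugating by $g_t$ does not expand $U^{cs}$, since it fixes the $e_2$ direction and contracts the $e_3$ direction. Hence $d(g_th_{s'}uz,\,g_th_{s''}z)=O(\rho)$. The change of variable $s'\mapsto s''$ moves the endpoints of $[0,1]$ by $O(\rho)$ and has Jacobian $1+O(\rho)$. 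Averaging over $u\in C^{cs}(\rho)$ with respect to $m^R_{U^{cs}}$ then gives
\begin{equation*}
\int_0^1\phi(g_th_{s'}z)\,ds'=\int_N\phi(g_tw)\,\psi(w)\,d\nu(w)+O(\rho\|\phi\|_{C^1}).
\end{equation*}
Here $\psi$ is the normalized density of the box $\{h_{s'}uz\}$, which by the local product formula for $\nu$ from Section \ref{sec:FiberCubes} is a box of volume $\asymp\rho^2$. Because $\Gamma$ is cocompact, $\pi$ is injective on such boxes uniformly in $z$, so this step is uniform in $y$. To control Sobolev norms, I would smooth $\psi$ at scale $\rho$. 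This gives $\int\psi\,d\nu=1$ and $\|\psi\|_{C^k}\lesssim\rho^{-2-k}$, at the cost of another $O(\rho\|\phi\|_{C^1})$ error.

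\textbf{Step 3: apply exponential mixing.} Since $\Gamma$ is cocompact, $L^2_0(N)$ has a spectral gap: only finitely many complementary series representations occur. Ratner's (or Moore's) quantitative decay of matrix coefficients for $PSL_2\R$ then gives
\begin{equation*}
\Bigl|\int\phi\circ g_t\,\psi\,d\nu-\int\phi\,d\nu\int\psi\,d\nu\Bigr|\le Ce^{-ct}\|\phi\|_{C^k}\|\psi\|_{C^k}
\end{equation*}
for some $k$ (one can take $k\le 4$ using $K$-finite and Sobolev estimates). Combining Steps 2 and 3,
\begin{equation*}
\Bigl|\frac1T\int_0^T\phi(h_sy)\,ds-\int_N\phi\,d\nu\Bigr|\le C\Bigl(\rho+e^{-ct}\rho^{-2-k}\Bigr)\|\phi\|_{C^4}.
\end{equation*}

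\textbf{Step 4: optimize the thickening scale.} Choose $\rho=e^{-ct/(3+k)}$. With $e^{2t}=T$, the error becomes $C_4T^{-\alpha_1}\|\phi\|_{C^4}$ with $\alpha_1=\frac{c}{2(3+k)}$. For $T$ bounded, the estimate is trivial after enlarging $C_4$.

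\textbf{Main obstacle.} I expect the delicate step to be Step 3: obtaining decay of correlations for the geodesic flow with the loss measured in a fixed $C^k$ norm, with $k\le4$. This requires the spectral gap together with uniform decay of $K$-finite matrix coefficients, and then Sobolev embedding to pass from $K$-finite vectors to $C^k$ functions. I would cite the Ratner and Moore rates for this input. A secondary point is to check that the commutation bounds in Step 2 are uniform in $s'\in[0,1]$ and in $y$, which uses cocompactness.
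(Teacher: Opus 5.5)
Your argument is correct, but it takes a different route from the paper. The paper gives no proof of this statement and simply imports it from Flaminio--Forni \cite{Flaminio-Forni}, whose method is representation-theoretic:

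\begin{itemize}
\item they decompose $L^2(N)$ into irreducible unitary representations of $PSL_2\R$;
\item they classify the horocycle-invariant distributions in each Sobolev space;
\item they solve the cohomological equation for $h_s$ with tame estimates.
\end{itemize}

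This yields an asymptotic expansion of the ergodic averages and the optimal exponent. That exponent is governed by the spectral gap, i.e.\ the smallest positive Laplace eigenvalue of the underlying compact hyperbolic surface.

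You instead run the Margulis thickening argument. You renormalize to a pushed unit segment, thicken in the non-expanded direction $U^{cs}$, and feed a smooth bump into exponential mixing of $g_t$. This uses the spectral gap only as a black box, through decay of matrix coefficients. It also transfers essentially verbatim to expanding horospherical orbits on other homogeneous spaces. The price is the non-optimal exponent $\alpha_1=c/(2(3+k))$. That is harmless here, because the paper uses $\alpha_1$ only as some positive constant: in the restriction \eqref{eq:equidistribution_e_restricitons} on $\e$ and in the choice of $\eta_0$ in Proposition~\ref{prop:exp_equidistribution_fiber}.

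Two details in Step 2 need adjusting, though neither affects the conclusion.

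\emph{Injectivity.} Your box has length $1$ in the $U^-$ direction. So $\pi$ need not be injective on it when the injectivity radius of $N$ is small, contrary to what you assert. You can fix this in either of two ways:
\begin{itemize}
\item take $\psi$ to be the density of the pushforward measure, which has uniformly bounded multiplicity since $\Gamma$ is discrete and $N$ is compact; or
\item renormalize with $e^{2t}=T/r$ for a small fixed $r$.
\end{itemize}

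\emph{Smoothing.} The smoothing should be a left convolution with a bump of width $\rho$. Its cost cannot be estimated pointwise, because the $U^-$-component of the translating element is expanded by $e^{2t}$ under conjugation by $g_t$. What saves you is your own commutation computation: each $\rho$-small left translate of the box is again an $O(\rho)$-thickened horocycle segment of length $1+O(\rho)$ through $z$. Hence the $U^-$-component only reparametrizes $s'$, and the smoothing error is indeed $O(\rho\|\phi\|_{C^1})$, as you claim.
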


	\begin{proof}[Proof of Proposition \ref{prop:exp_equidistribution_fiber}]
		Fix 
			\begin{equation*}
				0<\e_7<1, \qquad 
				0<\eta<\eta_0:=\frac{\alpha_1\lambda_2}{44}-\frac{4\e}{11}, \qquad 
				0<\xi<\xi_0:=\min\left\{\frac{\xi_2}{4},\xi_f\right\},
			\end{equation*}
		where $\xi_2$ is from Proposition \ref{prop:radon-nikodym_unstable_conditional_wrt_riem_vol} applied with $\delta=1$ and $\xi_f$ is defined right before \eqref{eq:definition_unstable_f_radius_xi_using_f_diag}. Fix 
			\begin{equation}\label{eq:exp_equidistribution_fiber_N_0}
				n>N_0
				:=\max\left\{  N_1, 
										\frac{1}{\eta}\ln\left( \frac{2}{\delta_{inj}} \right),
										\frac{1}{\eta} \ln \left( \frac{2K_4}{C_1\e_7} \right),
										\frac{4}{\alpha_1 \lambda_2}\ln \left( \frac{C_4C_7C_8 \ell^4(2\xi)^{-\alpha_1}}{4C_1\e_7} \right)
									\right\},
			\end{equation}
		where $C_1$ is from \eqref{eq:measure_delta_cube}, $K_4$ is from Proposition \ref{prop:upper_bound_difference_measure_cubes_SL2R}, $C_4$ and $\alpha_1$ are from Theorem \ref{thm:FlaminioForni}, $C_7$ is from \eqref{eq:equidistribution_fiber_smooth_approx_bounds}, and $C_8:=e^{\alpha_1C_{\tau_\infty}\xi^{\alpha_b}}$. Before we bound $\mathcal{I}_3$, we pause to give the consequences of \eqref{eq:exp_equidistribution_fiber_N_0} that we will use in the rest of the proof: 
		
			\begin{enumerate}[(i)]
			\item \label{eq:equidistribution_fiber_N_0_consequence_1}
				$\displaystyle \tau_n(x')>\frac{\lambda_2 n}{4}$ by \eqref{eq:equidistribution_Q1}.
			\item \label{eq:equidistribution_fiber_N_0_consequence_2}
				$\displaystyle 2e^{-\eta n}<\delta_{inj}$. This allows us to apply Corollary \ref{cor:smoothing_lemma_N} with $\e=\delta=e^{-\eta n}$. 
			\item \label{eq:equidistribution_fiber_N_0_consequence_3}
				$\displaystyle \frac{K_4}{C_1}e^{-\eta n}<\frac{\e_7}{2}$
			\item \label{eq:equidistribution_fiber_N_0_consequence_4}
				$\displaystyle \frac{C_4C_7 C_8 \ell^4}{8C_1} (2\xi)^{-\alpha_1} e^{-\frac{\alpha_1\lambda_2}{4}n}<\frac{\e_7}{2}$. 
			\end{enumerate}
			
		We now need to prove that \eqref{eq:exp_equidistribution_fiber} holds. We'll do this by first considering the simpler case where $(x,y)=(x_0,y_0)$. 
		
	\textit{Step 1:} in this step, we will assume $(x,y)=(x_0,y_0)$.
		
		By Proposition \ref{prop:decompose_unstable_diagonal_conditionals}, we get that 
			\begin{equation}\label{eq:fiberslices_measure_L(x')}
				\nu^u_{x_0,x',P(x_0)^{-1}y_0}(L(x'))=2\xi e^{-\tau_\infty(x',x_0)},
			\end{equation}
		and 
			\begin{equation*}
				\mathcal{I}_3
				=e^{-\tau_\infty(x',x_0)}\int_{-\xi}^\xi \chi_D(A_n(x')P(x')r_s(x_0,x')P(x_0)^{-1}y_0)ds.
			\end{equation*}
		Thus, showing that \eqref{eq:exp_equidistribution_fiber} holds is equivalent to showing 
			\begin{equation}\label{eq:exp_equidistribution_fiber2}
				\frac{1}{2\xi}\int_{-\xi}^\xi \chi_D(A_n(x')P(x')r_s(x_0,x')P(x_0)^{-1}y_0)ds
				\in (1-\e_7, 1+\e_7)\nu(D).
			\end{equation}
			
		We now change variables in the integral in \eqref{eq:exp_equidistribution_fiber2}. Combining \eqref{eq:r_s_horocycle_geodesic} with 
			\begin{equation*}
				A_n(x')P(x')=P(g^nx')B_n(x')=P(g^nx')g_{\tau_n(x')}, 
				\qquad 
				g_th_s=h_{se^{2t}}g_t,
			\end{equation*}
		we get that 
			\begin{equation*}
				A_n(x')P(x')r_s(x_0,x')P(x_0)^{-1}y_0
				= P(g^nx') h_{se^{\tau_\infty(x',x_0)+2\tau_n(x')}}g_{\tau_\infty(x',x_0)+\tau_n(x')} P(x_0)^{-1} y_0.
			\end{equation*}
		Setting 
			\begin{equation}\label{eq:equidistribution_fiber_choice_T_ybar}
				T:= 2\xi e^{\tau_\infty(x',x_0)+2\tau_n(x')}, 
				\qquad 
				\bar{y}:= h_{-T/2}g_{\tau_\infty(x',x_0)+\tau_n(x')} P(x_0)^{-1} y_0,
			\end{equation}
		and changing variables using $t=(s+\xi)e^{\tau_\infty(x',x_0)+2\tau_n(x')}$, we get that 
			\begin{equation}\label{eq:exp_equidistribution_fiber_change_variables}
				\frac{1}{2\xi}\int_{-\xi}^\xi \chi_D(A_n(x')P(x')r_s(x_0,x')P(x_0)^{-1}y_0)ds
				= \frac{1}{T}\int_0^T \chi_{P(g^nx')^{-1}D}(h_t\bar{y})dt.
			\end{equation}
			
		We now estimate the horocycle average on the right hand side of \eqref{eq:exp_equidistribution_fiber_change_variables} using Theorem \ref{thm:FlaminioForni}. By \eqref{eq:equidistribution_fiber_N_0_consequence_2}, we can apply Corollary \ref{cor:smoothing_lemma_N} to $D$ to get smooth functions $\hat\phi^-\leq \chi_D\leq \hat\phi^+$. If we pre-compose $\hat\phi^\pm$ with left multiplication by $P(g^nx')$, we get smooth functions 
			\begin{equation}\label{eq:equidistribution_fiber_smooth_approx_above_below}
				\phi^- \leq \chi_{P(g^nx')^{-1} D} \leq \phi^+
			\end{equation}
		such that
			\begin{equation}\label{eq:equidistribution_fiber_smooth_approx_bounds}
				\int_N \left| \phi^{\pm}-\chi_{P(g^nx')^{-1} D} \right|d\nu 
				\leq \frac{K_4}{C_1}e^{-\eta n}\nu(D),
				\qquad 
				\| \phi^\pm \|_{C^4}\leq C_7 \ell^4e^{(8\eta+4\e)n},
			\end{equation}
		for some constant $C_7$ independent of $n$. We briefly explain how the $C^4$ bound is obtained from Corollary \ref{cor:smoothing_lemma_N}(4). Recall that derivative of left multiplication by $P(g^nx')$ (evaluated in a right-invariant frame) is given by $\Ad(P(g^nx'))$. Then the same argument we used to obtain \eqref{eq:bound_norm_P(g^-kx)} along with the chain rule gives that
			$$
				\|\phi^\pm\|_{C^4}
				\leq C\ell^4 e^{4n\e} \|\hat\phi^\pm \|_{C^4}
				\leq C_7 \ell^4 e^{(8\eta+4\e)n}.
			$$
		We now apply Theorem \ref{thm:FlaminioForni} to $\phi^\pm$ to get 
			\begin{equation*}
				E(\phi^\pm)- C_4 T^{-\alpha_1}\left\|\phi^\pm\right\|_{C^4}
				\leq \frac{1}{T}\int_{0}^{T} \phi^\pm \left(h_t \bar y\right) d t
				\leq E(\phi^\pm)+ C_4 T^{-\alpha_1}\left\|\phi^\pm\right\|_{C^4}.				
			\end{equation*}
		Since $\nu$ is Haar measure (and thus is invariant under left multiplication by $P(g^nx')^{-1}$), we get 
			$$
				E(\phi^\pm)
				%=\nu(P(g^nx')^{-1} D)+\int_N \left( \phi^\sigma - \chi_{P(g^nx')^{-1} D} \right)d\nu
				=\nu(D)+ \int_N \left( \phi^\pm - \chi_{P(g^nx')^{-1} D} \right)d\nu.
			$$
		Combining this with \eqref{eq:equidistribution_fiber_smooth_approx_above_below} and \eqref{eq:equidistribution_fiber_smooth_approx_bounds}, we get that 
			\begin{equation*}
				\left|\frac{1}{T}\int_0^T \chi_{P(g^nx')^{-1}D}(h_t\bar{y})dt -\nu(D)\right| 
				\leq 
				\frac{K_4}{C_1}e^{-\eta n}\nu(D) + C_7 \ell^4T^{-\alpha_1}e^{(8\eta+4\e)n}.
			\end{equation*}
		Combining this with \eqref{eq:exp_equidistribution_fiber_change_variables}, we get that showing that \eqref{eq:exp_equidistribution_fiber2} holds is equivalent to showing that 
			\begin{equation*}
			 	\frac{K_4}{C_1}e^{-\eta n}\nu(D) + C_4C_7\ell^4 T^{-\alpha_1}  e^{(8\eta+4\e)n} <\e_7\nu(D)
			\end{equation*}
		To do this, by \eqref{eq:equidistribution_fiber_N_0_consequence_3}, it suffices to show that 
			\begin{equation}\label{eq:exp_equidistribution_fiber3}
				C_4C_7\ell^4 T^{-\alpha_1}  e^{(8\eta+4\e)n} <\frac{\e_7}{2}\nu(D).
			\end{equation}		
		By our choice of $\xi_0$, Lemma \ref{lem:bound_tau_infty} and Definition \ref{def:f_diag_unstable_conditional_measure}, we have that 
			\begin{equation*}
				e^{-\alpha_1 \tau_\infty(x',x_0)}
				\leq e^{\alpha_1|\tau_\infty(x',x_0)|}
				\leq e^{\alpha_1C_{\tau_\infty}\xi^{\alpha_b}}=:C_8.
			\end{equation*}
		Plugging this and \eqref{eq:equidistribution_fiber_N_0_consequence_1} into  \eqref{eq:equidistribution_fiber_choice_T_ybar}, we get that
			\begin{equation*}
				T^{-\alpha_1}
				= (2\xi )^{-\alpha_1} e^{-\alpha_1\tau_\infty(x',x_0)}e^{-2\alpha_1\tau_n(x')}
				\leq C_8 (2\xi )^{-\alpha_1} e^{-2\alpha_1\tau_n(x')}
				\leq C_8 (2\xi )^{-\alpha_1} e^{-\frac{\alpha_1\lambda_2}{2}n}.
			\end{equation*}			
		Thus, 
			\begin{equation*}
				C_4C_7\ell^4 T^{-\alpha_1}  e^{(8\eta+4\e)n}
				\leq C_4C_7 C_8 \ell^4 (2\xi)^{-\alpha_1} e^{-\left( \frac{\alpha_1\lambda_2}{2}-8\eta-4\e \right)n}.
			\end{equation*}
		Recall from \eqref{eq:lower_bound_measure_cubes_SL2R} that 
			$
				8C_1e^{-3\eta n}\leq \nu(D).
			$
		We therefore get that 
			\begin{equation*}
				C_4C_7\ell^4 T^{-\alpha_1}  e^{(8\eta+4\e)n}
				\leq \frac{C_4C_7 C_8 \ell^4}{8C_1} (2\xi)^{-\alpha_1} e^{-\left( \frac{\alpha_1\lambda_2}{2}-11\eta-4\e \right)n}\nu(D).
			\end{equation*}			
		Since $\eta<\eta_0$, and $\frac{\alpha_1\lambda_2}{2}-11\eta_0-4\e=\frac{\alpha_1\lambda_2}{4}$, we get that 
			\begin{equation*}
				C_4C_7\ell^4 T^{-\alpha_1}  e^{(8\eta+4\e)n}
				\leq \frac{C_4C_7 C_8 \ell^4}{8C_1} (2\xi)^{-\alpha_1} e^{-\frac{\alpha_1\lambda_2}{4}n}.
			\end{equation*}			
		Applying \eqref{eq:equidistribution_fiber_N_0_consequence_4}, we get that \eqref{eq:exp_equidistribution_fiber3} holds. 
	
	\textit{Step 2: Changing the reference point for the fiber slice}. 
		Now, let $(x,y)\in W^u_{f,\xi}(x_0,y_0)\cap (G_{\ell,\tau_g}\times N)$ be arbitrary. By Step 1, we have that 
			\begin{equation}\label{eq:exp_equidistribution_fiber_centered}
				\mathcal{I}_3:= \int_{L(x')} \chi_{D}\left( A_n(x')P(x')y' \right) d\nu^u_{x_0,x',P(x_0)^{-1}y_0}(y')
					\in \left( 1-\e_7, 1+\e_7 \right) \nu_{x_0,x',P(x_0)^{-1}y_0}^u(L(x')) \nu(D).				
			\end{equation}
		To show that \eqref{eq:exp_equidistribution_fiber} holds, we want to replace $\nu^u_{x_0,x',P(x_0)^{-1}y_0}$ by $\nu^u_{x,x',P(x)^{-1}y}$ in \eqref{eq:exp_equidistribution_fiber_centered}. This is just changing our parameterization of $L(x')$ from one with $(x_0,y_0)$ as the basepoint to one with $(x,y)$ as the basepoint. 
		
		By \eqref{eq:definition_unstable_f_radius_xi}, we have that for some $s_0\in (-\xi,\xi)$, 
			\begin{equation}\label{eq:equidistribution_fiber_y_in_terms_of_y_0}
				P(x)^{-1}y=r_{s_0}(x_0,x)P(x_0)^{-1}y_0.
			\end{equation}
		Let 
			\begin{equation*}
				a:=\tau_\infty(x,x_0), \qquad 
				b:= \tau_\infty(x',x).
			\end{equation*}
		The definition of $r_s$ gives that 
			\begin{equation*}
				r_t(x,x')r_{s_0}(x_0,x)
				=r_s(x_0,x'), 
				\qquad 
				s=te^{-a}+s_0e^b.
			\end{equation*}
		Combining this with \eqref{eq:equidistribution_fiber_y_in_terms_of_y_0}, we get
			\begin{equation*}
				L(x')
				:= \left\{ r_s(x_0,x')P(x_0)^{-1}y_0 \colon |s|<\xi \right\}
				= \left\{ r_t(x,x')P(x)^{-1}y \colon t\in (-\xi e^{a}-s_0e^{a+b}, \xi e^a-s_0e^{a+b}) \right\}.
			\end{equation*}
		We now show that $(-\xi e^{a}-s_0e^b, \xi e^a-s_0e^b)\subset (-\xi_1,\xi_1)$. Since $\xi<\xi_0\leq \frac{\xi_2}{4}\leq \xi_3$, we can apply Lemma \ref{lem:bound_e^tau_infty} to get that
			\begin{equation*}
				\left|\pm\xi e^{a}-s_0e^b\right| \leq 2 (\xi+|s_0|)\leq 4\xi < \xi_1.
			\end{equation*}
		The last inequality comes from the fact that $\xi<\xi_0\leq \frac{\xi_2}{4}\leq\frac{\xi_1}{4}$. We can therefore apply Proposition \ref{prop:decompose_unstable_diagonal_conditionals} to both parameterizations of $L(x')$. This along with \eqref{eq:tau_infty_change_basepoints} gives that on $L(x')$, 
			\begin{align*}
				d\nu^u_{x_0,x',P(x_0)^{-1}y_0}(r_s(x_0,x')P(x_0)^{-1}y_0)
				&= e^{-\tau_\infty(x',x_0)}ds\\
				&= e^{-\tau_\infty(x',x_0)} e^{-\tau_\infty(x,x_0)}dt\\
				&=e^{-2\tau_\infty(x,x_0)}e^{-\tau_\infty(x',x)}dt \\
				&= e^{-2\tau_\infty(x,x_0)} d\nu^u_{x,x',P(x)^{-1}y}(r_t(x,x')P(x)^{-1}y)
			\end{align*}
		Thus, multiplying both sides of \eqref{eq:exp_equidistribution_fiber_centered} by $e^{2\tau_\infty(x,x_0)}$ gives \eqref{eq:exp_equidistribution_fiber}.
	\end{proof}

\subsection{Proof of Theorem \ref{thm:exp_equidistribution_unstables_f}}		\label{sec:proof_equidistribution}
	
	Assume without loss of generality that $0<\e_1,\e_4,\e_5<1$. Fix
		\begin{equation*}
			0<\eta <\eta_0:=\frac{\alpha_1\lambda_2}{44}-\frac{4\e}{11}, 
			\qquad 
			\delta_4:= \frac{\e_1 \e_4 \e_5}{8}>0.
		\end{equation*}
	Choose $Q_1$ and $N_1$ as in the beginning of Section \ref{sec:exp_equidistribution_fiber} with $\delta_1:=\frac{\delta_4^4}{4}$. Choose $\ell\geq 1$ sufficiently large that 
		\begin{equation*}
			\mu(\Lambda^\ell_\e)>1-\frac{\delta_4^4}{4}, \qquad 
			\mu(\bar{\Lambda}^\ell_\e)>1-\frac{\delta_4^4}{4},
		\end{equation*}
	and choose $\tau_g>0$ sufficiently small that 
		\begin{equation*}
			\mu(\mathcal{P}_{\tau_g})>1-\frac{\delta_4^4}{4}.
		\end{equation*}
	Combining this with \eqref{eq:equidistribution_Q1}, we get that
		\begin{equation}\label{eq:equidistribution_measure_hatQ_1}
			\mu(\widehat{Q}_1^c)<\delta_4^4, \qquad
			\widehat{Q}_1:=Q_1\cap G_{\ell,\tau_g}.
		\end{equation}
	Now, choose $\xi_0>0$ sufficiently small that for $0<\xi<\xi_0$,
		\begin{itemize}
		\item 
			Proposition \ref{prop:exp_equidistribution_fiber} holds with $\e_7=\frac{\e_5}{4}$,
		\item 
			Theorem \ref{thm:exp_equidistribution_g_expmixing} holds with parameter $\e_{12}$,
		\item 
			\cite[Lemma 4.4]{ExpMixingBernoulli} holds with $\hat{\e}=\delta_4$, and
		\item 
			the second part of Proposition \ref{prop:radon-nikodym_unstable_conditional_wrt_riem_vol} holds with $\delta=\frac{\e_5}{20}$. This along with \eqref{eq:fiberslices_measure_L(x')} gives that 
				\begin{equation}\label{eq:fiberslice_measure_L(x')_bound}
					\nu^u(x,x',P(x)^{-1}y)(L(x'))\in \left( 1-\frac{\e_5}{20}, 1+\frac{\e_5}{20} \right) 2\xi.
				\end{equation}
		\end{itemize}
	Fix $0<\xi<\xi_0$. Take $N_0=\max\{ \e_{12}\bar{n}_g, N_0' \}$, where
		\begin{itemize}
		\item 
			$\bar{n}_g$ comes from Theorem \ref{thm:exp_equidistribution_g_expmixing} with parameter $\e_{12}$. Note that for $n>N_0$, $m_n:=\lceil \frac{n}{\e_{12}}\rceil >\bar{n}_g$. 
		\item 
			$N_0'$ comes from applying Proposition \ref{prop:exp_equidistribution_fiber} with $\e_7=\frac{\e_5}{4}$. Also recall from the proof of Proposition \ref{prop:exp_equidistribution_fiber} that $N_0'\geq N_1$. 
		\end{itemize}
	Fix $n>N_0$. Let $\mathcal{R}_{m_n}$ and $K_{m_n}$ be the sets given by Theorem \ref{thm:exp_equidistribution_g_expmixing}. Recall from Theorem \ref{thm:exp_equidistribution_g_expmixing} that 
		\begin{equation}\label{eq:equidistribution_g_measure_K,R}
			\mu(\mathcal{R}_{m_n}) \geq 1-300 \e_{12}^{10^{10}/16}, \qquad 
			\mu(K_{m_n})\geq 1-\e_{12}^{10}.
		\end{equation}
	We set 
		\begin{equation*}
			Q_n:= \widehat{Q}_1 \cap \mathcal{R}_{m_n}.
		\end{equation*}
	By \eqref{eq:equidistribution_measure_hatQ_1} and \eqref{eq:equidistribution_g_measure_K,R}, we get that 
		\begin{equation*}
			\mu(Q_n)>1-\frac{\e_1\e_4\e_5}{64}-300\e_{12}^{10^{10}/16}.
		\end{equation*}
	Now, we define the set 
		\begin{equation*}
			Q_2 := \left\{ x\in M \colon r_{u,g}(x)\geq \xi \text{ and } \mu^u_x\left( W^u_{g,\xi}(x)\cap \widehat{Q}_1^c\right) \leq \delta_4\mu^u_x\left( W^u_{g,\xi}(x) \right) \right\}.
		\end{equation*}
	By \eqref{eq:equidistribution_measure_hatQ_1} and our choice of $\xi_0$, we can apply \cite[Lemma 4.4]{ExpMixingBernoulli} to $\mathfrak{B}=\widehat{Q}_1^c$ to get that 
		\begin{equation}\label{eq:equidistribution_measure_Q2}
			\mu(Q_2)>1-4\delta_4>1-\frac{\e_1}{2}.
		\end{equation}
	Fix 
		$
			Q:= Q_2\cap K_{m_n}.
		$
	Since $\e_{12}^{10}<\frac{\e_1}{2}$, we can combine \eqref{eq:equidistribution_g_measure_K,R} with \eqref{eq:equidistribution_measure_Q2} to get that 
		$$
			\mu(Q)>1-\e_1.
		$$
	For the rest of the proof, we fix $x\in Q$. We now choose the set $J=J(x)\subset I_1$. Let $J_g(x)$ be from Theorem \ref{thm:exp_equidistribution_g_expmixing}. We obtain $J(x)$ from $J_g(x)$ by throwing out cubes that we designate as ``bad." For $i\in J_g(x)$, we say that $B_i$ is \textit{bad} if 
		\begin{equation*}
			\mu^u_x\left( W^u_{g,\xi}(x) \cap \widehat{Q}_1^c \cap g^{-n}(B_i) \right) \geq \frac{\e_5}{4} \mu(B_i) \mu^u_x\left( W^u_{g,\xi}(x) \right)
		\end{equation*}
	Set 
		$
			J(x):= J_g(x)\setminus \{ i \colon B_i \text{ is bad} \}. 
		$
	Note that $J(x)$ is independent of choice of $D$. We now show that cubes in $J(x)$ cover all but $\e_4$ of $M$. Combining Chebychev's inequality with the fact that $x\in Q_2$, we get that 
		\begin{equation*}
			\mu \left( \bigcup_{i \ \text{bad}} B_i \right) \leq \frac{4}{\delta_4\e_5}=\frac{\e_1\e_4}{2}\leq \frac{\e_4}{2}.
		\end{equation*}
	This combined with the fact that $\mu\left( \bigcup_{i\in J_g} B_i \right)>1-\e_{12}^{10}$ and that $\e_{12}^{10}<\frac{\e_4}{2}$ gives that 
		\begin{equation*}
			\mu \left( \bigcup_{i \in J(x)} B_i \right)>1-\e_4. 
		\end{equation*}
	Now, we fix $i\in J(x)$ and $y\in N$. We need to show that \eqref{eq:thm_exp_equidistribution_unstables_f} holds. Combining Proposition \ref{prop:unstables_f} and Proposition \ref{prop:decompose_unstable_diagonal_conditionals}, we get that 
		\begin{align*}
			\mathcal{I}_1
			&:= m^u_{(x,y)}\left( f^{-n}(B_i\times D) \cap W^u_{f,\xi}(x,y) \cap (Q_n\times N) \right) \\
			&= \int_{W^u_{g,\xi}(x)\cap Q_n} \chi_{B_i}(g^nx') 
				\int_{L(x')} \chi_{D}(A_n(x')P(x')y') d\nu^u_{x,x',P(x)^{-1}y}(y')
				d\mu^u_x(x').
		\end{align*}
	Applying Proposition \ref{prop:exp_equidistribution_fiber} with $(x_0,y_0)=(x,y)$ along with \eqref{eq:fiberslice_measure_L(x')_bound} gives that
		\begin{equation}\label{eq:equidistribution_bound_I1_1}
			\mathcal{I}_1 \in 
			\left( \left(1-\frac{\e_5}{4}\right)\left(1-\frac{\e_5}{20}\right),\left(1+\frac{\e_5}{4}\right)\left(1+\frac{\e_5}{20}\right) \right) 2\xi \nu(D) \mu^u_x\left( W^u_{g,\xi}(x)\cap Q_n \cap g^{-n}(B_i) \right).
		\end{equation}
	Now, we bound $\mu^u_x\left( W^u_{g,\xi}(x)\cap Q_n \cap g^{-n}(B_i) \right)$. Since $i\in J(x)\subset J_g(x)$, Theorem \ref{thm:exp_equidistribution_g_expmixing} and the fact that $\e_{12}^{10}<\frac{\e_5}{4}$ gives
		\begin{equation*}
			\mu^u_x\left( W^u_{g,\xi}(x)\cap Q_n \cap g^{-n}(B_i) \right) 
			\leq \mu^u_x\left( W^u_{g,\xi}(x)\cap \mathcal{R}_{m_n} \cap g^{-n}(B_i) \right)
			< \left(1+\frac{\e_5}{4}\right) \mu(B_i)\mu^u_x(W^u_{g,\xi}(x)).
		\end{equation*}
	If we additionally use the fact that $B_i$ is not bad, we get that 
		\begin{align*}
			\mu^u_x\left( W^u_{g,\xi}(x)\cap Q_n \cap g^{-n}(B_i) \right)  
			&\geq 
				\mu^u_x\left( W^u_{g,\xi}(x) \cap \mathcal{R}_{m_n} \cap g^{-n}(B_i) \right) 
				-
				\mu^u_x\left( W^u_{g,\xi}(x) \cap \hat Q_1^c \cap g^{-n}(B_i) \right)\\
			&> \left(1-\frac{\e_5}{2}\right) \mu(B_i)\mu^u_x(W^u_{g,\xi}(x)).
		\end{align*}
	Plugging these two bounds into \eqref{eq:equidistribution_bound_I1_1}, we get 
		\begin{equation}\label{eq:equidistribution_bound_I1_2}
			\mathcal{I}_1 \in 
			\left(  \left(1-\frac{\e_5}{2}\right)\left(1-\frac{\e_5}{4}\right)\left(1-\frac{\e_5}{20}\right),\left(1+\frac{\e_5}{4}\right)^2\left(1+\frac{\e_5}{20}\right) \right) 2\xi \nu(D) \mu(B_i)\mu^u_x(W^u_{g,\xi}(x)).
		\end{equation}
	Now, combining Proposition \ref{prop:unstables_f}, Proposition \ref{prop:decompose_unstable_diagonal_conditionals}, and \eqref{eq:fiberslice_measure_L(x')_bound}, we get that 
		\begin{equation*}
			m^u_{(x,y)}\left( W^u_{f,\xi}(x,y) \right) 
			\in \left( 1-\frac{\e_5}{20}, 1+\frac{\e_5}{20} \right) 2\xi \mu^u_x(W^u_{g,\xi}(x)).
		\end{equation*}
	Combining this with \eqref{eq:equidistribution_bound_I1_2} and the fact that $0<\e_5<1$, we get that \eqref{eq:thm_exp_equidistribution_unstables_f} holds.

\section{Proof of Theorem \ref{thm:Main_f_bernoulli}} \label{sec:proof_main_thm}

	To prove that $f$ is Bernoulli, we will follow the general strategy followed in \cite{OrnsteinWeiss}, \cite{KanigowskiBernoulliHomogeneous}, and \cite{ExpMixingBernoulli}. We outline the strategy in Section \ref{sec:VWB} and then prove Theorem \ref{thm:Main_f_bernoulli} in Section \ref{sec:subsec:proof_main}. 
	
	\subsection{Very Weakly Bernoulli Partitions}\label{sec:VWB}
		
		A finite partition $\mathcal{P}=(P_1,...,P_k)$ of a measure space $(X,\mathcal{B},\mu)$ is a finite family of disjoint sets, called \textit{atoms}, $P_1,...,P_k\subset X$ whose union is all of $X$. 
			\begin{itemize}
			\item 
				For $\e>0$, we say that a property holds \textit{for $\e$ almost every atom} of $\mathcal{P}$  if it holds for all atoms of $\mathcal{P}$ except for a set of atoms whose union has measure less than $\e$. 
			\item 
				If the measure space $(X,\mathcal{B},\mu)$ has a distance function $d$, we say that a partition $\mathcal{P}$ is \textit{regular} if $\forall \e>0$, $\exists \delta>0$ so that $\mu\left( V_\delta\left(\partial \mathcal{P}\right) \right)<\e$, where $V_{\delta}\left( \partial \mathcal{P} \right)$ is the $\delta$ neighborhood of $\partial \mathcal{P}$.
			\end{itemize}
	
		Let $\mathcal{P} = (P_1,...,P_k)$ and $\mathcal{Q}=(Q_1,...,Q_\ell)$ be finite partitions of a probability space $(X,\mu)$. 
			\begin{itemize}
			\item 
				The \textit{least common refinement} of $\mathcal{P}$ and $\mathcal{Q}$ is denoted $\mathcal{P}\vee \mathcal{Q}$ and is the partition of $X$ into sets of the form $P_i\cap Q_j$. 
			
			\item 
				If $A\subset X$, then $\mathcal{P}|_A = (P_1\cap A, ..., P_k\cap A)$ is the \textit{induced partition} of $(A, \mu|_A)$.
			
			\item 
				If $k=\ell$, then we can define the distance between the partitions $\mathcal{P}$ and $\mathcal{Q}$ by 
					$$
						\bar{d}(\mathcal{P},\mathcal{Q})=\sum_{i=1}^k \mu(P_i \bigtriangleup Q_i)
					$$
			\end{itemize}
			
		Let $(\mathcal{P}^s)_{s=0}^\infty$, where $\mathcal P ^s = \{P^s_1,...,P^s_{k_s}\}$ is a sequence of finite partitions of $(X,\mathcal{B},\mu)$.
			\begin{itemize}
			\item 
				$\bigvee_{s=0}^\infty \mathcal{P}^s$ is the smallest $\sigma$-algebra with respect to which the partitions $\mathcal{P}^s$ are measurable. 
			\item 
				We say that $(\mathcal{P}^s)_{s=0}^\infty$ converges to a partition into points if $\bigvee_{s=0}^\infty \mathcal{P}^s =\mathcal{B}$.
			\item 
				The $(\mathcal P^s)_{s=0}^\infty$ name of a point $x$ is the sequence $\ell_s=\ell_s(x)$ determined by $x\in P_{\ell_s}^s$. 
			\end{itemize}
	
		Now, we consider two sequences $\mathcal{P}^s=(P_1^s,...,P_k^s)$ and $\mathcal{Q}^s=(Q_1^s,...,Q_k^s)$, $s=1,...,S$ of finite partitions of $(X,\mu)$ and $(Y,\nu)$ respectively. 
			\begin{itemize}
			\item 
				If $(X,\mu)=(Y,\nu)$, then we define the distance between the sequences of partitions as 
					$$
						\bar{d}\left( (\mathcal{P}^s)_{s=1}^S, (\mathcal{Q}^s)_{s=1}^S \right)
						=
						\frac{1}{S}\sum_{s=1}^S \bar{d}(\mathcal{P}^s, \mathcal{Q}^s)
					$$
				If $(X,\mu)\neq (Y,\nu)$, then we define the distance as follows,
					\begin{itemize}
					\item 
						We say that $\mathcal{P}^s\sim \mathcal{Q}^s$ for $s=1,...,S$, if $\mu(P_i^s)=\nu(Q_i^s)$ for all $i=1,...,k$, and all $s=1,...,S$. 
					\item 
						We define the distance the sequences of partitions as follows, 
							$$
								\bar{d}\left( (\mathcal{P}^s)_{s=1}^S, (\mathcal{Q}^s)_{s=1}^S \right)
								=
								\inf_{\substack{\bar{\mathcal{Q}}^s \sim \mathcal{Q}^s \\  \text{ for } s=1,...,S}} \bar{d}\left( (\mathcal{P}^s)_{s=1}^S, (\bar{\mathcal{Q}}^s)_{s=1}^S \right),
							$$
						where $\bar{\mathcal{Q}}^s$ are partitions of $(X,\mu)$. 
					\end{itemize}
			\end{itemize}
		
		Finally, suppose $\mathcal{P}=(P_1,...,P_k)$ is a finite partition of a probability space $(X,\mathcal{B},\mu)$ and $T:(X,\mathcal{B},\mu) \to (X,\mathcal{B},\mu)$ is an automorphism $(X,\mathcal{B},\mu)$.
			\begin{itemize}
			\item 
				The partition $T^n\mathcal{P}$ is the partition of $(X,\mu)$ given by $T^n\mathcal{P}=(T^nP_1,...,T^nP_k)$.
			\item 
				We say that the $S,\mathcal{P}-$name of a point $x\in X$ (where $S\geq 1$ is an integer) is the sequence $(x_i^{\mathcal{P}})_{i=0}^S\in \{1,...,k\}^{S+1}$ determined by $T^i(x)\in P_{x_i^{\mathcal{P}}}$.
			\item 
				A transformation $\theta:(X,\mu)\to (Y,\nu)$ is $\e$-\textit{measure preserving} if $\exists E'\subset X$ with $\mu(E')<\e$ such that for all $A\subset X\setminus E'$, we have that 
					$$
						\left| \frac{\nu(\theta(A))}{\mu(A)}-1\right| <\e.
					$$
			\end{itemize}
			
		We can now define the \textit{Very Weak Bernoulli Property}:
		
		\begin{definition}
			Let $T:(X,\mathcal{B},\mu) \to (X,\mathcal{B},\mu)$ be an automorphism of a probability space $(X,\mathcal{B},\mu)$. A finite partition $\mathcal{P}$ of $X$ is \textit{very weakly Bernoulli} (\textit{VWB}) if $\forall \e>0$, $\exists N_0\in \N$ such that $\forall N'\geq N\geq N_0$, $\forall S\geq N_0$,
				\footnote{In \cite{OrnsteinWeiss}, they require $S\geq 1$, however we use the definition given in \cite{ExpMixingBernoulli} because it is weaker and sufficient to get the Bernoulli Property (See Remark 2.20 in \cite{ExpMixingBernoulli}).}
			and for $\e$ a.e. atom $A\in \bigvee_{i=N}^{N'} T^i(\mathcal{P})$, we have that 
				$$
					\bar{d}\left( (T^{-i}\mathcal{P})_{i=0}^S, (T^{-i}\mathcal{P}|_A)_{i=0}^S\right)<\e
				$$
		\end{definition}
		
		To show that $f$ is Bernoulli, it suffices to find a sequence of VWB partitions that converges to a partition into points. More precisely,
		
		\begin{lemma}[Theorems A+B in \cite{OrnsteinWeiss}, Theorem 2.18 in \cite{ExpMixingBernoulli}]\label{lem:VWB->Bernoulli}
			Let $T:(X,\mathcal{B},\mu) \to (X,\mathcal{B},\mu)$ be an automorphism of a probability space $(X,\mathcal{B},\mu)$. If $(\mathcal{P}^k)_{k=1}^\infty$ is a sequence of VWB partitions for $T$ of $(X,\mathcal{B},\mu)$ that converges to a partition into points (i.e. the smallest $\sigma$-algebra with respect to which all the $\mathcal{P}^k$ are measurable is $\mathcal{B}$), then $T$ is Bernoulli.
		\end{lemma}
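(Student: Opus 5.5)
This is the classical Ornstein--Weiss criterion, and my plan is to reduce it to two standard facts from Ornstein theory rather than reprove it from scratch.

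\emph{Step 1 (each $\mathcal{P}^k$ generates a Bernoulli factor).} Fix $k$ and consider the process $(T,\mathcal{P}^k)$, i.e.\ the factor $\sigma$-algebra $\mathcal{F}_k:=\bigvee_{i\in\Z}T^i\mathcal{P}^k$. The VWB condition says that conditioning on a typical atom of the remote past $\bigvee_{i=N}^{N'}T^i\mathcal{P}^k$ barely changes the $\bar d$-distribution of long future names. I would verify from this that $(T,\mathcal{P}^k)$ is \emph{finitely determined}: any process whose $S$-block distributions and entropy are close to those of $(T,\mathcal{P}^k)$ is $\bar d$-close to it. The argument builds the $\bar d$-matching block by block, each time conditioning on the already-matched past and using VWB to match the next block. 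Ornstein's isomorphism theorem then shows that $T|_{\mathcal{F}_k}$ is Bernoulli. This is Theorem A of \cite{OrnsteinWeiss}.

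The footnoted weakening, which requires the estimate only for $S\ge N_0$ rather than all $S\geq 1$, needs one extra remark. For $S<N_0$, a $\bar d$-estimate on names of length $S$ follows from the estimate on length $N_0$ at the cost of a factor $N_0/S$. Alternatively, since finite determinacy only concerns long blocks, the short-$S$ case is never used. I would cite Remark 2.20 of \cite{ExpMixingBernoulli} for this.

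\emph{Step 2 (passing to the limit).} Next I would show that the whole system is Bernoulli, using two standard results of Ornstein theory. First, the class of finitely determined (equivalently Bernoulli) partitions is closed in the partition metric $\bar d(\mathcal{P},\mathcal{Q})$. Second, an automorphism is Bernoulli as soon as every finite partition is approximable by finitely determined ones.

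Given an arbitrary finite partition $\mathcal{Q}$, the hypothesis $\bigvee_k\mathcal{P}^k=\mathcal{B}$ lets me approximate $\mathcal{Q}$ arbitrarily well by partitions measurable with respect to finitely many $\mathcal{P}^k$. The point needing care is that I want a \emph{single} $k$. Here I would use that in the intended application the $\mathcal{P}^k$ can be taken successively refining, so that $\mathcal{F}_k$ is increasing. Then Ornstein's theorem that an increasing union of Bernoulli factors generating $\mathcal{B}$ is Bernoulli finishes the proof. In general, one first arranges refinement as in Theorem B of \cite{OrnsteinWeiss}.

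\emph{Main obstacle.} I expect the genuine difficulty to be Step 1, namely extracting finite determinacy from VWB. This requires the inductive $\bar d$-coupling of conditional futures, together with the entropy bookkeeping that controls the error accumulated across blocks. Since this is exactly the content of \cite{OrnsteinWeiss} and \cite[Theorem 2.18]{ExpMixingBernoulli}, in the paper I would cite it rather than redo it.
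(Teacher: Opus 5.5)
Your route is the paper's route. The paper gives no argument of its own; it cites Ornstein--Weiss (Theorem A: a VWB partition generates a Bernoulli factor; Theorem B: an increasing generating sequence of VWB partitions makes $T$ Bernoulli) through \cite[Theorem 2.18]{ExpMixingBernoulli}. Your Step 1 and the nested case of Step 2 are a correct outline of exactly that.

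\textbf{The step that fails.} Your last sentence of Step 2, ``in general one first arranges refinement as in Theorem B,'' is not right. Theorem B is stated for an increasing sequence $\mathcal{P}^1\le\mathcal{P}^2\le\cdots$; it does not produce one. Nothing can produce one in general, because the join of two VWB partitions need not be VWB. In fact, under the literal parenthetical hypothesis (only that the $\mathcal{P}^k$ generate $\mathcal{B}$), the statement is false.

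\textbf{Counterexample.} Let $T(\omega,y)=(\sigma\omega,y+\alpha)$ on $\{0,1\}^{\Z}\times\mathbb{T}$, with measure $(\frac12,\frac12)^{\Z}\times\mathrm{Leb}$ and $\alpha\notin\mathbb{Q}$. Let $\mathcal{P}$ be the partition according to $\omega_0$, and $\mathcal{Q}$ the partition according to $\omega_0\oplus\1_{[1/2,1)}(y)$.
\begin{itemize}
\item Conditionally on $y$, the $\mathcal{Q}$-names $\omega_n\oplus\1_{[1/2,1)}(y+n\alpha)$ are i.i.d.\ fair bits. So $(T,\mathcal{P})$ and $(T,\mathcal{Q})$ are independent processes.
\item Hence every $T^n\mathcal{P}$ and $T^n\mathcal{Q}$ is VWB: the conditional name distributions in the definition equal the unconditional ones, so $\bar d=0$.
\item Together these partitions determine every $\omega_n$ and every $\1_{[1/2,1)}(y+n\alpha)$. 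The two-interval partition is generating for an irrational rotation, so they generate $\mathcal{B}$.
\item Yet $T$ has the eigenfunction $e^{2\pi i y}$, so it is not Bernoulli. In particular $\mathcal{P}\vee\mathcal{Q}$ is not VWB.
\end{itemize}

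So the lemma must be read with the $\mathcal{P}^k$ increasing, which is the classical meaning of ``converges to a partition into points.'' Your observation that you need a single $k$ is precisely where this enters. It costs nothing for the paper. Section~\ref{sec:subsec:proof_main} shows that every partition with piecewise smooth boundary is VWB, and such partitions are closed under finite joins. Hence a refining sequence with mesh tending to $0$ exists, and your Step 2 applies verbatim.

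\textbf{A smaller point.} Your ``factor $N_0/S$'' remark does not by itself recover the short-$S$ case. $N_0$ depends on $\varepsilon$, so $N_0\varepsilon/S$ need not be small. Rely instead on \cite[Remark 2.20]{ExpMixingBernoulli}, as the paper's footnote does.
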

		
		To show that a finite partition $\mathcal{P}$ is VWB for $f$, it suffices to  show that $\forall \e>0$, for $\e$ a.e. atom $A\in \bigvee_{i=N}^{N'} f^i(\mathcal{P})$, we can find an $\e$-measure  preserving map $\theta:(A,\mu|_A)\to (X,\mu)$ such that the orbits of $x$ and $\theta(x)$ are in the same atom of $\mathcal{P}$ for most times. More precisely,
		
		\begin{lemma}[Lemma 2.19 in \cite{ExpMixingBernoulli}, Lemma 2.3 in \cite{KanigowskiBernoulliHomogeneous}, Lemma 1.3 in \cite{OrnsteinWeiss}]\label{lem:VWB_coupling_same_atom}
			Let $T:(X,\mathcal{B},\mu) \to (X,\mathcal{B},\mu)$ be an automorphism of a probability space $(X,\mathcal{B},\mu)$, and let $\mathcal{P}$ be a finite partition of $X$. Suppose that for all $\e>0$, $\exists N\in \N$ such that $\forall N'>N$, for $\e$ a.e. atom $A\in \bigvee_{i=N}^{N'}T^i\mathcal{P}$ and for all $S\geq N$, there exists an $\e$-measure preserving map $\theta=\theta(N,S,A):(A,\mu|_A)\to (X,\mu)$ such that 
				$$
					d(x,\theta(x)):= \frac{1}{S}\sum_{i=0}^{S-1}e(x_i^{\mathcal{P}}-(\theta(x))_i^{\mathcal{P}})<\e,
				$$
			where $e:\Z\to \Z$ is defined by $e(n)=\begin{cases} 0, \ & n=0 \\ 1, \ & n\neq 0\end{cases}$. 
			
			Then $\mathcal{P}$ is a VWB partition for $T$.
		\end{lemma}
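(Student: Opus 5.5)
The plan is to translate the pointwise coupling $\theta$ into a joining of $(A,\mu|_A)$ with $(X,\mu)$, and then bound $\bar d$ by the expected fraction of times at which the names of coupled points disagree. Here $\mu|_A$ denotes the normalized restriction $\mu(\cdot\cap A)/\mu(A)$.

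\emph{Step 1 (setup).} Given $\e>0$, apply the hypothesis with a smaller parameter $\e'=\e/C$, where $C$ is an absolute constant. This produces $N$, and we take $N_0:=N$ in the VWB definition. Then for all $N'\ge N$, all $S\ge N_0$, and $\e'$-a.e.\ atom $A\in\bigvee_{i=N}^{N'}T^i\mathcal P$, we have an $\e'$-measure preserving map $\theta$ with $\int_A d(x,\theta x)\,d\mu|_A$ small. The quantifiers of the VWB definition ($N'\ge N\ge N_0$, $S\ge N_0$) are therefore covered. The convention $x\in T^{-i}P_j\iff T^ix\in P_j$ means that the $S,\mathcal P$-name of $x$ is exactly the list of atoms of $(T^{-i}\mathcal P)_{i=0}^{S}$ containing $x$.

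\emph{Step 2 (from $\theta$ to a joining).} Let $E'\subset A$ be the exceptional set, with $\mu|_A(E')<\e'$. On $A\setminus E'$ the push-forward $\theta_*(\mu|_A)$ has density within $\e'$ of that of $\mu$ on $\theta(A\setminus E')$. I will construct a probability measure $\lambda$ on $A\times X$ with marginals $\mu|_A$ and $\mu$ such that
\[
\lambda\bigl(\{(x,\theta x):x\in A\setminus E'\}\bigr)\ge 1-C\e'.
\]
To do this, keep $(\mathrm{id}\times\theta)_*\bigl(\min(1,\text{density ratio})\cdot\mu|_A\bigr)$, which has total mass $\ge 1-2\e'$. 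The leftover marginal masses are each of size $\le 2\e'$; couple them by their (normalized) product measure.

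\emph{Step 3 (from the joining to partitions on $X$).} Since $(X,\mu)$ is a nonatomic standard space, there is a measure-preserving map $\Psi:(X,\mu)\to(A\times X,\lambda)$ whose second coordinate is the identity. This holds up to null sets, after disintegrating $\lambda$ over the $X$-coordinate and using nonatomicity. Define $\bar{\mathcal Q}^i_j:=\Psi^{-1}(T^{-i}P_j\cap A\times X)$. Then $\mu(\bar{\mathcal Q}^i_j)=\mu|_A(T^{-i}P_j)$, so $\bar{\mathcal Q}^i\sim T^{-i}\mathcal P|_A$ for each $i$. Moreover
\[
\frac1S\sum_i\bar d(T^{-i}\mathcal P,\bar{\mathcal Q}^i)\le \frac{2}{S}\sum_{i}\lambda\{(x,z):x_i^{\mathcal P}\ne z_i^{\mathcal P}\}\le 2\int d(x,\theta x)\,d\mu|_A+O(\e'),
\]
where the boundary term from indexing $0..S$ versus $0..S-1$ is $O(1/S)$. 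This gives $\bar d\bigl((T^{-i}\mathcal P)_{i=0}^S,(T^{-i}\mathcal P|_A)_{i=0}^S\bigr)<\e$ for $\e$-a.e.\ atom $A$, which is the VWB property.

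The main obstacle is Step 3, namely realizing the joining by genuine partitions of $(X,\mu)$ with exactly prescribed atom measures. This is the standard equivalence between the relabeling definition of $\bar d$ and Ornstein's joining definition. I would handle it by a measurable isomorphism of $(X,\mu)$ with $([0,1],\mathrm{Leb})$ and a fibrewise quantile construction in the first coordinate; alternatively, I would cite \cite{OrnsteinWeiss} for this equivalence.
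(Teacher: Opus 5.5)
The paper does not prove this lemma; it quotes it from Ornstein--Weiss and the exponential-mixing paper. So I am judging your argument on its own terms. Your overall route (coupling, then joining, then a $\bar d$ bound) is the right one, and Step 2 is correct: the graph part $\lambda_0$ has both marginals dominated and mass at least $(1-\e')^2$, and adding the normalized product of the leftover marginals gives a joining.

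\textbf{Main gap: Step 3.} A measure-preserving $\Psi\colon(X,\mu)\to(A\times X,\lambda)$ whose second coordinate is the identity has the form $\Psi(z)=(\psi(z),z)$. Hence $\Psi_*\mu$ is carried by a graph over $X$, and its conditional measures given the $X$-coordinate are Dirac masses $\delta_{\psi(z)}$. Your $\lambda$ contains the component $\frac1m\alpha\otimes\beta$, where $\alpha=\mu|_A-\pi_1\lambda_0$ is absolutely continuous with respect to the nonatomic $\mu|_A$. Whenever $m>0$, the conditionals of $\lambda$ over $X$ are therefore not Dirac, so no such $\Psi$ exists. A fibrewise quantile construction cannot produce one either, since picking a point of the fibre measure needs randomness independent of $z$.

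What nonatomicity does give you is a second coordinate that merely preserves the atoms of $\bigvee_{i=0}^{S}T^{-i}\mathcal P$, and that is all you need. Concretely:
\begin{itemize}
\item Push $\lambda$ forward to pairs of $(S+1)$-names. This gives weights $\lambda(\mathbf a,\mathbf b)$, with $\mathbf a$ the name of the $X$-coordinate and $\mathbf b$ that of the $A$-coordinate, satisfying $\sum_{\mathbf b}\lambda(\mathbf a,\mathbf b)=\mu(X_{\mathbf a})$ where $X_{\mathbf a}=\bigcap_{i=0}^{S}T^{-i}P_{a_i}$.
\item Split each $X_{\mathbf a}$ into sets $X_{\mathbf a,\mathbf b}$ of measure $\lambda(\mathbf a,\mathbf b)$, and set $\bar Q^i_j=\bigcup_{b_i=j}X_{\mathbf a,\mathbf b}$.
\item Then $(\bar{\mathcal Q}^i)_{i=0}^S$ has the same joint distribution as $(T^{-i}\mathcal P|_A)_{i=0}^S$, which is stronger than the indexwise matching you checked.
\item Moreover $\sum_j\mu(T^{-i}P_j\triangle\bar Q^i_j)=2\sum_{a_i\neq b_i}\lambda(\mathbf a,\mathbf b)$, which is exactly the estimate in your display.
\end{itemize}

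\textbf{Second gap: quantifiers in Step 1.} The hypothesis concerns atoms of $\bigvee_{i=N}^{N'}T^i\mathcal P$ for one fixed $N$, and only for $N'>N$. The paper's definition of VWB requires every $N'\ge\tilde N\ge N_0$. For $\tilde N>N$ the partition $\bigvee_{i=\tilde N}^{N'}T^i\mathcal P$ is strictly coarser, so setting $N_0:=N$ does not by itself cover its atoms. You need an averaging step:
\begin{itemize}
\item Each atom $\tilde A$ of $\bigvee_{i=\tilde N}^{N'}T^i\mathcal P$ is a disjoint union of atoms $A_j$ of $\bigvee_{i=N}^{\max(N',N+1)}T^i\mathcal P$.
\item The weighted sum $\sum_j\frac{\mu(A_j)}{\mu(\tilde A)}\lambda_j$, using an arbitrary joining for the bad $A_j$, is a name-joining for $\mu|_{\tilde A}$.
\item The bad $A_j$ have total measure less than $\e'$. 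By Chebyshev, outside a union of coarse atoms of measure at most $\sqrt{\e'}$, the expected disagreement is $O(\e'+\sqrt{\e'})$. So choose $\e'$ of order $\e^2$.
\end{itemize}

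\textbf{Minor point.} The index-$S$ boundary term is only $O(1/S)$ with $S\ge N$, and nothing makes $N$ large. Either apply the hypothesis with $S+1$ in place of $S$, or enlarge $N_0$, which the averaging step now permits.
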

		
		If the partition $\mathcal{P}$ is regular, we can replace the condition in Lemma \ref{lem:VWB_coupling_same_atom} that the orbits of $x$ and $\theta(x)$ are in the same atom of $\mathcal{P}$ for most times by requiring the orbits of $x$ and $\theta(x)$ to be close for most times. More precisely,
		
		\begin{lemma} [Corollary 2.21 in \cite{ExpMixingBernoulli}, Lemma 2.4\footnote{or more precisely the proof of how Lemma 2.4 follows from the conclusion of Lemma 2.3} in \cite{OrnsteinWeiss}] \label{lem:VWB_coupling_close}
			Let $(X,\mathcal{B},\mu)$ be a probability space with distance function $d$. Let $T:(X,\mathcal B, \mu, d) \to (X,\mathcal B, \mu,d)$ be an ergodic automorphism of the probability space $(X,\mathcal B,\mu)$. Let $\mathcal P$ be a regular partition of $X$. 
			
			Suppose for all $\e>0$, there exists $N\in \N$ such that for all $N'\geq N$, for $\e$-a.e. atom $A\in \bigvee_N^{N'} T^i \mathcal P$ and for all $S\geq N$, there exists an $\e$ measure preserving map $\theta=\theta(N, S, A):(A,\mu|A)\to (X,\mu)$ such that $\forall x\in A$
				$$
					\frac{1}{S}\card\left( \left\{ i\in \{0,...,S-1\}: d(T^ix,T^i(\theta x))<\e\right\} \right)>1-\e
				$$
			Then $\mathcal P$ is very weak Bernoulli.
		\end{lemma}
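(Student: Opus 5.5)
The plan is to deduce the statement from Lemma \ref{lem:VWB_coupling_same_atom}. Closeness of $T^ix$ and $T^i\theta(x)$ forces them into the same atom of $\mathcal{P}$ except when $T^ix$ lies near $\partial\mathcal{P}$. Regularity together with the ergodic theorem says such times are rare for most points.

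Fix a target $\e>0$ and let $\e'>0$ be small, say $\e'=\e/10$. By regularity of $\mathcal{P}$ choose $\delta>0$ with $\mu(V_\delta(\partial\mathcal{P}))<\e'^2$.

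\textbf{Step 1 (good set of points).} By ergodicity of $T$ and the Birkhoff ergodic theorem, applied to $\chi_{V_\delta(\partial\mathcal{P})}$, and then Egorov's theorem, there are $N_1$ and a set $G\subset X$ with $\mu(G)>1-\e'^3$ such that for all $x\in G$ and $S\geq N_1$,
\[
\frac{1}{S}\card\{0\le i<S \colon T^ix\in V_\delta(\partial\mathcal{P})\}<2\e'^2 .
\]

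\textbf{Step 2 (apply the hypothesis).} Apply the hypothesis with $\e''=\min\{\e',\delta\}$. This gives $N_2$. Take $N=\max\{N_1,N_2\}$, any $N'\geq N$, and $S\geq N$.

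For $\e''$-a.e. atom $A\in\bigvee_N^{N'}T^i\mathcal{P}$ we get an $\e''$-measure preserving map $\theta\colon A\to X$ with
\[
\frac{1}{S}\card\{0\le i<S \colon d(T^ix,T^i\theta x)<\e''\}>1-\e''
\]
for every $x\in A$.

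By Chebyshev, for all but an $\e'$-proportion (in measure) of atoms we also have $\mu_A(G^c)<\e'^2$. Since $\theta$ is $\e''$-measure preserving, for such atoms
\[
\mu_A\bigl(\theta^{-1}(G^c)\bigr)\lesssim (1+\e'')\,\frac{\mu(G^c)}{\mu(A)}+\e''
\]
after discarding atoms of small total measure. This needs care: the bound should be run through the ratio condition on subsets of $A\setminus E'$, and atoms with $\mu(A)$ too small relative to $\mu(G^c)$ are thrown into the exceptional family, again by Chebyshev over atoms.

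\textbf{Step 3 (same atom for most times).} Take $x\in A\cap G$ and let $i<S$ be a time with $d(T^ix,T^i\theta x)<\e''\le\delta$ and $T^ix\notin V_\delta(\partial\mathcal{P})$. Then the ball of radius $\delta$ about $T^ix$ misses $\partial\mathcal{P}$, so $T^i\theta x$ lies in the same atom as $T^ix$. Hence the number of mismatched coordinates in the $S,\mathcal{P}$-names is at most $(\e''+2\e'^2)S<\e S$, i.e. $d(x,\theta x)<\e$ in the sense of Lemma \ref{lem:VWB_coupling_same_atom}.

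\textbf{Main obstacle.} The conclusion of Step 3 holds only for $x$ outside an exceptional subset of $A$ of $\mu_A$-measure $O(\e')$, whereas Lemma \ref{lem:VWB_coupling_same_atom} as stated asks for it on all of $A$. I would handle this by observing that the proof of Lemma \ref{lem:VWB_coupling_same_atom} (as in \cite{OrnsteinWeiss}) only uses the averaged quantity $\int_A d(x,\theta x)\,d\mu_A$ to bound the $\bar d$-distance of names. Since $d\le 1$, an exceptional set of $\mu_A$-measure $O(\e')$ adds only $O(\e')$ to that average. Alternatively, one can absorb the exceptional set into the set $E'$ in the definition of an $\e$-measure preserving map and redefine $\theta$ there.

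Bookkeeping the various "a.e. atom" exclusions, each of total measure $O(\e')$, then yields the hypothesis of Lemma \ref{lem:VWB_coupling_same_atom} with $\e$, so $\mathcal{P}$ is VWB.
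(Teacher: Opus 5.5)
Your route is the standard one behind the cited references; the paper gives no proof of its own and only cites \cite{OrnsteinWeiss} and \cite{ExpMixingBernoulli}. Regularity plus Birkhoff and Egorov turn ``$\e''$-close at a time when $T^ix\notin V_\delta(\partial\mathcal P)$'' into ``same atom'', uniformly in $S$, and Lemma~\ref{lem:VWB_coupling_same_atom} finishes.

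\textbf{The gap.} The choice $N=\max\{N_1,N_2\}$ does not follow from the hypothesis. The hypothesis only supplies couplings for $\e''$-a.e.\ atom of $\bigvee_{N_2}^{N'}T^i\mathcal P$. If $N_1>N_2$, an atom $A$ of $\bigvee_{N}^{N'}T^i\mathcal P$ is a union of such atoms $A_j$. The maps $\theta_j$ then do not glue to an $\e$-measure preserving map on $(A,\mu|_A)$: for $B\subset A_j$ one gets $\mu(\theta(B))\approx\mu_A(B)/\mu_A(A_j)$. So you must keep $N=N_2$, and you must then handle the range $N_2\le S<N_1$, where Egorov gives nothing.

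\textbf{A repair.} Once $N_1$ is fixed, use regularity again to choose $\delta_2\in(0,\delta]$ with $N_1\,\mu(V_{\delta_2}(\partial\mathcal P))<\e'^3$. Put $H=\bigcap_{0\le i<N_1}T^{-i}\bigl(X\setminus V_{\delta_2}(\partial\mathcal P)\bigr)$, so $\mu(H^c)<\e'^3$ by invariance of $\mu$. Apply the hypothesis with $\e''=\min\{\e',\delta_2\}$, and run your Chebyshev step with $G\cap H$ in place of $G$. For $x\in A\cap G\cap H$ there are no boundary times at all when $S<N_1$, and Step 3 applies verbatim when $S\ge N_1$. The discarded atoms also remain independent of $S$.

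\textbf{Smaller points.}
\begin{enumerate}
\item The bound on $\mu_A(\theta^{-1}(G^c))$ is not needed, since Step 3 only uses $x\in G$, never $\theta x\in G$. The factor $\mu(A)^{-1}$ in it is also spurious, because $\mu|_A$ is normalized in the definition of $\e$-measure preserving.
\item Of your two fixes for the main obstacle, use the second and make it explicit. Set $\theta(x)=x$ on $A\setminus(G\cap H)$, so the names agree identically there, and add this set to $E'$. The enlarged $E'$ has $\mu_A$-measure $<\e''+\e'^2<\e$, and $\theta$ is unchanged off it, so $\theta$ stays $\e$-measure preserving. This avoids appealing to the internals of the proof of Lemma~\ref{lem:VWB_coupling_same_atom}.
\item Step 3 infers that a $\delta$-ball missing $\partial\mathcal P$ lies in a single atom. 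This uses connectedness of balls, which holds on a manifold such as $M\times N$ but not in a general metric space. In general, replace $V_\delta(\partial\mathcal P)$ by $\{x\colon B(x,\delta)\not\subset\mathcal P(x)\}$, where $\mathcal P(x)$ is the atom containing $x$. Its measure still tends to $0$, because regularity forces $\mu(\partial\mathcal P)=0$.
\end{enumerate}
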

		
\subsection{Proof of Theorem \ref{thm:Main_f_bernoulli}}\label{sec:subsec:proof_main}
	
	Let $\mathcal{P}$ be a partition of $M\times N$ with piecewise smooth boundary. Since $\mathcal{P}$ has piecewise smooth boundary, it is regular. Since $f$ is ergodic by Proposition \ref{prop:f_ergodic}, if we can show that the hypotheses of Lemma \ref{lem:VWB_coupling_close} hold, we'll have that the partition $\mathcal{P}$ is very weak Bernoulli. Since we chose the partition $\mathcal{P}$ arbitrarily, Lemma \ref{lem:VWB->Bernoulli} will then give that $f$ is Bernoulli. 
	
	So, to prove Theorem \ref{thm:Main_f_bernoulli} we just need to show that the hypotheses of Lemma \ref{lem:VWB_coupling_close} hold. We will do this following the arguments given in \cite{ExpMixingBernoulli}. The only places that exponential mixing is used in \cite{ExpMixingBernoulli} is to show that their map has a nonzero Lyapunov exponent and to establish equidistribution of unstable manifolds on exponentially small cubes (\cite[Lemma 7.3]{ExpMixingBernoulli}). Outside of establishing these two results, the rest of their proof only uses that the map is a $C^{1+\alpha}$ diffeomorphism of a compact manifold that preserves a smooth ergodic measure. In other words, the arguments given in \cite{ExpMixingBernoulli} show that if $f$ is a $C^{1+\alpha}$ diffeomorphism of a compact manifold that preserves a smooth ergodic measure and that if $f$ has some nonzero Lyapunov exponents and the local unstable manifolds of $f$ equidistribute on exponentially small cubes (i.e. the conclusion of \cite[Lemma 7.3]{ExpMixingBernoulli} holds), then $f$ is Bernoulli. 

	Since $g:M\to M$ is $C^{1+\alpha_g}$ and $A:M\to PSL_2\R$ is $C^{1+\alpha}$, we have that the skew product $f:M\times N\to M\times N$ given by \eqref{eq:skew-product-f_intro} is $C^{1+\alpha_f}$, where $\alpha_f=\min\{\alpha_g,\alpha\}$. We know that $f$ preserves the smooth measure $m=\mu\times \nu$ and and is ergodic (Proposition \ref{prop:f_ergodic}). The Lyapunov exponents of $f$ are the union (with multiplicity) of the Lyapunov exponents of $g$ and of $\mathcal{A}$. Thus, $f$ has some nonzero Lyapunov exponents. 
	
	To show that $f$ is Bernoulli, all that remains is to show that the unstables of $f$ equidistribute on exponentially small cubes. We did this in Theorem \ref{thm:exp_equidistribution_unstables_f}. There are several differences between Theorem \ref{thm:exp_equidistribution_unstables_f} and \cite[Proposition 7.3]{ExpMixingBernoulli}. The two that impact the proof are
		\begin{enumerate}
		\item 
			Theorem \ref{thm:exp_equidistribution_unstables_f} proves equidistribution on exponentially small product cubes, whereas \cite[Proposition 7.3]{ExpMixingBernoulli} proves equidistribution on exponentially small ``$f$-adapted parallelograms." A ``$f$-adapted parallelogram" is a thickening of a local unstable manifold $W^u_{f,\xi}(z)$ in the $cs$-direction using a Lyapunov chart for $f$ centered at $z$. For more details, see \cite[(5.1),(5.13)]{ExpMixingBernoulli}.
		\item 
			Theorem \ref{thm:exp_equidistribution_unstables_f} proves equidistribution of unstables restricted to the set $Q_n\times N$,  whereas \cite[Proposition 7.3]{ExpMixingBernoulli} proves equidistribution of unstables restricted to the set $\mathcal{R}_n$. It is important to the proof of the Bernoulli property in \cite{ExpMixingBernoulli} that $\mathcal{R}_n$ is saturated by the fake center-stable foliation and that it has local product structure given by the fake center-stable foliation and the local unstable manifolds. 
		\end{enumerate}
	Thus, the final step is to prove that Theorem \ref{thm:exp_equidistribution_unstables_f} implies an analogue of \cite[Proposition 7.3]{ExpMixingBernoulli}. Before stating this analogue, we need to introduce some notation and definitions. 
	
	Let $\mathcal{P}_{f,\tau_f}$ be the Pesin sets for $f$ defined in \cite[Section 2.1]{ExpMixingBernoulli}. The local unstable manifolds for $f$ defined in \cite[Lemma 2.8]{ExpMixingBernoulli} are the same as the local unstable manifolds we define for $f$ in Section \ref{sec:unstable_f}. For fixed $\e_6>0$, we define
		$$
			\mathcal{L}^f_{n,\tau_f}:= \mathcal{P}_{f,\tau_f} \cap f^{-n}\mathcal{P}_{f,\tau_f} \cap f^{-\lfloor \e_6 n \rfloor}\mathcal{P}_{f,\tau_f}.
		$$
	As in \cite[(5.4)]{ExpMixingBernoulli}, we assume that $\tau_f$ has been chosen small enough that 
		\begin{equation*}
			m(\mathcal{L}^f_{n,\tau_f})\geq 1-3\e_6^{\mathbf{b}},
		\end{equation*}
	where $\mathbf{b}=10^{10}$. As in \cite[Section 5]{ExpMixingBernoulli}, for $\eta_f>0$, $\e_6>0$, and $n\in \N$ let $\{\mathcal{B}_k^f\}_{k\in J_f}=\{\mathcal{B}_k^f(e^{-\eta_f \e_6 n})\}_{k\in J_f}$ be the family of pairwise disjoint $f$-adapted parallelograms defined in \cite[Section 5, (5.1),(5.13)]{ExpMixingBernoulli}. From \cite[(5.10)]{ExpMixingBernoulli}, we have that 
		\begin{equation}\label{eq:measure_cover_f_adapted_parallelograms}
			m\left( \bigcup_{k\in J_f} \mathcal{B}_k^f \right)\geq1-100\e_6^{\mathbf{b}/16}.
		\end{equation}
	Also note that $\mathcal{B}_k^f$ has unstable radius $e^{-(\eta_f-\e_6)\e_6n}$ and center-stable radius $e^{-\eta_f\e_6 n}$ \cite[(5.1), Section 7.2]{ExpMixingBernoulli}. We assume without loss of generality that $\e_6<\eta_f$ so that the unstable radius of each $\mathcal{B}_k^f$ is exponentially small. We let $\mathcal{R}_n^f$ be the domain of the fake center-stable foliation for $f$ \cite[Sections 5 and 6, (7.14)]{ExpMixingBernoulli}. As in \cite[Lemma 6.7]{ExpMixingBernoulli}, we have that 
		\begin{equation*}
			m(\mathcal{R}_n^f)\geq 1-300\e_6^{\mathbf{b}/16}.
		\end{equation*}

	We now state our analogue of \cite[Proposition 7.3]{ExpMixingBernoulli}:
	
	\begin{prop}\label{prop:equidistribution_f_adapted}
		There exists a number $\eta_f>0$ such that for all $\e_6>0$, there exists $\xi_{0,f}>0$ such that for all $\xi\in (0,\xi_{0,f})$, there exists $\bar{n}=\bar{n}(\e_6,\xi)\in \N$ such that for all $n\geq \bar{n}$, there exists a set $K_n\subset \mathcal{L}^f_{n,\tau_f}$ with $m(K_n)\geq 1-\e_6^{10}$ such that if $\{\mathcal{B}_k^f\}_{k\in J_f}=\{\mathcal{B}_k^f(e^{-\eta_f \e_6 n})\}_{k\in J_f}$ is a family of $f$-adapted parallelograms, then for all $x\in K_n$, there is a subset $J'_f(x)\subset J_f$ such that $m\left( \bigcup_{k\in J'_f}\mathcal{B}_k^f \right)>1-\e_6^{10}$ and 
			\begin{equation}\label{eq:prop:equidistribution_f_adapted1}
				\hat{m}_{W^u_{f,\xi}(x)}^u\left( f^{-\lfloor \e_6 n \rfloor} \left( \mathcal{L}_{n,\tau_f}^f \right)\right) \geq 1-\e_6^{10}
			\end{equation}
		and for all $k\in J_f'(x)$,
			\begin{equation}\label{eq:prop:equidistribution_f_adapted2}
				\hat{m}_{W^u_{f,\xi}(x)}^u \left( \mathcal{R}_n^f \cap f^{-\lfloor \e_6 n\rfloor}(\mathcal{B}_k^f)  \right) \in (1-\e_6^{10},1+\e_6^{10})m(\mathcal{B}_{k}^f)
			\end{equation}
		and for $\tilde{\e}_6=\e_6^{4000}$, 
			\begin{equation}\label{eq:prop:equidistribution_f_adapted3}
				\hat{m}_{W^u_{f,\xi}(x)}^u \left( \mathcal{R}_n^f \cap f^{-\lfloor \e_6 n\rfloor}((1-\tilde{\e}_6)\mathcal{B}_k^f)  \right) \in (1-\e_6^{10},1+\e_6^{10})m((1-\tilde{\e}_6)\mathcal{B}_{k}^f)
			\end{equation}			
	\end{prop}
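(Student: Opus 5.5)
We will deduce Proposition \ref{prop:equidistribution_f_adapted} from Theorem \ref{thm:exp_equidistribution_unstables_f}, applied at the time $\lfloor \e_6 n\rfloor$ in place of $n$ and with $\e_1,\e_4,\e_5$ chosen as small powers of $\e_6$. The strategy is to approximate each $f$-adapted parallelogram from inside and outside by unions of product sets $B_i\times D$, with $D\subset N$ a cube of radius $e^{-\eta\lfloor\e_6 n\rfloor}$. Here $\eta<\eta_0$ is taken from Theorem \ref{thm:exp_equidistribution_unstables_f}, and $\eta_f$ is chosen with $\eta_f\gg\eta$, so that the parallelograms are much larger than the product cubes.

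\textbf{Step 1: tiling.} First, using Lemma \ref{lem:covering_N_cubes} and Theorem \ref{thm:exp_equidistribution_g_expmixing}, we fix a disjoint family of base sets $\{B_i\}$ (balls of radius about $e^{-\eta_g\e_{12}m}$) and a disjoint family of fiber cubes $\{D_j\}$ of radius $e^{-\eta\lfloor\e_6 n\rfloor}$. Together these cover all but a small power of $\e_6$ of $M\times N$. Next, for each $f$-adapted parallelogram $\mathcal{B}^f_k$, we let $\mathcal{B}^{f,-}_k$ be the union of the product cells $B_i\times D_j$ contained in $\mathcal{B}^f_k$, and $\mathcal{B}^{f,+}_k$ the union of those meeting it.

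Since $\mathcal{B}^f_k$ has piecewise smooth boundary and radius at least $e^{-\eta_f\e_6 n}$, while the cells have diameter at most $e^{-c\eta\e_6 n}$ with $c\eta>\eta_f$, the difference $\mathcal{B}^{f,+}_k\setminus \mathcal{B}^{f,-}_k$ lies in a thin collar of $\partial\mathcal{B}^f_k$. Its measure is therefore at most $e^{-c'\e_6 n}\,m(\mathcal{B}^f_k)\ll \e_6^{10}m(\mathcal{B}^f_k)$. This requires that the Lyapunov chart defining $\mathcal{B}^f_k$ has bounded distortion, which holds because $\mathcal{B}^f_k$ is centered in the Pesin set $\mathcal{P}_{f,\tau_f}$. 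The same estimate applies to $(1-\tilde\e_6)\mathcal{B}^f_k$.

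\textbf{Step 2: summing the cell estimates.} Theorem \ref{thm:exp_equidistribution_unstables_f} gives, for $x\in Q$ and good indices $i\in J(x)$, the estimate \eqref{eq:thm_exp_equidistribution_unstables_f} for every cube $D$ simultaneously. This uniformity in $D$ is exactly what lets us sum over the cells in $\mathcal{B}^{f,\pm}_k$.

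We call $k$ good if the bad cells (those with $i\notin J(x)$, or lying outside the cover) occupy at most an $\e_6^{20}$ fraction of $\mathcal{B}^{f,+}_k$. Chebyshev's inequality, with $\e_4$ a high power of $\e_6$, then shows that the good $k$ cover measure at least $1-\e_6^{10}$; this defines $J_f'(x)$.

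For good $k$, the upper bound in \eqref{eq:prop:equidistribution_f_adapted2} comes from summing over $\mathcal{B}^{f,+}_k$, and the lower bound from summing over $\mathcal{B}^{f,-}_k$. In the lower bound the bad cells are discarded, which costs only an $\e_6^{20}$ fraction.

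\textbf{Main obstacle: the restriction set.} Theorem \ref{thm:exp_equidistribution_unstables_f} counts only the part of the plaque in $Q_n\times N$, whereas \eqref{eq:prop:equidistribution_f_adapted2} needs $\mathcal{R}^f_n$, the domain of the fake center-stable foliation of $f$. To pass between them we will:
\begin{itemize}
\item use $m(\mathcal{R}^f_n)\ge1-300\e_6^{\mathbf b/16}$ together with \cite[Lemma 4.4]{ExpMixingBernoulli}, so that for $x$ in a large set $K_n$ the unstable plaque meets $(Q_n\times N)\,\triangle\,\mathcal{R}^f_n$ in measure at most $\e_6^{\mathbf b/64}$;
\item apply the Chebyshev argument of the proof of Theorem \ref{thm:exp_equidistribution_unstables_f} once more, throwing out those $k$ for which $f^{-\lfloor\e_6 n\rfloor}\mathcal{B}^f_k$ carries too much of this symmetric difference.
\end{itemize}
This works because the base-level exceptional sets have measures that are enormous powers of $\e_6$. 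Finally, \eqref{eq:prop:equidistribution_f_adapted1} follows the same way, by including $f^{-\lfloor\e_6n\rfloor}\mathcal{L}^f_{n,\tau_f}$ in the definition of $K_n$ via \cite[Lemma 4.4]{ExpMixingBernoulli}, using $m(\mathcal{L}^f_{n,\tau_f})\ge1-3\e_6^{\mathbf b}$.
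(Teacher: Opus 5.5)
Your overall architecture matches the paper's. You apply Theorem \ref{thm:exp_equidistribution_unstables_f} at time $t=\lfloor \e_6 n\rfloor$, tile by product cells $B_i\times D_j$, use that $J(x)$ is independent of $D$, and sandwich $\mathcal{B}^f_k$ between inner and outer unions of cells. The collar is controlled by a bi-Lipschitz chart, as in Lemma \ref{lem:bound_delta_nbhd_f_adapted_cube}, and you pass from $Q_t\times N$ to $\mathcal{R}^f_n$ via \cite[Lemma 4.4]{ExpMixingBernoulli} plus Chebyshev; doing that at the level of $k$ rather than of cells is fine.

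However, the upper bound in \eqref{eq:prop:equidistribution_f_adapted2} has a gap. Your notion of a good $k$ only controls the \emph{Haar} measure of the bad cells inside $\mathcal{B}^{f,+}_k$, i.e. those with $i\notin J(x)$ or lying outside the cover. For exactly those cells, Theorem \ref{thm:exp_equidistribution_unstables_f} gives no upper estimate on the plaque's conditional measure. So "summing over $\mathcal{B}^{f,+}_k$" does not bound $\hat m^u_{\mathcal{W}}\bigl(\mathcal{R}^f_n\cap f^{-t}(\mathcal{B}^f_k\cap E')\bigr)$, where $E'$ is the complement of the union of good cells. A priori the plaque could put a non-negligible share of its mass there for the given $k$. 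You also cannot absorb this into $K_n$ via \cite[Lemma 4.4]{ExpMixingBernoulli}, because $E'$ depends on $x$ through $J(x)$.

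The missing step is a mass-conservation argument, which is what the paper does:
\begin{enumerate}
\item The good cells cover $m$-measure at least $1-O(\e_4+\e_6^{40})$, and each receives at least a $(1-\e_5)$ fraction of its fair share, up to the symmetric-difference error.
\item Since $\hat m^u_{\mathcal{W}}$ is a probability measure, the total plaque mass in $\mathcal{R}^f_n\cap f^{-t}(E')$ is small; this is the paper's \eqref{eq:prop:equidistribution_f_adapted_time_outside_J''}.
\item A second Chebyshev inequality over $k$ then discards the parallelograms with $\hat m^u_{\mathcal{W}}\bigl(\mathcal{R}^f_n\cap f^{-t}(\mathcal{B}^f_k\cap E')\bigr)\geq \e_6^{20}m(\mathcal{B}^f_k)$. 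This is the paper's set $J_2$, with $J_f'=J_1\cap J_2$, where $J_1$ is your Haar-measure condition.
\end{enumerate}
With this added, the upper bound goes through.

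Separately, "$\eta_f\gg\eta$" is backwards. For the parallelograms to be much larger than the cells you need $\eta_f<\min\{\eta,\eta_g-\e_{12}\}$; the paper takes $\eta_f<\min\{\eta_g/2,\eta\}$. This is what your later condition $c\eta>\eta_f$ actually requires. The base cells, of unstable size $e^{-(\eta_g-\e_{12})t}$, enter this constraint as well, not just the fiber cubes.
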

	
	\begin{proof}
		Fix $0<\eta<\frac{\alpha_1 \lambda_2}{44}-\frac{4\e}{11}>0$. Note that $\eta<\eta_0$ where $\eta_0$ is taken as in Theorem \ref{thm:exp_equidistribution_unstables_f}. Fix $0<\eta_f<\min\{\frac{\eta_g}{2},\eta\}$. 
		
		Fix $\e_6>0$. Assume without loss of generality that $\e_6<\min\{\eta_f,\frac{1}{2}\}$. Fix $0<\e_{12}<\min\{\frac{\e_6^{12}}{4^{1/10}}, \frac{\eta_g}{2}\}$. Let $\eta':=\min\{ \eta_g-\e_{12},\eta \}$. Note that $\eta_f<\min\{\frac{\eta_g}{2},\eta\}<\eta'$. 
		Let $\xi_{0,f}:=\min\{\xi_0,\xi_1\}$, where 
			\begin{itemize}
			\item 
				$\xi_0$ is the $\xi_0$ from applying Theorem \ref{thm:exp_equidistribution_unstables_f} with $\e_1=\e_4=\e_5=\e_6^{120}$ and $\e_{12}$ and $\eta$ as above.
			\item 
				$\xi_1$ is the $\xi_0$ we get from applying \cite[Lemma 4.4]{ExpMixingBernoulli} with $\hat{\e}=\e_6^{90}$. 
			\end{itemize}
		Fix $\xi\in (0,\xi_{0,f})$. To prove Proposition \ref{prop:equidistribution_f_adapted}, it suffices to show that \eqref{eq:prop:equidistribution_f_adapted1} and \eqref{eq:prop:equidistribution_f_adapted2} hold. Then, we can repeat the argument with $(1-\tilde{\e}_6)\mathcal{B}_k^f$ instead of $\mathcal{B}_k^f$. We then let $N_0$ be the maximum of the two $N_0$'s we get, $K_n$ be the intersections of the two $K_n$'s we get, and $J'_f$ be the intersection of the two $J'_f$'s we get. 
		
		Now, let
			\begin{align*}
				T_0:=&\max\left\{ 
					N_0, 
					\frac{1}{\eta}\log\left( \frac{1}{\delta_6}\right),
					\frac{2}{\eta_g}\log\left( \frac{2}{\tau_g} \right),
					\frac{1}{\eta}\log\left( \frac{2}{\delta_{inj}} \right),
					\frac{1}{\eta_f-\e_6}\log\left( \frac{2}{\delta_7} \right), \right. \\
					&\hspace{1.5in} \left.
					\frac{1}{\eta'-\eta_f}\log\left( \frac{C_9 e^{\eta_f}}{C_{10}} \right),
					\frac{1}{\eta'-\eta_f}\log\left( \frac{C_5C_9e^{\eta_f}}{\e_6^{20}} \right)
				 \right\}
			\end{align*}
		where 
			\begin{itemize}
			\item 
				$N_0$ is from applying Theorem \ref{thm:exp_equidistribution_unstables_f} with $\e_1=\e_4=\e_5=\e_6^{120}$ and $\e_{12}$, $\eta$, and $\xi$ as above. 
			\item 
				$\delta_6$ is the $\delta_\e$ we get from applying Lemma \ref{lem:covering_N_cubes} with $\e=\e_6^{40}$.
			\item 
				$\delta_7$, $C_5$, and $C_{10}$ are from Lemma \ref{lem:bound_delta_nbhd_f_adapted_cube}
			\item 
				$C_9$ is from Lemma \ref{lem:diameter_product_cubes}
			\end{itemize}
		Take
			$
				\bar{n}=\frac{T_0+1}{\e_6}.
			$
		We now fix $n\geq \bar{n}$. Let $t:=\lfloor \e_6 n\rfloor$. Note that $t\leq \e_6n <t+1$, so the fact that $n\geq \bar{n}\geq \frac{T_0+1}{\e_6}$ implies that $t>T_0$. 
		
		Now, let $\{\mathcal{B}_i^g\}_{i\in I_1}$ be the family of cubes we get from applying Theorem \ref{thm:exp_equidistribution_g_expmixing} with $\e=\e_{12}$ and at time $n'$ such that $\lfloor \e_{12}n'\rfloor =t$. We can apply Theorem \ref{thm:exp_equidistribution_g_expmixing} at time $n'$ since $n'\geq \frac{t}{\e_{12}}\geq \frac{N_0}{\e_{12}}\geq \bar{n}_g$. Note that the unstable radius of $\mathcal{B}_i^{g}$ is $e^{-(\eta_g-\e_{12})\e_{12}n'}\leq e^{-(\eta_g-\e_{12})t}$ and the center-stable radius of $\mathcal{B}_i^g$ is $e^{-\eta_g \e_{12}n'}\leq e^{-\eta_g t}$. 
		
		Let $\{D_j\}_{j\in I_2}$ be a disjoint family of cubes in $N$ with radius $e^{-\eta t}$ such that 
			$%\displaystyle
				\nu\left( \bigcup_{j\in I_2} D_j \right)\geq 1-\e_6^{40}.
			$
		We can find such a family by Lemma \ref{lem:covering_N_cubes} since $t>T_0\geq -\frac{1}{\eta}\log(\delta_6)$.
		For $(i,j)\in I_1\times I_2$, we define the product cube $C_{ij}=\mathcal{B}_i^g\times D_j$. 
		
		We apply Theorem \ref{thm:exp_equidistribution_unstables_f} with $\e_1=\e_4=\e_5=\e_6^{120}$ and $\eta$ and $\xi$ as above at time $t>T_0=N_0$. This gives us a set $Q\subset M$ with $\mu(Q)>1-\e_6^{120}$ and a set $Q_t\subset M$ with $\mu(Q_t)>1-\frac{\e_6^{360}}{64}-\frac{300}{4^{\mathbf{b}/160}}\e_6^{12\mathbf{b}/16}$, such that for all $(x,y)\in Q\times N$, there exists a subset $J=J(x)\subset I_1$ with 
			$
				\mu\left( \bigcup_{i\in J} \mathcal{B}_i^g \right)>1-\e_6^{120}
			$
		such that for all $(i,j)\in J\times I_2$,
			\begin{equation}\label{eq:prop:equidistribution_f_adapted-productcubes}
				\hat{m}^u_{W^u_{f,\xi}(x,y)}\left( (Q_t\times N)\cap f^{-t}(C_{ij}) \right) \in \left( 1-\e_6^{120},1+\e_6^{120} \right)m(C_{ij})
			\end{equation}
		
		We define the set $K_n$ to be the set of points $z$ in $(Q\times N)\cap \mathcal{L}_{n,\tau_f}^f$ for which not too large a proportion of $W^u_{f,\xi}(x)$ is contained in the bad set 
			$$
				E_n:= \left( (Q_t\times N)\Delta \mathcal{R}_n^f \right) \cup f^{-t}\left( (\mathcal{L}_{n,\tau_f}^f)^c \right).
			$$
		More precisely, 
			$$
				K_n:= \bar{K}_n \cap (Q\times N) \cap \mathcal{L}_{n,\tau_f}^f, 
				\qquad \bar{K}_n:=\left\{ z\in M\times N \colon r_{u,f}(z)\geq \xi \text{ and } \hat{m}^u_{W^u_{f,\xi}(z)}(E_n)\leq \e_6^{90} \right\},
			$$
		where $r_{u,f}(z)$ is the radius of the local unstable manifold $W^u_f(z)$. We need to show that $m(K_n)\geq 1-\e_6^{10}$. To do this, we first bound $m(\bar{K}_n)$. Observe that 
			\begin{align*}
				m(E_n)
				&\leq m\left( (Q_t\times N)^c \right)+ m\left( (\mathcal{R}_n^f)^c \right)+m\left( f^{-t}((\mathcal{L}_{n,\tau_f}^f)^c )\right)\\
				&\leq \frac{\e_6^{360}}{64}+\frac{300}{4^{\mathbf{b}/160}}\e_6^{12\mathbf{b}/16} 
					+300\e_6^{\mathbf{b}/16} 
					+3\e_6^{\mathbf{b}}
				\leq \e_6^{360}
			\end{align*}
		since $\e_6\leq \frac{1}{2}$. Since $\xi<\xi_0\leq \xi_1$, we can apply \cite[Lemma 4.4]{ExpMixingBernoulli} with $\hat{\e}=\e_6^{90}$ to get that $m(\bar{K}_n)\geq 1-4\e_6^{90}$. Thus, since $\e_6\leq \frac{1}{2}$, 
			\begin{align*}
				m(K_n)
				&\geq 1- m\left( \bar{K}_n^c \right)-m\left( (Q\times N)^c \right)-m\left( (\mathcal{L}_{n,\tau_f}^f)^c \right)
				\geq 1-4\e_6^{90}-\e_6^{120}-3\e_6^{\mathbf{b}} 
				\geq 1-\e_6^{10}.
			\end{align*}
		
		From now on, we fix $z\in K_n$. We let $\mathcal{W}=W^u_{f,\xi}(z)$. We now can show that \eqref{eq:prop:equidistribution_f_adapted1} holds. Since $z\in K_n\subset \bar{K}_n$, we have that $\hat{m}_{\mathcal{W}}^u(E_n)\leq \e_6^{90}$. Since $f^{-t}((\mathcal{L}^f_{n,\tau_f})^c)\subset E_n$, we get that \eqref{eq:prop:equidistribution_f_adapted1} holds. 
		
		The remainder of the proof is devoted to showing that \eqref{eq:prop:equidistribution_f_adapted2} holds. 
		
	\textit{Step 1:} 
		The first step is to replace the set $Q_t\times N$ in \eqref{eq:prop:equidistribution_f_adapted-productcubes} with $\mathcal{R}_n^f$. We begin by getting rid of the product cubes $C_{ij}$ that spend too much time inside $(Q_t\times N)\Delta \mathcal{R}_n^f$. Let 
			$$
				J''=J''(z):= \left\{ (i,j)\in J\times I_2 \colon \hat{m}^u_{\mathcal{W}}\left( ((Q_t\times N)\Delta \mathcal{R}_n^f) \cap f^{-t}(C_{ij}) \right)< \e_6^{40} m(C_{ij}) \right\}
			$$
		We show that the cubes in $J''$ cover most of $M\times N$. By Chebychev's inequality and the fact that $z\in K_n\subset \bar{K}_n$, we get that
			\begin{align*}
				m\left( \bigcup_{(J\times I_2)\setminus J''} C_{ij} \right)
				&\leq \e_6^{-40} \sum_{I_1\times I_2} \hat{m}^u_{\mathcal{W}}\left( ((Q_t\times N)\Delta \mathcal{R}_n^f) \cap f^{-t}(C_{ij}) \right) \\
				&\leq \e_6^{-40}\hat{m}_{\mathcal{W}}^u\left( (Q_t\times N)\Delta \mathcal{R}_n^f \right)
				\leq \e_6^{-40} \hat{m}^u_{\mathcal{W}}(E_n)
				\leq \e_6^{-40} \e_6^{90}
				= \e_6^{50}
			\end{align*}
		This along with the fact that $\mu\left( \bigcup_{i\in J} \mathcal{B}_i^g \right)>1-\e_6^{120}$ and $\nu\left( \bigcup_{j\in I_2} D_j \right)\geq 1-\e_6^{40}$ gives that 
			\begin{equation}\label{eq:prop:equidistribution_f_adapted_J''_measure}
				m\left( \bigcup_{J''} C_{ij} \right) 
				=m\left( \bigcup_{J\times I_2} C_{ij} \right)-m\left( \bigcup_{(J\times I_2)\setminus J''} C_{ij} \right)
				> \left( 1-\e_6^{120} \right)\left( 1-\e_6^{40} \right)-\e_6^{50}
				\geq 1-3\e_6^{40}
			\end{equation}
			
		Now, we're ready to replace $Q_t\times N$ by $\mathcal{R}_n^f$ in \eqref{eq:prop:equidistribution_f_adapted-productcubes}. Take $(i,j)\in J''$.  We begin with the lower bound. Since $z\in K_n\subset Q\times N$, $(i,j)\in J''\subset J\times I_2$, and $t\geq N_0$, we have that \eqref{eq:prop:equidistribution_f_adapted-productcubes} holds. Additionally, since $(i,j)\in J''$, we have that 
			\begin{equation}\label{eq:prop:equidistribution_f_adapted_J''_condition}
				\hat{m}^u_{\mathcal{W}}\left( ((Q_t\times N)\Delta \mathcal{R}_n^f) \cap f^{-t}(C_{ij}) \right)< \e_6^{40} m(C_{ij})
			\end{equation}
		Combining \eqref{eq:prop:equidistribution_f_adapted-productcubes} and \eqref{eq:prop:equidistribution_f_adapted_J''_condition}, we get that 
			\begin{align*}
				\hat{m}^u_{\mathcal{W}}\left(\mathcal{R}_n^f \cap f^{-t}(C_{ij})\right)
				&\geq 	\hat{m}^u_{\mathcal{W}}\left( (Q_t\times N)\cap f^{-t}(C_{ij}) \right) - \hat{m}^u_{\mathcal{W}}\left( ((Q_t\times N)\Delta \mathcal{R}_n^f) \cap f^{-t}(C_{ij}) \right) \\
				&> (1-\e_6^{120}-\e_{6}^{40})m(C_{ij}) 
				> (1-2\e_6^{40})m(C_{ij})
			\end{align*}
		The upper bound is similar. Combining \eqref{eq:prop:equidistribution_f_adapted-productcubes} and \eqref{eq:prop:equidistribution_f_adapted_J''_condition}, we get that 
			\begin{align*}
				\hat{m}^u_{\mathcal{W}}\left(\mathcal{R}_n^f \cap f^{-t}(C_{ij})\right)
				&\leq 	\hat{m}^u_{\mathcal{W}}\left( (Q_t\times N)\cap f^{-t}(C_{ij}) \right) + \hat{m}^u_{\mathcal{W}}\left( ((Q_t\times N)\Delta \mathcal{R}_n^f) \cap f^{-t}(C_{ij}) \right) \\
				&< (1+\e_6^{120}+\e_{6}^{40})m(C_{ij})
				< (1+2\e_6^{40})m(C_{ij})
			\end{align*}			
		We have shown that 
			\begin{equation}\label{eq:prop:equidistribution_f_adapted_replace_QxN}
				\hat{m}^u_{\mathcal{W}}\left(\mathcal{R}_n^f \cap f^{-t}(C_{ij})\right)
				\in \left( 1-2\e_{6}^{40}, 1+2\e_{6}^{40} \right)m(C_{ij})
			\end{equation}
		It follows immediately from \eqref{eq:prop:equidistribution_f_adapted_replace_QxN} and \eqref{eq:prop:equidistribution_f_adapted_J''_measure} that $\mathcal{W}$ spends very little time outside $\bigcup_{J''}C_{ij}$. More precisely, 
			\begin{equation}\label{eq:prop:equidistribution_f_adapted_time_outside_J''}
				\hat{m}^u_{\mathcal{W}}\left(\mathcal{R}_n^f \cap f^{-t}\left( \bigcup_{J''}C_{ij} \right)\right)
				> (1-2\e_{6}^{40})m\left( \bigcup_{J''}C_{ij} \right)
				\geq (1-2\e_{6}^{40})(1-3\e_6^{40})
				>1-5\e_6^{40}
			\end{equation}
	
	\textit{Step 2:} 
		In this step, we go from having equidistribution for product cubes $C_{ij}$ to equidistribution for most $f$-adapted parallelograms $\mathcal{B}_{k}^f$. Let 
			$$
				E':= (M\times N)\setminus \left( \bigcup_{J''}C_{ij} \right).
			$$
		The strategy is to show that for most $k\in J_f$, we can fill/cover all but a small portion of $\mathcal{B}_k^f$ by (good) product cubes $C_{ij}$ and then use equidistribution of $\mathcal{W}$ on each $C_{ij}$ to get equidistribution on $\mathcal{B}_k^f$.
		
		We begin by defining the set of ``good'' $f$-adapted parallelograms. Let 
			$
				J_f':= J_1\cap J_2,
			$
		where
			\begin{equation*}
				J_1:= \left\{ k\in J_f \colon m(\mathcal{B}_k^f\cap E') <\e_6^{20}m(\mathcal{B}_k^f)\right\}, \qquad 
				J_2:= \left\{ k\in J_f \colon \hat{m}_{\mathcal{W}}^u\left( \mathcal{R}_n^f \cap f^{-t}(\mathcal{B}_k^f\cap E') \right)< \e_6^{20}m(\mathcal{B}_k^f) \right\}.
			\end{equation*}
		We need to show that parallelograms in $J_f'$ cover most of $M\times N$. Applying Chebychev's inequality and using \eqref{eq:prop:equidistribution_f_adapted_J''_measure} and \eqref{eq:prop:equidistribution_f_adapted_time_outside_J''}, we get
			$$
				m\left( \bigcup_{J_f\setminus J_1} \mathcal{B}_k^f \right)
				\leq \e_6^{-20} m(E')<3\e_6^{20},
				\qquad
				m\left( \bigcup_{J_f\setminus J_2} \mathcal{B}_k^f \right)
				\leq \e_6^{-20} \hat{m}_{\mathcal{W}}^u\left( \mathcal{R}_n^f \cap f^{-t}(E') \right)<5\e_6^{20}.			
			$$
		Combining this with \eqref{eq:measure_cover_f_adapted_parallelograms} and the fact that $\e_6<\frac{1}{2}$ gives that
			\begin{equation*}
				m\left( \bigcup_{J_f'} \mathcal{B}_k^f \right)
				> 1-100\e_6^{\mathbf{b}/16}-8\e_6^{20}
				>1-\e_6^{10}
			\end{equation*}
		
		Fix $k\in J_f'$. We now fill/cover all but a small proportion of $\mathcal{B}_k^f$ by good product cubes (i.e. $C_{ij}$ with $(i,j)\in J''$). Define
			\begin{align*}
				J_k^- := \left\{ (i,j)\in J'' \colon C_{ij}\subset \mathcal{B}_k^f \right\}, \qquad %F_k^-:= \bigcup_{J_k^-} C_{ij}
				J_k^+ := \left\{ (i,j)\in J'' \colon C_{ij}\cap  \mathcal{B}_k^f\neq \emptyset \right\}.
			\end{align*}
		So we get that 
			\begin{equation*}
				F_k^- \subset \mathcal{B}_k^f \subset F_k^+ \cup \left( \mathcal{B}_k^f \cap E' \right), \qquad \text{where} \quad 
					F_k^-:= \bigcup_{J_k^-} C_{ij}, \quad 
					F_k^+:= \bigcup_{J_k^+} C_{ij}
			\end{equation*}
		We can therefore apply \eqref{eq:prop:equidistribution_f_adapted_replace_QxN} to each $(i,j)\in J_k^-\subset J''$ to get 
			\begin{equation*}
				\hat{m}^u_{\mathcal{W}}\left(\mathcal{R}_n^f \cap f^{-t}(\mathcal{B}_k^f)\right)
				\geq \hat{m}^u_{\mathcal{W}}\left(\mathcal{R}_n^f \cap f^{-t}(F_k^-)\right)
				> (1-2\e_6^{40}) m(F_k^-)
			\end{equation*}
		Similarly, applying \eqref{eq:prop:equidistribution_f_adapted_replace_QxN} for each $(i,j)\in J_k^+\subset J''$ and using the fact that $k\in J_f'\subset J_2$ gives that 
			\begin{equation*}
				\hat{m}^u_{\mathcal{W}}\left(\mathcal{R}_n^f \cap f^{-t}(\mathcal{B}_k^f)\right)
				\leq \hat{m}^u_{\mathcal{W}}\left(\mathcal{R}_n^f \cap f^{-t}(F_k^+)\right) 
					+\hat{m}^u_{\mathcal{W}}\left(\mathcal{R}_n^f \cap f^{-t}(\mathcal{B}_k^f\cap E')\right)
				< (1+2\e_6^{40})m(F_k^+)+\e_6^{20}m(\mathcal{B}_k^f)
			\end{equation*}
		Combining these bounds with the fact that $F_k^-\subset \mathcal{B}_k^f$ and that $F_k^{+}\subset (F^+_k\setminus \mathcal{B}_k^f) \cup \mathcal{B}_k^f$, we get that 
			\begin{equation}\label{eq:prop:equidistribution_f_adapted_bounds_filling_cubes}
				(1-2\e_6^{40})m(\mathcal{B}_k^f)-m(\mathcal{B}_k^f\setminus F_k^-) 
				<
				\hat{m}^u_{\mathcal{W}}\left(\mathcal{R}_n^f \cap f^{-t}(\mathcal{B}_k^f)\right)
				<
				(1+3\e_6^{20})m(\mathcal{B}_k^f)+(1+2\e_6^{40})m(F_k^+\setminus \mathcal{B}_k^f)
			\end{equation}
			
		We now show that $m(F_k^-\setminus \mathcal{B}_k^f)$ and $m(F_k^+\setminus \mathcal{B}_k^f)$ are small relative to $m(\mathcal{B}_k^f)$. We begin by bounding the diameters of the product cubes $C_{ij}$ using the following lemma.
		
		\begin{lemma}\label{lem:diameter_product_cubes}
			There exists $C_9>0$ such that if $\mathcal{B}^g(r_1,r_2)$ is a $g$-adapted parallelogram as defined in \cite[(5.1)]{ExpMixingBernoulli} with $0<r_2<r_1\leq \frac{\tau_g}{2}$ and $C(r_3)y\subset N$ is a cube in $N$ of radius $0<r_3\leq \frac{\delta_{inj}}{2}$, then 
				$
					\diam\left( \mathcal{B}^g(r_1,r_2)\times C(r_3)y \right)\leq C_9 \max\{r_1,r_3\}.
				$
		\end{lemma}
		\begin{proof}
			Combining Lemma 2.4, Lemma 2.6 and (5.1) in \cite{ExpMixingBernoulli}, we get a constant $C_9'>0$ such that for all $0<r_2<r_1\leq \frac{\tau_g}{2}$ $\diam(\mathcal{B}^g(r_1,r_2))\leq C_9'r_1$. Similarly, since the map $\theta$ that we used to define cubes in $N$ in Section \ref{sec:FiberCubes} is a diffeomorphism on $C_{SL_2\R}(\delta_{inj})$, we get a constant $C_9''>0$ such that for any $r_3\in \left(0,\frac{\delta_{inj}}{2}\right]$ and $y\in N$, $\diam(C(r_3)y)\leq C_9''r_3$. Combining these two bounds, we get Lemma \ref{lem:diameter_product_cubes}.
		\end{proof}
		
		Recall that for $(i,j)\in I_1\times I_2$, $C_{ij}=\mathcal{B}_i^g\times D_j$ where $\mathcal{B}_i^g=\mathcal{B}_i^g(r_1,r_2)$ with $r_1\leq e^{-(\eta_g-\e_{12})t}$ and $r_2\leq e^{-\eta_gt}$ and $D_j=C(r_3)y_j$ with $r_3=e^{-\eta t}$. Since 
			$
				t>T_0\geq \max\{ \frac{2}{\eta_g}\log(\frac{2}{\tau_g}), \frac{1}{\eta}\log(\frac{2}{\delta_{inj}})\}
			$
		and $\e_{12}<\frac{\eta_g}{2}$, we get that $r_2<r_1\leq \frac{\tau_g}{2}$ and $r_3\leq \frac{\delta_{inj}}{2}$. We can therefore apply Lemma \ref{lem:diameter_product_cubes} to get that 
			\begin{equation}\label{eq:prop:equidistribution_f_adapted_diameter_product_cube}
				\diam(C_{ij}) \leq C_9\max\{ e^{-(\eta_g-\e_{12})t}, e^{-\eta t} \}\leq C_9 e^{-\eta' t}, \qquad
				\eta':=\min\{ \eta_g-\e_{12},\eta \}.
			\end{equation}
		Let $V_{\delta}(\partial \mathcal{B}_k^f)$ be the $\delta$-neighborhood of $\partial \mathcal{B}_k^f$ in $M\times N$. By \eqref{eq:prop:equidistribution_f_adapted_diameter_product_cube}, if for $(i,j)\in I_1\times I_2$, $C_{ij}\cap V_{C_9e^{-\eta' t}}(\partial \mathcal{B}_k^f)=\emptyset$, then either $C_{ij}\subset \mathcal{B}_k^f$ or $C_{ij}\cap \mathcal{B}_k^f=\emptyset$. Thus 
			\begin{equation*}
				m(F_k^+\setminus \mathcal{B}_k^f) \leq m\left( V_{C_9e^{-\eta' t}}(\partial \mathcal{B}_k^f)  \right)
			\end{equation*}
		and since $k\in J_f'\subset J_1$, 
			\begin{equation*}
				m(\mathcal{B}_k^f\setminus F_k^-)
				\leq m(B_k^f\cap E')+m\left( V_{C_9e^{-\eta' t}}(\partial \mathcal{B}_k^f)  \right)
				<\e_6^{20}m(\mathcal{B}_k^f)+m\left( V_{C_9e^{-\eta' t}}(\partial \mathcal{B}_k^f)  \right)
			\end{equation*}
		Combining these two bounds with \eqref{eq:prop:equidistribution_f_adapted_bounds_filling_cubes} gives that
			\begin{equation}\label{eq:prop:equidistribution_f_adapted_bounds_neighborhood}
				(1-3\e_6^{20})m(\mathcal{B}_k^f)-m\left( V_{C_9e^{-\eta' t}}(\partial \mathcal{B}_k^f)  \right)
				< \hat{m}^u_{\mathcal{W}}\left(\mathcal{R}_n^f \cap f^{-t}(\mathcal{B}_k^f)\right)
				< (1+3\e_6^{20})m(\mathcal{B}_k^f)+(1+2\e_6^{40})m\left( V_{C_9e^{-\eta' t}}(\partial \mathcal{B}_k^f)  \right)
			\end{equation}
		
		We now bound $m\left( V_{C_9e^{-\eta' t}}(\partial \mathcal{B}_k^f)  \right)$ using the following lemma:
		
		\begin{lemma}\label{lem:bound_delta_nbhd_f_adapted_cube}
			There exist constants $C_5>0$, $\delta_7\in (0,\tau_f)$ and $C_{10}>0$ such that if $0<r_2<r_1\leq \frac{\delta_7}{2}$, $\mathcal{B}^f(r_1,r_2)$ is an $f$-adapted parallelogram as defined in \cite[(5.1)]{ExpMixingBernoulli}, and $0<\delta<C_{10}r_2$, then
				$$
					m\left( V_\delta(\partial \mathcal{B}^f(r_1,r_2)) \right)\leq C_5\frac{\delta}{r_2}m(\mathcal{B}^f(r_1,r_2)),
				$$
			where $V_\delta(\partial \mathcal{B}^f(r_1,r_2))$ is the $\delta$-neighborhood of $\partial \mathcal{B}^f(r_1,r_2)$ in $M\times N$. 
		\end{lemma}
		\begin{proof}
			Let $R(r_1,r_2)=[-r_1,r_1]^{d_u}\times [-r_2,r_2]^{d_{cs}}\subset \R^{u}\times \R^{cs}$, where $d_u$ is the number of positive Lyapunov exponents of $f$ (counted with multiplicity) and $d_{cs}=\dim(M\times N)-d_u$. Combining Lemma 2.4, Lemma 2.6 and (5.1) in \cite{ExpMixingBernoulli}, we get that there exists $\delta_7=\delta_7(\tau_f)\in (0,\tau_f)$ such that for any $r_1,r_2\in(0,\delta_7)$ and for any point $x\in \mathcal{P}_{\tau_f}^f$, there is a bi-Lipschitz function $H$ defined on $(-\delta_7,\delta_7)^{\dim(M\times N)}$ such that $H(R(r_1,r_2))=\mathcal{B}^f(r_1,r_2)$, where $\mathcal{B}^f(r_1,r_2)$ is the $f$-adapted parallelogram centered at $x$ defined in \cite[(5.1)]{ExpMixingBernoulli}. Note that the Lipschitz constant for $H$ and $H^{-1}$ can be taken to be uniform in $x\in \mathcal{P}_{\tau_f}^f$ and in $r_1,r_2\in (0,\delta_7)$. Let $L:=\max\{\Lip(H),\Lip(H^{-1})\}\geq 1$.
			
			Let $C_{10}=\frac{1}{L}$. Take $0<r_2\leq r_1\leq \frac{\delta_7}{2}$ and $0<\delta<C_{10}r_2=\frac{r_2}{L}$. Since $V_{L\delta}(\partial R(r_1,r_2))\subset (-\delta_7,\delta_7)^{\dim(M\times N)}$ and $H$ is bi-Lipschitz on $(-\delta_7,\delta_7)^{\dim(M\times N)}$ with Lipschitz constant $L$, we get that 
				\begin{equation}\label{eq:bound_delta_nbhd_f_adapted_cube1}
					m\left( V_\delta(\partial \mathcal{B}^f(r_1,r_2)) \right) 
					\leq L^{\dim(M\times N)}\vol\left( H^{-1}( V_\delta(\partial \mathcal{B}^f(r_1,r_2)) ) \right)
					\leq L^{\dim(M\times N)}\vol\left( V_{L\delta}(\partial R(r_1,r_2)) \right)
				\end{equation}
			and 
				\begin{equation}\label{eq:bound_delta_nbhd_f_adapted_cube2}
					\vol(R(r_1,r_2))\leq L^{\dim(M\times N)}m(\mathcal{B}^f(r_1,r_2))
				\end{equation}
			Now, we bound $\vol\left( V_{L\delta}(\partial R(r_1,r_2)) \right)$ in terms of $\vol(R(r_1,r_2))$. Since $L\delta<r_2=\min\{r_1,r_2\}$, 
				$$
					\vol(V_{L\delta}(\partial R(r_1,r_2))) %=2^{m+n}(r_1+{L\delta})^{d_u}(r_2+{L\delta})^{d_{cs}}-2^{m+n}(r_1-{L\delta})^{d_u}(r_2-{L\delta})^{d_{cs}}
					=2^{d_u+d_{cs}}r_1^{d_u}r_2^{d_{cs}}\left[ \left( 1+\frac{L\delta}{r_1} \right)^{d_u}\left( 1+\frac{L\delta}{r_2} \right)^{d_{cs}} -\left( 1-\frac{L\delta}{r_1} \right)^{d_u}\left( 1-\frac{L\delta}{r_2} \right)^{d_{cs}} \right]
				$$
			Since $0<\frac{L\delta}{r_1}, \frac{L\delta}{r_2}\leq1$, we can apply the Mean Value Theorem to the function $(a,b)\mapsto (1+a)^{d_u}(1+b)^{d_{cs}}$ along the segment from $(-\frac{L\delta}{r_1},-\frac{L\delta}{r_2})$ to $(\frac{L\delta}{r_1},\frac{L\delta}{r_2})$ to get that 
				$$
					\vol(V_{L\delta}(\partial R(r_1,r_2))) 
					\leq 2^{2d_u+2d_{cs}} r_1^{d_u}r_2^{d_{cs}} L\delta \left( \frac{d_u}{r_1}+\frac{d_{cs}}{r_2} \right)
					\leq 2^{2d_u+2d_{cs}+1}\max(d_u,d_{cs})\vol(R(r_1,r_2)) \frac{L\delta}{r_2} 
				$$
			where the last inequality uses the fact that $r_2\leq r_1$. Combining this with \eqref{eq:bound_delta_nbhd_f_adapted_cube1} and \eqref{eq:bound_delta_nbhd_f_adapted_cube2} gives that 
				$$
					m\left( V_\delta(\partial \mathcal{B}^f(r_1,r_2)) \right) 
					\leq (2L)^{2\dim(M\times N)+1}\max(d_u,d_{cs})\frac{\delta}{r_2}m(\mathcal{B}^f(r_1,r_2))
				$$
			Taking $C_5=(2L)^{2\dim(M\times N)+1}\max(d_u,d_{cs})$ finishes the proof. 
		\end{proof}
		
		We want to apply Lemma \ref{lem:bound_delta_nbhd_f_adapted_cube} to $m\left( V_{C_9e^{-\eta' t}}(\partial \mathcal{B}_k^f)  \right)$. Recall that $\mathcal{B}_k^f=\mathcal{B}_k^f(e^{-(\eta_f-\e_6)\e_6n},e^{-\eta_f\e_6 n})$. Since $t>T_0\geq \frac{1}{\eta_f-\e_6}\log(\frac{2}{\delta_7})$ and $\e_6n\geq t$, we get that $e^{-(\eta_f-\e_6)\e_6n}\leq \frac{\delta_7}{2}$. Since $t>T_0\geq \frac{1}{\eta'-\eta_f}\log(\frac{C_9e^{\eta_f}}{C_10})$ and $\e_6n<t+1$, we get that $C_9e^{-\eta't}<C_{10}e^{-\eta_f\e_6n}$. We can now apply Lemma \ref{lem:bound_delta_nbhd_f_adapted_cube} to get 
			\begin{equation*}
				m\left( V_{C_9e^{-\eta' t}}(\partial \mathcal{B}_k^f)  \right)
				\leq C_5C_9e^{-\eta' t+\eta_f\e_6n}m(\mathcal{B}_k^f)
				\leq C_5C_9e^{\eta_f}e^{-(\eta'-\eta_f)t}m(\mathcal{B}_k^f)
			\end{equation*}
		Since $t>T_0\geq \frac{1}{\eta'-\eta_f}\log(\frac{C_5C_9e^{\eta_f}}{\e_6^{20}})$, we get that 
			\begin{equation*}
				m\left( V_{C_9e^{-\eta' t}}(\partial \mathcal{B}_k^f)  \right)
				\leq \e_6^{20}m(\mathcal{B}_k^f)
			\end{equation*}
		Plugging this into \eqref{eq:prop:equidistribution_f_adapted_bounds_neighborhood}, gives 
			\begin{equation*}
				\hat{m}^u_{\mathcal{W}}\left(\mathcal{R}_n^f \cap f^{-t}(\mathcal{B}_k^f)\right) 
				\in \left( 1-4\e_6^{20},1+6\e_6^{20} \right) m(\mathcal{B}_k^f)
			\end{equation*}
		Since $\e_6\leq\frac{1}{2}$, $6\e_6^{20}\leq \e_6^{10}$, we have shown that \eqref{eq:prop:equidistribution_f_adapted2} holds.
	\end{proof}

\appendix
\section{Pesin Theory}\label{appendix:pesin_theory}
\subsection{Regularity of Osceledets subspaces}\label{appendix:pesin_regularity_osceledets}

	The goal of this section is to prove Proposition \ref{prop:E^sE^uHolderPesin}. The fact that the stable and unstable distributions for a H\"older cocycle are H\"older on Pesin sets is a standard fact from Pesin Theory \cite[Section 5.3]{BarreiraPesinNonUniform}. However, we can not directly apply those arguments to $\tilde{A}$ because $\tilde{A}$ need not even be continuous. We prove Proposition \ref{prop:E^sE^uHolderPesin} by proving an analogous result for the cocycle $\mathcal{A}$, which is H\"older, and then applying Corollary \ref{cor:stable_unstable_osceledets_subspaces_Ad}. Before doing this, we give the precise definition of the Pesin sets $\Lambda_{\e,\mathcal{A}}^\ell$ for $\mathcal{A}$:
	
	\begin{definition}\label{def:pesin_sets_Ad(A)}
		Fix $\e>0$ and $\ell\geq 1$. the \emph{Pesin set} $\Lambda_{\e,\mathcal{A}}^\ell$ is the set of regular points in $M$ such that  $\forall k\in \Z$ the following hold:
		\begin{enumerate}
		\item 
			If $v\in E^s_3(g^kx)$ and  $m\geq 0$,
				\begin{align*}
					\left\| \mathcal{A}_{m}(g^kx)v\right\|_F \leq \ell e^{-2\lambda_2 m+\e m+\e |k|} \|v\|_F
					\quad \text{and} \quad 
					\left\| \mathcal{A}_{-m}(g^kx)v\right\|_F \geq \ell^{-1} e^{2\lambda_2 m-\e m-\e |k-m|} \|v\|_F
				\end{align*}
		\item 
			If $v\in E^c_3(g^kx)$ and $m\in \Z$
				\begin{align*}
					\ell^{-1} e^{-\e |m|-\e |k+m|} \|v\|_F
					\leq \left\| \mathcal{A}_{m}(g^kx)v\right\|_F 
					\leq \ell e^{\e |m|+\e |k|} \|v\|_F
				\end{align*}
		\item 
			If $v\in E^u_3(g^kx)$ and $m\geq 0$
				\begin{align*}
					\left\| \mathcal{A}_{-m}(g^kx)v\right\|_F \leq \ell e^{-2\lambda_2 m+\e m+\e |k|} \|v\|_F
					\quad \text{and} \quad 
					\left\| \mathcal{A}_{m}(g^kx)v\right\|_F \geq \ell^{-1} e^{2\lambda_2 m-\e m-\e |k+m|} \|v\|_F
				\end{align*}
		\item If $i_1,i_2,i_3\in \{u,c,s\}$ with $i_1\neq i_2, i_2\neq i_3, i_1\neq i_3$, then 
			$$
				\angle\left( E^{i_1}_3(g^kx),E^{i_2}_3(g^kx)\oplus E^{i_3}_3(g^kx)\right) \geq \ell^{-1}e^{-\e |k|}
			$$
		\end{enumerate}
	\end{definition}

	\begin{proof}[Proof of Proposition \ref{prop:E^sE^uHolderPesin}]
		We first prove an analogue of Proposition \ref{prop:E^sE^uHolderPesin} for the distributions $E_3^u(x)$ and $E_3^s(x)$ for $\mathcal{A}$. The arguments used to prove \cite[Theorem A]{AraujoBufetovFilip} along with growth and angle estimates from Definition \ref{def:pesin_sets_Ad(A)} give that for $\sigma\in \{s,u\}$ and $x,y\in \Lambda_\e^\ell$ with $d(x,y)\leq 1$, 
			\begin{equation*}
				\dist\left( E^\sigma_3(x),E^\sigma_3(y) \right) \leq 64\ell^6 e^{2\lambda_2-4\e}d(x,y)^{\frac{\alpha(2\lambda_2-4\e)}{\log(a)+2\lambda_2-\e}}
			\end{equation*}
		where $a\geq 1$ is a constant that depends on $A$ and $g$, but not on $\e,\ell, x, $ or $y$. Since $\e\in (0,\frac{\lambda_2}{4})$, 
			$$
				\frac{\alpha(2\lambda_2-4\e)}{\log(\alpha)+2\lambda_2-\e}\geq \beta:= \frac{\alpha\lambda_2}{\log(a)+2\lambda_2}. 
			$$
		Combining this with Proposition \ref{prop:PesinSetsFacts}(2) gives that for $k\in \Z$, 
			\begin{equation}\label{eq:AdE^sE^uHolderPesin}
				\dist\left( E^\sigma_3(g^kx),E^\sigma_3(g^ky) \right)
				\leq C_{\e,\ell}' e^{6\e|k|}d(g^kx,g^ky)^\beta,
				\qquad C_{\e,\ell}' = 64 \ell^6 e^{2\lambda_2-4\e}
			\end{equation}
		Now, we prove that these regularity estimates for $E_3^s$ and $E_3^u$ imply Proposition \ref{prop:E^sE^uHolderPesin}. Take $X_x\in E^\sigma_3(g^kx)$ and $X_y\in E^\sigma_3(g^ky)$ with $\|X_x\|_F=\|X_y\|_F=1$ and $\angle(X_x,X_y)\in [0,\frac{\pi}{2}]$. By Corollary \ref{cor:stable_unstable_osceledets_subspaces_Ad}
			$$
				\ker(X_z)=\im(X_z)=E^\sigma_2(g^kz), \qquad z\in \{x,y\}.
			$$
		We can therefore write orthogonal projection $P_z:\R^2\to \R^2$ onto $E_2^\sigma(g^kz)$ as $P_z=\id-X_z^TX_z$. From \eqref{eq:angle_distance_subspaces}, we have that 
			$$
				\dist\left( E^\sigma_2(g^kx), E^\sigma_2(g^ky) \right) 
				= \left\| P_x-P_y\right\|_{op}
				\leq 2\left\|X_y-X_x\right\|_{op}
				\leq 2\sqrt{2}\sin\left( \angle(X_y,X_x) \right)
				=2\sqrt2 \dist(E^\sigma_3(g^kx),E^\sigma_3(g^ky))
			$$
		Combining this with \eqref{eq:AdE^sE^uHolderPesin} gives Proposition \ref{prop:E^sE^uHolderPesin}.
	\end{proof}
	
\subsection{Proof of the Osceledets-Pesin Reduction}\label{appendix:proof_osceledets_pesin_reduction}

	\begin{proof}[Proof of Theorem \ref{thm:Osceledets_Pesin_Reduction}]
		Let $v^u(x)\in E_2^u(x)$ and $v^s(x)\in E_2^s(x)$ be unit vectors such that $\det(v^u(x),v^s(x))=\sin\theta_x$. Define the matrix $\tilde P(x)\in SL_2\R$ to be the change of basis matrix taking the standard orthonormal basis  of $\R^2$ to the basis $\left\{ \dfrac{v^u(x)}{(\sin\theta_x)^{1/2}}, \dfrac{v^s(x)}{(\sin\theta_x)^{1/2}}  \right\}$. The columns of $\tilde P(x)$ are the images of the standard basis vectors. So if we write $v^u(x)$ and $v^s(x)$ in terms of the standard basis, then 
			$$
				\tilde{P}(x)=(\sin\theta_x)^{-1/2}\begin{pmatrix} v^u(x) & v^s(x) \end{pmatrix}
			$$
		Since $\det(\tilde P(x))=1$, $\tilde{P}(x)\in SL_2\R$. Since $E_2^u$ and $E_2^s$ are $\tilde{A}$-invariant, 
			$$
				\tilde{B}(x)=\tilde P(gx)^{-1}\tilde A(x)\tilde P(x)
			$$
		is diagonal. Since $\tilde{B}(x)\in SL_2\R$, $\tilde{B}(x)=\diag(b(x),b(x)^{-1})$. 
	
		\begin{claim}\label{claim:osceledets-pesin_norm_P(x)}
			$\|\tilde P(x)\|_{2,op}=\|\tilde P(x)^{-1}\|_{2,op}=\left( \tan(\frac{\theta_x}{2}) \right)^{-1/2}$
		\end{claim}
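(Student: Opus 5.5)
The plan is a direct singular-value computation for the explicit matrix $\tilde P(x)=(\sin\theta_x)^{-1/2}\begin{pmatrix} v^u(x) & v^s(x)\end{pmatrix}$. The operator norm $\|\tilde P(x)\|_{2,op}$ equals the square root of the largest eigenvalue of the symmetric matrix $\tilde P(x)^T\tilde P(x)$. Since $v^u(x)$ and $v^s(x)$ are unit vectors, this Gram matrix is
\begin{equation*}
	\tilde P(x)^T\tilde P(x)=\frac{1}{\sin\theta_x}\begin{pmatrix} 1 & c \\ c & 1\end{pmatrix},
	\qquad c:=\langle v^u(x),v^s(x)\rangle .
\end{equation*}
Here $\theta_x\in(0,\frac{\pi}{2}]$ is the (unoriented) angle between the lines $E^u_2(x)$ and $E^s_2(x)$, so $c=\pm\cos\theta_x$; the sign depends on the orientation chosen for the unit vectors. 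The normalization $\det(v^u,v^s)=\sin\theta_x$ is consistent with $|c|=\cos\theta_x$.

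The eigenvalues of $\begin{pmatrix}1&c\\c&1\end{pmatrix}$ are $1\pm c$, with eigenvectors $(1,\pm1)$. Hence the largest eigenvalue of $\tilde P(x)^T\tilde P(x)$ is $(1+\cos\theta_x)/\sin\theta_x$ regardless of the sign of $c$. The half-angle identity
\begin{equation*}
	\frac{1+\cos\theta}{\sin\theta}=\frac{2\cos^2(\theta/2)}{2\sin(\theta/2)\cos(\theta/2)}=\cot(\theta/2)
\end{equation*}
then gives $\|\tilde P(x)\|_{2,op}=\left(\tan(\frac{\theta_x}{2})\right)^{-1/2}$.

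For the inverse, I will use that $\tilde P(x)\in SL_2\R$. Its two singular values $\sigma_{\max}\geq\sigma_{\min}$ satisfy $\sigma_{\max}\sigma_{\min}=|\det\tilde P(x)|=1$. Therefore
\begin{equation*}
	\|\tilde P(x)^{-1}\|_{2,op}=\sigma_{\min}^{-1}=\sigma_{\max}=\|\tilde P(x)\|_{2,op}.
\end{equation*}
As a check, the two eigenvalues $(1\pm\cos\theta_x)/\sin\theta_x$ multiply to $(1-\cos^2\theta_x)/\sin^2\theta_x=1$. Equivalently, one can use the adjugate formula $\tilde P^{-1}=\begin{pmatrix} d&-b\\-c&a\end{pmatrix}$, which is an orthogonal conjugate of $\tilde P^T$ and so has the same norm.

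There is no real obstacle here. The only point needing care is the sign of $\langle v^u,v^s\rangle$: the bound should be written with $|c|=\cos\theta_x$, so that the largest eigenvalue is correctly identified even when the chosen orientation makes the inner product negative.
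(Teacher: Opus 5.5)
Your proposal is correct and follows essentially the same route as the paper: compute the Gram matrix $\tilde P(x)^T\tilde P(x)=\frac{1}{\sin\theta_x}\begin{pmatrix}1&c\\c&1\end{pmatrix}$ with $|c|=\cos\theta_x$, then identify the largest eigenvalue as $(1+\cos\theta_x)/\sin\theta_x=\cot(\theta_x/2)$. Your use of $\det\tilde P(x)=1$ to conclude $\|\tilde P(x)^{-1}\|_{2,op}=\|\tilde P(x)\|_{2,op}$ supplies the inverse-norm part of the claim, which the paper's proof leaves implicit.
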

		\begin{proof}
			We compute the singular values of $\tilde P(x)$. Since $v^u(x)$ and $v^s(x)$ are unit vectors with $\det(v^u(x),v^s(x))=\sin\theta_x$ (and $\theta_x\in \left(0,\frac{\pi}{2}\right]$), we get that $|v^u(x)\cdot v^s(x)|=\cos \theta_x$. This along with the fact that 
				$
					\tilde{P}(x)=(\sin\theta_x)^{-1/2}\begin{pmatrix} v^u(x) & v^s(x) \end{pmatrix},
				$
			gives that
				$$
					\tilde{P}(x)^T\tilde{P}(x)
					=\frac{1}{\sin \theta_x}\begin{pmatrix} 1 & c\\ c& 1 \end{pmatrix}, 
					\qquad c:=v^u(x)\cdot v^s(x).
				$$
			The eigenvalues of $\tilde{P}(x)^T\tilde{P}(x)$ are $\frac{1+c}{\sin\theta_x}$ and $\frac{1-c}{\sin\theta_x}$. Since $|c|=\cos\theta_x$, we get that the largest singular value of $\tilde{P}(x)$ is 
				$
					\left( \frac{1+\cos\theta_x}{\sin\theta_x} \right)^{1/2}=\left(\tan(\frac{\theta_x}{2})\right)^{-1/2}.
				$
		\end{proof}
		
		Claim \ref{claim:osceledets-pesin_norm_P(x)} along with the half angle formula for tangent gives that 
			$$
				\log \|\tilde{P}(g^kx)\|=\frac{1}{2}\log(1+\cos\theta_{g^kx})-\frac{1}{2}\log(\sin\theta_{g^kx})
			$$
		Since the first term is bounded and the second term is sub-linear by Osceledets Theorem (see section \ref{sec:osceledets_A}), we get that $\tilde{P}$ is tempered. Since $\tilde{P}$ is tempered, we get that $\tilde{B}$ has the same Lyapunov exponents as $\tilde A$. 
		
		Now we see that $\log|b|\in L^1(\mu)$. We can write 
			\begin{equation}\label{eq:b(x)_formula}
				b(x)=a(x)\left( \frac{\sin\theta_{gx}}{\sin\theta_x} \right)^{1/2},
			\end{equation}
		where $a(x)\in \R$ is defined by $\tilde{A}(x)v^u(x)=a(x)v^u(gx)$. This along with the fact that 
			\begin{equation}\label{eq:osceledets_bound_ratio_angles_norm}
				\|\tilde A(x)\|^{-2}
				\leq \frac{\sin|\angle(\tilde A(x)v^u(x),\tilde A(x)v^s(x))|}{\sin|\angle(v^u(x),v^s(x))|}
				= \frac{\sin \theta_{gx}}{\sin\theta_x}
				\leq \|\tilde A(x)\|^2
			\end{equation}
		allows us to conclude that $\log|b|\in L^1(\mu)$ from the fact that $\log\|A(x)^{\pm 1}\|\in L^1(\mu)$.
	\end{proof}
	
\subsection{Proofs of Regularity Estimates for the Osceledets-Pesin Reduction} \label{appendix:proofs_regularity_osceledets_pesin_reduction}

\subsubsection{Proof of Proposition \ref{prop:tau_holder_on_pesin}} \label{appendix:proof_tau_holder}
	
	Proposition \ref{prop:tau_holder_on_pesin} is a corollary of the following proposition:
	
	\begin{prop}\label{prop:b_holder_on_pesin}
		There exists $\alpha_b\in (0,1)$ such that for all $\e\in (0,\frac{\lambda_2}{4})$ and $\ell\geq 1$, there exists a constant $C_b=C_b(\e,\ell)>0$ such that for all $k\in \Z$, if $x,y\in \Lambda_\e^\ell$ with $d(g^kx,g^ky)\leq 1$, then 
			\begin{equation}\label{eq:b_holder_on_pesin}
				\left||b(g^kx)|-|b(g^ky)|\right|\leq C_b e^{6\e|k|} d(g^kx,g^ky)^{\alpha_b}
			\end{equation}
	\end{prop}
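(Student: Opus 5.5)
We plan to use the explicit formula \eqref{eq:b(x)_formula}: for a regular point $z$,
$$
|b(z)| = |a(z)|\left(\frac{\sin\theta_{gz}}{\sin\theta_z}\right)^{1/2}, \qquad |a(z)| = \|\tilde A(z)v^u(z)\|,
$$
where $v^u(z)$ is a unit vector in $E^u_2(z)$. The quantity $|a(z)|$ depends only on $\pm\tilde A(z)$ and on the line $E^u_2(z)$, so it is a well-defined function of $A(z)\in PSL_2\R$ and $E^u_2(z)$. Writing $z_1=g^kx$ and $z_2=g^ky$, we estimate the difference $|b(z_1)|-|b(z_2)|$ factor by factor using the elementary bound $|p_1q_1-p_2q_2|\le |p_1-p_2||q_1|+|p_2||q_1-q_2|$. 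This requires three ingredients: Hölder bounds on each factor, uniform bounds on the size of the factors, and control of the passage from $z$ to $gz$.

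\textbf{Step 1 (the factor $|a|$).} The map $\tilde A$ may be discontinuous, but the norm $\|\tilde A(z)v\|$ for unit $v$ depends only on $A(z)$. Since $A$ is $C^\alpha$ on compact $M$, $\|A\|$ is bounded, so we can estimate
$$
\bigl||a(z_1)|-|a(z_2)|\bigr| \le \bigl\|\tilde A(z_1)-\tilde A(z_2)\bigr\| + \|A\|_\infty\, \bigl\|v^u(z_1)-v^u(z_2)\bigr\|,
$$
choosing the signs of the lift and of $v^u$ so that the differences are minimized. Here $\|v^u(z_1)-v^u(z_2)\|\lesssim \dist(E^u_2(z_1),E^u_2(z_2))$. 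By Proposition \ref{prop:E^sE^uHolderPesin} together with the Hölder continuity of $A$, this gives a bound of the form $C e^{6\e|k|}d(z_1,z_2)^{\min(\alpha,\beta)}$. Because $A$ is bounded away from $0$ and $\infty$, $|a|$ is uniformly bounded above and below.

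\textbf{Step 2 (the angle ratio).} By Corollary \ref{cor:theta_x_holder}, $|\theta_{z_1}-\theta_{z_2}|\le 4C_{\e,\ell}e^{6\e|k|}d^\beta$. By the Pesin angle bound in Definition \ref{def:PesinSets}(3), $\sin\theta_{z_i}\ge \ell^{-1}e^{-\e|k|}$. Consequently
$$
\bigl|(\sin\theta_{z_1})^{-1/2}-(\sin\theta_{z_2})^{-1/2}\bigr| \lesssim \ell^{3/2}e^{\frac32\e|k|}\,\bigl|\theta_{z_1}-\theta_{z_2}\bigr|.
$$
The factor $(\sin\theta_{gz})^{1/2}$ is bounded by $1$. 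For its Hölder control we apply Corollary \ref{cor:theta_x_holder} at time $k+1$. This needs $d(gz_1,gz_2)\lesssim \Lip(g)\,d(z_1,z_2)$, and when $d(gz_1,gz_2)>1$ the difference is trivially bounded by a constant times $e^{\e|k|}$, which is absorbed into $C_b$ since $d(z_1,z_2)$ is then bounded below. Alternatively, \eqref{eq:osceledets_bound_ratio_angles_norm} shows that the ratio $\sin\theta_{gz}/\sin\theta_z$ is bounded by $\|A\|_\infty^2$, so the product rule applied to the ratio itself also works.

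\textbf{Step 3 (assembly).} Combining Steps 1 and 2, the exponents of $e^{\e|k|}$ add up to at most $6\e|k|$ from the Hölder constants plus the factors $e^{\frac32\e|k|}$ from the angle bounds. This exceeds $6\e|k|$, so we either rebalance using the square root (in the ratio the factors of $\ell$ partially cancel) or accept a slightly worse exponent. The main obstacle is exactly this bookkeeping of the exponential weights so that the total is $e^{6\e|k|}$ rather than, say, $e^{8\e|k|}$. If it does not close, the fix is to apply Proposition \ref{prop:E^sE^uHolderPesin} with a smaller $\e'$, say $\e/2$, since $\Lambda^\ell_{\e'}\subset\Lambda^\ell_\e$ is not automatic. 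Failing that, we absorb the excess into the stated Hölder exponent by taking $\alpha_b=\min(\alpha,\beta)$ and using $d\le 1$.
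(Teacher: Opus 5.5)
Your decomposition $|b|=|a|\,(\sin\theta_{gz}/\sin\theta_z)^{1/2}$ and your Step 1 match the paper's proof: there $b_2=\|\tilde A v^u\|$ is handled via H\"older continuity of $A$ and Proposition~\ref{prop:E^sE^uHolderPesin}, giving $e^{6\e|k|}d^{\min(\alpha,\beta)}$. Your handling of the case $d(gz_1,gz_2)>1$ is more careful than the paper's.

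The gap is in Steps 2--3, which you identify as the main obstacle and then leave unresolved. Applying the mean value theorem to $s\mapsto s^{-1/2}$ with $\sin\theta_{z_i}\gtrsim \ell^{-1}e^{-\e|k|}$ multiplies the rate $e^{6\e|k|}$ of Corollary~\ref{cor:theta_x_holder} by $e^{\frac32\e|k|}$. Neither fallback repairs this:
\begin{itemize}
\item Passing to $\e/2$ is unavailable. The hypothesis only puts $x,y$ in $\Lambda^\ell_\e$. Although $\Lambda^\ell_{\e/2}\subset\Lambda^\ell_\e$ does hold, the inclusion you would need, $\Lambda^\ell_\e\subset\Lambda^\ell_{\e/2}$, fails in general.
\item Using $d\le 1$ cannot absorb a factor $e^{\frac32\e|k|}$ while keeping $\alpha_b=\min(\alpha,\beta)$, because $d(g^kx,g^ky)$ can be of order $1$ for arbitrarily large $|k|$.
\end{itemize}

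The missing idea is to trade exponential rate for H\"older exponent via $|\sqrt{t_1}-\sqrt{t_2}|\le |t_1-t_2|^{1/2}$. The paper first bounds the full ratio. Using $\sin\theta_{g^kz}\ge \frac12\ell^{-1}e^{-\e|k|}$ in the denominators and Corollary~\ref{cor:theta_x_holder} at times $k$ and $k+1$,
\[
\Bigl|\tfrac{\sin\theta_{g^{k+1}x}}{\sin\theta_{g^kx}}-\tfrac{\sin\theta_{g^{k+1}y}}{\sin\theta_{g^ky}}\Bigr|\lesssim \ell^2e^{2\e|k|}\cdot e^{6\e|k|}\,d(g^kx,g^ky)^\beta .
\]
The square root then halves both the rate and the exponent, giving $|b_1(g^kx)-b_1(g^ky)|\lesssim e^{4\e|k|}d(g^kx,g^ky)^{\beta/2}$. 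With the uniform bounds on $b_1$ and $b_2$, this yields the rate $e^{6\e|k|}$ with $\alpha_b=\min\{\alpha,\beta/2\}$. Note that the H\"older exponent necessarily drops below $\min(\alpha,\beta)$.

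Alternatively, you could keep your $e^{\frac{15}{2}\e|k|}d^{\min(\alpha,\beta)}$ bound and interpolate it against the trivial bound $\bigl||b(z_1)|-|b(z_2)|\bigr|\le\|A\|_\infty^2$. Using $\min\{p,q\}\le p^{1/5}q^{4/5}$ gives $e^{6\e|k|}d^{\frac45\min(\alpha,\beta)}$. Either way, some such interpolation, with a reduced H\"older exponent, is required.
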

	\begin{proof}
		Since $\|v^u(gz)\|=1$, \eqref{eq:b(x)_formula} gives that if $z$ is a regular point for $A$, then 
			\begin{equation}\label{eq:|b(x)|}
				\left|b(z)\right|=b_1(z)b_2(z), 
				\qquad b_1(z):=\left( \frac{\sin \theta_{gz}}{\sin \theta_{z}} \right)^{1/2},
				\qquad b_2(z):= \|\tilde A(z) v^u(z)\|
			\end{equation}
		We show H\"older regularity of $|b|$ by that both $b_1$ and $b_2$ are H\"older and uniformly bounded. 
		
		First we show that $b_1$ is H\"older on Pesin sets. Combining Definition \ref{def:PesinSets}(3) with the fact that $\frac{t}{2}\leq \sin(t)\leq 1$ for $t\in [0,\frac{\pi}{2}]$ gives that for $x,y\in \Lambda_\e^\ell$, 
			\begin{equation*}
				\left| \frac{\sin\theta_{g^{k+1}x}}{\sin\theta_{g^kx}} - \frac{\sin\theta_{g^{k+1}y}}{\sin\theta_{g^ky}} \right|
				\leq 
				4\ell^{2}e^{2\e|k|}\left( \left|\sin \theta_{g^ky} - \sin \theta_{g^k x} \right| + \left| \sin \theta_{g^{k+1}x} - \sin \theta_{g^{k+1}y} \right|   \right)
			\end{equation*}
		Since $\sin(t)$ is Lipschitz with Lipschitz constant 1 and $g$ is bi-Lipschitz with Lipschitz constant $L_g$, applying Corollary \ref{cor:theta_x_holder} twice at times $k$ and $k+1$ gives that 
			\begin{equation*}
				\left| \frac{\sin\theta_{g^{k+1}x}}{\sin\theta_{g^kx}} - \frac{\sin\theta_{g^{k+1}y}}{\sin\theta_{g^ky}} \right|
				\leq 16C_{\e,\ell}\ell^2 e^{6\e} e^{8\e|k|}(1+L_g^\beta)d(g^kx,g^ky)^{\beta}
			\end{equation*}
		Since $|\sqrt{t_1}-\sqrt{t_2}|\leq |t_1-t_2|^{1/2}$ for all $t_1,t_2\geq 0$, we get that 
			\begin{equation}\label{eq:ratio_angles_Holder}
				|b_1(g^kx)-b_1(g^ky)|\leq 4C_{\e,\ell}^{1/2}\ell e^{3\e}(1+L_g^\beta)^{1/2}e^{4\e|k|}d(g^kx,g^ky)^{\beta/2}
			\end{equation}
		Since $\|\tilde A(z)\|$ is uniformly bounded on $M$, \eqref{eq:osceledets_bound_ratio_angles_norm} gives that $b_1$ is uniformly bounded. The fact that $\|\tilde A(z)\|$ is uniformly bounded on $M$ also gives that $b_2$ is uniformly bounded. 
		
		We now just need to show that $b_2$ is H\"older on Pesin sets. Let $\phi:PSL_2\R\times \R\mathbb{P}^1\to \R$ be the smooth function
			$
				\phi(A,E)= \|\tilde A v\|,
			$
		where $\tilde{A}\in SL_2\R$ is a lift of $A$ and $v\in E$ is a unit vector spanning $E$. Note that $\phi$ is well defined since $\|\tilde A v\|$ is independent of the choice of lift $\tilde{A}$ and of choice of $v\in E$. Then, $b_2(z)=\phi(A(z),E^u_2(z))$. Since $A(M)\times \mathbb{RP}^1$ is compact, $\phi$ is Lipschitz there, so 
			$$
				|b_2(g^kx)-b_2(g^ky)|\leq C\left( d(A(g^kx),A(g^ky))+\dist(E^u_2(g^kx),E^u_2(g^ky)) \right)
			$$
		Then H\"older continuity of $A$ along with Proposition \ref{prop:E^sE^uHolderPesin} gives that for some $C'=C'(\e,\ell)$
			\begin{equation}\label{eq:b_holder_norm_Av^u}
				|b_2(g^kx)-b_2(g^ky)|\leq C'e^{6\e|k|}d(g^kx,g^ky)^{\min(\alpha,\beta)}
			\end{equation}
		Combining \eqref{eq:|b(x)|}, \eqref{eq:ratio_angles_Holder}, and \eqref{eq:b_holder_norm_Av^u} with the uniform boundedness of $b_1$ and $b_2$ gives \eqref{eq:b_holder_on_pesin} with $\alpha_b=\min\{\alpha,\frac{\beta}{2}\}$.
	\end{proof}
	
	\begin{proof}[Proof of Proposition \ref{prop:tau_holder_on_pesin}]
		By definition of the Pesin set, for $z\in \bar{\Lambda}_\e^\ell$, $|b(g^kz)|\geq \ell^{-1}e^{\lambda_2-\e|k|}$. So applying the Mean Value Theorem for $\log$ and Proposition \ref{prop:b_holder_on_pesin} gives Proposition \ref{prop:tau_holder_on_pesin}. 
	\end{proof}

\subsubsection{Proofs of H\"older Estimates for $P(x)$}\label{appendix:proofs_holder_P(x)}
	
	\begin{proof}[Proof of Proposition \ref{prop:P_holder_on_pesin-norm}]
		Recall from the proof of Theorem \ref{thm:Osceledets_Pesin_Reduction} that we choose unit vectors $v^u(z)\in E^u_2(z)$ and $v^s(z)\in E^s_2(z)$ such that $\det(v^u(z),v^s(z))=\sin\theta_z$. Choose $\tilde{v}^u(g^ky)\in E^u_2(g^ky)$ to be a unit vector such that $|\angle(v^u(g^kx),\tilde{v}^u(g^ky))|\leq \frac{\pi}{2}$. Choose $\tilde{v}^s(g^ky)\in E^s_2(g^ky)$ to be the unique unit vector such that $\det(\tilde{v}^u(g^ky),\tilde{v}^s(g^ky))=\sin\theta_{g^ky}>0$. Then
			$$
				\bar{P}(g^ky):=(\sin\theta_{g^ky})^{-1/2}\begin{pmatrix} \tilde{v}^u(g^ky) & \tilde{v}^s(g^ky)\end{pmatrix}.
			$$
		equals either $\tilde{P}(g^ky)$ or $-\tilde{P}(g^ky)$. 
		
		For $\sigma\in\{s,u\}$, 
			\begin{equation}\label{eq:prop_P_holder_norm1}
				\left\|\frac{v^\sigma(g^kx)}{\sqrt{\sin \theta_{g^kx}}} - \frac{\tilde v^\sigma(g^ky)}{\sqrt{\sin \theta_{g^ky}}} \right\|
				\leq \left|\frac{1}{\sqrt{\sin \theta_{g^kx}}}-\frac{1}{\sqrt{\sin \theta_{g^ky}}}\right| 
					+\frac{1}{\sqrt{\sin \theta_{g^ky}}} \left\|v^\sigma(g^kx)-\tilde{v}^\sigma(g^ky)\right\|
			\end{equation}
		Proposition \ref{prop:E^sE^uHolderPesin} implies that for some constant $C_1=C_1(\e,\ell)>0$, 
			\begin{equation}\label{eq:prop_P_holder_norm2}
				\left\|v^\sigma(g^kx)-\tilde{v}^\sigma(g^ky) \right\| \leq C_1e^{6\e|k|}d(g^kx,g^ky)^\beta
			\end{equation}
		Since $y\in \Lambda_\e^\ell$ and $\sin(t)\geq \frac{t}{2}$ for $t\in[0,\frac{\pi}{2}]$, we get that 
			\begin{equation}\label{eq:prop_P_holder_norm3}
				(\sin \theta_{g^ky})^{-1/2}\leq \sqrt{2}\ell^{1/2}e^{\e|k|/2}.
			\end{equation}
		For $z\in \Lambda_\e^\ell$, $\theta_{g^kz}\geq \ell^{-1}e^{-\e|k|}$. So, combining Corollary \ref{cor:theta_x_holder} with the Mean Value Theorem gives that, for some constant $C_2=C_2(\e,\ell)>0$,
			\begin{equation}\label{eq:prop_P_holder_norm4}
				\left|(\sin\theta_{g^kx})^{-1/2}-(\sin\theta_{g^ky})^{-1/2}\right| 
				\leq C_2 e^{\frac{15}{2}\e|k|}d(g^kx,g^ky)^{\beta}
			\end{equation}
		Plugging \eqref{eq:prop_P_holder_norm2}, \eqref{eq:prop_P_holder_norm3}, and \eqref{eq:prop_P_holder_norm4} into \eqref{eq:prop_P_holder_norm1} gives that for some constant $C_3=C_3(\e,\ell)>0$, 
			$$
				\left\|\frac{v^\sigma(g^kx)}{\sqrt{\sin \theta_{g^kx}}} - \frac{\tilde v^\sigma(g^ky)}{\sqrt{\sin \theta_{g^ky}}} \right\|
				\leq C_3e^{\frac{15}{2}\e|k|}d(g^kx,g^ky)^\beta
			$$
		Applying this to the two columns of $\tilde{P}(g^kx)-\bar{P}(g^ky)$ gives that for some constant $C_{P,1}=C_{P,1}(\e,\ell)>0$, 
			$$
				\|\tilde{P}(g^kx)-\bar{P}(g^ky)\| 
				\leq C_{P,1}e^{\frac{15}{2}\e|k|}d(g^kx,g^ky)^\beta
				\leq C_{P,1}e^{8\e|k|}d(g^kx,g^ky)^\beta
			$$
		Since $\bar{P}(g^ky)=\pm\tilde{P}(g^ky)$, we get Proposition \ref{prop:P_holder_on_pesin-norm}.
	\end{proof}
	
	\begin{proof}[Proof of Proposition \ref{prop:P_holder_on_pesin_unstables}]
		Taking $\bar{P}(g^ky)$ as in the proof of Proposition \ref{prop:P_holder_on_pesin-norm}, we have that
			\begin{equation}\label{eq:prop:P_holder_on_pesin_unstables1}
				d_{PSL_2\R}\left( P(g^{-k}x),P(g^{-k}y) \right) 
				\leq 
				d_{SL_2\R}\left(e,\bar P(g^{-k}y)  \tilde P(g^{-k}x)^{-1} \right)
			\end{equation}
	
		Since $x\in \Lambda_\e^\ell$, Claim \ref{claim:osceledets-pesin_norm_P(x)} along with the fact that $\sin(t)\geq \frac{t}{2}$ for $t\in[0,\frac{\pi}{2}]$ gives that 
			\begin{equation}\label{eq:bound_norm_P(g^-kx)}
				\|\tilde{P}(g^{-k}x)^{-1}\|
				\leq 2\ell^{1/2}e^{\e k/2}\leq 2\ell^{1/2}e^{\e k}
			\end{equation}
		Combining this with Proposition \ref{prop:P_holder_on_pesin-norm} and \eqref{eq:g_unstable_exp_contracting2} gives that 
			\begin{equation}\label{eq:prop:P_holder_on_pesin_unstables2}
				\|e-\bar P(g^{-k}y)  \tilde P(g^{-k}x)^{-1}\|
				\leq 2\ell^{1/2}C_{P,1}\tau_g^{-\beta}e^{(9\e-\frac{\lambda_g\beta}{2})k}d(x,y)^{\beta}
			\end{equation}
		Take $U$ as in Lemma \ref{lem:SL2R_difference_equivalent_to_Riem_dist}. Since $\e<\frac{\lambda_g\beta}{18}$, by \eqref{eq:prop:P_holder_on_pesin_unstables2}, we can find $r_p>0$ (which is independent of both $x$ and $k$) such that if $d(x,y)\leq r_p$, then $\bar P(g^{-k}y)  \tilde P(g^{-k}x)^{-1}\in U$ for all $k\geq 0$. Thus, combining \eqref{eq:prop:P_holder_on_pesin_unstables1} and \eqref{eq:prop:P_holder_on_pesin_unstables2} with Lemma \ref{lem:SL2R_difference_equivalent_to_Riem_dist} gives that 
			$$
				d_{PSL_2\R}\left( P(g^{-k}x),P(g^{-k}y) \right) 
				\leq 2K\ell^{1/2}C_{P,1}\tau_g^{-\beta}e^{(9\e-\frac{\lambda_g\beta}{2})k}d(x,y)^{\beta}
			$$
	\end{proof}

\section{Smoothing Lemma}

	The purpose of this appendix is to prove the smoothing lemma that we use in the proof of Proposition \ref{prop:exp_equidistribution_fiber}. Recall that $\chi_{C(\e)}$ denotes the indicator function of the cube of radius $\e$ in $SL_2\R$. 
	
	\begin{lemma}\label{lem:smooth_indicator_fcn_cube}
		Take $\e,\delta\in(0,1)$ such that $\e(1+\delta)<\delta_0$. Then there exist functions $\phi^+, \phi^-\in C^\infty_c(SL_2\R)$ such that 
			\begin{enumerate}[(1)]
			\item for $\sigma \in \{+,-\}$, $\phi^\sigma \equiv 0$ outside $C(\e(1+\delta))$.
			\item $\phi^-\leq \chi_{C(\e)}\leq \phi^+$
			\item for $\sigma\in \{+,-\}$, 
				\begin{equation*}
					\int_{SL_2\R} \left| \phi^\sigma-\chi_{C(\e)} \right|d\nu \leq \frac{K_4}{C_1}\delta \nu(C(\e))
				\end{equation*}
			\item There exists a constant $C_7'\geq 1$ such that for $\sigma\in \{+,-\}$, 
				\begin{equation*}
					\left\| \phi^\sigma \right\|_{C^4}\leq C_7'(\delta\e)^{-4}
				\end{equation*}
			\end{enumerate}
	\end{lemma}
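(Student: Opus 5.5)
The construction is carried out in the Lie algebra coordinates and transported by the chart $\theta$. Recall that $C(r)=\theta(C_{\sl_2\R}(r))$, where $C_{\sl_2\R}(r)=(-r,r)^3$ in the basis $e_1,e_2,e_3$, and that $\theta$ is a diffeomorphism on a neighborhood of $\overline{C_{\sl_2\R}(\delta_0)}$.

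Fix once and for all a smooth nondecreasing function $\rho:\R\to[0,1]$ with $\rho=0$ on $(-\infty,0]$ and $\rho=1$ on $[1,\infty)$. Define
$$
	\psi_{a,b}(t):=\rho\!\left(\frac{b-|t|}{b-a}\right), \qquad 0<a<b,
$$
so that $\psi_{a,b}$ equals $1$ on $[-a,a]$, is supported in $[-b,b]$, is smooth, and satisfies $\|\psi_{a,b}^{(j)}\|_\infty\le C_\rho (b-a)^{-j}$. (Smoothness at $t=0$ is automatic, since the function is constant near $0$.) Set
$$
	\phi^+(\theta(v_1e_1+v_2e_2+v_3e_3))=\prod_{i=1}^3\psi_{\e,\,\e(1+\delta/2)}(v_i),
	\qquad
	\phi^-(\theta(v))=\prod_{i=1}^3\psi_{\e(1-\delta),\,\e(1-\delta/2)}(v_i),
$$
and extend both by $0$ outside $C(\delta_0)$.

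Properties (1) and (2) are then immediate:
\begin{itemize}
	\item $\phi^+=1$ on $C(\e)$ and is supported in $C(\e(1+\delta/2))\subset C(\e(1+\delta))$.
	\item $\phi^-$ is supported in $C(\e)$ and takes values in $[0,1]$.
	\item Both functions are compactly supported inside the chart domain, so they belong to $C^\infty_c(SL_2\R)$.
\end{itemize}

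For (3), since $0\le\phi^\pm\le 1$, the error $|\phi^\sigma-\chi_{C(\e)}|$ is supported in an annulus.
\begin{itemize}
	\item For $\phi^+$ the annulus is $C(\e(1+\delta))\setminus C(\e)$. Proposition~\ref{prop:upper_bound_difference_measure_cubes_SL2R} bounds its measure by $K_4\e^3((1+\delta)^3-1)$. For $\delta<1$ we have $(1+\delta)^3-1\le 7\delta$.
	\item For $\phi^-$ the annulus is $C(\e)\setminus C(\e(1-\delta))$, with measure at most $K_4\e^3(1-(1-\delta)^3)\le 3K_4\delta\e^3$.
	\item By the lower bound \eqref{eq:lower_bound_measure_cubes_SL2R}, $\e^3\le \nu(C(\e))/(8C_1)$.
\end{itemize}
Combining these gives $\int|\phi^\sigma-\chi_{C(\e)}|\,d\nu\le \tfrac{7}{8}\tfrac{K_4}{C_1}\delta\,\nu(C(\e))$, which is within the claimed bound. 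To get the constant exactly as stated, shrink the transition widths by a fixed factor, which only changes $C_7'$.

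The point I expect to need the most care is (4). The $C^4$ norm is measured on the manifold, for instance with respect to the right-invariant frame coming from $\langle\cdot,\cdot\rangle_{coord}$. In the coordinates $v$, the function $\prod_i\psi(v_i)$ has derivatives up to order $4$ bounded by $C(\e\delta/2)^{-4}$, because every transition width is comparable to $\e\delta$. Passing back through $\theta^{-1}$ by the chain rule (Faà di Bruno) introduces derivatives of $\theta^{-1}$ up to order $4$. These are uniformly bounded on the compact set $\overline{C(\delta_0)}$, and they are also uniformly bounded when expressed in the right-invariant frame, since that frame is smooth on this compact set. The lower-order terms carry smaller negative powers of $\e\delta$, and since $\e\delta<1$ each is dominated by $(\e\delta)^{-4}$. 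Hence $\|\phi^\sigma\|_{C^4}\le C_7'(\e\delta)^{-4}$, with $C_7'$ depending only on $\rho$, $\theta$, and $\delta_0$, and we may take $C_7'\ge 1$.
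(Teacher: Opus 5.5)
Your proof is correct and follows essentially the paper's route. The paper builds the cutoffs by mollifying the indicators of the Euclidean cubes $Q_{\e\pm a}$ with $a=\e\delta/2$ rather than taking tensor products of one-dimensional cutoffs, but the remaining steps are the same: transport through $\theta$, the annulus estimate for (3) via Proposition~\ref{prop:upper_bound_difference_measure_cubes_SL2R} and \eqref{eq:lower_bound_measure_cubes_SL2R}, and the chain-rule bound for (4). Your closing remark in (3) about shrinking the transition widths is unnecessary, since the constant $\tfrac{7}{8}\tfrac{K_4}{C_1}$ you obtain already satisfies the stated bound.
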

	
	Here $\delta_0$ is chosen as in Section \ref{sec:FiberCubes} so that $C(r)$ is defined for every $r\in (0,\delta_0)$.
	
	\begin{proof}
		For $r>0$, let $Q_r$ denote the Euclidean cube $C_{\sl_2\R}(r)$. Choose $\rho\in C^\infty_c(\sl_2\R)$ with $\rho\geq 0$,  $\supp(\rho)\subset Q_1$, and $\int \rho=1$. For $a>0$, we define 
			$$
				\rho_a(x):=a^{-3}\rho\left( \frac{x}{a} \right).
			$$
		Observe that $\supp(\rho_a)\subset Q_a$, $\int \rho_a=1$, and for every multi-index $\alpha$ with $|\alpha|\leq 4$, 
			\begin{equation}\label{eq:smoothing_lemma_pf1}
				\left\|D^\alpha \rho_a \right\|_{L^1}=a^{-|\alpha|}\left\|D^{\alpha}\rho\right\|_{L^1}.
			\end{equation}
		Fix $a=\frac{\e\delta}{2}$, and let 
			\begin{equation*}
				\overline{\phi}^+:=\chi_{Q_{\e+a}} * \rho_a
				\qquad \text{ and } \qquad 
				\overline{\phi}^-:=\chi_{Q_{\e-a}} * \rho_a
			\end{equation*}
		Since $\rho_a\geq 0$ and $\int \rho_a=1$, both functions take values in $[0,1]$. Moreover, since $\supp(\rho_a)\subset Q_a$ and $\int \rho_a=1$, we get that 
			\begin{equation*}
				\bar{\phi}^+\equiv 1 \text{ on } Q_\e, 
				\quad 
				\supp(\bar{\phi}^+)\subset Q_{\e+2a}=Q_{\e(1+\delta)},
				\quad 
				\bar{\phi}^-\equiv 1 \text{ on } Q_{\e-2a}=Q_{\e(1-\delta)}, 
				\quad 
				\supp(\bar{\phi}^-)\subset Q_{\e}.
			\end{equation*}
		Define $\phi^{\pm}:SL_2\R\to [0,1]$ by 
			\begin{equation*}
				\phi^{\pm} := \begin{cases}
					\bar{\phi}^{\pm}\circ \theta^{-1}(x), & \qquad x\in C(\delta_0), \\
					0, & \qquad x\notin C(\delta_0)
				\end{cases}
			\end{equation*}
		where $\theta: Q_{\delta_0}\to C(\delta_0)$ is from Section \ref{sec:FiberCubes}. We immediately get that $\phi^{\pm}\in C^\infty_c(SL_2\R)$ and properties (1) and (2). We now show property (3). Observe that 
			$$
				0\leq \phi^+-\chi_{C(\e)}\leq \chi_{C(\e(1+\delta))}-\chi_{C(\e)}.
			$$
		Thus, Proposition \ref{prop:upper_bound_difference_measure_cubes_SL2R} along with \eqref{eq:lower_bound_measure_cubes_SL2R} give that
			$$
				\int_{SL_2\R}|\phi^+ - \chi_{C(\e)}| 
				\leq K_4\e^3((1+\delta)^3-1) 
				\leq 7K_4\e^3\delta 
				\leq \frac{K_4}{C_1}\delta \nu(C(\e)).
			$$
		Similarly, 
			$$
				0\leq \chi_{C(\e)}-\phi^-\leq \chi_{C(\e)}-\chi_{C(\e(1-\delta))}.
			$$
		As before we get 
			$$
				\int_{SL_2\R}|\phi^- - \chi_{C(\e)}| 
				\leq K_4\e^3(1-(1-\delta)^3) 
				\leq 4K_4\e^3\delta 
				\leq \frac{K_4}{C_1}\delta \nu(C(\e)).
			$$			
		We've now shown that property (3) holds. Now to show property (4) holds, observe that we can find a constant $C'>0$ such that for all multi-indices $\alpha$ with $|\alpha|\leq 4$, 
			$$
				\|D^\alpha \rho\|_{L^1}\leq C'.
			$$
		Combining this with the fact that
			$$
				D^\alpha \bar{\phi}^{\pm} =\chi_{Q_{\e\pm a}} * D^\alpha \rho_a,
			$$
		and with \eqref{eq:smoothing_lemma_pf1}, we get that
			$$
				\|D^\alpha \bar{\phi}^{\pm} \|_\infty
				\leq \|D^\alpha \rho_a\|_{L^1}
				\leq C' \left( \frac{\e\delta}{2} \right)^{-|\alpha|}.
			$$
		Since $\theta^{-1}$ is smooth on $\overline{C(\delta_0)}$, all its derivatives through order 4 are uniformly bounded on $\overline{C(\delta_0)}$. The chain rule then gives that for some constant $C''>0$,
			$$
				\|D^j \phi^{\pm}\|_{\infty} \leq C'' (\e\delta)^{-j}, \qquad 0\leq j\leq 4.
			$$
		Then, since $\e\delta<1$, we get that property (4) holds. 
	\end{proof}
	
	\begin{cor}\label{cor:smoothing_lemma_N}
		Take $\e,\delta\in(0,1)$ such that $\e(1+\delta)<\delta_{inj}$. Then there exist functions $\phi^+, \phi^-\in C^\infty_c(N)$ such that for $y\in N$,
			\begin{enumerate}[(1)]
			\item for $\sigma \in \{+,-\}$, $\phi^\sigma \equiv 0$ outside $C(\e(1+\delta))y$.
			\item $\phi^-\leq \chi_{C(\e)y}\leq \phi^+$
			\item for $\sigma\in \{+,-\}$, 
				\begin{equation*}
					\int_{N} \left| \phi^\sigma-\chi_{C(\e)y} \right|d\nu \leq \frac{K_4}{C_1}\delta \nu(C(\e)y)
				\end{equation*}
			\item There exists a constant $C_7'\geq 1$ such that for $\sigma\in \{+,-\}$, 
				\begin{equation*}
					\left\| \phi^\sigma \right\|_{C^4}\leq C_7'(\delta\e)^{-4}
				\end{equation*}
			\end{enumerate}
	\end{cor}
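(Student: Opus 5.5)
The plan is to transfer Lemma \ref{lem:smooth_indicator_fcn_cube} from $SL_2\R$ to $N$ through the covering map $\pi$, using that $\pi$ is injective on the relevant cubes. Fix $y\in N$ and a lift $\tilde y\in SL_2\R$ with $\pi(\tilde y)=y$. Since $\e(1+\delta)<\delta_{inj}\le\delta_0$, Lemma \ref{lem:smooth_indicator_fcn_cube} applies and gives $\tilde\phi^\pm\in C^\infty_c(SL_2\R)$, supported in $C(\e(1+\delta))$, with properties (1)--(4) there.

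First I would right-translate these functions to the lift. Set $\psi^\pm(z):=\tilde\phi^\pm(z\tilde y^{-1})$, which is supported in $C(\e(1+\delta))\tilde y$. Right translation is an isometry for the right-invariant metric $d_{SL_2\R}$, and it commutes with the right-invariant frame used to measure derivatives. It also preserves the bi-invariant Haar measure $\nu$. So $\psi^\pm$ still satisfy (2)--(4) relative to $\chi_{C(\e)\tilde y}$, with the same constants.

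Next I would push down to $N$. Because $C(\e(1+\delta))\subset C(\delta_{inj})\subset B_{SL_2\R}(e,r_0)$, the set $C(\e(1+\delta))\tilde y$ lies in $B_{SL_2\R}(\tilde y,r_0)$, on which $\pi$ is injective. Define $\phi^\pm:=\psi^\pm\circ(\pi|_{B(\tilde y,r_0)})^{-1}$ on $B_N(y,r_0)$ and $\phi^\pm:=0$ elsewhere. Property (1) and smoothness hold because the support of $\psi^\pm$ is a compact subset of the open ball, so $\phi^\pm$ vanishes near the boundary of $B_N(y,r_0)$. Property (2) holds because $\pi(C(\e)\tilde y)=C(\e)y$ and $\pi$ is a bijection on the ball. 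Property (3) holds because $\pi$ restricted to the ball is measure-preserving onto its image, since $\nu$ on $N$ is induced from Haar measure. Property (4) holds because $\pi$ is a local isometry for the induced metrics and intertwines the right-invariant frames, so $C^4$ norms are unchanged.

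The main point to check is the well-definedness of the pushdown, which rests on the injectivity radius $r_0$ and the choice of $\delta_{inj}$ from Section \ref{sec:FiberCubes}. The constants $K_4/C_1$ and $C_7'$ carry over unchanged because both the translation and the local projection are measure- and metric-preserving.
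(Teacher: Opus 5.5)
Your proposal is correct and is the argument the paper intends: the corollary is stated without proof as an immediate consequence of Lemma \ref{lem:smooth_indicator_fcn_cube}, obtained by right-translating to a lift $\tilde y$ (which preserves $\nu$, $d_{SL_2\R}$, and derivatives along right-invariant vector fields) and then pushing down through $\pi$, which is injective on $B_{SL_2\R}(\tilde y,r_0)\supset C(\delta_{inj})\tilde y$. One implicit point is worth stating: the $C^4$ bound of the lemma must be read in the right-invariant frame (the frame the paper uses later, where left multiplication acts by $\Ad$); this costs at most a constant on the compact set $\overline{C(\delta_0)}$, and your translation and projection then preserve it exactly, so $C_7'$ is uniform in $y$.
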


	\bibliographystyle{amsplain}
	\bibliography{BernoulliSkewExpMixingBase}

\end{document}